\numberwithin{equation}{section}
\numberwithin{figure}{section}
\newtheorem{theorem}{Theorem}[section]
\newtheorem{proposition}[theorem]{Proposition}
\newtheorem{lemma}[theorem]{Lemma}
\newtheorem{corollary}[theorem]{Corollary}
\theoremstyle{definition}
\newtheorem{definition}[theorem]{Definition}
\newtheorem{property}[theorem]{Property}
\newtheorem{assumptions}{Assumptions}
\newtheorem{assumption}[assumptions]{Assumption}
\theoremstyle{remark}
\newtheorem{remark}[theorem]{Remark}
\newtheorem{claim}[theorem]{Claim}
\newcommand{\norm}[1]{\left\lVert #1\right\rVert}
\newcommand{\eps}{\varepsilon}
\newcommand{\vp}{\varphi}
\newcommand{\N}{\mathbb{N}}
\newcommand{\R}{\mathbb{R}}
\newcommand{\C}{\mathbb C}
\newcommand{\FFF}{\mathcal{F}}
\newcommand{\JJJ}{\mathcal{J}}
\newcommand{\NNN}{\mathcal{N}}
\newcommand{\PPP}{\mathcal{P}}
\newcommand{\RRR}{\mathcal{R}}
\newcommand{\tvp}{\tilde{\vp}}
\newcommand{\tvarphi}{\tilde{\varphi}}
\newcommand{\tw}{\tilde{w}}
\newcommand{\ent}[1]{\left\lfloor #1 \right\rfloor}
\newcommand{\supent}[1]{\left\lceil #1 \right\rceil}
\renewcommand{\leq}{\leqslant}
\renewcommand{\geq}{\geqslant}
\DeclareMathAlphabet{\mathpzc}{OT1}{pzc}{m}{it}
\renewcommand{\Re}{\mathcal R\!\mathpzc{e}}
\renewcommand{\Im}{\mathcal I\!\mathpzc{m}}
\begin{document}

\title[Scattering theory for NLS]{Profile decomposition and scattering for general nonlinear Schr\"odinger equations}

\author[Thomas Duyckaerts]{Thomas Duyckaerts}
\address[Thomas Duyckaerts]{LAGA (UMR 7539),
\newline\indent
Institut Galil\'ee, Universit\'e Sorbonne Paris Nord,
  \newline\indent
  99 avenue Jean-Baptiste Cl\'ement,
  \newline\indent
  93430 Villetaneuse,
  France
  \newline\indent
  and
  Département Math\'ematiques et Application,
  \newline\indent
  \'Ecole Normale Sup\'erieure
  \newline\indent
  45 rue d'Ulm
  \newline\indent
  75005 Paris,
  France}
\email[Thomas Duyckaerts]{duyckaer@math.univ-paris13.fr}

\author[Phan Van Tin]{Phan Van Tin}
\address[Phan Van Tin]{LAGA (UMR 7539),
\newline\indent
Institut Galil\'ee, Universit\'e Sorbonne Paris Nord,
\newline\indent
  99 avenue Jean-Baptiste Cl\'ement,
  \newline\indent
  93430 Villetaneuse,
  France}
\email[Phan Van Tin]{vantin.phan@math.univ-paris13.fr}

\subjclass{35Q55, 35A01, %Existence problems for PDEs: global existence, local existence, non-existence
    35B40%Asymptotic behavior of solutions to PDEs
    }

\date{\today}
\keywords{Nonlinear Schr\"odinger equations, profile decomposition, stability theory, scattering}

\begin{abstract}
We consider a Schr\"odinger equation with a nonlinearity which is a general perturbation of a power nonlinearity.
We construct a profile decomposition adapted to this nonlinearity. We also prove global existence and scattering in a general defocusing setting, assuming that the critical Sobolev norm is bounded in the energy-supercritical case. This generalizes several previous works on double-power nonlinearities.
\end{abstract}

\maketitle

\tableofcontents
\section{Introduction}
\subsection{General setting}
This article concerns the nonlinear Schr\"odinger (NLS) equation
\begin{equation}
 \label{NLS}
 i\partial_tu+\Delta u=g(u),
\end{equation} 
in space dimension $d\geq 1$,
where $g(u)$ is a $L^2$-supercritical nonlinearity of the form 
\begin{equation}
\label{def_g_intro}
g(u)=\iota_0|u|^{p_0}u+g_1(u), \iota_0\in \{\pm 1\} 
\end{equation} 
and $g_1:\C\to \C$ is a $L^2$-supercritical lower-order term, i.e. $g_1$ is $C^{k_0}$ and
\begin{equation*}
 \exists p_1,p_2,\; \frac{4}{d}<p_1\leq p_2<p_0,\quad \forall u\in \C,\; \forall k\in \llbracket 0,k_0\rrbracket,\quad 
 |D^kg_1(u)|\lesssim |u|^{p_2-k}+|u|^{p_1-k},
\end{equation*} 
for some $k_0$ that we will be specified later. 

The model case for $g$ is a sum of $k+1$ powers, $k\geq 1$
\begin{equation}
 \label{multi_power}
 g(u)=\sum_{j=0}^k \iota_j|u|^{p_j}u, \quad \iota_j\in \R\setminus\{0\},\quad  \frac{4}{d}<p_{k}<\ldots<p_1<p_0.
\end{equation}

With the regularity assumption: 
\begin{equation}
\label{regul}
\forall j\in \{0,\ldots,k\},\; p_j\text{ is an even integers, or }\supent{s_0}< p_j,
\end{equation}
where $s_0=\frac d2-\frac{2}{p_0}$
(see Subsection \ref{sub:notations} for the notation $\supent{s_0}$ and other notations that will be used in this introduction). The case of a double-power, energy-subcritical (i.e. $s_0\leq 1$) nonlinearity was studied in many work. Our goal is give a general setting for the study of \eqref{NLS} which includes also the supercritical case $s_0>1$ and more general lower order nonlinearity $g_1$.

We are interested in the global well-posedness and scattering for solutions of \eqref{NLS}. Neglecting the lower order term in $g$, we obtain the usual (NLS) equation with a single power nonlinearity
\begin{equation}
 \label{NLSh}
 i\partial_tu+\Delta u=\iota_0 |u|^{p_0}u,
\end{equation}

The equation \eqref{NLSh} is invariant by scaling: if $u$ is a solution of \eqref{NLSh} and $\lambda>0$, then $u_{\lambda}(t,x)=\lambda^{2/p_0}u(\lambda^2t,\lambda x)$ is also a solution of \eqref{NLSh}. The critical Sobolev exponent $s_0$ for \eqref{NLSh} is the unique $s_0$ such that $\|u\|_{\dot{H}^{s_0}}=\|u_{\lambda}(0)\|_{\dot{H}^{s_0}}$ for all $\lambda>0$. The equation \eqref{NLSh} is well-posed in $\dot{H}^{s_0}$ (with additional technical conditions in high dimensions ensure a minimal regularity of the nonlinearity) see \cite{CaWe90}.

\subsection{Well-posedness and profile decomposition}

In Section \ref{S:preliminary} we prove that \eqref{NLS} is locally well-posed in the inhomogenous space $H^{s}$, for any $s\geq s_0$, assuming that $g\in C^{\supent{s}+1}$ and $L^2$-supercritical (see Assumption \ref{Assum:NL} p.\pageref{Assum:NL} for the precise assumptions).  We also develop a full stability/long time perturbation theory for \eqref{NLS}.
The existence and uniqueness of solutions yields for all $u_0\in H^{s}$ a maximal interval of existence $I_{\max}(u_0)=(T_{-}(u_0),T_{+}(u_0))$.
Assuming $g(u)=G'(|u|^2)u$ for some $C^1$ function $G$, with $G(0)=0$ we also have conservation of the mass:
\begin{equation*}
M(u(t))=\int_{\R^d} |u(t,x)|^2dx,
\end{equation*} 
and, if $s\geq 1$, of the energy
\begin{equation*}
E(u(t))=\int_{\R^d}|\nabla u(t,x)|^2dx+\int_{\R^d}G(|u(t,x)|^2)dx.
\end{equation*}
and the momentum
\begin{equation*}
P(u(t))=\Im \int_{\R^d}\nabla u(t,x)\,\overline{u}(t,x)dx
\end{equation*}
of a solution. 

Our first main result is the construction of a profile decomposition adapted to bounded sequences of $H^{s_0}$ solutions of equation \eqref{NLS}, which builds up on the stability theory developed in Section \ref{S:preliminary}. This amounts to expressing such a sequence as a sum of three distinct types of objects: a dispersive behaving as a solution of the linear Schr\"odinger equation, concentrating (nonlinear) profiles that are solutions of \eqref{NLSh} rescaled with a scaling parameter going to $0$, and nonconcentrating profiles, that are solutions of \eqref{NLS}. We refer to Section \ref{S:profile}, and in particular Subsection \ref{sub:NL} and  Theorem \ref{T:NLapprox} for the detailed statements.

This profile decomposition is valid in the  general setting described above, and generalizes various previous constructions on double power nonlinearities (see Subsection \ref{sub:previous} below for references).

\subsection{Global well-posedness}

Solutions of \eqref{NLS} are not always global. Indeed, in the case of a double power nonlinearity, if $\iota_0=-1$ (the higher-order nonlinearity is focusing), a standard convexity argument (see \cite{Zakharov72,Gl77}) shows that any solution with negative energy and finite variance blows up in finite time (at least in the case of a double-power nonlinearity, \eqref{multi_power} with $k=2$). More surprisingly, Merle, Rapha\"el, Rodnianski and Szeftel \cite{MeRaRoSz22a} have constructed solutions of the homogeneous equation \eqref{NLSh} with a defocusing, energy-supercritical nonlinearity $s_0>1$, $\iota_0=1$ that blow up in finite time. It is known however in the defocusing case $\iota_0=1$, for many values of $p_0$,
that solution of \eqref{NLSh} that remains bounded in the critical Sobolev space are global and scatter. We will prove that this property implies that solutions of \eqref{NLS}, with $g$ of the form \eqref{def_g_intro} satisfying the same boundedness condition are global. We will thus consider:
\begin{property}
\label{Proper:bnd}
Let $A_0\in (0,\infty]$.
For any solution $u$ of \eqref{NLSh} with initial data in $\dot{H}^{s_0}$, if 
\begin{equation}
  \label{bound_Hs0}
  \limsup_{t\to T_{+}(u)} \|u(t)\|_{\dot{H}^{s_0}}<A_0.
\end{equation} 
Then $T_{+}(u)=+\infty$ and $u$ scatters for positive times in $\dot{H}^{s_0}$, i.e. there exists $v_0\in \dot{H}^{s_0}$ such that
$$\lim_{t\to\infty} \left\|u(t)-e^{it\Delta}v_0\right\|_{\dot{H}^{s_0}}=0.$$
\end{property} 
It is conjectured that Property \ref{Proper:bnd} always holds for $A_0=\infty$ in the defocusing case $\iota_0=1$. In the defocusing energy-critical case $d\geq 3$, $p_0=\frac{4}{d-2}$, Property \ref{Proper:bnd} is unconditional (the bound \eqref{bound_Hs0} is given by conservation of the energy), and was  proved in \cite{CoKeStTaTa08}, \cite{RyVi07} and \cite{Visan07},

The study of Property \ref{Proper:bnd} in the defocusing case for other critical exponents was initiated in \cite{KeMe10} where it was proved when $d=3$, $p_0=2$ (thus $s_0=1/2$), for radial solutions. It was later proved in many other cases:  see \cite{KillipVisan10}, \cite{MiXuZh14}, \cite{LuZheng17}, \cite{DoMiMuZh17} for supercritical nonlinearities in dimension $d\geq 4$ (with technical restriction if $d\geq 7$), and \cite{XieFang13}, \cite{Murphy14}, \cite{Murphy15}, \cite{GaoZhao19}, \cite{YuXueying21} and \cite{JaVaLu22} for several energy-subcritical nonlinearities.

In the focusing case $\iota_0=-1$, Property \ref{Proper:bnd} is only known to hold when $A_0$ is small, from the small data theory for equation \eqref{NLSh}. In the focusing energy critical case, $p_0=\frac{4}{d-2}$, it follows from \cite{KeMe06}, \cite{KiVi10} and \cite{Dodson19a} that Property \ref{Proper:bnd} holds with $A_0=\|\nabla W\|^2_{L^2}$ in dimension $d\geq 4$, and $d=3$ in the radial case, where $W$ is the ground state of equation \eqref{NLSh}.
This is optimal, since the existence of $W$ shows that Property \ref{Proper:bnd} does not hold for $A_0>\|\nabla W\|^2_{L^2}$.

When $\iota_0=-1$ and $s_0\in (0,1)$, there exists standing wave solutions of \eqref{NLSh}, so that Property \ref{Proper:bnd} does not hold for large $A_0$. When $\iota_0=-1$, $s_0>1$, travelling wave solutions in $\dot{H}^{s_0}$ do not exist, and the validity of Property \ref{Proper:bnd} for large $A_0$ is an open question. Let us also mention that the analogue of this property was proved for radial focusing nonlinear wave equation in the energy supercritical and subcritical settings (see e.g. \cite{DuKeMe12c} for supercritical $p_0$ in space dimension $3$).

Our result on global well-posedness is as follows:
\begin{theorem}
 \label{T:GWP} 
 Let $\iota_0$, $s_0$, $g$ such that Assumption \ref{Assum:profile}  holds, and such that Property \ref{Proper:bnd} is valid for some $A_0\in (0,\infty]$. Let $u$ be a solution of \eqref{NLS_g}, with initial data in $H^{s_0}$, such that \eqref{bound_Hs0} holds. Then $T_{+}(u)=+\infty$.
\end{theorem}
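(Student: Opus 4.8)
The plan is to argue by contradiction, exploiting the profile decomposition of Theorem \ref{T:NLapprox} together with Property \ref{Proper:bnd} in a concentration-compactness scheme of Kenig--Merle type, but adapted to the fact that the lower-order term $g_1$ scales away under concentration. Suppose $T_{+}(u)<\infty$. Since $u$ solves \eqref{NLS} with a subcritical perturbation, finite-time blow-up must be accompanied by blow-up of the critical Sobolev norm (this is the criterion one gets from the local well-posedness and stability theory of Section \ref{S:preliminary}); so there is a sequence $t_n\uparrow T_{+}(u)$ with $\|u(t_n)\|_{\dot H^{s_0}}\to\infty$. Rescale: set $\lambda_n\to 0$ appropriately and $v_n(x)=\lambda_n^{2/p_0}u(t_n,\lambda_n x)$, chosen so that $\|v_n\|_{\dot H^{s_0}}$ is comparable to $1$ (or to any fixed large constant below $A_0$). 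The point of the rescaling is that under $x\mapsto\lambda_n x$ with $\lambda_n\to 0$, the $|u|^{p_0}u$ term is exactly invariant while $g_1(u)$ becomes negligible, so the rescaled solutions are, in the limit, solutions of the homogeneous equation \eqref{NLSh} to which Property \ref{Proper:bnd} applies.

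First I would apply the profile decomposition (Theorem \ref{T:NLapprox}) to the bounded $\dot H^{s_0}$ sequence $(v_n)_n$, writing $v_n=\sum_{j=1}^J \Phi^j_n + w^J_n$ with pairwise orthogonal parameters. The profiles split into those with scaling parameter $\lambda^j_n\to 0$ (concentrating profiles, governed by \eqref{NLSh}) and those with $\lambda^j_n\equiv 1$ (nonconcentrating profiles, governed by \eqref{NLS}); the dispersive remainder $w^J_n$ is small in the relevant Strichartz norm for $J$ large. For each concentrating profile, the associated nonlinear profile is a global scattering solution of \eqref{NLSh} by Property \ref{Proper:bnd}, since its $\dot H^{s_0}$ norm is bounded by $\limsup\|v_n\|_{\dot H^{s_0}}<A_0$ (here one must verify the uniform bound survives passing through the orthogonal decomposition — this uses the almost-orthogonality of $\dot H^{s_0}$ norms and the choice of normalization making the total norm strictly below $A_0$; this is where the hypothesis on $A_0$ is genuinely used). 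For each nonconcentrating profile, the nonlinear profile is a solution of \eqref{NLS} whose existence time is controlled because its initial data has small $\dot H^{s_0}$ norm, or one peels it off and iterates. I would then feed this data into the long-time perturbation / stability theorem of Section \ref{S:preliminary}: the sum of the nonlinear profiles plus $e^{it\Delta}w^J_n$ is an approximate solution of \eqref{NLS} with small error (the error coming from the cross terms in the nonlinearity, controllable by the orthogonality of parameters plus a Strichartz-norm bookkeeping, exactly as in standard profile-decomposition arguments), hence $v_n$ itself is global with a uniformly bounded Strichartz norm on $\R$.

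This uniform bound on $\|v_n\|$ over a full scaling-invariant interval contradicts the blow-up of $u$ at $T_{+}(u)$: unwinding the rescaling, it yields a uniform bound on the critical Strichartz norm of $u$ on an interval of the form $[t_n, t_n+c\lambda_n^{-2}]$ with $c>0$ fixed, which (again by the local theory) forces $u$ to extend past $T_{+}(u)$ once $t_n$ is close enough to $T_{+}(u)$ that $t_n+c\lambda_n^{-2}>T_{+}(u)$ — and the right scaling relation $\lambda_n\sim\|u(t_n)\|_{\dot H^{s_0}}^{-1/s_0}\to 0$ together with a lower bound on the blow-up rate of $\|u(t_n)\|_{\dot H^{s_0}}$ makes this happen. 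This contradiction gives $T_{+}(u)=+\infty$.

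\textbf{Main obstacle.} I expect the hardest point to be the bookkeeping that makes the perturbative step close: one must show that in the rescaled variables the contribution of $g_1$ to the error term genuinely vanishes as $n\to\infty$ (uniformly in the truncation parameter $J$), which requires quantitative control of how $g_1$ behaves under the concentrating rescaling and an interplay with the Strichartz exponents — and simultaneously handle the nonconcentrating profiles, for which $g_1$ does \emph{not} scale away, by ensuring their $\dot H^{s_0}$ data is small enough to invoke small-data global existence for \eqref{NLS}. Making the normalization of $\lambda_n$ precise so that all concentrating nonlinear profiles have $\dot H^{s_0}$ norm strictly below $A_0$, while keeping $\|v_n\|_{\dot H^{s_0}}$ bounded away from $0$ so that the decomposition is nontrivial, is the delicate accounting at the heart of the argument.
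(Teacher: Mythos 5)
There is a fundamental gap at the very start. You write that ``finite-time blow-up must be accompanied by blow-up of the critical Sobolev norm ... so there is a sequence $t_n\uparrow T_{+}(u)$ with $\|u(t_n)\|_{\dot H^{s_0}}\to\infty$.'' This is false for a $\dot H^{s_0}$-critical local theory: the blow-up criterion one gets from Section~\ref{S:preliminary} (see Lemma~\ref{L:scatt_criterion} and the remark following it) is that the Strichartz norm $\|u\|_{X(0,T_{+})}$ must diverge, not the critical Sobolev norm; for $s>s_0$ the inhomogeneous $H^{s}$ norm diverges, but $\dot H^{s_0}$ need not. More to the point, your claim directly contradicts the hypothesis \eqref{bound_Hs0} of the theorem, which assumes precisely that $\limsup_{t\to T_{+}}\|u(t)\|_{\dot H^{s_0}}<A_0$. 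The content of the theorem is exactly that this a priori bound rules out finite-time blow-up; assuming the opposite nullifies the problem rather than attacking it.

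Because of this, the rescaling $v_n=\lambda_n^{2/p_0}u(t_n,\lambda_n\cdot)$ with $\lambda_n\sim\|u(t_n)\|_{\dot H^{s_0}}^{-1/s_0}\to 0$ has no basis: there is no divergent quantity to normalize, so $\lambda_n$ cannot be forced to $0$, and with it goes your heuristic that the rescaling kills $g_1$. The paper's proof instead uses conservation of mass and \eqref{bound_Hs0} to conclude that $(u(t_n))_n$ is already bounded in the \emph{inhomogeneous} space $H^{s_0}$ (no preliminary rescaling needed), then applies the profile decomposition of Theorem~\ref{T:NLapprox} directly to this sequence. The split between concentrating profiles (where $g_1$ scales away and Property~\ref{Proper:bnd}, through its quantitative reformulation Proposition~\ref{P:boundh}, controls them) and nonconcentrating profiles (handled by local theory on the shrinking interval $[0,T_{+}-t_n)$) is exactly the division you anticipated — but it comes out of the decomposition's own intrinsic scaling parameters, not out of a preliminary rescaling of the whole solution. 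A careful bootstrap/continuity argument using the Pythagorean expansion then shows each concentrating profile stays strictly below $A_0$ in $\dot H^{s_0}$, which is where the hypothesis $A_0$ is used; that argument has no analogue in your sketch.
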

We refer to Definition \ref{D:NNN} and Section \ref{S:profile}, p.~\pageref{Assum:profile} for the details of Assumption \ref{Assum:profile}. Let us mention that a multi-power non-linearity as in \eqref{multi_power} with the additional technical assumption \eqref{regul} satisfies this assumption.

If Property \ref{Proper:bnd} holds only for radial functions, then the conclusion of Theorem \ref{T:GWP} is also valid when restricted to radial solutions of \eqref{NLS}. 

Theorem \ref{T:GWP} is new in the energy supercritical case $s_0>1$. In the energy-subcritical and energy-critical cases $0<s_0\leq 1$, it was proved in \cite{TaViZh07} for a double power non-linearity with the stronger assumptions $\iota_0=1$, $u_0\in H^1$, without assuming Property \ref{Proper:bnd}.

The proof of Theorem \ref{T:GWP} uses the profile decomposition mentioned above. The Property \ref{Proper:bnd} is used to deal with the concentrating profiles.

In the defocusing energy-critical case, by conservation of the energy and the scattering result for the energy-critical Schr\"odinger defocusing equation,  the global well-posedness is unconditional:
\begin{corollary}
\label{Cr:bnd} 
Assume $\iota_0=1$. Let $d\in \{3,4,5\}$, and $g$ such that Assumption \ref{Assum:profile} holds with $p_0=\frac{4}{d-2}$. Let $u$ be a solution of \eqref{NLS_g}, with initial data in $H^{1}$. Then $u$ is global.
\end{corollary}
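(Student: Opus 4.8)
The plan is to obtain Corollary~\ref{Cr:bnd} as a particular case of Theorem~\ref{T:GWP}. When $p_0=\frac{4}{d-2}$ one computes $s_0=\frac d2-\frac2{p_0}=\frac d2-\frac{d-2}{2}=1$, so the critical space is $\dot H^1$ and the hypothesis ``$u_0\in H^1$'' of the corollary is exactly ``$u_0\in H^{s_0}$'', as required in Theorem~\ref{T:GWP}; Assumption~\ref{Assum:profile} is part of the hypotheses. Moreover, in the defocusing energy-critical case Property~\ref{Proper:bnd} holds for the homogeneous equation~\eqref{NLSh} with $A_0=+\infty$, by \cite{CoKeStTaTa08} ($d=3$), \cite{RyVi07} ($d=4$) and \cite{Visan07} ($d=5$) --- and $d\le 5$ is precisely the range in which $p_0=\frac4{d-2}>1$, consistent with the regularity condition~\eqref{regul} for $s_0=1$. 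Hence the only thing left to check in order to apply Theorem~\ref{T:GWP} is that $u$ obeys~\eqref{bound_Hs0} with this $A_0=+\infty$, i.e.\ that $\|u(t)\|_{\dot H^1}$ stays bounded on its maximal interval of existence.

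This a priori bound --- the only real content --- follows from conservation of mass and energy. Since $\iota_0=1$ and (as part of the assumptions) $g(u)=G'(|u|^2)u$, split $G=G_0+G_1$, where $G_0(r)=\frac{2}{p_0+2}r^{(p_0+2)/2}\ge 0$ is the leading defocusing potential and $G_1$ the lower-order one, so that $G_0(|u|^2)=\frac{2}{p_0+2}|u|^{2^*}$ with $2^*:=\frac{2d}{d-2}=p_0+2$. Conservation of energy gives, for $t$ in the maximal interval of $u$,
\begin{equation*}
\|\nabla u(t)\|_{L^2}^2+\frac{2}{p_0+2}\,\|u(t)\|_{L^{2^*}}^{2^*}=E(u_0)-\int_{\R^d}G_1(|u(t)|^2)\,dx .
\end{equation*}
From the growth bounds on $g_1$ together with $\frac4d<p_1\le p_2<p_0$, one gets $|G_1(|u|^2)|\lesssim |u|^{r_1}+|u|^{r_2}$ for some $2<r_1\le r_2<2^*$. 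Interpolating $L^{r_i}$ between $L^2$ and $L^{2^*}$ and applying Young's inequality, for every $\eps>0$ there are $C_\eps$ and $a>0$ with
\begin{equation*}
\Big|\int_{\R^d}G_1(|u(t)|^2)\,dx\Big|\le \eps\,\|u(t)\|_{L^{2^*}}^{2^*}+C_\eps\big(1+\|u(t)\|_{L^2}^{a}\big).
\end{equation*}
Taking $\eps=\frac{1}{p_0+2}$ and using conservation of mass $\|u(t)\|_{L^2}=\|u_0\|_{L^2}$ yields
\begin{equation*}
\|\nabla u(t)\|_{L^2}^2+\frac{1}{p_0+2}\,\|u(t)\|_{L^{2^*}}^{2^*}\le E(u_0)+C\big(1+\|u_0\|_{L^2}^{a}\big),
\end{equation*}
which is finite since $u_0\in H^1(\R^d)\hookrightarrow L^q$ for $2\le q\le 2^*$ (here $d\le 5$). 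Hence $\sup_t\|u(t)\|_{\dot H^1}<\infty$, so~\eqref{bound_Hs0} holds with $A_0=+\infty$.

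All hypotheses of Theorem~\ref{T:GWP} being met, we conclude $T_+(u)=+\infty$. To get that $u$ is global it remains to see $T_-(u)=-\infty$, which I would obtain by time reversal: $v(t,x):=\overline{u(-t,x)}$ solves~\eqref{NLS_g} as well (using $\overline{g(\bar z)}=g(z)$, valid for $g(u)=G'(|u|^2)u$ with $G$ real), with $v(0)=\overline{u_0}\in H^1$ having the same mass and energy as $u$; so the argument above applies to $v$ and gives $T_+(v)=+\infty$, i.e.\ $T_-(u)=-\infty$.

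The main (and essentially only) obstacle is the a priori $\dot H^1$ bound: one must rule out that the possibly focusing lower-order term $g_1$ destroys the coercivity of the energy. This is exactly where the hypothesis that $g_1$ is simultaneously $L^2$-supercritical and energy-subcritical ($\frac4d<p_1\le p_2<p_0$) enters: the associated potential energy $\int G_1(|u|^2)$ is controlled, after interpolation, by an arbitrarily small fraction of the good-sign critical term $\|u\|_{L^{2^*}}^{2^*}$ plus a mass-dependent constant. Everything else is a direct combination of Theorem~\ref{T:GWP} with the known scattering theory for the defocusing energy-critical NLS.
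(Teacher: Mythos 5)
Your proof is correct and follows the same route the paper indicates (the paper only sketches it in the sentence before the corollary): apply Theorem~\ref{T:GWP} with $A_0=+\infty$, taking Property~\ref{Proper:bnd} from the known scattering theory for the defocusing energy-critical NLS, and verify \eqref{bound_Hs0} from conservation of mass and energy. You usefully fill in the one nontrivial point the paper leaves implicit, namely that even when the lower-order term $g_1$ may be focusing, the bounds $\frac4d<p_1\le p_2<p_0=\frac{4}{d-2}$ place $|G_1(|u|^2)|$ strictly between $L^2$ and $L^{2^*}$, so interpolation plus Young's inequality absorbs it into the good-sign term $\|u\|_{L^{2^*}}^{2^*}$ and a mass-dependent constant; and you correctly handle $T_-$ by the time-reversal symmetry $v(t,x)=\overline{u(-t,x)}$ available since $g(z)=G'(|z|^2)z$ with $G$ real.
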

Corollary \ref{Cr:bnd} was proved in the case of a double-power nonlinearity in \cite{Zhang06} ($d=3$) and 
in \cite{TaViZh07} (for general $d\geq 4$). Corollary \ref{Cr:bnd} generalizes these results to more general perturbations of the energy-critical nonlinearity, in low dimension. The restriction on the dimension is due to the regularity assumption $g\in C^2$ in Assumption \ref{Assum:profile}. This restriction could be weakened using a refined well-posedness/stability theory as in \cite{BuCzLiPaZh13}.

\subsection{Scattering}

Our next goal is to give sufficient conditions for scattering of solutions of \eqref{NLS}. We recall that a solution of \eqref{NLS} with initial data in $H^{s_0}$ is said to scatter (in $H^{s_0}$, forward in time) when $T_{+}(u)=+\infty$ and there exists $v_0\in H^{s_0}$ such that
$$\lim_{t\to\infty} \left\|e^{it\Delta}v_0-u(t)\right\|_{H^{s_0}}=0.$$
We will prove scattering for a general defocusing nonlinearity defined as follows:
\begin{definition}
\label{D:defocusing}
The nonlinearity $g$ is defocusing when it is of the form $g(u)=G'(|u|^2)u$ for some $C^1$ function $G$ such that for almost all $a>0$, $aG'(a)-G(a)>0$.
\end{definition}
Note that any power nonlinearity with a positive coefficient is defocusing in the sense of Definition \eqref{D:defocusing}.
For a multi-power nonlinearity $g$ as in \eqref{multi_power} the assumption that $g$ is defocusing is equivalent to
\begin{equation}
\label{general_defoc}
\forall s>0,\quad \sum_{j=0}^k\iota_j\frac{p_j}{p_j+2}s^{\frac{p_j}{2}-1}>0.
\end{equation}
Note that \eqref{general_defoc} holds when all the $\iota_j$ are positive. Also, \eqref{general_defoc} implies $\iota_0>0$, $\iota_k>0$.

For our scattering results, we distinguish between the energy-supercritical case and the energy-subcritical case:
\begin{theorem}
\label{T:defoc_defoc}
Let $d\geq 3$ and assume $s_0>1$. Let $g$ be a defocusing nonlinearity that satisfies Assumption \ref{Assum:profile}. Assume that Property \ref{Proper:bnd} holds for some $A_0\in (0,\infty]$. Let $u$ be a solution of \eqref{NLS} with initial data $u_0\in H^{s_0}$ and that satisfies \eqref{bound_Hs0}. Then $u$ is global and scatter forward in time.
\end{theorem}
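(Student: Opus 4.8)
The plan is to combine the profile decomposition of Theorem \ref{T:NLapprox} with the Morawetz-type monotonicity that is available for a defocusing nonlinearity in the sense of Definition \ref{D:defocusing}, together with the already-established global well-posedness (Theorem \ref{T:GWP}). First I would invoke Theorem \ref{T:GWP}: since $g$ is defocusing it satisfies Assumption \ref{Assum:profile}, Property \ref{Proper:bnd} holds for some $A_0$, and the solution satisfies \eqref{bound_Hs0}, so $T_+(u)=+\infty$; moreover the same boundedness propagates to the $\dot H^{s_0}$ norm on all of $[0,\infty)$. It then suffices to prove that the relevant Strichartz-type scattering norm of $u$ on $[0,\infty)$ is finite, since the inhomogeneous local theory and stability theory of Section \ref{S:preliminary} upgrade a finite scattering norm to scattering in $H^{s_0}$.

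The core is a contradiction/compactness argument in the style of Kenig--Merle. Suppose the scattering norm is infinite. One extracts a sequence of times $t_n\to\infty$ along which the scattering norm on $[t_n,\infty)$ blows up while $\|u(t_n)\|_{H^{s_0}}$ stays bounded, and applies the profile decomposition to the bounded sequence $(u(t_n))_n$ in $H^{s_0}$. This produces a linear dispersive remainder, concentrating nonlinear profiles (rescaled solutions of \eqref{NLSh}), and nonconcentrating profiles (solutions of \eqref{NLS}). For the concentrating profiles, Property \ref{Proper:bnd} (applied after passing to the homogeneous equation at small scales, exactly as in the use of Property \ref{Proper:bnd} to handle concentrating profiles in the proof of Theorem \ref{T:GWP}) gives that each has finite scattering norm; the linear remainder is handled by the small-data theory. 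The key point is to rule out a nonconcentrating profile carrying a nontrivial fraction of the norm and an infinite scattering norm — this is where the defocusing structure enters.

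The obstruction-removal step is the heart of the argument: one must show that a minimal-norm nonconcentrating ``critical element'' — a nonzero global solution of \eqref{NLS} with infinite scattering norm and precompact (modulo the symmetry group) trajectory in $H^{s_0}$ — cannot exist. Here I would use a virial/interaction-Morawetz estimate adapted to the nonlinearity $g(u)=G'(|u|^2)u$: the defocusing sign condition $aG'(a)-G(a)>0$ a.e. is precisely what makes the Morawetz quantity monotone with a nonnegative bulk term, so a precompact-trajectory solution must be identically zero, contradicting minimality. The main obstacle I anticipate is the interplay between the low regularity $s_0>1$ (energy-supercritical, so the energy is not available as a coercive conserved quantity) and the need for a usable Morawetz estimate: one only has the a priori bound \eqref{bound_Hs0} in $\dot H^{s_0}$, not control of $\dot H^1$, so the Morawetz argument must be run at the scaling-critical level, which requires care with the error terms coming from $g_1$ and with the frequency-localized version of the interaction Morawetz inequality. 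A secondary technical point is verifying that the compactness/profile machinery of Section \ref{S:profile}, stated for $H^{s_0}$, is compatible with translating the concentrating profiles into the scope of Property \ref{Proper:bnd}, but this should follow from the already-proven Theorem \ref{T:NLapprox} together with the perturbation theory of Section \ref{S:preliminary}.
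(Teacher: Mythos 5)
Your overall Kenig--Merle framework is the correct one and matches the paper: extract a critical element with a precompact trajectory in $H^{s_0}$ via the profile decomposition (Theorem \ref{T:NLapprox} and Theorem \ref{T:scatt2}'s compactness claim), handle concentrating profiles with Property \ref{Proper:bnd}, and kill the critical element with a virial-type argument that exploits the defocusing sign condition. However, the rigidity step --- the heart of the matter --- is where your proposal diverges and has genuine gaps.

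First, the difficulty you anticipate is not the real one. You worry that one ``only has the a priori bound in $\dot H^{s_0}$, not control of $\dot H^1$,'' and that this forces a scaling-critical frequency-localized interaction Morawetz. But since $s_0>1$ and the data is in the \emph{inhomogeneous} space $H^{s_0}=L^2\cap\dot H^{s_0}$, interpolation gives uniform control of the full $H^1$ norm along the compact trajectory. So $H^1$ control is available for the critical element, and no frequency-localized or scaling-critical Morawetz is needed. The paper accordingly uses the much more elementary \emph{localized virial} identity (Proposition \ref{pro1}, proved via Lemma \ref{lm compute A1,A2}) rather than interaction Morawetz.

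Second, and more importantly, you do not address the possibly nonzero momentum of the critical element. In the energy-subcritical setting one removes the momentum by a Galilean boost, but a Galilean transformation has an uncontrollable effect on the $\dot H^{s_0}$ norm when $s_0>1$, so it is incompatible with the induction on $A$ in Theorem \ref{T:scatt2}. The paper's actual innovation here is twofold: (i) it localizes the virial functional around the time-dependent center $X(t)=2\,P(u)t/M(u)$ rather than the origin, and combines this with the growth estimate $|x(t)-X(t)|=o(t)$ from Lemma \ref{lm2}, so that the localized virial derivative equals $2\Phi(u)-2|P(u)|^2/M(u)+O(\varepsilon)$; and (ii) it shows (Lemma \ref{Lm:positive of Phi}, with the Banica-type improved Cauchy--Schwarz referred to in the introduction) that $\Phi(\varphi)-|P(\varphi)|^2/M(\varphi)>0$ for the critical element. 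Your proposal has the defocusing sign condition producing a nonnegative bulk term, which is the right intuition for $\Phi$ itself, but without the $X(t)$-recentering and the Cauchy--Schwarz refinement the argument does not close when $P(u)\neq 0$. You should make these two ingredients explicit; as written, your rigidity step has a hole that the paper's Section \ref{S:rigidity} is precisely designed to fill.
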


When $s_0\leq 1$, we must further assume that the initial data has finite energy.
\begin{theorem}
\label{T:defoc_defoc'}
Assume $0<s_0\leq 1$. Let $g$ be a defocusing nonlinearity that satisfies Assumption \ref{Assum:profile}. Let $u$ be a solution of \eqref{NLS} with initial data $u_0\in H^1$. Then $u$ is global and scatter in both time directions.
\end{theorem}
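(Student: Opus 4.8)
The plan is to follow the classical route to scattering for intercritical defocusing Schr\"odinger equations (Ginibre--Velo; Colliander--Keel--Staffilani--Takaoka--Tao; Colliander--Grillakis--Tzirakis; Planchon--Vega): combine the conservation laws with an interaction Morawetz estimate, whose favourable sign is exactly what Definition~\ref{D:defocusing} is designed to encode, together with the well-posedness and long-time perturbation theory of Section~\ref{S:preliminary}, the lower-order term $g_{1}$ entering only perturbatively; in contrast with Theorems~\ref{T:GWP} and \ref{T:defoc_defoc}, I do not expect to need a profile decomposition or concentration--compactness here. \emph{First (global existence and a uniform $H^{1}$ bound).} Writing $g(u)=G'(|u|^{2})u$, the masses $M(u)$ and $E(u)$ are conserved; since $g$ is $L^{2}$-supercritical, $G'(0)=0$, so (using $G(0)=0$) $G(a)/a\to0$ as $a\to0^{+}$, while the defocusing condition $aG'(a)-G(a)>0$ says precisely that $a\mapsto G(a)/a$ is non-decreasing on $(0,\infty)$; hence $G\geq0$, so $\|\nabla u(t)\|_{L^{2}}^{2}\leq E(u_{0})$ and, with mass conservation, $\sup_{t}\|u(t)\|_{H^{1}}\lesssim_{M(u_{0}),E(u_{0})}1$. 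When $s_{0}<1$ the equation is energy-subcritical, so this bound together with the blow-up criterion of Section~\ref{S:preliminary} forces $T_{\pm}(u)=\pm\infty$; when $s_{0}=1$ (that is $p_{0}=\frac{4}{d-2}$, so $d\leq5$ under Assumption~\ref{Assum:profile}, and $\iota_{0}=1$ because $g$ is defocusing) global existence is exactly Corollary~\ref{Cr:bnd}.

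\emph{Second (interaction Morawetz estimate).} I would establish the bilinear Morawetz estimate for \eqref{NLS}. Differentiating in time the interaction Morawetz action built on the weight $|x-y|$ acting on $|u(t,x)|^{2}|u(t,y)|^{2}$, every resulting term is non-negative except a potential term equal to the integral of $\Delta|x-y|\,|u(t,y)|^{2}\bigl(|u(t,x)|^{2}G'(|u(t,x)|^{2})-G(|u(t,x)|^{2})\bigr)$ plus its $x\leftrightarrow y$ symmetrisation, which is $\geq0$ precisely by Definition~\ref{D:defocusing}; the kinetic contributions are controlled because $\partial_{j}\partial_{k}|x-y|$ is positive semidefinite and $-\Delta^{2}|x-y|\geq0$ as a distribution for $d\geq3$ (with the usual modifications for $d=1,2$). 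Bounding the Morawetz action by $\sup_{t}\|u(t)\|_{L^{2}}^{3}\|\nabla u(t)\|_{L^{2}}$ and invoking the first step, this yields a global space-time bound of the form $\|u\|_{L^{4}_{t}L^{q}_{x}([0,\infty)\times\R^{d})}\lesssim_{M(u_{0}),E(u_{0})}1$ for a suitable exponent $q=q(d)$ (with $q=4$ when $d=3$).

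\emph{Third (a global Strichartz bound --- the main obstacle).} This is the heart of the matter. I would split $[0,\infty)$ into $N\lesssim_{M(u_{0}),E(u_{0})}1$ subintervals $I_{j}$ on each of which $\|u\|_{L^{4}_{t}L^{q}_{x}(I_{j})}\leq\eta$ for a small absolute $\eta$, and on each $I_{j}$ bound a scattering-type Strichartz norm $\SSS(I_{j})$ of $u$ by an absolute constant via Duhamel's formula, the Strichartz estimates, the fractional product and chain rules for $g$, the uniform mass and $H^{1}$ bounds, and a continuity argument. Here the intercriticality $\frac{4}{d}<p_{1}\leq p_{2}<p_{0}\leq\frac{4}{d-2}$ is exactly what lets one, by H\"older in space-time, distribute the nonlinear contribution into a positive power of the small Morawetz norm on $I_{j}$, a power of the globally bounded mass and $H^{1}$ norms, and a power of $\|u\|_{\SSS(I_{j})}$ --- with $g_{1}$ the more subcritical, easier piece --- so that the continuity argument closes; summing over $j$ gives $\|u\|_{\SSS([0,\infty))}<\infty$. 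In the energy-critical case $s_{0}=1$ this distribution degenerates, and I would argue perturbatively instead: the critical part $|u|^{p_{0}}u$ has globally finite scattering norm by the unconditional scattering for the defocusing energy-critical equation (Property~\ref{Proper:bnd} with $A_{0}=\infty$, which is valid here), while the energy-subcritical remainder $g_{1}$ is absorbed by the long-time perturbation theory of Section~\ref{S:preliminary}, again giving $\|u\|_{\SSS([0,\infty))}<\infty$. I expect the uniform bookkeeping of the general $g_{1}$ through this bootstrap, across all dimensions, to be the technically heaviest point.

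\emph{Fourth (scattering).} Finally, a finite global scattering norm together with the uniform $H^{1}$ bound and the Strichartz estimates gives, by the standard Cauchy-criterion argument, that $e^{-it\Delta}u(t)$ converges in $H^{s_{0}}$ as $t\to+\infty$, i.e.\ $u$ scatters forward in time; since $\overline{u}(-t,\cdot)$ is again a solution of \eqref{NLS} with $H^{1}$ data, the same argument applied to it yields scattering backward in time, which completes the proof.
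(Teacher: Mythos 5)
The proposal takes a genuinely different route from the paper's. The paper does not write out the proof of Theorem~\ref{T:defoc_defoc'}: Section~\ref{S:scattering} states that it is ``similar, but simpler'' to the proofs of Theorems~\ref{T:defoc_defoc} and \ref{T:scatt_intro2}, which go through the Kenig--Merle compactness/rigidity scheme --- Theorem~\ref{T:scatt2} and Claim~\ref{Cl:compactness} extract a minimal almost-periodic solution via the nonlinear profile decomposition of Section~\ref{S:profile} and the long-time perturbation theory of Theorem~\ref{T:long time perturbation}, and Proposition~\ref{pro1} (a localised single-particle virial argument around $X(t)$, combined with Lemma~\ref{Lm:positive of Phi}, where the defocusing sign condition enters through $\Phi(u)-|P(u)|^2/M(u)>0$) rules it out. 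You instead propose the direct interaction-Morawetz route of Ginibre--Velo, Colliander--Keel--Staffilani--Takaoka--Tao, Colliander--Grillakis--Tzirakis, Planchon--Vega, and especially Tao--Visan--Zhang \cite{TaViZh07} (the paper explicitly credits \cite{TaViZh07} and \cite{Zhang06} with the double-power case of this very theorem via this method). Your structural observation that the Morawetz potential term is controlled by $aG'(a)-G(a)$, precisely the quantity whose sign Definition~\ref{D:defocusing} prescribes, is correct and is the right way to generalise the double-power argument to the class $\NNN(s_0,p_2,p_1)$. The trade-off is clear: your route avoids the entire profile-decomposition machinery and is more elementary and quantitative, but it is intrinsically limited to $s_0\leq 1$, since the interaction Morawetz estimate lives at a fixed subcritical regularity and its favourable sign relies on the conserved $H^1$ control; the paper's unified compactness framework is what makes the supercritical companion Theorem~\ref{T:defoc_defoc} accessible in the same breath.

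Two places in your sketch need strengthening before this counts as a proof. First, the energy-critical case $s_0=1$: the sentence ``the critical part $|u|^{p_0}u$ has globally finite scattering norm $\ldots$ while the energy-subcritical remainder $g_1$ is absorbed by the long-time perturbation theory'' is, as written, circular --- to invoke Theorem~\ref{T:long time perturbation} over $[0,\infty)$ you must control $\|g_1(u)\|_{N^1([0,\infty))}$, which presupposes the global spacetime bound on $u$ that you are trying to prove. The argument that actually closes (as in \cite{TaViZh07}) is to cover the time line by $O(1)$ intervals on which the subcritical Morawetz norm of $u$ is small, and \emph{on each such interval} compare $u$ to the solution of the pure energy-critical equation with matched data at one endpoint, using the unconditional energy-critical scattering bound (Property~\ref{Proper:bnd} with $A_0=\infty$) together with Theorem~\ref{T:long time perturbation}; the smallness of the Morawetz norm on each interval is what makes the $g_1$-error perturbative there. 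Second, the low dimensions $d=1,2$ (which Assumption~\ref{Assum:profile} allows when $s_0<1$) deserve more than the parenthetical ``usual modifications'': the weight $|x-y|$ does not give a sign-definite quartic estimate in those dimensions, and one needs the tensored one-dimensional weights or the Colliander--Grillakis--Tzirakis / Planchon--Vega variant; you should state explicitly which version you use and check that its potential term still reduces to $\int \Delta a\,(G'(|u|^2)|u|^2-G(|u|^2))$ with the same favourable sign.
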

Note that the assumption that $g$ is defocusing together with the fact that $G(a)/a$ goes to $0$ as $a$ goes to $0$ (which is a consequence of Assumption \ref{Assum:profile}) implies that $G\geq 0$. Thus the
assumptions of Theorem \ref{T:defoc_defoc'} and the conservation of mass and energy imply that any $H^1$ solution of $u$ is bounded in $H^1$.

Theorems \ref{T:defoc_defoc} and Theorem \ref{T:defoc_defoc'} show that scattering holds for a multi-power nonlinearity satisfying \eqref{multi_power}, \eqref{regul} and \eqref{general_defoc}. 

In the case of a double-power nonlinearity the condition \eqref{general_defoc} is equivalent to $\iota_0>0$, $\iota_1>0$. In this case, Theorem \ref{T:defoc_defoc} is new. Theorem \ref{T:defoc_defoc'} is proved in \cite[Theorem 1.3]{TaViZh07} and \cite{Zhang06}).%We do not know if Theorem \ref{T:defoc_defoc} holds in this case with the weaker assumption $u_0\in H^{s_0}$.

For a double-power nonlinearity with ($\iota_0>0$, $\iota_1<0$) or ($\iota_0<1$, $0<s_0<s_1<1$), there are solitary wave solutions and thus it is impossible to prove an analog of Theorems \ref{T:defoc_defoc} and \ref{T:defoc_defoc'}. This is an open question for other double power nonlinearities.

%\thomas{Thomas: I re-formulated your small mass result, assuming $\iota_0>0$. In the case where $\iota_0<0$, we should be able to prove by rescaling that $\Phi(u)$ is not positive for small masses, so that it seems to be an equivalent formulation.}

When only the main order term of the nonlinearity is defocusing, i.e. when $\iota_0>0$, scattering holds for initial data with small mass. We again distinguish between $s_0>1$ and $s_0\in (0,1]$
\begin{theorem}
\label{T:scatt_intro2}
Let $d\geq 3$ and assume $s_0>1$. Let $g$ be a
nonlinearity that satisfies Assumption \ref{Assum:profile} with $\iota_0>0$. Assume that Property \ref{Proper:bnd} holds for some $A_0\in (0,\infty]$. There exists $m_c>0$ such that any solution of \eqref{NLS} with initial data in $H^{s_0}$ such that $M(u_0)<m_c$ and that satisfies \eqref{bound_Hs0} is global and scatters forward in time.
\end{theorem}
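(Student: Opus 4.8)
The plan is to argue by contradiction using the nonlinear profile decomposition of Theorem~\ref{T:NLapprox} and the stability theory of Section~\ref{S:preliminary}, with the smallness of the mass ruling out the non-concentrating profiles and Property~\ref{Proper:bnd} taking care of the concentrating ones. \textbf{Step 1 (reduction to a space-time bound).} By Theorem~\ref{T:GWP}, every solution $u$ of \eqref{NLS} with $u_0\in H^{s_0}$, $M(u_0)$ finite and satisfying \eqref{bound_Hs0} is global forward in time; by the local well-posedness and the stability theory of Section~\ref{S:preliminary}, such a solution scatters forward in time if and only if a suitable critical Strichartz-type norm $\|u\|_{S([0,\infty))}$ (at regularity $s_0$) is finite. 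It thus suffices to find $m_c>0$ so that $M(u_0)<m_c$ together with \eqref{bound_Hs0} forces $\|u\|_{S([0,\infty))}<\infty$.

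\textbf{Step 2 (contradiction setup).} Let $m_c\in[0,\infty]$ be the supremum of the set of $m>0$ such that every solution of \eqref{NLS} with $u_0\in H^{s_0}$, $M(u_0)<m$ and $\limsup_{t\to T_+(u)}\|u(t)\|_{\dot H^{s_0}}<A_0$ scatters forward in time, and suppose towards a contradiction that $m_c=0$. Then there is a sequence of solutions $u_n$ with $M(u_{0,n})\to 0$, $\limsup_{t\to T_+(u_n)}\|u_n(t)\|_{\dot H^{s_0}}<A_0$, and $\|u_n\|_{S([0,\infty))}=+\infty$ for every $n$. Since $(u_{0,n})_n$ is bounded in $H^{s_0}=L^2\cap\dot H^{s_0}$, Theorem~\ref{T:NLapprox} expresses $u_n$, up to an error small in $S([0,\infty))$, as a superposition of nonlinear profiles together with a dispersive term whose linear evolution has controlled $S$-norm.

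\textbf{Step 3 (handling the profiles).} Because $(u_{0,n})_n$ is bounded in $L^2$, no profile can de-concentrate: a profile with frequency scale $\mu_n\to 0$ would carry $L^2$-norm of order $\mu_n^{-s_0}\to\infty$. The concentrating profiles are, modulo the symmetries of \eqref{NLSh}, rescalings $\mu_n^{\frac d2-s_0}V^j\!\big(\mu_n^2(t-t_n),\mu_n(x-x_n)\big)$ of solutions $V^j$ of the homogeneous equation \eqref{NLSh} with $\mu_n\to\infty$; the Pythagorean expansion of the $\dot H^{s_0}$ norm, propagated along the flows, transfers the bound \eqref{bound_Hs0} to each profile, so $\limsup_{t\to T_+(V^j)}\|V^j(t)\|_{\dot H^{s_0}}<A_0$ and Property~\ref{Proper:bnd} applies to $V^j$: it is global with $\|V^j\|_{S(\R)}<\infty$, hence by scale invariance each concentrating nonlinear profile has finite $S([0,\infty))$ norm, with a summable tail since small-data theory gives $\|V^j\|_{S(\R)}\lesssim\|V^j\|_{\dot H^{s_0}}$ once $\|V^j\|_{\dot H^{s_0}}$ is small. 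The remaining profiles are non-concentrating, i.e.\ genuine solutions of \eqref{NLS} at a bounded scale; the asymptotic almost-orthogonality of the $L^2$ norms forces $\sum_j\|W^j(0)\|_{L^2}^2\lesssim\liminf_n M(u_{0,n})=0$, where $W^j$ is the $j$-th non-concentrating nonlinear profile, so all of them vanish. Plugging this back into Theorem~\ref{T:NLapprox} and the stability theory, $\|u_n\|_{S([0,\infty))}$ is bounded uniformly in $n$, contradicting $\|u_n\|_{S([0,\infty))}=+\infty$. Hence $m_c>0$, and Step~1 concludes.

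\textbf{Main difficulty.} The genuine analytic content is carried by Theorem~\ref{T:NLapprox}: that the concentrating nonlinear profiles actually solve \eqref{NLSh} in the limit — equivalently, that the lower-order term $g_1$, whose powers $p_1\leq p_2$ are strictly below $p_0$, becomes negligible in the relevant Strichartz norms under the $p_0$-scaling as $\mu_n\to\infty$ — and that the nonlinear profiles are asymptotically orthogonal in space-time so their $S$-norms add up. Granting this, the one new point is that $M(u_{0,n})\to 0$ removes exactly the non-concentrating profiles, which are precisely the ones where $g_1$ is not negligible and for which no a priori scattering bound is available (so that an inductive argument would be circular), leaving only concentrating profiles that Property~\ref{Proper:bnd} dispatches; the standing hypotheses $s_0>1$, $d\geq 3$ merely fix the energy-supercritical regime in which Property~\ref{Proper:bnd} is the natural conditional input.
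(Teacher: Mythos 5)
Your proposal takes a genuinely different route from the paper's. The paper runs a full Kenig--Merle compactness--rigidity scheme: Theorem~\ref{T:scatt2} inducts on the quantity $\sup_t\|u(t)\|^2_{\dot H^{s_0}}+\eta\|u(t)\|_2^2$ up to a critical level $A_c\in(0,A_0)$, extracts a critical element $u_c$ with compact trajectory modulo translation, and then excludes $u_c$ by the localized virial argument of Proposition~\ref{pro1} together with the coercivity of $\Phi$ for small mass (Property~\ref{Assume_C}, which is where $\iota_0>0$ and the Gagliardo--Nirenberg inequality from \cite{LeNo20} enter). Your proposal dispenses entirely with the virial identity, the center-of-mass control, and Property~\ref{Assume_C}, attempting instead to reach a contradiction purely from the profile decomposition by driving $M(u_{0,n})\to 0$ so that the $L^2$-Pythagorean expansion kills the non-concentrating profiles. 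The fact that $\iota_0>0$ plays no role in your argument is already a warning sign.

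There is a genuine gap where you write ``Since $(u_{0,n})_n$ is bounded in $H^{s_0}$.'' Hypothesis \eqref{bound_Hs0} is a \emph{per-solution} $\limsup_{t\to T_+}$ bound; it controls neither $\|u_n(0)\|_{\dot H^{s_0}}$ nor $\sup_t\|u_n(t)\|_{\dot H^{s_0}}$ uniformly in $n$. Even after shifting time so that $\sup_{t\geq 0}\|u_n(t)\|_{\dot H^{s_0}}<A_0$ for each $n$, this supremum can tend to $A_0$ along your sequence. Then the Pythagorean expansion, propagated along the flow as in the proof of Theorem~\ref{T:GWP}, only yields $\sup_t\|V^j(t)\|_{\dot H^{s_0}}\leq A_0$ for the concentrating profiles, not a strict bound: Property~\ref{Proper:bnd} may fail, and even where it applies, the space-time bound $\FFF(A)$ of Proposition~\ref{P:boundh} need not stay bounded as $A\uparrow A_0$, so Theorem~\ref{T:long time perturbation} cannot be invoked uniformly in $n$. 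This is exactly what the paper's explicit minimization over $A<A_0$ (and the auxiliary parameter $\eta$ in \eqref{sct20}) is designed to secure. Your observation that $M\to 0$ eliminates non-concentrating profiles is correct as far as it goes, but it cannot by itself substitute for the rigidity step, because it leaves uncontrolled the scenario in which the critical concentrating profile saturates the threshold $A_0$.
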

\begin{theorem}
\label{T:scatt_intro2'}
Let $g$ be
nonlinearity that satisfies Assumption \ref{Assum:profile} with $\iota_0>0$ and $s_0\in (0,1]$. There exists $m_c>0$ such that any solution of \eqref{NLS} with initial data in $H^{1}$ and such that $M(u_0)<m_c$ is global and scatters in both time directions.
\end{theorem}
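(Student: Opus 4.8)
The plan is to follow the concentration--compactness/rigidity scheme built on the profile decomposition of Section~\ref{S:profile}, along the lines of the proof of Theorem~\ref{T:defoc_defoc'}, the smallness of $M(u_0)$ entering only in order to dominate the lower-order term $g_1$ by the defocusing leading term $\iota_0|u|^{p_0}u$.

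\emph{Step 1: a priori bound and global existence.} Since $s_0\le1$, $H^1\hookrightarrow H^{s_0}$ and Section~\ref{S:preliminary} gives a maximal $H^1$ solution $u$, with $M$ and $E$ conserved. Write $g(u)=G'(|u|^2)u$ with $G=G_0+G_1$, $G_0(a)=c_0a^{(p_0+2)/2}\ge0$ (here $c_0>0$ because $\iota_0>0$) and $|G_1(a)|\lesssim a^{(p_1+2)/2}+a^{(p_2+2)/2}$. Because $p_1,p_2<p_0$, interpolating $\|u\|_{L^{p_j+2}}$ between $L^2$ and $L^{p_0+2}$ and using Young's inequality yields, for every $H^1$ function,
\[
 \int_{\R^d}\bigl|G_1(|u|^2)\bigr|\,dx\ \le\ \tfrac12\int_{\R^d}G_0(|u|^2)\,dx+C\bigl(M(u)^{\sigma_1}+M(u)^{\sigma_2}\bigr),\qquad\sigma_1,\sigma_2>0,
\]
with $C=C(d,p_0,p_1,p_2,g_1)$. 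Combined with conservation of energy this gives $\sup_t\|\nabla u(t)\|_{L^2}^2\le2E(u_0)+C(M(u_0)^{\sigma_1}+M(u_0)^{\sigma_2})<\infty$, hence $\sup_t\|u(t)\|_{H^1}<\infty$. For $s_0<1$ the blow-up criterion of Section~\ref{S:preliminary} gives $I_{\max}(u_0)=\R$; for $s_0=1$ (so $d\ge3$, $p_0=\tfrac4{d-2}$) global existence comes instead from the unconditional scattering for the defocusing energy-critical equation together with the stability theory of Section~\ref{S:preliminary}, $g_1$ being a perturbation thanks to the estimate above. No restriction on the mass has been used so far.

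\emph{Step 2: a compact non-scattering solution.} Argue by contradiction: fix a small $m_c>0$ (adjusted below) and assume there are $H^1$ solutions with mass $<m_c$ that do not scatter forward in time; by Step~1 they are global. Let $E_*$ be the infimum of their energies; since small-mass, small-energy data scatter (by Step~1 and the small-data theory for \eqref{NLS} in $\dot H^{s_0}$) one has $E_*>0$, and $E_*<\infty$. Choosing such solutions $u_n$ with $E(u_n)\to E_*$, the sequence $(u_n(0))$ is bounded in $H^1$ by Step~1. Applying the profile decomposition of Section~\ref{S:profile} to $(u_n(0))$, and using the stability theory, the near-additivity of mass and energy along profiles, and the minimality of $E_*$, one extracts (Kenig--Merle style) a critical solution $u_c$: global, non-scattering, $M(u_c)\le m_c$, bounded in $H^1$, and with $\{u_c(t):t\ge0\}$ precompact in $H^{s_0}$ modulo translations — the $H^1$ bound rules out scaling concentration, so no concentrating profile survives and no scaling is needed, and interpolating the orbit is precompact in $H^s$ for every $s<1$. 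Two uniform-in-time bounds follow: $\|u_c(t)\|_{\dot H^{s_0}}\ge\delta_0>0$ (otherwise small-data scattering applies from some time on), and, since for $s_0<1$ one may take $s<1$ with $H^s\hookrightarrow L^{p_0+2}$, also $\|u_c(t)\|_{L^{p_0+2}}\ge\delta_1>0$.

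\emph{Step 3: rigidity.} Show $u_c\equiv0$, a contradiction. Using a truncated virial / interaction-Morawetz quantity $\mathcal I_R(t)$ adapted to the translation parameter supplied by Step~2, one has $|\mathcal I_R(t)|$ bounded uniformly in $t$, while
\[
 \tfrac{d}{dt}\mathcal I_R(t)\ \ge\ c\!\int_{\R^d}\!\!\int_{\R^d}\!\frac{\bigl(aG'(a)-G(a)\bigr)\big|_{a=|u_c(x)|^2}\,|u_c(y)|^2}{|x-y|}\,dx\,dy+(\text{nonnegative terms})-\eps(R),
\]
with $\eps(R)\to0$ uniformly in $t$ by precompactness. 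Splitting the nonlinear integrand, $aG_0'(a)-G_0(a)=\tfrac{p_0}{p_0+2}\iota_0\,a^{(p_0+2)/2}\ge0$, while $|aG_1'(a)-G_1(a)|\lesssim a^{(p_1+2)/2}+a^{(p_2+2)/2}$; interpolating $\|u_c\|_{L^{p_j+2}}$ between $L^2$ (mass $<m_c$) and $L^{p_0+2}$ and invoking the lower bound $\|u_c(t)\|_{L^{p_0+2}}\ge\delta_1$, the lower-order contribution is absorbed into the leading one once $m_c$ is small enough, so $\tfrac{d}{dt}\mathcal I_R(t)\ge c_0>0$ for all $t\ge0$; integrating on $[0,T]$ contradicts the boundedness of $\mathcal I_R$ for $T$ large. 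Hence every $H^1$ solution with $M(u_0)<m_c$ scatters forward in time, and backward scattering follows from the symmetry $u(t,x)\mapsto\overline{u(-t,x)}$.

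\emph{Main obstacle.} The crux is the absorption of the lower-order term $g_1$ — in Step~1 in the energy identity, and above all in Step~3 in the virial/Morawetz identity: it is what forces $m_c$ to be small, the admissible size being dictated by $d$ and $p_0,p_1,p_2$ through the interpolation exponents (and using $p_1,p_2<p_0$, so that the interpolation between $L^2$ and $L^{p_0+2}$ is admissible, and $p_1,p_2>\tfrac4d$, so that the mass enters with a positive power), together with the fact that the non-scattering of $u_c$ pins its $\dot H^{s_0}$, hence $L^{p_0+2}$, norm away from $0$ by a universal constant — this is what keeps the threshold $m_c$ effective rather than circular. A minor point is that scattering is asserted in $H^{s_0}$ while the a priori bound lives in $H^1$; these are reconciled by interpolation and conservation of mass, the scattering state inheriting the $H^1$ bound.
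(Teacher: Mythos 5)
Your Steps 1 and 2 line up with the paper's strategy (concentration--compactness via the profile decomposition of Section~\ref{S:profile}, extracting a critical solution $u_c$ whose orbit is precompact modulo translations, with the $H^1$ bound ruling out concentrating profiles), but Step 3 departs from the paper in a way that, as written, contains a genuine gap. The paper's rigidity mechanism is \emph{not} an interaction Morawetz inequality: it is the localized virial of Proposition~\ref{pro1}, taken along the center-of-mass trajectory $X(t)=2\frac{P(u_c)}{M(u_c)}t$ so as to handle $P(u_c)\neq0$ without a Galilean transform, which yields $\min_t\bigl|\Phi(u_c(t))-\frac{|P(u_c)|^2}{M(u_c)}\bigr|=0$. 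The contradiction then comes from the purely static coercivity of Property~\ref{Assume_C} (proved by the one-variable Gagliardo--Nirenberg inequality $\|u\|_{L^{p_j+2}}^{p_j+2}\lesssim\|u\|_{L^2}^{p_j-\theta p_0}\|\nabla u\|_{L^2}^{2(1-\theta)}\|u\|_{L^{p_0+2}}^{\theta(p_0+2)}$), which shows $\Phi(\varphi)\geq\frac12\int|\nabla\varphi|^2>0$ for all nonzero $\varphi$ with $M(\varphi)<m_c$, with $m_c$ depending only on $d$ and $g$, combined with the Cauchy--Schwarz improvement of Lemma~\ref{Lm:positive of Phi}. This makes the threshold $m_c$ universal and entirely independent of the critical element.

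In your Step 3 the absorption of the lower-order term is both under-justified and circular. You invoke ``interpolating $\|u_c\|_{L^{p_j+2}}$ between $L^2$ and $L^{p_0+2}$'' to dominate $\int\!\!\int|x-y|^{-1}\bigl(aG_1'(a)-G_1(a)\bigr)|_{a=|u_c(x)|^2}|u_c(y)|^2\,dx\,dy$ by the leading term, but $L^p$ interpolation of a single norm does not control a double integral weighted by $|x-y|^{-1}$; a correct absorption would need a pointwise interpolation $|u_c|^{p_j+2}\leq C_\epsilon|u_c|^2+\epsilon|u_c|^{p_0+2}$ followed by an estimate of $\int\!\!\int|x-y|^{-1}|u_c(x)|^2|u_c(y)|^2$, which in turn requires a compactness radius $R_0$ and the lower bound $\delta_1$ on $\|u_c(t)\|_{L^{p_0+2}}$. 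Those two constants depend on the critical element $u_c$, which itself depends on the choice of $m_c$; you never explain why they can be made uniform, so the selection ``$m_c$ small enough'' is circular. The paper's route avoids this entirely because Property~\ref{Assume_C} fixes $m_c$ \emph{before} the compactness argument is run. If you want to keep the Morawetz flavor, you would need to first establish a static coercivity statement in the spirit of Property~\ref{Assume_C} (with universal $m_c$) and then show that it forces the positive sign of the Morawetz bulk; as stated, the step is not a proof.
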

\begin{remark}
In Theorem \ref{T:defoc_defoc} and Theorem \ref{T:scatt_intro2}, if the analog of \eqref{bound_Hs0} as $t$ goes to $T_{-}(u)$ is valid then the conclusions hold backward in time. 
\end{remark}

Theorem \ref{T:scatt_intro2'} is new, even for double power nonlinearities. Theorem \ref{T:scatt_intro2'} generalizes \cite[Theorem 1.3]{TaViZh07} which concerns a double power nonlinearity with $\iota_0>0$ and $\iota_1<0$. In particular the case where $d=3$, $p_0=4$ (thus $s_0=1$), $p_1=2$ scattering was proved to hold for a larger set of initial data in \cite{KiOhPoVi17}, \cite{KillipMurphyVisan21}. In a subsequent work, we will use the material of this article, together with \cite{LeNo20} to obtain an improvement of Theorems \ref{T:scatt_intro2} and \ref{T:scatt_intro2'} in the same spirit.

The proofs of Theorem \ref{T:defoc_defoc} and Theorem \ref{T:scatt_intro2} follow the by now classical rigidity-compactness roadmap (see \cite{KeMe06}), using the profile decomposition constructed in Section \ref{S:profile}. This provides, in a contradiction argument, a global critical solution $u_c$ of \eqref{NLS} such that there exists $x(t)$ such that
$$K=\left\{u_c(t,\cdot+x(t)),\; t\in \R\right\}$$
has compact closure in $H^{s_0}$.

To exclude this critical element and obtain a contradiction, we use the virial identity
\begin{equation}
 \label{virial_ID}
 \frac{d}{dt} \Im \int x\cdot\nabla u\,\overline{u}=2\Phi(u),
\end{equation} 
where
  $$\Phi(u)=\int |\nabla u|^2+\iota_1\frac{dp_1}{2(p_1+2)}|u|^{p_1+2}+\iota_0\frac{dp_0}{2(p_0+2)}|u|^{p_0+2},$$
and the center of mass identity: 
\begin{equation}
\label{momentum}
 \frac{d}{dt}\int x|u(t)|^2=2 P(u).
\end{equation}
which are valid for solutions of $u$ with enough decay at infinity.

Using a localized version of \eqref{momentum}, one can prove:
\begin{equation}
\label{control_x(t)}
\lim_{t\rightarrow +\infty}\frac{1}{t}\left|x(t)-X(t)\right|=0\text{ where }X(t)=2\frac{P(u_c)}{M(u_c)}t.
\end{equation}
When the momentum of $u_c$ is zero, this allows to control the growth of $x(t)$. A localized version of \eqref{virial_ID} using the relative compactness $K$, gives a contradiction if $\inf_{t\in \R} \Phi(u_c(t))-\frac{|P(u_c)|^2}{M(u_c)}>0$. In the defocusing/defocusing case (as in Theorem \ref{T:defoc_defoc}), this property is true whenever $u_c$ is not identically zero. In the defocusing/focusing case, we show, using the quite complete study of the elliptic problem in \cite{LeNo20}, and some of the ideas in \cite{KiOhPoVi17}, that $\Phi$ is positive in the region described in the assumptions of Theorem \ref{T:scatt_intro2}, yielding again the desired contradiction.

One must adapt this argument when the momentum of $u_c$ is not zero. The standard strategy, going back to \cite{DuHoRo08} is to use the Galilean transformation to reduce to the case to a critical solution with zero momentum. However the effect of the Galilean transform on the Sobolev norm $\dot{H}^{s_0}$ of the solution is not explicit, and thus the strategy breaks down in the case $s_0>1$, where our proof relies on an induction-type argument on this norm. To tackle this difficulty, we observe that \eqref{virial_ID}, \eqref{momentum} and the conservation of momentum imply:
$$\frac{d}{dt}\Im \int (x-X(t))\nabla u\, \overline{u}= 2\Phi(u(t)) - \frac{2|P(u)|^2}{M(u)},$$
which we localize with a time-dependant localization around $X(t)$. This gives again a contradiction using an improved Cauchy-Schwarz inequality going back to \cite{Banica04} to prove that $\Phi(u)  - \frac{|P(u)|^2}{M(u)}$ is still positive for the solutions that we consider.

\subsection{Previous works}
\label{sub:previous}
To our knowledge, the profile decomposition for a general nonlinearity of the form \eqref{def_g_intro} was not considered before.

Dynamics of nonlinear Schr\"odinger equations with general nonlinearities were treated in very few works. In \cite{SofferLiu23}, \cite{SofferWu23}, the authors prove that any radial, bounded, global solutions of a nonlinear Schr\"odinger equation with a general energy-subcritical or energy-critical nonlinearity is asymptotically the sum of a radiation term, solution of the linear Schr\"odinger equation, and a localized term. We refer to \cite{SoWu23} for results in a nonradial setting.

Let us mention a few works on NLS equation with a double-power nonlinearity. The study of this type of equation was initiated in \cite{Zhang06}, in dimension $3$, where the author investigated the global well-posedness, scattering and blow-up phenomena in the case $p_0=4$. This includes in particular a scattering result for small mass, in the spirit of Theorem \ref{T:scatt_intro2}. Similar results were obtained in \cite{TaViZh07}, in general dimension $d$ in the energy-critical and subcritical setting $s_0\leq 1$. %As already mentioned, Theorem \ref{T:scatt_intro2} was proved in \cite{KiOhPoVi17} in the particular case $d=3$, $p_0=4$, $p_1=2$. The scattering was extended to a slightly larger region in \cite{KillipVisan21}.

The problem with $p_1=\frac{4}{d}$, $\iota_1=-1$, $p_0<\frac{4}{d-2}$ was considered in \cite{Cheng20}, \cite{Murphy21}, where the author investigated scattering below or at the mass of the ground-state for the mass-critical homogeneous equation. See also \cite{CaCh21} which considers the 
case $(p_0,p_1,\iota_0,\iota_1)=(4,2,1,-1)$ in space dimensions $1$, $2$ and $3$.

The problem with a focusing dominant nonlinearity $\iota_0=-1$ was considered in many works, always in the energy-critical or subcritical cases. Let us mention in particular
\cite{AkIbKiNa} where a nine-set theorem in the spirit of \cite{NaSch11}, \cite{NaSc12} was proved. 
We also refer to \cite{MiXuZh13}, \cite{Xie14},\cite{HuangZhang14}, \cite{XuYang16}, \cite{ChMiZh16}, \cite{MiaoZhaoZheng17}, \cite{Luo} for scattering result or scattering blow-up dichotomy in the case $\iota_0=-1$.

Let us mention that in several of the preceding works, the authors construct and use a profile decomposition adapted to NLS equation with a particular double-power nonlinearity. The profile decomposition in Section \ref{S:profile} generalises these profiles decompositions to the large class of non-linearities described above.

\subsection{Outline}
We conclude this introduction by an outline of the article. Section \ref{S:preliminary} is devoted to some preliminaries, well-posedness and perturbation theory for the NLS equation \eqref{NLS} with a general nonlinearity $g$. In Section \ref{S:profile}, we construct a profile decomposition for sequences that are bounded in $H^{s_0}$ adapted to \eqref{NLS} with a nonlinearity of the form \eqref{def_g_intro}. This profile decomposition is based on the linear profile decomposition of Shao \cite{Shao09} (which relies ultimatly on the result by  Merle-Vega \cite{MeVe98})  and the long-time perturbation result proved in Subsection \ref{sub:Cauchy}. In Section \ref{S:GWP}, we use this profile decomposition to prove our global well-posedness result, Theorem \ref{T:GWP}. In Section \ref{S:rigidity} we prove a general rigidity result for solutions of \eqref{NLS} with a relatively compact trajectory in $H^{s_0}$. In Section \ref{S:scattering}, we prove our scattering result Theorems \ref{T:defoc_defoc} and \ref{T:scatt_intro2}, using the material of the preceding sections.

\section*{Acknowledgment}

The second author is supported by Post-doc fellowship from Labex MME-DII: SAIC/2022 No 10078.

\section{Preliminaries and Cauchy theory}
\label{S:preliminary}
This section is concerned with the Cauchy and stability theory for the equation:
\begin{equation}
 \label{NLS_g}
 i\partial_tu+\Delta u=g(u).
\end{equation} 
for a general $L^2$ supercritical nonlinearity $g(u)$ in space dimension $d\geq 1$. Our assumptions on $g$ will of course include the case of double power nonlinearities that we are interested in. We start by introducing some notations and functions spaces (see \S \ref{sub:notations}) and recalling some nonlinear estimates (see \S \ref{sub:nonlinear}).  In \S  \ref{sub:nonlinearity} we prove estimates on the nonlinearity $g(u)$ that are crucial for the well-posedness theory. In \S \ref{sub:Cauchy} we prove our main results.

\subsection{Notations and function spaces}
\label{sub:notations}

For $s\in\R$, $\supent{s}$ is the smallest integer number larger or equal $s$, and $\ent{s}$ is the largest integer smaller or equal $s$ (the integer part of $s$).
If $j$ and $k$ are integers with $j\leq k$, we denote by $\llbracket j,k \rrbracket=\{j,j+1,\ldots,k-1,k\}$.

When $A$ and $B$ are two positive quantities depending on some parameters, we denote $A\lesssim B$ when there is a constant $C>0$ such that $A\leq CB$ and $A\approx B$ when $A\lesssim B$ and $B\lesssim A$.

For each $q\geq 1$, we define $q'$ such that 
\[
\frac{1}{q}+\frac{1}{q'}=1.
\]
If $X$ is a vector space, $(u,v)\in X^2$, we will make a small abuse of notation, denoting $\|(u,v)\|_{X}=\|u\|_X+\|v\|_X$.

We fix $d\geq 1$. If $m$ is a complex valued function on $\R^d$, we define by $m(\nabla)$ the Fourier multiplier with symbol $m(\xi)$, i.e. $\widehat{m(\nabla )u}=m(\xi)\hat{u}(\xi)$, where $\hat{u}$ is the Fourier transform of $u$.
For $s\geq 0$, $p\geq 1$, we define
\[
\norm{u}_{H^{s,p}}=\norm{\langle \nabla  \rangle^s u}_{L^p},\quad \norm{u}_{\dot{H}^{s,p}}=\norm{|\nabla|^s u}_{L^p}
\]
where $\langle \xi\rangle=(1+|x|^2)^{s/2}$. 
It follows from Mikhlin multiplier theorem that
\begin{equation}
 \label{equivalence}
 \norm{u}_{H^{s,p}}\approx \|u\|_{L^p}+\left\| |\nabla |^su\right\|_{L^p}, \quad 1<p<\infty.
\end{equation} 
(see the proof in the appendix). For a multi-index $\alpha=(\alpha_1,\alpha_2,...,\alpha_d)$, denote
\[
D^{\alpha}=\partial_{x_1}^{\alpha_1}\cdot\cdot\cdot \partial_{x_d}^{\alpha_d}, \quad |\alpha|=\sum_{i=1}^d|\alpha_i|.
\]
For $s>0$ and $1<p<\infty$ and $v=s-\ent{s}$, we have (see \cite[Lemma 3.2]{AnKi21}),
\begin{equation}
\label{equivalent norm}
\sum_{|\alpha|=\ent{s}}\norm{D^{\alpha}f}_{\dot{H}^{v,p}} \approx \|f\|_{\dot{H}^{s,p}}.
\end{equation}

We recall that a pair $(q,r)$ is \emph{Strichartz-admissible} for the Schr\"odinger equation when $2\leq q\leq \infty$, $2\leq r\leq \infty$, $\frac{2}{q}+\frac{d}{r}=\frac{d}{2}$, and $(q,r,d)\neq (2,\infty,2)$. We recall that if $(q,r)$ and $(a,b)$ are Strichartz admissible, we have, 
$$ \|u\|_{L^{q}_tL^r_{x}}\lesssim \|u_0\|_{L^2}+\|f\|_{L^{a'}_tL^{b'}_x}$$
for any solution of the Schr\"odinger equation
$i\partial_tu +\Delta u=f$ with initial data $u_0$.
We denote:
$$S^0(I)=\begin{cases}
L^{\infty}\left(I,L^2(\R^d)\right)\cap L^{2}\left(I,L^{\frac{2d}{d-2}}(\R^d)\right)& \text{ if }d\geq 3\\
L^{\infty}\left(I,L^2(\R^2)\right)\cap L^{q_2}\left(I,L^{r_2}(\R^2)\right)& \text{ if }d=2 \\
L^{\infty}\left(I,L^2(\R)\right)\cap L^{4}\left(I,L^{\infty}(\R)\right)& \text{ if }d=1,
         \end{cases}
$$
where when $d=2$, $(q_2,r_2)$ is an admissible pair with $q_2>2$ close to $2$, and 
\begin{equation*}
N^0(I)= \begin{cases}
L^{1}\left(I,L^2(\R^d)\right)+ L^{2}\left(I,L^{\frac{2d}{d+2}}(\R^d)\right)& \text{ if }d\geq 3\\
L^{1}\left(I,L^2(\R^2)\right) + L^{q_2'}\left(I,L^{r_2'}(\R^2)\right)& \text{ if }d=2 \\
L^{1}\left(I,L^2(\R)\right)+ L^{4/3}\left(I,L^{1}(\R)\right)& \text{ if }d=1,
         \end{cases}
\end{equation*}
Note that the norm of $S^0(I)$ is equivalent to the supremum of all $L^q(I,L^r)$ norms, and that the norm of $N^0(I)$ is smaller than the infimum of all $L^{q'}(I,L^{r'})$ norms, where in each case, $(q,r)$ is taken over all Strichartz admissible pairs (with $q\geq q_2$ if $d=2$). We also define the following Strichartz spaces, and dual Strichartz spaces:
\begin{equation*}
W^0(I)=
L^{\frac{2(2+d)}{d}}\left(I\times\R^d\right)
 \quad Z^0(I)=\left(W^0(I)\right)'=L^{\frac{2(d+2)}{d+4}}(I\times \R^d),
\end{equation*}
so that we have $S^0(I)\subset W^0(I)$ and $Z^0(I)\subset N^0(I)$ (with continous embedding). We denote
$$ X_p(I)=L^{\frac{p(d+2)}{2}}(I\times \R^d). $$

For $s\geq 0$, we denote
\begin{gather*}
\norm{u}_{S^s(I)}=\norm{\langle\nabla\rangle^{s} u}_{S^0(I)},\quad \norm{u}_{\dot{S}^s(I)}=\norm{|\nabla |^{s} u}_{S^0(I)}
\end{gather*}
and define similarly $W^s(I)$, $\dot{W}^s(I)$,  $N^s(I)$, $\dot{N}^s(I)$, $Z^s(I)$ and $\dot{Z}^s(I)$.

\subsection{Preliminary nonlinear estimates}
\label{sub:nonlinear}
\begin{lemma}(Product Rule 1)\label{L:product_rule}
Let $s \geq 0$ and $1<r,r_1,r_0,q_1,q_0<\infty$ such that $\frac{1}{r}=\frac{1}{r_i}+\frac{1}{q_i}$, for $i=0,1$. Then,
\begin{gather}
\label{product_rule1}
\norm{|\nabla|^{s} (f\varphi)}_{L^r_x}\lesssim \norm{f}_{L^{r_1}_x}\norm{|\nabla|^{s} \varphi}_{L^{q_1}_x}+\norm{|\nabla|^{s}f}_{L^{r_0}_x}\norm{\varphi}_{L^{q_0}_x}\\
\label{product_rule2}
\norm{\left<\nabla\right>^{s} (f\varphi)}_{L^r_x}\lesssim \norm{f}_{L^{r_1}_x}\norm{\left<\nabla\right>^{s} \varphi}_{L^{q_1}_x}+\norm{\left<\nabla\right>^{s}f}_{L^{r_0}_x}\norm{\varphi}_{L^{q_0}_x}.
\end{gather}
\end{lemma}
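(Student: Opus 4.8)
\textbf{Proof plan for Lemma \ref{L:product_rule} (Product Rules).}

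The plan is to prove \eqref{product_rule1} first, using the Kato--Ponce / Gagliardo fractional Leibniz inequality as the underlying tool, and then deduce \eqref{product_rule2} from \eqref{product_rule1} together with the decomposition $\langle\nabla\rangle^s = $ (something like) $\sum$ of $|\nabla|^\sigma$-type pieces plus a zero-order piece. More precisely, for \eqref{product_rule1} the statement is exactly the classical fractional Leibniz rule: for $s>0$ and exponents $r,r_i,q_i\in(1,\infty)$ with $\tfrac1r=\tfrac1{r_i}+\tfrac1{q_i}$,
\[
\norm{|\nabla|^s(f\varphi)}_{L^r_x}\lesssim \norm{f}_{L^{r_1}_x}\norm{|\nabla|^s\varphi}_{L^{q_1}_x}+\norm{|\nabla|^s f}_{L^{r_0}_x}\norm{\varphi}_{L^{q_0}_x}.
\]
I would simply cite the standard reference for this (Kato--Ponce \cite{CaWe90}-type results, or Grafakos--Oh, or Muscalu--Schlag); the case $s=0$ is trivial (it is just H\"older's inequality, and in fact an equality of the form $\norm{f\varphi}_{L^r}\le\norm{f}_{L^{r_1}}\norm{\varphi}_{L^{q_1}}$, so the claimed sum is immediate). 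For non-integer $s$ one uses the Coifman--Meyer multiplier theorem applied to the bilinear paraproduct decomposition of $|\nabla|^s(f\varphi)$; for integer $s$ it follows from the ordinary Leibniz rule and H\"older. Since the whole point of stating it here is to have a clean reference, the ``proof'' is essentially a pointer to the literature, possibly with the remark that the hypotheses $1<r,r_i,q_i<\infty$ are precisely what the Coifman--Meyer / Mikhlin machinery requires.

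For \eqref{product_rule2} I would reduce to \eqref{product_rule1}. Write $v=s-\ent{s}\in[0,1)$ and use the equivalence \eqref{equivalent norm} together with \eqref{equivalence}: controlling $\norm{\langle\nabla\rangle^s(f\varphi)}_{L^r}$ amounts, up to equivalent norms, to controlling $\norm{f\varphi}_{L^r}$ and $\norm{|\nabla|^s(f\varphi)}_{L^r}$ (when $s$ is an integer the latter is $\sum_{|\alpha|=s}\norm{D^\alpha(f\varphi)}_{L^r}$, handled by the ordinary Leibniz rule and H\"older; when $s$ is not an integer use \eqref{equivalent norm} to pass to $\sum_{|\alpha|=\ent s}\norm{D^\alpha(f\varphi)}_{\dot H^{v,p}}$ and then distribute the $D^\alpha$ by Leibniz and apply \eqref{product_rule1} with exponent $v$ on each term). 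The zero-order term $\norm{f\varphi}_{L^r}\le\norm{f}_{L^{r_1}}\norm{\varphi}_{L^{q_1}}$ by H\"older is dominated by the right-hand side of \eqref{product_rule2} (using $\norm{\varphi}_{L^{q_1}}\lesssim\norm{\langle\nabla\rangle^s\varphi}_{L^{q_1}}$ only if a Sobolev embedding is available — but actually one does not even need that, since $\norm{f}_{L^{r_1}}\norm{\varphi}_{L^{q_1}}$ is \emph{already} bounded by the first term on the right once we note $\norm{\langle\nabla\rangle^s\varphi}_{L^{q_1}}\gtrsim\norm{\varphi}_{L^{q_1}}$, which holds trivially since $\langle\xi\rangle^s\ge1$ and by \eqref{equivalence}). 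Collecting the integer/fractional pieces and the zero-order piece gives \eqref{product_rule2}.

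The main obstacle — really the only non-bookkeeping point — is making sure the fractional Leibniz rule \eqref{product_rule1} is invoked in a form valid for \emph{all} the admissible exponent triples, including the endpoint-adjacent ranges allowed by $1<r,r_i,q_i<\infty$ (in particular one should be slightly careful that $r$ itself is allowed to be less than $1$ in some versions of Kato--Ponce, but here $r>1$ so we are safely inside the classical range), and, when $s\notin\N$, correctly splitting $D^\alpha(f\varphi)$ via the Leibniz rule before applying the $\dot H^{v,p}$ fractional estimate so that exactly the two stated product structures appear and no cross terms with the ``wrong'' pairing of exponents survive. Everything else is a routine combination of H\"older's inequality, \eqref{equivalence}, and \eqref{equivalent norm}.
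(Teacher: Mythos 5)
Your treatment of \eqref{product_rule1} (cite the fractional Leibniz literature) is what the paper does as well. For \eqref{product_rule2} your starting reduction is correct and is exactly the paper's: by \eqref{equivalence}, $\norm{\langle\nabla\rangle^s(f\varphi)}_{L^r}\approx\norm{f\varphi}_{L^r}+\norm{|\nabla|^s(f\varphi)}_{L^r}$, the first piece is H\"older, and $\norm{\varphi}_{L^{q_1}}\lesssim\norm{\langle\nabla\rangle^s\varphi}_{L^{q_1}}$ absorbs it into the right-hand side (your parenthetical observation here is right and matches the paper). But then you take an unnecessary detour: you further decompose $\norm{|\nabla|^s(f\varphi)}_{L^r}$ via \eqref{equivalent norm} into $\sum_{|\alpha|=\ent s}\norm{D^\alpha(f\varphi)}_{\dot H^{v,r}}$, distribute $D^\alpha$ by Leibniz, and only then apply the fractional product rule to each piece. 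That route produces cross terms $\norm{D^\beta f}\cdot\norm{D^\gamma\varphi}$ with $\beta+\gamma=\alpha$ that do not directly match the two stated product structures, and you would need Gagliardo--Nirenberg interpolation (Lemma \ref{lm homogeneous GN inequality}) to recombine them --- this is precisely the ``main obstacle'' you flag at the end. The paper simply applies \eqref{product_rule1} \emph{directly} to $\norm{|\nabla|^s(f\varphi)}_{L^r}$, with no $D^\alpha$ splitting at all, which gives exactly the two stated terms and makes the obstacle disappear. In short: once you have already reduced to $\norm{f\varphi}_{L^r}+\norm{|\nabla|^s(f\varphi)}_{L^r}$, you should realize that \eqref{product_rule1} applies verbatim to the second summand; there is nothing more to do.
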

\begin{proof}
See e.g \cite[Proposition 3.3]{ChWe91}. For \eqref{product_rule1}, see e.g \cite[Lemma 2.2]{AnKi21} for the statement and \cite{ChWe91} for proof in dimension $1$ and $s\in (0,1)$.

By \eqref{equivalence}, \eqref{product_rule1} and H\"older inequality, we have
\begin{multline*}
 \left\|\left<\nabla\right>^s (f\varphi)\right\|_{L^r}\approx\norm{f\varphi}_{L^r}+\norm{|\nabla|^s(f\varphi)}_{L^r}
\lesssim \norm{f}_{L^{r_1}}\norm{\varphi}_{L^{q_1}}+\norm{f}_{L^{r_1}}\norm{|\nabla|^s\varphi}_{L^{q_1}}+\norm{|\nabla|^sf}_{L^{r_0}}\norm{\varphi}_{L^{q_0}}\\
\lesssim\norm{f}_{r_1}\norm{\left<\nabla\right>^s \varphi}_{q_1}+\norm{\left<\nabla\right>^sf}_{r_0}\norm{\varphi}_{q_0}.
\end{multline*}
Hence \eqref{product_rule2}.
\end{proof}

\begin{lemma}[Product rule 2](see e.g \cite[Corollary 2.3]{AnKi21}).
\label{lm product rule 2}
Let $s\geq 0$, $n \in \N$ ($n\geq 1$), and, for $i,j\in \llbracket 1,n\rrbracket$, $1<r,r^i_k<\infty$, such that for all $k\in \llbracket 1,n\rrbracket$, $\frac{1}{r}=\sum_{i=1}^n\frac{1}{r^i_k}$. Then
\[
\norm{\prod_{i=1}^n f_i}_{\dot{H}^{s,r}}\lesssim \sum_{k=1}^n\left(\norm{f_k}_{\dot{H}^{s,r^k_k}}\prod_{i\neq k}\norm{f_i}_{L^{r^i_k}}\right).
\]
\end{lemma}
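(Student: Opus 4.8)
The plan is to reduce the $n$-factor inequality to the case $n=2$, which is essentially the homogeneous fractional Leibniz (Kato--Ponce) inequality, and then iterate. First I would recall the $n=2$ statement: if $1<r,r_1^1,r_1^2<\infty$ with $\frac1r=\frac1{r_1^1}+\frac1{r_1^2}$, then $\norm{f_1f_2}_{\dot H^{s,r}}\lesssim \norm{f_1}_{\dot H^{s,r_1^1}}\norm{f_2}_{L^{r_1^2}}+\norm{f_1}_{L^{r_1^1}}\norm{f_2}_{\dot H^{s,r_1^2}}$. This is exactly the content of \eqref{product_rule1} in Lemma \ref{L:product_rule} (with exponents chosen appropriately), so for $n=2$ there is nothing to prove beyond bookkeeping of indices.

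Next I would run an induction on $n$. Write $\prod_{i=1}^n f_i=f_1\cdot\big(\prod_{i=2}^n f_i\big)$ and apply the $n=2$ case with the splitting $\frac1r=\frac1{r_1^1}+\frac1\rho$ where $\frac1\rho=\sum_{i=2}^n\frac1{r_1^i}$: this gives
\[
\Big\|\prod_{i=1}^n f_i\Big\|_{\dot H^{s,r}}\lesssim \norm{f_1}_{\dot H^{s,r_1^1}}\Big\|\prod_{i=2}^n f_i\Big\|_{L^{\rho}}+\norm{f_1}_{L^{r_1^1}}\Big\|\prod_{i=2}^n f_i\Big\|_{\dot H^{s,\rho}}.
\]
The first term is handled by plain H\"older, $\big\|\prod_{i=2}^n f_i\big\|_{L^\rho}\lesssim \prod_{i=2}^n\norm{f_i}_{L^{r_1^i}}$, producing the $k=1$ summand. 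For the second term I apply the induction hypothesis to $\prod_{i=2}^n f_i$ with the exponent relation $\frac1\rho=\sum_{i=2}^n\frac1{r_1^i}$, obtaining $\sum_{k=2}^n\big(\norm{f_k}_{\dot H^{s,r_1^k}}\prod_{2\le i\le n, i\ne k}\norm{f_i}_{L^{r_1^i}}\big)$; multiplying by $\norm{f_1}_{L^{r_1^1}}$ gives the summands for $k=2,\dots,n$. Collecting the two contributions yields the claimed bound. One subtlety to check is that all intermediate exponents $\rho$ (and the ones appearing at deeper levels of the recursion) lie strictly between $1$ and $\infty$; since each $r_1^i\in(1,\infty)$ and $\frac1\rho=\sum_{i\ge 2}\frac1{r_1^i}<\sum_{i\ge 1}\frac1{r_1^i}=\frac1r$ with $r<\infty$, and $\frac1\rho>0$, we indeed have $1<\rho<\infty$, and the same argument applies at each step. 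Note also that in the statement the exponents are allowed to depend on the index $k$ of the differentiated factor (the $r^i_k$), but the displayed inequality decouples over $k$, so it suffices to prove, for each fixed $k$, the single-$k$ bound $\norm{\prod_i f_i}_{\dot H^{s,r}}\lesssim \norm{f_k}_{\dot H^{s,r^k_k}}\prod_{i\ne k}\norm{f_i}_{L^{r^i_k}}$ — but this is false as stated for a single term, so instead one proves the full sum once with a fixed admissible family and observes the result then holds a fortiori; alternatively one simply cites \cite[Corollary 2.3]{AnKi21} as the lemma does.

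The main obstacle is purely the index management: one must verify at each stage of the induction that the H\"older relations close up and that no exponent degenerates to $1$ or $\infty$, and one must be careful that the $k$-dependence of the exponents in the statement is compatible with peeling off one factor at a time (which is why the cleanest route is to fix, for the induction, one admissible choice of exponents per differentiated slot and then note the final inequality is a sum over such choices). There is no hard analysis beyond the $n=2$ fractional Leibniz rule, which is quoted; accordingly I would keep the write-up short, as the lemma already points to \cite[Corollary 2.3]{AnKi21} for the full argument.
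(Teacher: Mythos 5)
The paper gives no proof of this lemma; it simply cites \cite[Corollary 2.3]{AnKi21}, so there is no ``paper's own proof'' to compare against. Your induction plan is a natural approach, but it has a genuine gap in exactly the place you flag and then wave away: the $k$-dependence of the H\"older exponents $r^i_k$.

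As you set it up, the induction step applies the two-factor Leibniz rule with a \emph{single} splitting $\tfrac1r=\tfrac1{r^1_1}+\tfrac1\rho$ for both terms, then invokes H\"older and the inductive hypothesis with the family $(r^i_1)_{i\geq 2}$. This correctly proves the ``fixed-family'' version, i.e.\ the bound $\|\prod_i f_i\|_{\dot H^{s,r}}\lesssim\sum_k\|f_k\|_{\dot H^{s,\tilde r^k}}\prod_{i\neq k}\|f_i\|_{L^{\tilde r^i}}$ for one admissible tuple $(\tilde r^1,\dots,\tilde r^n)$. But your final remark, that the $k$-dependent statement then ``holds a fortiori,'' is not correct. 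The $k$-dependent version is strictly stronger than the fixed-family version and does not follow from it by any monotonicity; this is already visible at $n=2$, where the $k$-dependent statement is precisely \eqref{product_rule1} in Lemma~\ref{L:product_rule} with distinct pairs $(r_0,q_0)$ and $(r_1,q_1)$, and that does not reduce to the special case $r_0=r_1$, $q_0=q_1$. Moreover the general exponents are not cosmetic here: the paper's application in the proof of Lemma~\ref{L:NL} (see the bounds \eqref{number1}--\eqref{number2} with exponents $q_1$ vs.\ $q_2$ and $r_k$ vs.\ $\rho_k$) uses genuinely $k$-dependent families.

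You can partially salvage the induction by exploiting the full strength of Lemma~\ref{L:product_rule}, i.e.\ using \emph{different} H\"older splittings $\tfrac1r=\tfrac1{r^1_1}+\tfrac1{\rho}$ for the term where the derivative falls on $f_1$, and $\tfrac1r=\tfrac1a+\tfrac1b$ for the other term, then applying the inductive hypothesis to $\|\,|\nabla|^s\prod_{i\geq2}f_i\|_{L^b}$. But a single exponent $a$ must then play the role of $r^1_k$ for every $k\geq 2$, and $b$ must equal $\bigl(\sum_{i\geq 2}1/r^i_k\bigr)^{-1}$ simultaneously for every $k\geq 2$; tracking this down the recursion shows the inductive argument only yields the restricted statement in which, for each $i$, $r^i_k$ takes at most two values (one when $k=i$, one when $k\neq i$). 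That restricted form happens to cover the paper's use, but it is not the lemma as stated. For the full $k$-dependent statement one needs the direct paraproduct argument of \cite{AnKi21}: decompose $|\nabla|^s\prod_i f_i$ into $n$ pieces according to which factor carries the dominant Littlewood--Paley frequency, and apply H\"older to each piece separately with its own exponent tuple.
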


\begin{lemma}[Fractional chain rule](see e.g \cite[Lemma 2.4]{AnKi21}).
\label{lm fractional chain rule}
Let $G\in C^1(\C,\C)$, $s\in (0,1)$, $1<r,r_2<\infty$, and $1<r_1\leq\infty$ satisfying $\frac{1}{r}=\frac{1}{r_1}+\frac{1}{r_2}$,
\[
\norm{G(u)}_{\dot{H}^{s,r}}\lesssim \norm{G'(u)}_{L^{r_1}}\norm{u}_{\dot{H}^{s,r_2}}.
\]
\end{lemma}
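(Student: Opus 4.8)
The plan is to deduce this from the square-function (difference) characterisation of the homogeneous Sobolev norm: for $0<s<1$ and $1<r<\infty$,
\[
\norm{f}_{\dot{H}^{s,r}}\approx \norm{\mathcal{D}_s f}_{L^r_x},\qquad \mathcal{D}_s f(x):=\left(\int_{\R^d}\frac{|f(x+h)-f(x)|^2}{|h|^{d+2s}}\,dh\right)^{1/2},
\]
which is classical (it follows from the identification of $\dot{H}^{s,r}$ with the Triebel--Lizorkin space $\dot{F}^{s}_{r,2}$ and the difference characterisation of the latter; see, e.g., \cite{ChWe91}). Since the nonlinearities of interest are not holomorphic, I read $|G'(z)|$ as the operator norm of the real differential $DG(z)$ on $\C\simeq\R^2$, so that $|G(a)-G(b)|\leq |a-b|\int_0^1|G'(b+\tau(a-b))|\,d\tau$ for all $a,b\in\C$ by the mean value inequality.

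First I would apply the characterisation with $f=G(u)$. Writing $\Delta_h u(x)=u(x+h)-u(x)$, the mean value inequality and Minkowski's integral inequality in the $\tau$-variable give
\[
\mathcal{D}_s(G(u))(x)\leq \int_0^1\left(\int_{\R^d}\frac{\left|G'\big(u(x)+\tau\Delta_h u(x)\big)\right|^2|\Delta_h u(x)|^2}{|h|^{d+2s}}\,dh\right)^{1/2}d\tau.
\]
Then I would split $|G'(u(x)+\tau\Delta_h u(x))|\leq |G'(u(x))|+\big|G'(u(x)+\tau\Delta_h u(x))-G'(u(x))\big|$. The contribution of the first term is exactly $|G'(u(x))|\,\mathcal{D}_s u(x)$, and H\"older's inequality with $\frac1r=\frac1{r_1}+\frac1{r_2}$, followed by the characterisation above applied to $u$, yields
\[
\big\||G'(u)|\,\mathcal{D}_s u\big\|_{L^r_x}\leq \norm{G'(u)}_{L^{r_1}}\norm{\mathcal{D}_s u}_{L^{r_2}}\lesssim \norm{G'(u)}_{L^{r_1}}\norm{u}_{\dot{H}^{s,r_2}},
\]
which is precisely the asserted bound for the main term.

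The hard part will be the remaining term, built from $|G'(u(x)+\tau\Delta_h u(x))-G'(u(x))|$: one cannot factor out a pointwise function of $x$, because the intermediate points $u(x)+\tau\Delta_h u(x)$ lie on a segment in the target $\C$ and are not values of $u$ at nearby spatial points. I would control it following the argument of Christ--Weinstein \cite{ChWe91} (reproduced in \cite{AnKi21}): split the $h$-integral at a suitable scale, use the continuity of $G'$ together with $|\Delta_h u(x)|\to 0$ as $h\to 0$ on the region of small $|h|$, and on the region of large $|h|$ perform a dyadic decomposition and bound the intermediate values of $|G'|$ by a Hardy--Littlewood maximal function of $G'(u)$; invoking then the $L^r$-boundedness of the maximal operator and H\"older's inequality recovers again $\norm{G'(u)}_{L^{r_1}}\norm{u}_{\dot H^{s,r_2}}$. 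This is the only genuinely delicate point; once it is settled, combining it with the main-term estimate and the characterisation $\norm{G(u)}_{\dot H^{s,r}}\approx\norm{\mathcal{D}_s(G(u))}_{L^r_x}$ completes the proof.
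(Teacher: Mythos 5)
The paper does not prove this lemma; it is quoted from \cite{AnKi21} (Lemma~2.4), which in turn goes back to the Christ--Weinstein fractional chain rule, so there is no in-paper argument to compare against. Your idea to start from the difference-quotient (Gagliardo/Triebel--Lizorkin) characterization of $\dot H^{s,r}$ is a reasonable one, and the main-term bound $\||G'(u)|\,\mathcal{D}_s u\|_{L^r}\lesssim\|G'(u)\|_{L^{r_1}}\|u\|_{\dot H^{s,r_2}}$ is correct.

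However, the plan for the remainder term --- the part you yourself flag as ``the hard part'' --- has a genuine gap, and I don't see how to make it run as sketched. First, for small $|h|$ you invoke ``continuity of $G'$ together with $|\Delta_h u(x)|\to 0$''; but $u$ is merely a Sobolev function, so $\Delta_h u(x)$ is \emph{not} pointwise small when $h$ is small, and mere $C^0$-continuity of $G'$ gives no quantitative gain. Second, and more seriously, for the large-$|h|$ piece you propose to bound $|G'(u(x)+\tau\Delta_h u(x))|$ by a Hardy--Littlewood maximal function of $G'(u)$. That cannot work: the intermediate point $u(x)+\tau\Delta_h u(x)$ lives on a segment in the \emph{target} $\C$ between the values $u(x)$ and $u(x+h)$, and is not a value of $u$ at any spatial point, so there is no pointwise majorization of $|G'|$ at that intermediate point by averages of $G'(u)$ unless $|G'|$ is (quasi-)monotone or convex along segments. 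This is exactly why the Christ--Weinstein theorem is stated under a structural hypothesis of the form $|G(a)-G(b)|\le (h(a)+h(b))|a-b|$ for some majorant $h$ (which for the power-type nonlinearities used in this paper is satisfied with $h=|G'|$), and why the standard proofs work through a Littlewood--Paley paraproduct decomposition rather than the raw difference characterization; that route avoids ever having to evaluate $G'$ at a convex combination of two far-apart values of $u$. As stated for an arbitrary $C^1$ function $G$, the outline you propose cannot be completed; if you want a complete proof, you should either add the Christ--Weinstein structural hypothesis on $G'$ (or a H\"older modulus for $G'$), which is what is actually verified in the applications in this paper, and then reproduce the paraproduct argument of \cite{ChWe91}.
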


\begin{lemma}[Gagliardo-Nirenberg inequality \cite{BrMi18}] 
\label{lm GN inequality}
Let $s_1\leq s_2$, $p_2> 1$, $s=\theta s_1+(1-\theta)s_2$, $\frac{1}{p}=\frac{\theta}{p_1}+\frac{1-\theta}{p_2}$. Then
\[
\norm{u}_{H^{s,p}}\lesssim \norm{u}_{H^{s_1,p_1}}^{\theta}\norm{u}_{H^{s_2,p_2}}^{1-\theta}.
\]
\end{lemma}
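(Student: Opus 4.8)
The plan is to reduce the inequality to the Littlewood--Paley (Triebel--Lizorkin) description of the Bessel potential spaces and then to apply Hölder's inequality twice: once discretely in the frequency index, once in the space variable. I would first dispose of the endpoints $\theta\in\{0,1\}$, where $(s,p)$ equals $(s_2,p_2)$ or $(s_1,p_1)$ and the inequality is trivial, and I would treat first the generic range $1<p_1,p_2<\infty$; note that then $1<p<\infty$ automatically, since $\frac1p=\frac{\theta}{p_1}+\frac{1-\theta}{p_2}<1$ for $\theta\in(0,1)$ — which is why only $p_2>1$ needs to be assumed.

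Concretely, I would fix an inhomogeneous Littlewood--Paley family $(P_j)_{j\geq 0}$ on $\R^d$ and, for $\sigma\in\R$, set $S_\sigma u=\bigl(\sum_{j\geq0}\langle 2^{j}\rangle^{2\sigma}\abs{P_ju}^2\bigr)^{1/2}$. For $1<q<\infty$ one has the square-function equivalence $\norm{u}_{H^{\sigma,q}}\approx\norm{S_\sigma u}_{L^q}$, with constant depending only on $d,\sigma,q$; this is standard and of the same nature as the multiplier estimate behind \eqref{equivalence}. Since $s=\theta s_1+(1-\theta)s_2$, the frequency weights factor, $\langle 2^j\rangle^{2s}=\langle 2^j\rangle^{2\theta s_1}\langle 2^j\rangle^{2(1-\theta)s_2}$, so at each point $x$ the discrete Hölder inequality with exponents $\tfrac1\theta,\tfrac1{1-\theta}$ gives
\[
(S_su)^2(x)=\sum_{j\geq0}\bigl(\langle 2^j\rangle^{2s_1}\abs{P_ju(x)}^2\bigr)^{\theta}\bigl(\langle 2^j\rangle^{2s_2}\abs{P_ju(x)}^2\bigr)^{1-\theta}\leq (S_{s_1}u)^{2\theta}(x)\,(S_{s_2}u)^{2(1-\theta)}(x).
\]
Taking square roots and then the $L^p_x$ norm, I would apply Hölder's inequality in $x$ to $(S_{s_1}u)^{\theta p}$ and $(S_{s_2}u)^{(1-\theta)p}$ with the conjugate exponents $\frac{p_1}{\theta p}$ and $\frac{p_2}{(1-\theta)p}$ (both are $>1$ precisely because $\frac1p=\frac\theta{p_1}+\frac{1-\theta}{p_2}$, so this step is legitimate even when $p<2$), obtaining
\[
\norm{S_su}_{L^p}\leq\norm{(S_{s_1}u)^{\theta}(S_{s_2}u)^{1-\theta}}_{L^p}\leq\norm{S_{s_1}u}_{L^{p_1}}^{\theta}\norm{S_{s_2}u}_{L^{p_2}}^{1-\theta},
\]
which combined with the norm equivalence is exactly the claim.

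The only genuine obstacle is the square-function equivalence $\norm{u}_{H^{\sigma,q}}\approx\norm{S_\sigma u}_{L^q}$, which requires $1<q<\infty$: this is precisely what dictates the hypothesis on $p_2$ (and is automatic for $p$, whose exponent is interior), and it is also why the boundary cases $p_1\in\{1,\infty\}$ escape this particular argument. For those one can instead invoke the complex interpolation identity $[H^{s_2,p_2},H^{s_1,p_1}]_\theta=H^{s,p}$ together with the universal interpolation inequality $\norm{u}_{[X_0,X_1]_\theta}\leq\norm{u}_{X_0}^{1-\theta}\norm{u}_{X_1}^{\theta}$, or argue by approximation as in \cite{BrMi18}.
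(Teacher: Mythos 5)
The paper does not supply a proof of this lemma; it is quoted directly from \cite{BrMi18}. Your Littlewood--Paley proof is a correct and standard independent argument. The square-function characterization $\norm{u}_{H^{\sigma,q}}\approx\norm{S_{\sigma}u}_{L^q}$ for $1<q<\infty$ is the identification $H^{\sigma,q}=F^{\sigma}_{q,2}$; the pointwise discrete H\"older step via the factorization $\langle 2^j\rangle^{2s}=\langle 2^j\rangle^{2\theta s_1}\langle 2^j\rangle^{2(1-\theta)s_2}$ gives $S_s u\leq (S_{s_1}u)^{\theta}(S_{s_2}u)^{1-\theta}$ pointwise; and the $L^p$ H\"older step with exponents $p_1/(\theta p)$ and $p_2/((1-\theta)p)$ (conjugate precisely because $1/p=\theta/p_1+(1-\theta)/p_2$, and both $\geq 1$ since $\theta p\leq p_1$ and $(1-\theta)p\leq p_2$) finishes the argument. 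You are also right that this route requires $1<p_1<\infty$ as well as $1<p_2<\infty$, since the square-function equivalence is invoked for all three norms, and your delegation of the boundary cases $p_1\in\{1,\infty\}$ to complex interpolation or to the approximation argument of \cite{BrMi18} is the appropriate remedy. For the purposes of this paper the endpoint cases never arise: the lemma is used only to derive the homogeneous version (Lemma \ref{lm homogeneous GN inequality}) and is then applied in the proof of Lemma \ref{L:NL} with finite exponents strictly greater than $1$, so your main argument alone already covers everything the paper actually needs.
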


\begin{lemma}[Homogeneous Gagliardo-Nirenberg inequality]
\label{lm homogeneous GN inequality}
Let $s_1\leq s_2$, $p_2> 1$, $s=\theta s_1+(1-\theta)s_2$, $\frac{1}{p}=\frac{\theta}{p_1}+\frac{1-\theta}{p_2}$. Then
\[
\norm{u}_{\dot{H}^{s,p}}\lesssim \norm{u}_{\dot{H}^{s_1,p_1}}^{\theta}\norm{u}_{\dot{H}^{s_2,p_2}}^{1-\theta}.
\]
\end{lemma}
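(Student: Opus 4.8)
The plan is to deduce the homogeneous inequality from the inhomogeneous one (Lemma \ref{lm GN inequality}) by a scaling argument, exactly in the spirit of the usual passage from inhomogeneous to homogeneous Sobolev estimates. We may assume $\theta\in(0,1)$, the cases $\theta\in\{0,1\}$ being trivial, and, as throughout the paper, $0\leq s_1\leq s_2$. For $\lambda>0$ set $u_\lambda(x)=u(\lambda x)$. A direct computation on the Fourier side gives the scaling laws
\[
\norm{u_\lambda}_{L^q}=\lambda^{-d/q}\norm{u}_{L^q},\qquad \norm{u_\lambda}_{\dot{H}^{\sigma,q}}=\lambda^{\sigma-d/q}\norm{u}_{\dot{H}^{\sigma,q}},\qquad \sigma\geq 0,\ 1<q<\infty,
\]
and subtracting the two relations $s=\theta s_1+(1-\theta)s_2$ and $\frac1p=\frac{\theta}{p_1}+\frac{1-\theta}{p_2}$ yields the compatibility identity $s-\frac dp=\theta\bigl(s_1-\frac d{p_1}\bigr)+(1-\theta)\bigl(s_2-\frac d{p_2}\bigr)$ that the argument will hinge on.

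First I would reduce, by a standard density argument (for instance to functions $u$ with $\widehat u\in C_c^\infty(\R^d\setminus\{0\})$, for which all the homogeneous and inhomogeneous norms appearing below are finite and mutually comparable), to the case $u\in L^{p_1}\cap L^{p_2}$. Then, for every $\lambda>0$, I apply Lemma \ref{lm GN inequality} to $u_\lambda$ and use \eqref{equivalence} to bound $\norm{u_\lambda}_{H^{s,p}}$ from below by $\norm{u_\lambda}_{\dot{H}^{s,p}}$ and each $\norm{u_\lambda}_{H^{s_i,p_i}}$ from above by $\norm{u_\lambda}_{L^{p_i}}+\norm{u_\lambda}_{\dot{H}^{s_i,p_i}}$, which gives
\[
\norm{u_\lambda}_{\dot{H}^{s,p}}\lesssim\bigl(\norm{u_\lambda}_{L^{p_1}}+\norm{u_\lambda}_{\dot{H}^{s_1,p_1}}\bigr)^{\theta}\bigl(\norm{u_\lambda}_{L^{p_2}}+\norm{u_\lambda}_{\dot{H}^{s_2,p_2}}\bigr)^{1-\theta}.
\]

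Next I insert the scaling laws and divide through by $\lambda^{s-d/p}$; by the compatibility identity the powers of $\lambda$ cancel on the left, and inside the $i$-th factor the term carrying $\norm{u}_{L^{p_i}}$ picks up exactly the power $\lambda^{-s_i}$, so that
\[
\norm{u}_{\dot{H}^{s,p}}\lesssim\bigl(\lambda^{-s_1}\norm{u}_{L^{p_1}}+\norm{u}_{\dot{H}^{s_1,p_1}}\bigr)^{\theta}\bigl(\lambda^{-s_2}\norm{u}_{L^{p_2}}+\norm{u}_{\dot{H}^{s_2,p_2}}\bigr)^{1-\theta}\qquad\text{for all }\lambda>0.
\]
Letting $\lambda\to+\infty$ kills the $L^{p_i}$ contribution whenever $s_i>0$; when $s_i=0$ the corresponding factor already equals $\norm{u}_{L^{p_i}}=\norm{u}_{\dot{H}^{0,p_i}}$ up to the constant $2$ (and if both $s_1=s_2=0$ the statement reduces to the H\"older inequality $\norm{u}_{L^p}\leq\norm{u}_{L^{p_1}}^\theta\norm{u}_{L^{p_2}}^{1-\theta}$). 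This yields the claimed estimate for the dense class, and undoing the density reduction finishes the proof.

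The computations are elementary and the only point requiring a little care is the exponent bookkeeping: one must check that the $L^{p_i}$ terms come with a nonpositive power of $\lambda$ so that the limit $\lambda\to+\infty$ is harmless, which is precisely where the hypothesis $s_i\geq 0$ enters, together with the minor degenerate cases $s_1=0$ or $s_2=0$. I do not expect any of this to present a genuine obstacle.
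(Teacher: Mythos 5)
Your proposal is correct and follows essentially the same route as the paper: apply the inhomogeneous inequality of Lemma \ref{lm GN inequality} to the rescaled function $u(\lambda\cdot)$, use \eqref{equivalence} and the scaling of the $L^{p_i}$ and $\dot H^{s_i,p_i}$ norms, divide by $\lambda^{s-d/p}$, and let $\lambda\to+\infty$. The extra remarks on density and the degenerate cases $s_i=0$ are harmless refinements of the same argument.
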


\begin{proof}
Let $\varphi\in H^s$ Define $\varphi^{\lambda}(x)=\varphi(\lambda x)$. Applying Lemma \ref{lm GN inequality} for $\varphi^{\lambda}$, we have
\begin{multline*}
\left(\lambda^{-d}\norm{\varphi}_{L^p}^p+\lambda^{ps-d}\norm{\varphi}_{\dot{H}^{s,p}}^p\right)^{\frac{1}{p}}\\
\lesssim \left(\lambda^{-d}\norm{\varphi}_{L^{p_1}}^{p_1}+\lambda^{p_1s_1-d}\norm{\varphi}_{\dot{H}^{s_1,p_1}}^{p_1}\right)^{\frac{1}{{p_1}}}\left(\lambda^{-d}\norm{\varphi}_{L^{p_2}}^{p_2}+\lambda^{p_2s_2-d}\norm{\varphi}_{\dot{H}^{s_2,p_2}}^{p_2}\right)^{\frac{1}{p_2}}.
\end{multline*}
By dividing both sides by $\lambda^{s-\frac{d}{p}}=\lambda^{\theta s_1-\frac{d\theta}{p_1}}\lambda^{(1-\theta)s_2-\frac{d(1-\theta)}{p_2}}$, we have
\[
\left(\lambda^{-ps}\norm{\varphi}_{L^p}^p+\norm{\varphi}_{\dot{H}^{s,p}}^p\right)^{\frac{1}{p}}\lesssim \left(\lambda^{-p_1s_1}\norm{\varphi}_{L^{p_1}}^{p_1}+\norm{\varphi}_{\dot{H}^{s_1,p_1}}^{p_1}\right)^{\frac{1}{{p_1}}}\left(\lambda^{-p_2s_2}\norm{\varphi}_{L^{p_2}}^{p_2}+\norm{\varphi}_{\dot{H}^{s_2,p_2}}^{p_2}\right)^{\frac{1}{p_2}}.
\]
Let $\lambda\rightarrow +\infty$, we obtain the desired result.
\end{proof}

\begin{lemma}[Leibniz rule]
\label{L:Leibniz}
Let $f \in C^k(\C,\C)$ and $\alpha=(\alpha_1,\alpha_2,\cdot\cdot\cdot,\alpha_d)\in \N^d$ such that $|\alpha|\leq k$. Then $D^{\alpha}\left(f(u)\right)$ is a linear combinations of terms of the form 
$$ (\partial_z^{h_1}\partial_{\overline{z}}^{h_2}f)(u)\prod_{k=1}^{h_1} D^{\beta_k} u \prod_{k=1}^{h_2} D^{\gamma_k} \overline{u} ,$$
where $\sum_{k=1}^{h_1}\beta_k+\sum_{k=1}^{h_2}\gamma_k=\alpha$, $1\leq h_1+h_2\leq |\alpha|$, $|\beta_k|\geq 1$, $|\gamma_k|\geq 1$ for all $k$.
\end{lemma}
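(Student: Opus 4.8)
The plan is to prove Lemma~\ref{L:Leibniz} by induction on $|\alpha|$, using the Wirtinger derivatives $\partial_z=\tfrac12(\partial_a-i\partial_b)$ and $\partial_{\overline z}=\tfrac12(\partial_a+i\partial_b)$ (writing $z=a+ib$), for which one has the chain rule $D^{e_i}\big(f(u)\big)=(\partial_z f)(u)\,D^{e_i}u+(\partial_{\overline z}f)(u)\,D^{e_i}\overline u$ whenever $f\in C^1$, together with the ordinary Leibniz rule for products. Here $e_i\in\N^d$ denotes the multi-index equal to $1$ in position $i$ and $0$ elsewhere. For $|\alpha|=1$ this chain rule is exactly the asserted identity, with $(h_1,h_2)=(1,0)$ for the first term and $(0,1)$ for the second, and $\beta_1=e_i$ (resp.\ $\gamma_1=e_i$).

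Assume the claim holds for every multi-index of order $\leq m$, and let $\alpha\in\N^d$ with $|\alpha|=m+1\leq k$. Choose $i$ with $\alpha_i\geq1$ and write $\alpha=\alpha'+e_i$ with $|\alpha'|=m$, so that $D^{\alpha}\big(f(u)\big)=D^{e_i}\big(D^{\alpha'}(f(u))\big)$. By the induction hypothesis $D^{\alpha'}\big(f(u)\big)$ is a linear combination of terms
\[
T=(\partial_z^{h_1}\partial_{\overline z}^{h_2}f)(u)\prod_{j=1}^{h_1} D^{\beta_j} u\prod_{j=1}^{h_2} D^{\gamma_j}\overline u,
\]
with $\sum_j\beta_j+\sum_j\gamma_j=\alpha'$, $1\leq h_1+h_2\leq m$, and $|\beta_j|,|\gamma_j|\geq1$. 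Since $h_1+h_2\leq m\leq k-1$, the function $\partial_z^{h_1}\partial_{\overline z}^{h_2}f$ is $C^1$, so $D^{e_i}$ may be applied to $T$ by the product rule. The derivative acts on exactly one factor of $T$, producing three types of contributions: (a) it hits $(\partial_z^{h_1}\partial_{\overline z}^{h_2}f)(u)$, which by the chain rule gives $(\partial_z^{h_1+1}\partial_{\overline z}^{h_2}f)(u)\,D^{e_i}u+(\partial_z^{h_1}\partial_{\overline z}^{h_2+1}f)(u)\,D^{e_i}\overline u$ times the unchanged product, i.e.\ two terms of the same shape with $(h_1,h_2)$ replaced by $(h_1+1,h_2)$, resp.\ $(h_1,h_2+1)$, and one extra factor $D^{e_i}u$, resp.\ $D^{e_i}\overline u$; (b) it hits some $D^{\beta_j}u$, replacing it by $D^{\beta_j+e_i}u$; (c) it hits some $D^{\gamma_j}\overline u$, replacing it by $D^{\gamma_j+e_i}\overline u$ (using that $D^{e_i}$ commutes with complex conjugation). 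In every case the resulting term is again of the form in the statement relative to $\alpha$: the multi-indices appearing in the product now sum to $\alpha'+e_i=\alpha$; every multi-index still has order $\geq1$ (note $|e_i|=1$ and $|\beta_j+e_i|,|\gamma_j+e_i|\geq2$); and $h_1+h_2$ either is unchanged (cases (b),(c)) or increases by one (case (a)), hence stays between $1$ and $m+1=|\alpha|$. Summing over the finitely many $T$'s finishes the induction.

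I expect no serious obstacle here; the argument is essentially bookkeeping. The two points to watch are the regularity count — differentiating $(\partial_z^{h_1}\partial_{\overline z}^{h_2}f)(u)$ once more requires $f\in C^{h_1+h_2+1}$, which is available precisely because $h_1+h_2\leq|\alpha|-1\leq k-1$ — and verifying, as done above, that the constraints $1\leq h_1+h_2\leq|\alpha|$, $\sum_j\beta_j+\sum_j\gamma_j=\alpha$ and $|\beta_j|,|\gamma_j|\geq1$ are preserved under each differentiation.
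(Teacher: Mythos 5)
Your proof is correct and follows exactly the approach the paper indicates: induction on $|\alpha|$ using the chain rule $\partial_{x_i}f(u)=(\partial_z f)(u)\,\partial_{x_i}u+(\partial_{\overline z}f)(u)\,\overline{\partial_{x_i}u}$ together with the product rule. You spell out the bookkeeping (including the regularity count $h_1+h_2\leq k-1$) that the paper leaves implicit, but the argument is the same.
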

\begin{proof}
The proof is by induction, using the formula:
\[
\partial_{x_i}f(u)=\partial_zf(u)\partial_{x_i} u+\partial_{\overline{z}}f(u)\overline{\partial_{x_i} u}.
\]
\end{proof}

\subsection{Local Lipschitz continuity of the nonlinearity}
\label{sub:nonlinearity}

In this subsection we consider the following general classes of nonlinearities:
\begin{definition}
\label{D:NNN}
 Let $0<p_1\leq p_0$, $s\geq 0$ be real numbers. We denote by $\NNN(s,p_0,p_1)$ the vector space of functions $g\in C^{\supent{s}+1}(\C,\C)$ such that
 \begin{gather}
  \label{Hyp_g} 
  \exists C>0,\; \forall k\in \llbracket 0,\supent{s}+1\rrbracket,\; \forall z\in \C,\quad \left|g^{(k)}(z)\right|\leq C \left(|z|^{p_0+1-k}+|z|^{p_1+1-k}\right).
 \end{gather}
\end{definition}
In the definition $|g^{(k)}(z)|$ denotes the supremum of all the derivatives (in $z$, $\overline{z}$) of order $k$ of $g$.

\begin{assumption}
\label{Assum:NL}
$d\geq 1$, $g\in \NNN(s,p_0,p_1)$ with $0\leq s$, $\frac{4}{d}\leq p_1\leq p_0$, $1<p_1$ and
$$\supent{s}\leq p_0 \text{ or }g\text{ is a polynomial in }u,\overline{u}.$$
\end{assumption}
We recall the notation $X_p(I)=L^{\frac{p(d+2)}{2}}(I\times \R^d)$. We will prove:

\begin{proposition}
\label{P:boundg}
If Assumption \ref{Assum:NL} is satisfied, we let
\begin{equation}
\label{defX}
X(I)=X_{p_0}(I)\cap X_{p_1}(I)=L^{p_0(d+2)/2}_{t,x}(I\times\R^d)\cap L^{p_1(d+2)/2}_{t,x}(I\times\R^d).
\end{equation}
Then
\begin{multline}
  \label{diff_bound}
  \left\|g(u)-g(v)\right\|_{N^s(I)}\lesssim 
  \left(\|(u,v)\|_{X(I)}^{p_1-1}+\|(u,v)\|_{X(I)}^{p_0-1}\right)
  \\
  \times \Big[ \|u-v\|_{\dot{W}^s(I)}\|(u,v)\|_{X(I)}+\|u-v\|_{X(I)}\|(u,v)\|_{W^s(I)}\Big],
 \end{multline} 
 In particular
 \begin{equation}
  \label{bound_g}
  \left\|g(u)\right\|_{N^s(I)}\lesssim \|u\|_{W^s(I)}\left(\|u\|_{X(I)}^{p_0}+\|u\|_{X(I)}^{p_1}\right).
 \end{equation} 
 \end{proposition}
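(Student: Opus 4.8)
The plan is to prove the difference estimate \eqref{diff_bound} first, since \eqref{bound_g} follows from it by taking $v=0$ (and using $g(0)=0$, which is a consequence of \eqref{Hyp_g}, together with the trivial bound $\|(u,0)\|_X\approx\|u\|_X$). To estimate $\|g(u)-g(v)\|_{N^s(I)}$, I would first reduce to a spatial estimate: since $N^s(I)$ contains $\dot Z^s(I)=|\nabla|^{-s}L^{2(d+2)/(d+4)}_{t,x}$ (up to the low-frequency part handled by $\langle\nabla\rangle$ vs $|\nabla|$, which is where Assumption~\ref{Assum:NL} and the $L^2$-based norms enter), it suffices to bound $\||\nabla|^s(g(u)-g(v))\|_{L^{2(d+2)/(d+4)}_{t,x}}$ and the corresponding un-differentiated $L^{2(d+2)/(d+4)}$ norm. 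The exponent is chosen so that H\"older in $x$ splits $\frac{d+4}{2(d+2)}=\frac{2}{p_i(d+2)}\cdot\frac{p_i}{2}+\ldots$ — concretely, a factor $|u|^{p_i-1}$ or $|v|^{p_i-1}$ or a product of $p_i-1$ copies of $u,v$ lands in $X_{p_i}=L^{p_i(d+2)/2}$, one more such factor lands in $X_{p_i}$, and the remaining factor carrying the $s$ derivatives lands in $W^s$ or $X_{p_i}$, with the time integrability matching up by the same arithmetic.

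\textbf{Writing $g(u)-g(v)$ via the fundamental theorem of calculus.} The key algebraic step is
\[
g(u)-g(v)=\int_0^1 \Big(\partial_zg(v+\theta(u-v))(u-v)+\partial_{\bar z}g(v+\theta(u-v))\overline{(u-v)}\Big)\,d\theta,
\]
so that, writing $w=u-v$ and $u_\theta=v+\theta w$, I must bound $\||\nabla|^s\big((\partial_zg)(u_\theta)\,w\big)\|$ uniformly in $\theta$ (and the analogous $\partial_{\bar z}$ term). Apply Product Rule~1 (Lemma~\ref{L:product_rule}) in the homogeneous form \eqref{product_rule1} to split this as $\|(\partial_zg)(u_\theta)\|_{L^{a}}\,\||\nabla|^s w\|_{L^{b}}+\||\nabla|^s(\partial_zg)(u_\theta)\|_{L^{c}}\,\|w\|_{L^{e}}$, choosing the exponents so the first term feeds into $\|w\|_{\dot W^s}\|(u,v)\|_X\cdot(\ldots)$ and the second into $\|w\|_X\|(u,v)\|_{W^s}\cdot(\ldots)$. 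For the first factor $\|(\partial_zg)(u_\theta)\|_{L^a}$, use the pointwise bound $|g'(z)|\lesssim |z|^{p_0}+|z|^{p_1}$ from \eqref{Hyp_g} with $k=1$ and $|u_\theta|\lesssim |u|+|v|$ to get $\lesssim \|(u,v)\|_X^{p_0}+\|(u,v)\|_X^{p_1}$, but redistributed: we pull out $p_0$ or $p_1$ minus one power into the prefactor and keep one power of $\|(u,v)\|_X$ to match the bracket structure of \eqref{diff_bound}. For the second factor $\||\nabla|^s(\partial_zg)(u_\theta)\|_{L^c}$, this is the main work.

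\textbf{Estimating $\||\nabla|^s h(u_\theta)\|$ where $h=\partial_zg\in\NNN(s+?,p_0-1,p_1-1)$-type.} Here I would split according to whether $s<1$ or $s\ge 1$, and whether $g$ is polynomial. If $0<s<1$, use the Fractional Chain Rule (Lemma~\ref{lm fractional chain rule}): $\||\nabla|^s h(u_\theta)\|_{L^c}\lesssim \|h'(u_\theta)\|_{L^{c_1}}\||\nabla|^s u_\theta\|_{L^{c_2}}$, with $|h'(u_\theta)|=|g''(u_\theta)|\lesssim |u_\theta|^{p_0-1}+|u_\theta|^{p_1-1}$ from \eqref{Hyp_g}, giving the desired $X$-prefactor and a factor $\||\nabla|^s u_\theta\|\lesssim \|u\|_{\dot W^s}+\|v\|_{\dot W^s}=\|(u,v)\|_{\dot W^s}$. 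If $s\ge 1$, write $s=\ent s+v$ with $v\in[0,1)$, use \eqref{equivalent norm} to reduce $\||\nabla|^s h(u_\theta)\|$ to $\sum_{|\alpha|=\ent s}\|D^\alpha h(u_\theta)\|_{\dot H^{v,c}}$, expand $D^\alpha h(u_\theta)$ by the Leibniz rule (Lemma~\ref{L:Leibniz}) into a sum of terms $(\partial^{h_1}\partial^{h_2}h)(u_\theta)\prod D^{\beta_k}u_\theta\prod D^{\gamma_k}\overline{u_\theta}$, and then distribute the remaining $v$ derivative with Product Rule~2 (Lemma~\ref{lm product rule 2}) plus, on the head factor $(\partial^{h_1+h_2}h)(u_\theta)$, the Fractional Chain Rule again (needing $h$ enough times differentiable, i.e. $g\in C^{\supent s+1}$, which is exactly Definition~\ref{D:NNN}); the condition $\supent s\le p_0$ or $g$ polynomial guarantees the pointwise powers $p_i+1-k$ appearing stay non-negative so that $|g^{(k)}(z)|\lesssim|z|^{p_0+1-k}+|z|^{p_1+1-k}$ is meaningful. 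At each stage the H\"older exponents are chosen so that exactly one factor carries $v$ (or $s$) derivatives — landing in $W^s$ (via Gagliardo--Nirenberg, Lemma~\ref{lm homogeneous GN inequality}, to interpolate intermediate $\dot H^{v,\cdot}$ and $\dot W^{s}$) or in $X$ — while all other factors are bare powers lying in $X_{p_0}\cap X_{p_1}$. Counting powers: each term has total homogeneity $p_i+1$ in $(u,v,w)$, one factor is $w$, so $p_i$ factors remain; of these, one or zero carries derivatives, and the rest give $\|(u,v)\|_X^{p_i-1}$ or $\|(u,v)\|_X^{p_i-2}\cdot(\ldots)$, which after collecting matches the stated right-hand side of \eqref{diff_bound}.

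\textbf{Main obstacle.} The routine but delicate part is the bookkeeping for $s\ge 1$: ensuring that in every Leibniz/Product-Rule-2 term the derivative count is split so that precisely one factor reaches the top order $s$ (giving a $W^s$ or $X$ norm we control) while the rest are low-order and can be put in $X_{p_0}\cap X_{p_1}$ by Gagliardo--Nirenberg, and simultaneously that all the H\"older exponents $\frac1r=\sum\frac1{r_i}$ are admissible ($1<r_i<\infty$) and that the time exponents match $L^{p_i(d+2)/2}$. The requirement $\supent s\le p_0$ (or $g$ polynomial) is what keeps the pointwise derivative bounds \eqref{Hyp_g} non-degenerate for all needed $k$, and the chain-rule applications on the head symbol are the place one must be careful that $g$ is differentiated no more than $\supent s+1$ times. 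I expect no conceptual difficulty beyond this exponent-chasing, since all the analytic inputs (Strichartz, Product Rules, Fractional Chain Rule, Gagliardo--Nirenberg, Leibniz) are already in hand from \S\ref{sub:nonlinear}.
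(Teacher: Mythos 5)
The overall scheme you describe — FTC to write $g(u)-g(v)$, Product Rule 1 to split off one derivative onto $u-v$ or onto the symbol, then Leibniz/Product Rule 2/fractional chain rule for $\||\nabla|^s (\partial_z g)(u_\theta)\|$, with Gagliardo--Nirenberg to distribute the intermediate derivatives — is precisely the mechanism of Lemma~\ref{L:NL} in the paper, so you have the right toolbox. But there is a genuine gap: you apply this machinery directly to $g$ under the two-power hypothesis \eqref{Hyp_g}, and you claim that ``the condition $\supent s\le p_0$ or $g$ polynomial guarantees the pointwise powers $p_i+1-k$ appearing stay non-negative.'' That claim is false for $i=1$. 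Assumption~\ref{Assum:NL} only requires $\supent s\le p_0$; it places no lower bound $\supent s\le p_1$ on the smaller exponent, and indeed for $s\geq s_0>1$ and $p_1$ close to $4/d$ one easily has $p_1<\supent s$. Then for $k$ up to $\supent s+1$ the exponent $p_1+1-k$ is negative, the bound $|g^{(k)}(z)|\lesssim|z|^{p_1+1-k}$ becomes vacuous near $z=0$, and the crucial H\"older step $\|f^{(h+1)}(u)\|_{L^{q_3}}\lesssim\|u\|_{X_{p_1}}^{p_1-h-1}$ in the chain rule is not available because $p_1-h-1<0$. So the direct argument breaks down exactly where the lower-order part of the nonlinearity is not smooth enough relative to $s$.

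The paper's proof handles this with an additional decomposition that your proposal does not contain: it first writes $g=P+\tilde g$ where $P$ is the Taylor polynomial of $g$ at $0$ up to degree $\supent s$. The polynomial $P$ is treated by the algebraic telescoping identity in Lemma~\ref{L:boundg_onepower} (no chain rule, so no constraint from $\supent s\le p_1$), while the Taylor remainder $\tilde g$ now satisfies $|\tilde g^{(k)}(z)|\lesssim|z|^{\supent s+1-k}$ for $|z|\le 1$ and $\lesssim|z|^{p_0+1-k}$ for $|z|\ge1$, with \emph{all} exponents non-negative. After a further cut-off into $g_0=(1-\chi)\tilde g$ and $g_1=\chi\tilde g$ (each satisfying a single-power bound with $p=p_0$ and $p=\supent s$ respectively, and $\supent s\le p$ in both cases), Lemma~\ref{L:NL} applies cleanly. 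Without this Taylor-subtraction step your argument cannot close under the stated hypotheses; with it, your outline coincides with the paper's. I would suggest incorporating the decomposition $g=P+\tilde g$ before any fractional chain rule is invoked.
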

Let us insist on the important fact that the first norm of $u-v$ in the second line of \eqref{diff_bound} is the norm in the \emph{homogeneous} space $\dot{W}^s$.

We start with a few lemmas.

\begin{lemma}
\label{L:boundg_onepower}
Let $p\geq 1$ be an integer, and $g(u)$ be a homogeneous polynomial of degree $p+1$ in $u$, $\overline{u}$. 
 Let $s\geq 0$, $u,v\in S^s(I)$. Then 
 \begin{multline}
 \label{diff_boundj'}
 \left\||\nabla|^s(g(u)-g(v))\right\|_{N^0(I)}\lesssim 
  \left(\|u\|_{X_p(I)}^{p-1}+\|v\|_{X_p(I)}^{p-1}\right)\\
  \times \Big[ \||\nabla|^s(u-v)\|_{W^0(I)}\left(\|u\|_{X_p(I)}+\|v\|_{X_p(I)}\right)+\|u-v\|_{X_p(I)}\left(\||\nabla|^su\|_{W^0(I)}+\||\nabla|^sv\|_{W^0(I)}\right)\Big],
 \end{multline}
 and
\begin{equation}
\label{diff_boundj''}
\left\|g(u)-g(v)\right\|_{N^0(I)}\lesssim \|u-v\|_{X_p(I)}\|(u,v)\|_{X_p(I)}^{p-1}\|(u,v)\|_{W^0(I)}.
\end{equation} 
 \end{lemma}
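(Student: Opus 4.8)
The plan is to reduce everything to a pointwise algebraic identity for homogeneous polynomials, combined with the Leibniz rule (Lemma \ref{L:Leibniz}), Product rule 2 (Lemma \ref{lm product rule 2}), and Strichartz/Hölder bookkeeping. Since $g$ is a homogeneous polynomial of degree $p+1$ in $u,\overline u$, it is a linear combination of monomials of the form $z^{a}\overline z^{\,b}$ with $a+b=p+1$; by linearity it suffices to treat one such monomial $g(u)=u^a\overline u^{\,b}$. The first step is the telescoping identity
\[
g(u)-g(v)=\sum_{\ell} (u-v)\,P_\ell(u,v)\quad\text{or}\quad \overline{(u-v)}\,Q_\ell(u,v),
\]
where each $P_\ell,Q_\ell$ is a monomial of degree exactly $p$ in the variables $u,\overline u,v,\overline v$ (write $u^a\overline u^{\,b}-v^a\overline v^{\,b}$ as a sum of differences changing one factor at a time). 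This already gives \eqref{diff_boundj''} directly: apply the dual Strichartz estimate $\|\cdot\|_{N^0(I)}\lesssim\|\cdot\|_{Z^0(I)}=\|\cdot\|_{L^{2(d+2)/(d+4)}_{t,x}}$, then Hölder in $(t,x)$ splitting the exponent $\tfrac{2(d+2)}{d+4}$ as one factor in $X_p(I)=L^{p(d+2)/2}_{t,x}$ for $u-v$, one factor in $W^0(I)=L^{2(d+2)/d}_{t,x}$ for one of the remaining factors, and $p-1$ factors in $X_p(I)$ for the rest — the exponent arithmetic $\tfrac{d+4}{2(d+2)}=\tfrac{2}{p(d+2)}+\tfrac{d}{2(d+2)}+\tfrac{p-1}{p}\cdot\tfrac{2}{p(d+2)}$ checks out, which is the familiar $L^2$-supercritical scaling computation.

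For \eqref{diff_boundj'} I would apply $|\nabla|^s$ to each term $(u-v)P_\ell(u,v)$. Write $s=\ent s+v$ with $v\in[0,1)$; using \eqref{equivalent norm} it is enough to bound $\|D^\alpha(\cdot)\|_{\dot H^{v,r}}$ for $|\alpha|=\ent s$ in the dual Strichartz exponent $r=\tfrac{2(d+2)}{d+4}$. Expand $D^\alpha\big((u-v)P_\ell(u,v)\big)$ by the Leibniz rule (Lemma \ref{L:Leibniz} applied to the polynomial $f$): this yields a linear combination of products of $\ent s+1$ factors, each factor a derivative $D^{\beta}$ of one of $u-v,u,\overline u,v,\overline v$ with $\sum|\beta|=\ent s$; moreover the factor carrying the derivative falling on $u-v$ (if any) is a derivative of $u-v$, and all other factors are derivatives of $u,v$. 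Then apply Product rule 2 (Lemma \ref{lm product rule 2}): the $\dot H^{v,r}$ norm of such a product is bounded by a sum over which factor carries the extra $v$ derivative, of that factor in $\dot H^{v,\rho}$ times the others in appropriate $L^{\rho'}$. Combining with \eqref{equivalent norm} in the reverse direction, each term is controlled by a product in which exactly one factor sits in $\dot W^s(I) = |\nabla|^{-s}S^0(I)$-type norm (either $\||\nabla|^s(u-v)\|_{W^0}$ or $\||\nabla|^s u\|_{W^0}$ or $\||\nabla|^s v\|_{W^0}$), one factor sits in $\|u-v\|_{X_p(I)}$ or not (depending on whether the $\dot W^s$ factor already is $u-v$), and the remaining $p-1$ factors sit in $X_p(I)$. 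Interpolation between $L^\infty_t L^2_x$ and $W^0$ — i.e. between the endpoints of $S^0$ — together with Sobolev embedding handles the intermediate Lebesgue exponents $L^{\rho}$ for the low-derivative factors, reducing them all to $X_p(I)=L^{p(d+2)/2}_{t,x}$ norms and $W^0(I)$ norms; here I allow the harmless fattening of $X_p$ to $X(I)$ as in the statement. Summing over the finitely many Leibniz terms gives \eqref{diff_boundj'}.

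The main obstacle is the exponent bookkeeping in the Leibniz/Product-rule expansion: one must check that for every term produced by Lemma \ref{L:Leibniz}, the $p-1$ "low" factors can be placed in $L^{p(d+2)/2}_{t,x}$ while one "high" factor goes into $\|\cdot\|_{\dot W^s}$ and the remaining slack is exactly $\|\cdot\|_{W^0}$ (or into $\|u-v\|_{X_p}$), and that the Hölder exponents close given $p_1\ge 4/d$ so that all spaces involved are Strichartz-admissible or controlled by $S^0$; the regularity hypothesis $\supent s\le p_0$ (or $g$ polynomial, which is the present case) is what guarantees $f=\partial_z^{h_1}\partial_{\overline z}^{h_2}g$ in Lemma \ref{L:Leibniz} is still a polynomial of degree $p+1-(h_1+h_2)\ge0$ so no negative powers of $u$ appear. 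I expect the only genuinely delicate point is verifying that one can always arrange for the factor placed in the homogeneous space $\dot W^s$ (rather than $W^s$) to be the one carrying all $\ent s$ integer derivatives plus the fractional $v$, which is exactly what \eqref{equivalent norm} and \eqref{equivalent norm}-type equivalences are for; everything else is routine Hölder.
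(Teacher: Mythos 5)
Your proof of \eqref{diff_boundj''} matches the paper's: telescope, dual Strichartz, H\"older with the exponent identity \eqref{relation}. For \eqref{diff_boundj'}, however, you take a genuinely different and considerably heavier route than the paper. The paper never splits $s$ into integer plus fractional parts: after expanding each monomial difference as in \eqref{Id_up} into a sum of $(p+1)$-fold products $(u-v)\,\overline u^{j}\,u^{p-j-k}v^{k}$, it applies the fractional product rule (Lemma \ref{L:product_rule}, iterated, equivalently Lemma \ref{lm product rule 2} with the full order $s$) once to each such product. This places all of $|\nabla|^s$ on one factor at a time and the rest in $X_p$, giving the clean dichotomy of \eqref{diff_boundj'} in a single step. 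Your route --- Leibniz on the integer part $D^\alpha$, then Lemma \ref{lm product rule 2} for the fractional remainder $v=s-\ent s$, then interpolation for the intermediate Lebesgue exponents of the factors $D^{\beta_i}u$ --- is essentially the machinery the paper reserves for Lemma \ref{L:NL} (the non-polynomial case, where one has no explicit product structure to feed the one-shot fractional product rule). It is not wrong for polynomials, but two caveats: (i) the ``interpolation between the endpoints of $S^0$ plus Sobolev'' step is doing real work --- one needs the Gagliardo--Nirenberg bookkeeping of the type in \eqref{T60}, $\|D^{\beta}u\|_{L^{r}_{t,x}}\lesssim\|u\|_{\dot W^s}^{|\beta|/s}\|u\|_{X_p}^{1-|\beta|/s}$, and you should check that the spatial Lebesgue exponents you need actually lie in the admissible range; and (ii) the output of that interpolation is a product of \emph{fractional powers} of $\|u-v\|_{\dot W^s}$, $\|u-v\|_{X_p}$, $\|u\|_{\dot W^s}$, $\|u\|_{X_p}$ etc., not the clean additive structure of the right-hand side of \eqref{diff_boundj'}; you would still need a Young-type inequality and a homogeneity count to convert these mixed terms to the stated form. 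Both points are routine but not free, and your sketch glosses over them. For the polynomial case, the paper's direct fractional product rule avoids both.
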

\begin{proof}
It is sufficient to prove \eqref{diff_boundj'} and \eqref{diff_boundj''} for $g(u)=u^{p+1-j}\overline{u}^j$, where $j\in \llbracket 0,p+1\rrbracket$.
 We have
 \begin{equation}
  \label{ID_g}
 g(u)-g(v)=\left(u^{p+1-j}-v^{p+1-j}\right)\overline{u}^j+v^{p+1-j} (\overline{u}^j-\overline{v}^j).
 \end{equation}
We treat the contribution of the first term in the right-hand side of \eqref{ID_g}. The contribution of the second term is similar. We have
\begin{equation}
\label{Id_up}
 \left(u^{p+1-j}-v^{p+1-j}\right)\overline{u}^j=(u-v)\overline{u}^j\sum_{k=0}^{p-j} u^{p-j-k}v^k.
\end{equation}
We work on the interval $I$ for all norms.
We note the following relations between the exponents defining the $W^0$, $Z^0=(W^0)'$ and $X_{p}$ norms:
\begin{equation}
 \label{relation}
\frac{d+4}{2(d+2)}=\frac{d}{2(d+2)}+p\times \frac{2 }{p(d+2)}.
\end{equation}
 By H\"older inequality, the definitions of $X_p$ and $Z_0$ and the fact that $Z_0$ is continuously embedded into $\dot{N}^s$, we obtain \eqref{diff_boundj''}.

 We next prove \eqref{diff_boundj'}. Using \eqref{Id_up}, Lemma \ref{L:product_rule} and the definitions of $X_p$ and $Z^0$, we have
\begin{multline*}
\norm{
\left(u^{p+1-j}-v^{p+1-j}\right)\overline{u}^j
}_{\dot{N}^s}
\lesssim \norm{|\nabla|^s \Big(\left(u^{p+1-j}-v^{p+1-j}\right)\overline{u}^j\Big)
}_{Z^0}
\\
\lesssim \sum_{k=0}^{p-j}\norm{|\nabla|^s (u-v)}_{W^0}\norm{u}_{{X_p}}^{p-k}\norm{v}_{{X_p}}^{k}\\
\quad+\sum_{k=0}^{p-j}\norm{u-v}_{{X_p}}\left(\norm{|\nabla|^s u}_{W^0}\norm{u}_{X_p}^{p-k-1}\norm{v}_{X_p}^k+\norm{|\nabla|^sv}_{W^0}\norm{v}_{{X_p}}^{k-1}\norm{u}_{X_p}^{p-k}\right)\\
 \lesssim \norm{u-v}_{W^s}(\norm{u}_{X_p}^{p}+\norm{v}_{X_p}^{p})+\norm{u-v}_{X_p}(\norm{u}_{W^s}+\norm{v}_{W^s})(\norm{u}_{X_p}^{p-1}+\norm{v}_{X_p}^{p-1}).
\end{multline*}
Combining with the same bound for the second term in \eqref{ID_g}, we obtain \eqref{diff_boundj'}, concluding the proof.
\end{proof}

\begin{lemma}
\label{L:NL}
 Let $s$, $p$ be real numbers such that $0<s$, $\supent{s}\leq p$, $1<p$ and $\frac{4}{d}\leq p$. Let $g\in C^{\supent{s}+1}(\C,\C)$ such that 
 \begin{equation}
  \label{hyp_g_lemma}
  \forall k\in \llbracket 0,\supent{s}+1\rrbracket,\quad \left| g^{(k)}(u)\right|\leq C |u|^{p+1-k}.
 \end{equation} 
 Then \eqref{diff_boundj'} and \eqref{diff_boundj''} hold.
\end{lemma}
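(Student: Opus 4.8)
The idea is to reduce the estimate for a general $C^{\supent{s}+1}$ nonlinearity $g$ satisfying the single-power bound \eqref{hyp_g_lemma} to the polynomial case already treated in Lemma \ref{L:boundg_onepower}, via a paraproduct/fractional-calculus argument. Write $s = \ent{s}+v$ with $v\in[0,1)$. Using \eqref{equivalent norm}, it suffices to control $\sum_{|\alpha|=\ent{s}}\norm{D^\alpha(g(u)-g(v))}_{\dot H^{v,r'}}$ in the relevant dual-Strichartz exponents. Apply Lemma \ref{L:Leibniz} to expand $D^\alpha(g(u))$ as a sum of terms $(\partial_z^{h_1}\partial_{\bar z}^{h_2}g)(u)\prod D^{\beta_k}u\prod D^{\gamma_k}\bar u$ with $\sum|\beta_k|+\sum|\gamma_k| = \ent{s}$ and $1\le h_1+h_2\le \ent{s}$; do the same for $g(v)$. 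The difference $D^\alpha g(u) - D^\alpha g(v)$ then splits, by a telescoping identity, into a sum of terms where either one factor $D^{\beta_k}u - D^{\beta_k}v$ (or the conjugate) appears, or the prefactor difference $(\partial^{h}g)(u)-(\partial^{h}g)(v)$ appears. For each such term one applies the fractional Leibniz rule (Lemma \ref{lm product rule 2}) to distribute the remaining $v$ derivatives, then Hölder with exponents dictated by the Strichartz/$X_p$ scaling relation \eqref{relation}, and finally the Gagliardo–Nirenberg interpolation inequalities (Lemmas \ref{lm GN inequality}, \ref{lm homogeneous GN inequality}) to convert intermediate $\dot H^{\cdot,\cdot}$ norms of $u,v$ into the $W^s$ and $X_p$ norms appearing on the right-hand side of \eqref{diff_boundj'}.

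The two structurally different contributions are handled as follows. First, when the derivative difference falls on one of the $D^{\beta_k}$ factors, one gets a factor $\norm{D^{\beta_k}(u-v)}_{\dot H^{v_k,\cdot}}$ for some distribution $v_k$ of the fractional derivative, which after interpolation becomes $\norm{u-v}_{\dot W^s}$ (times $X_p$ powers), while the remaining factors produce $\norm{(u,v)}_{X_p}^{p-1}\norm{(u,v)}_{X_p}$ or $\norm{(u,v)}_{W^s}$ by counting: there are $h_1+h_2\le p$ derivative-carrying factors plus the prefactor $|\partial^h g(u)|\lesssim |u|^{p+1-h}$, and the total homogeneity is $p+1$, matching exactly the bookkeeping in Lemma \ref{L:boundg_onepower}. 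Second, for the prefactor difference one writes $(\partial^h g)(u)-(\partial^h g)(v) = (u-v)\int_0^1 (\partial^{h+1}g)(v+t(u-v))\,dt$ plus the conjugate term, uses $|\partial^{h+1}g|\lesssim|u|^{p-h}+|v|^{p-h}$ (here $h+1\le \supent{s}+1$ and $\supent{s}\le p$ ensure the exponent $p-h\ge 0$, which is where the hypothesis $\supent{s}\le p$ is needed), and again distributes fractional derivatives by Lemma \ref{lm fractional chain rule} / Lemma \ref{lm product rule 2}. In every case the net exponent on $(u,v)$ in $X_p$ is $p-1$ or $p$, reproducing \eqref{diff_boundj'}; the estimate \eqref{diff_boundj''} is the same computation without the fractional derivative, using only Hölder and \eqref{relation}.

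The main obstacle is the bookkeeping: one must verify that for \emph{every} term produced by the Leibniz expansion there is an admissible choice of Hölder exponents (all strictly between $1$ and $\infty$, summing correctly via \eqref{relation}) and an admissible choice of how to split the fractional derivative $|\nabla|^v$ among the factors, such that after Gagliardo–Nirenberg interpolation each factor lands in either $\dot W^s$, $X_p$, or (for the difference) $\dot W^s$ of $u-v$ or $X_p$ of $u-v$, with the correct total power $p+1$. The delicate points are: (i) keeping the derivative count $h_1+h_2\le p$ compatible with having enough "room" in the Hölder exponents for $p$ copies of $X_p$; (ii) making sure that when the fractional part $|\nabla|^v$ is moved onto a factor that already carries integer derivatives $D^{\beta_k}$, the resulting homogeneous Sobolev norm interpolates correctly between $L^{p(d+2)/2}$ and $\dot W^s = \dot H^{s,2(d+2)/d}$ — this is exactly Lemma \ref{lm homogeneous GN inequality} applied with $s_1=0$, $s_2=s$; and (iii) the regularity threshold: the argument consumes $\supent{s}+1$ derivatives of $g$ (one more than $s$) precisely so that the prefactor-difference term is controlled, which is why Definition \ref{D:NNN} and Assumption \ref{Assum:NL} require $g\in C^{\supent{s}+1}$ and $\supent{s}\le p$ (or $g$ polynomial, in which case one simply invokes Lemma \ref{L:boundg_onepower} directly). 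Once the exponent arithmetic is set up once and for all, each of the finitely many terms is estimated by the same three-step routine (Leibniz/telescope, fractional Leibniz, Hölder + Gagliardo–Nirenberg), and summing gives \eqref{diff_boundj'} and \eqref{diff_boundj''}.
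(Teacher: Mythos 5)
Your plan is correct in spirit and uses the same raw ingredients as the paper (telescoping, Leibniz rule, fractional product rule, Gagliardo--Nirenberg, and the scaling relation \eqref{relation}), but in a different order, and that ordering matters for how much bookkeeping you end up doing.

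The paper telescopes \emph{first}: it writes $g(u)-g(v)=(u-v)\int_0^1 g_z(v+\theta(u-v))\,d\theta + (\overline{u-v})\int_0^1 g_{\bar z}(\cdots)\,d\theta$, applies the two-factor product rule (Lemma~\ref{L:product_rule}) once to $|\nabla|^s$ of each product, and thereby isolates the difference $u-v$ as a single zero-order factor for the rest of the argument. All the hard work is then concentrated in the auxiliary estimate \eqref{star}, namely $\||\nabla|^s f(u)\|_{L^a_{t,x}}\lesssim\|u\|_{\dot W^s}\|u\|_{X_p}^{p-1}$ for $f=g_z$ or $g_{\bar z}$, which is a chain-rule estimate for a \emph{single} function and never involves a difference. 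It is only at this stage that \eqref{equivalent norm}, Lemma~\ref{L:Leibniz}, the multi-factor product rule, the fractional chain rule, and Gagliardo--Nirenberg enter, and the cases $h<p$ and $h=p$ are handled there. You instead apply \eqref{equivalent norm} and Lemma~\ref{L:Leibniz} to $D^\alpha(g(u))$ and $D^\alpha(g(v))$ separately, and only \emph{then} telescope term by term. This works, but it forces you to estimate terms containing a factor $D^{\beta_k}(u-v)$ with $|\beta_k|<\ent s$; after distributing the residual fractional derivative and interpolating, such a factor produces a mixed quantity $\|u-v\|_{\dot W^s}^\theta\|u-v\|_{X_p}^{1-\theta}$ with $0<\theta<1$, and you must pair it against a matching power of $\|(u,v)\|_{\dot W^s}^{1-\theta}\|(u,v)\|_{X_p}^{\cdot}$ coming from the other factors and close with Young's inequality to recover the sum structure on the right side of \eqref{diff_boundj'}. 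The paper's ordering avoids this extra Young step altogether. Also, your opening sentence slightly overstates things: you do not literally reduce to Lemma~\ref{L:boundg_onepower}, you run a parallel expansion; the paper likewise does not invoke that lemma here. Both routes lead to the result, and you correctly identify where the hypotheses $g\in C^{\supent s+1}$ and $\supent s\le p$ enter (so that $\partial^{h+1}g$ exists and $p-h\ge 0$). Your version is sound but is roughly the paper's Step~3 applied to a more cluttered expression; telescoping before Leibniz is the tidier choice.
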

\begin{proof}
We have 
\[
g(u)-g(v)=(u-v)\int_0^1 g_z (v+\theta (u-v))\,d\theta+\overline{u-v}\int_0^1 g_{\overline{z}}(v+\theta(u-v))\,d\theta,
\]
where $g_z=\frac{\partial g}{\partial z}$, $g_{\overline{z}}=\frac{\partial g}{\partial \overline{z}}$.
Since $\left|\frac{\partial g}{\partial z}\right|,\left|\frac{\partial g}{\partial \overline{z}}\right|\leq C |z|^{p}$, we have
\[
|g(u)-g(v)|\leq C|u-v|(|u|^{p}+|v|^{p}).
\]
Thus, by H\"older inequality and \eqref{relation},
\begin{equation}
\label{T20}
\norm{g(u)-g(v)}_{N^0}\lesssim \norm{g(u)-g(v)}_{Z^0}\lesssim \|u-v\|_{X_p}\|(u,v)\|^{p-1}_{X_p}\|(u,v)\|_{W^0}.
 \end{equation} 

This yields \eqref{diff_boundj''}. We are left with proving \eqref{diff_boundj'} when $s>0$ . We define $a$ by
\[
\frac{1}{a}= \frac{d+4}{2(d+2)}-\frac{2}{p(d+2)}.
\]
Note that the assumption $p\geq \frac 4d$ implies $\frac{2(d+2)}{d+4}\leq a\leq \frac{d+2}{2}$. 
By using product rule Lemma \ref{L:product_rule}, we have
\begin{align}
\label{zero term}
&\norm{g(u)-g(v)}_{\dot{N}^s}\leq \norm{|\nabla|^s(g(u)-g(v))}_{Z^0}  \\
&\lesssim \norm{|\nabla|^s (u-v)}_{W^0}\left(\norm{\int_0^1 g_z(v+\theta (u-v))\,d\theta}_{L^{\frac{d+2}{2}}_{t,x}}+\norm{\int_0^1 g_{\overline{z}}(v+\theta (u-v))\,d\theta}_{L^{\frac{d+2}{2}}_{t,x}}\right)\label{first term}\\
&\quad +\norm{u-v}_{X_p}\left(\norm{\int_0^1  |\nabla|^s g_z(v+\theta (u-v))\,d\theta}_{L^{a}_{t,x}}+\norm{\int_0^1  |\nabla|^s g_{\overline{z}}(v+\theta (u-v))\,d\theta}_{L^{a}_{t,x}}\right)\label{second term}.
\end{align}
By the assumption \eqref{hyp_g_lemma}, the term \eqref{first term} is bounded as follows
\begin{equation}
\label{T32}
\eqref{first term}\lesssim
\norm{u-v}_{\dot{W}^s} (\norm{u}^{p}_{L^{p(d+2)/2}_{t,x}}+\norm{v}^{p}_{L^{p(d+2)/2}_{t,x}})
\approx \norm{u-v}_{\dot{W}^s} \norm{(u,v)}_{X_p}^{p}.
\end{equation}
We now consider the term \eqref{second term}. We will prove the following bound
\begin{equation}
\label{T33}
\eqref{second term}\lesssim \|u-v\|_{X_p}\|(u,v)\|_{\dot{W}^s}\|(u,v)\|^{p-1}_{X_p}
\end{equation}
Combining \eqref{T32} and \eqref{T33}, we obtain the bound \eqref{diff_boundj'}. We are thus left with proving \eqref{T33}.

Note that the term \eqref{second term} is bounded by
\begin{align*}
\norm{u-v}_{X_p}(\sup_{\theta\in [0,1]}\norm{|\nabla|^s g_z(v+\theta (u-v))}_{L^{a}_{t,x}}+\sup_{\theta\in [0,1]}\norm{|\nabla|^s g_{\overline{z}}(v+\theta (u-v))}_{L^{a}_{t,x}}).
\end{align*}
Letting $f=g_z$ or $f=g_{\overline{z}}$, we will prove, for a general function $u\in \dot{W}^s\cap X_p$,
\begin{equation}
 \label{star}
 \left\| |\nabla|^s (f(u))\right\|_{L^a_{t,x}} \lesssim \|u\|_{\dot{W}^s}\|u\|_{X_p}^{p-1}.
\end{equation} 
Note that this will conclude the proof of \eqref{T33}. 

The function $f$ belongs to $C^{\supent{s}}$ and satisfies 
\begin{equation}
 \label{boundf}
|f^{(k)}(z)|\lesssim |z|^{p-k}, 
 \end{equation} 

for each $0\leq k\leq\supent{s}$. We use similar argument as in \cite[Proof of Lemma 3.3]{AnKi21}. Using the equivalence of norms \eqref{equivalent norm}, we have
\begin{equation}\label{eq345}
\norm{|\nabla|^s f(u)}_{L^{a}_{t,x}} \lesssim \sum_{|\alpha|=\ent{s}}\norm{D^{\alpha}f(u)}_{L^{a}\dot{H}^{v,a}},
\end{equation}
where $v=s-\ent{s}$. By Lemma \ref{L:Leibniz}, $D^{\alpha}\left(f(u)\right)$ is a linear combinations of terms of the form 
$$ (\partial_z^{h_1}\partial_{\overline{z}}^{h_2}f)(u)\prod_{k=1}^{h_1} D^{\beta_k} u \prod_{k=1}^{h_2} D^{\gamma_k} \overline{u} ,$$
where $\sum_{k=1}^{h_1}\beta_k+\sum_{k=1}^{h_2}\gamma_k=\alpha$, $1\leq h_1+h_2\leq |\alpha|$, $|\beta_k|\geq 1$, $|\gamma_k|\geq 1$ for all $k$. To simplify notations we will only consider terms of the form
$$f^{(h)}(u)\prod_{k=1}^{h}D^{\beta_k}u,\qquad \sum_{k=1}^h \beta_k=\alpha,\; 1\leq h\leq |\alpha|,\; |\beta_k|\geq 1,$$
where $f^{(h)}=\left(\frac{\partial }{\partial z}\right)^{h}f$. The proof is the same for the other terms. We distinguish between the cases $h<p$ and $h=p$. 

\medskip

\emph{Case $h<p$.}

Using Lemma \ref{lm product rule 2} and \eqref{boundf}, we have
\begin{align}
&\norm{f^{(h)}(u)\prod_{i=1}^h D^{\beta_i}u}_{L^{a}\dot{H}^{v,a}}\label{number0}\\
&\lesssim\norm{f^{(h)}(u)}_{L^{q_1}\dot{H}^{v,q_1}}\prod_{i=1}^h\norm{D^{\beta_i}u}_{L^{r_i}_{t,x}}\label{number1}\\
&\quad +\norm{|u|^{p-h}}_{L^{q_2}_{t,x}}\sum_{k=1}^h \prod_{i=1,i\neq k}^h\norm{D^{\beta_i}u}_{L^{r_i}_{t,x}}\norm{D^{\beta_k}u}_{L^{\rho_k}\dot{H}^{v,\rho_k}}.\label{number2} 
\end{align}
where, for $i\in \llbracket 1,h\rrbracket$,
\begin{equation}
\label{def_param1}
\frac{1}{r_i}=\frac{|\beta_i|}{s}\frac{d}{2(d+2)}+\left(1-\frac{|\beta_i|}{s}\right)\frac{2}{p(d+2)}, \; \frac{1}{\rho_i}=\frac{|\beta_i|+v}{s}\frac{d}{2(d+2)}+\left(1-\frac{|\beta_i|+v}{s}\right)\frac{2}{p(d+2)}
\end{equation}
and
\begin{equation}
\label{def_param2}
\frac{1}{q_1}=\frac{v}{s}\frac{d}{2(2+d)}+\frac{2}{p(d+2)}\frac{\ent{s}}{s}+\frac{2(p-h-1)}{p(d+2)},\qquad \frac{1}{q_2}=\frac{2(p-h)}{p(d+2)}.
\end{equation}
Since $\sum_{k=1}^{h}|\beta_k|=|\alpha|=\ent{s}$, we see that the right-hand sides of the equalities in \eqref{def_param1} and \eqref{def_param2} are positive, and thus that $q_1$, $q_2$, and, for $i=\llbracket 1,h\rrbracket$, $r_i$ and $\rho_i$ are finite and positive. Using also that $v=s-\ent{s}$, we obtain
$$ \frac{1}{a}=\frac{1}{q_1}+\sum_{i=1}^h r_i=\frac{1}{q_2}+\sum_{\substack{1\leq i\leq h\\ i\neq k}}\frac{1}{r_i}+\frac{1}{\rho_k},$$
which proves that $q_1$, $q_2$, and, for $i=\llbracket 1,h\rrbracket$, $r_i$ and $\rho_i$ are all greater than $a$, and that the assumptions of Lemma \ref{lm product rule 2} are satisfied.

By Gagliardo-Nirenberg inequality Lemma \ref{lm homogeneous GN inequality} and the definition of $r_i$, $\rho_i$, we see that 
\begin{equation*}
 \left\| D^{\beta_i}u\right\|_{L^{r_i}_x}\lesssim \||\nabla|^su\|_{L^{\frac{2(2+d)}{d}}_x}^{\frac{|\beta_i|}{s}}\|u\|_{L^{\frac{p(d+2)}{2}}_x}^{1-\frac{|\beta_i|}{s}},\quad 
 \left\| D^{\beta_i}u\right\|_{\dot{H}^{v,\rho_i}}\lesssim \||\nabla|^su\|_{L^{\frac{2(2+d)}{d}}_x}^{\frac{|\beta_i|+v}{s}}\|u\|_{L^{\frac{p(d+2)}{2}}_x}^{1-\frac{|\beta_i|+v}{s}}.
\end{equation*} 
Integrating in time and using H\"older inequality, we obtain
\begin{equation}
\label{T60}
 \left\| D^{\beta_i}u\right\|_{L^{r_i}_{t,x}}\lesssim \|u\|_{\dot{W}^{s}}^{\frac{|\beta_i|}{s}}\|u\|_{X_p}^{1-\frac{|\beta_i|}{s}},\quad \left\| D^{\beta_i}u\right\|_{L^{\rho_i}_t\dot{H}^{v,\rho_i}} \lesssim\|u\|_{\dot{W}^{s}}^{\frac{|\beta_i|+v}{s}}\|u\|_{X_p}^{1-\frac{|\beta_i|+v}{s}}
\end{equation} 
By \eqref{T60}, we obtain: 
\begin{equation*}
\text{\eqref{number2}}\lesssim \norm{u}_{X_p}^{p-1}\norm{u}_{\dot{W}^s}.
\end{equation*}
Using the first inequality in  \eqref{T60}, \eqref{number1} is estimated by 
\begin{align}\text{\eqref{number1}}
&\lesssim \norm{f^{(h)}(u)}_{L^{q_1}\dot{H}^{v,q_1}}\prod_{i=1}^h \norm{u}_{\dot{W}^{s}}^{\frac{|\beta_i|}{s}}\norm{u}_{X_p}^{1-\frac{|\beta_i|}{s}}\nonumber\\
&=\norm{f^{(h)}(u)}_{L^{q_1}\dot{H}^{v,q_1}}\norm{u}_{\dot{W}^s}^{\frac{\ent{s}}{s}}\norm{u}_{X_p}^{h-\frac{\ent{s}}{s}}\label{number3}.
\end{align} 
In \eqref{number3}, we will estimate $\norm{f^{(h)}(u)}_{L^{q_1}\dot{H}^{v,q_1}}$ by $\norm{u}_{X_p}$ and $\norm{u}_{\dot{W}^{s}}$ using fractional chain rule Lemma \ref{lm fractional chain rule} and Gagliardo-Nirenberg inequality Lemma \ref{lm homogeneous GN inequality}.   

If $s \notin \N$ then $\supent{s} \geq h+1$ (because $h \leq |\alpha|=\ent{s}$). Using Fractional chain rule Lemma \ref{lm fractional chain rule}, we have
\begin{align*}
\norm{f^{(h)}(u)}_{L^{q_1}\dot{H}^{v,r_1}}
&\lesssim \norm{f^{(h+1)}(u)}_{L^{q_{3}}_{t,x}}\norm{u}_{L^{q_{4}}\dot{H}^{v,q_{4}}}\\
&\lesssim \norm{|u|^{p-h-1}}_{L^{q_{3}}_{t,x}}\norm{u}_{L^{q_{4}}\dot{H}^{v,q_{4}}}\\
&=\norm{u}^{p-h-1}_{L^{(p-h-1)q_{3}}_{t,x}}\norm{u}_{L^{q_{4}}\dot{H}^{v,q_{4}}},
\end{align*}
where 
\begin{equation*}
\frac{1}{q_3}=\frac{2(p-h-1)}{p(d+2)},\quad 
\frac{1}{q_4}=\frac{1}{q_1}-\frac{1}{q_3}=\frac{v}{s}\frac{d}{2(2+d)}+\frac{\ent{s}}{s}\frac{2}{p(d+2)}.
\end{equation*}
For $(q_4,r_4)$ as above, using Gagliardo-Nirenberg inequality Lemma \ref{lm homogeneous GN inequality}, we have
\[
\norm{u}_{L^{q_4}\dot{H}^{v,r_4}}\lesssim \norm{u}_{\dot{W}^s}^{\frac{v}{s}}\norm{u}_{X_p}^{\frac{\ent{s}}{s}}.
\]
Thus, for $s\notin \N$, 
\begin{equation}
 \label{T80}
\eqref{number1}\lesssim \eqref{number3}\lesssim \norm{u}^{p-h-1}_{X_p}\norm{u}_{\dot{W}^s}^{\frac{v}{s}}\norm{u}_{X_p}^{\frac{\ent{s}}{s}}\norm{u}_{\dot{W}^s}^{\frac{\ent{s}}{s}}\norm{u}_{X_p}^{h-\frac{\ent{s}}{s}}=\norm{u}_{\dot{W}^s}\norm{u}_{X_p}^{p-1},
\end{equation}
yielding \eqref{star}.
If $s\in\N$ then $v=0$, $\sum_{i=1}^k|\beta_i|=s$, and $
\frac{1}{q_1}=\frac{p-h}{p(d+2)/2}$. Using \eqref{T60}, we obtain
\begin{equation*}
\eqref{number1}\lesssim \eqref{number3}
\lesssim \norm{|u|^{p-h}}_{L^{q_1}_{t,x}}\norm{u}_{\dot{W}^s}\norm{u}_{X_p}^{h-1}=\norm{u}_{X_p}^{p-1}\norm{u}_{\dot{W}^s},
\end{equation*}
which proves \eqref{star} in this case also.

\medskip

\emph{Case $h=p$.}

In this case, we have $p=|\alpha|$, and thus, since $|\alpha|=\ent{s}\leq \supent{s}\leq p$, $s$ is an integer, $v=0$ and $s=p$.

By the assumptions on $f$, we have $|f^{(h)}(u)|\lesssim 1$. Thus
$$\eqref{number0}\lesssim \norm{f^{(h)}(u)\prod_{i=1}^h D^{\beta_i}u}_{L^{a}\dot{H}^{v,a}}\lesssim \norm{\prod_{i=1}^h D^{\beta_i}u}_{L^{a}\dot{H}^{v,a}}\lesssim \prod_{i=1}^h \norm{D^{\beta_i}u}_{L^{r_i}_{t,x}},$$
where the $r_i$ are defined as above. Using \eqref{T60} and since $\sum_{i=1}^h|\beta_i|=s$, and $h=p$, we obtain \eqref{star}, which concludes the proof of the Lemma. 
\end{proof}

\begin{proof}[Proof of Proposition \ref{P:boundg}]
Proposition \ref{P:boundg} follows easily from Lemmas \ref{L:boundg_onepower} and \ref{L:NL}. We fix $g\in \NNN(s,p_0,p_1)$ with $0\leq s$, $p_0\geq q_d$ and $p_0\geq\supent{s}$. All the norms used are over the interval $I$.

The estimate \eqref{bound_g} is exactly \eqref{diff_bound} with $v=0$. To prove \eqref{diff_bound}, we decompose $g(u)$ as follows:
$$ g(u)=P(u)+\tilde{g}(u).$$
Where $P(u)$ is the Taylor expansion of $g$ at $u=0$ up to order $\supent{s}$. As a consequence $P$ is a polynomial of the form 
$$P(u)=\sum_{p_1+1\leq k_1+k_2\leq \supent{s}} a_{k_1,k_2}u^{k_1}\overline{u}^{k_2}.$$
By Lemma \ref{L:boundg_onepower}, using that by the definition of $X$ and the assumptions $p_1\geq \frac{4}{d}$ and $p_0\geq \supent{s}$ we have
$$p_1\leq q\leq \supent{s}-1 \Longrightarrow\|u\|_{X_{q}}\leq \|u\|_{X},$$
we obtain
\begin{multline}
\label{estim:polyn}
 \left\|P(u)-P(v)\right\|_{N^s}\lesssim 
  \left(\|u\|_{X}^{p_0-1}+\|v\|_{X}^{p_0-1}+\|u\|_{X}^{p_1-1}+\|v\|_{X}^{p_1-1}\right)\\
  \times \Big[ \|u-v\|_{\dot{W}^s}\left(\|u\|_{X}+\|v\|_{X}\right)+\|u-v\|_{X}\left(\|u\|_{W^s}+\|v\|_{W^s}\right)\Big].
  \end{multline}
  Next, we notice that since $g\in \NNN(s,p_0,p_1)$, the definition of $P$ and the inequalities $p_1<p_0$ and $\supent{s}\leq p_0$ imply, for $k\in \llbracket 0,\supent{s}+1\rrbracket$, $z\in \C$,
  \begin{equation*}
  |z|\geq 1\Longrightarrow  \left|\tilde{g}^{(k)}(z)\right|\leq C |z|^{p_0+1-k},\quad |z|\leq 1\Longrightarrow  \left|\tilde{g}^{(k)}(z)\right|\leq C |z|^{\supent{s}+1-k}.
  \end{equation*}

  We let $\chi$ be a smooth function such that 
\[
\chi=\left\{
\begin{matrix}
1 \quad\text{ if } |x|<1,\\
0 \quad\text{ if } |x|>2.
\end{matrix}
\right.
\] 
Set $g_0(u)=(1-\chi(u))\tilde{g}(u)$, $g_1(u)=\chi(u)\tilde{g}(u)$, so that $g_0$ satisfies the assumptions of Lemma \ref{L:NL} with $p=\max(p_0,\supent{s})=p_0$, and $g_1$ the same assumptions with $p=\supent{s}$. Combining the conclusion of this Lemma for $g_0$, $g_1$ with the estimate \eqref{estim:polyn}, we obtain the conclusion of the proposition.
\end{proof}

\subsection{Cauchy and stability theory for general nonlinearities}
\label{sub:Cauchy}
In all this subsection, we fix $g$, $p_0$, $p_1$, $s$ such that Assumption \ref{Assum:NL} holds. We assume furthermore
\begin{equation}
 \label{Assum_LWP}
s\geq s_0=\frac{d}{2}-\frac{2}{p_0}. 
 \end{equation}

Note that these assumptions are satisfied for a sum of powers:
$$ g(u)=\sum_{j=0}^{k} \lambda_j |u|^{p_j}u,$$
where $k\geq 0$, $ \frac{4}{d}\leq p_1<\ldots<p_{k-1}<p_0$, $\lambda_j\in \R$ for all $j$, provided $g\in C^{\supent{s}+1}$ (which is the case, for example, when all the $p_j$ are even integers), $s\geq \frac{d}{2}-\frac{2}{p_0}$, and $\supent{s}\leq p_0$ if $g$ is not a polynomial.

\begin{definition}
Let $0\in I$ be an interval. By definition, a solution $u$ to \eqref{NLS} on $I$, with initial data in $H^s$ ($s\geq s_0$) is a function $u\in C(I,H^s) $ such that for all $K \subset I$ compact, $u\in S^s(K)$ and $u$ satisfies the following Duhamel formula
\begin{equation}
 \label{Duhamel}
u(t)=e^{it\Delta}u_0-i\int_0^t e^{i(t-\tau)\Delta}g(u)(\tau)\,d\tau,
 \end{equation} 
for all $t\in I$.
\end{definition}
We recall from \eqref{defX} the definition of $X(I)$. 
Noting that the assumption $p_0\geq \frac{4}{d}$ implies $p_0(d+2)/2>2$ (and $p_0(d+2)/2\geq 6$ if $d=1$), we can choose $q_0$ such that $(p_0(d+2)/2,q_0)$ is an admissible pair. By Sobolev inequality and the definitions of $s_0$, $q_0$, one can check
\begin{equation}
 \label{inclusion}
 \|u\|_{L^{\frac{p_0(d+2)}{2}}_x}\lesssim \left\| |\nabla |^{s_0} u\right\|_{L^{q_0}}
 \end{equation} 
and thus
\begin{equation}
 \label{Ss_X}
 \|u\|_{L^{\frac{p_0(d+2)}{2}}(I\times \R^d)}\lesssim \||\nabla |^{s_0}u\|_{S^{0}(I)},\quad \|u\|_{X(I)}\lesssim \|u\|_{S^{s_0}(I)}.
\end{equation}

\begin{proposition}\label{P:local wellposed}
Let $u_0\in H^s$. Let $g$, $p_0$, $p_1$, $s$ such that Assumption \ref{Assum:NL}  holds and $s\geq\frac{d}{2}-\frac{2}{p_0}$. Let $0\in I$ be an interval of $\R$, and $A>0$. Assume that $\norm{u_0}_{H^s}\leq A$ and
\begin{align*}
\norm{e^{i\cdot\Delta}u_0}_{X(I)}= \delta\leq \delta_0(A) \text{ small}.
\end{align*}
Then there exists a unique solution $u$ to \eqref{NLS} such that
\begin{equation*}
u(0)=u_0,\quad
\norm{u}_{S^s(I)}\lesssim \norm{u_0}_{H^s},\quad\norm{u}_{X(I)}\leq 2\delta.
\end{equation*}
Moreover, if $u_{0,k} \rightarrow u_0$ in $H^s$ (so that, for $k$ large, $\norm{e^{it\Delta}u_{0,k}}_{X(I)}< \delta$) then the corresponding solution $u_{k} \rightarrow u$ in $C(I,H^s)$.
\end{proposition}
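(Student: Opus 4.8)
The plan is a standard Banach fixed-point argument, carried out at the regularity level $s$ itself, for the Duhamel map
$$\Phi(u)(t):=e^{it\Delta}u_0-i\int_0^t e^{i(t-\tau)\Delta}g(u)(\tau)\,d\tau,$$
whose fixed points on $I$ are exactly the solutions of \eqref{NLS} (note $g(0)=0$ since $g\in\NNN(s,p_0,p_1)$, so $\Phi$ is well defined). Fix a constant $C_0\geq 1$ bounding the Strichartz estimates together with the embeddings $S^0(I)\subset W^0(I)$ and $Z^0(I)\subset N^0(I)$, the Sobolev inclusion $\|w\|_{X(I)}\lesssim\|w\|_{S^{s_0}(I)}$ of \eqref{Ss_X}, and the elementary multiplier bounds $\|w\|_{\dot W^s(I)}+\|w\|_{W^s(I)}\lesssim\|w\|_{S^s(I)}$ and $\|h\|_{N^{s_0}(I)}\lesssim\|h\|_{N^s(I)}$ (the last because $\langle\nabla\rangle^{s_0-s}$, with $s_0\leq s$, is a bounded multiplier on the dual Strichartz spaces). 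Set $M:=2C_0A$ and let
$$\mathcal{B}:=\bigl\{u\in C(I,H^s):\ \|u\|_{S^s(I)}\leq M,\ \|u\|_{X(I)}\leq 2\delta\bigr\},$$
with metric $d(u,v):=\|u-v\|_{S^s(I)}$. Since $\|\cdot\|_{X(I)}$ is continuous for the $S^s(I)$-topology (by \eqref{Ss_X} and $s\geq s_0$), $\mathcal{B}$ is a closed subset of the Banach space $S^s(I)$, hence complete, and it is nonempty (it contains $e^{it\Delta}u_0$).

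\emph{$\Phi$ maps $\mathcal{B}$ into itself.} For $u\in\mathcal B$, Strichartz and \eqref{bound_g}, with $\|u\|_{W^s(I)}\leq C_0 M$ and $\|u\|_{X(I)}\leq 2\delta$, give
$$\|\Phi(u)\|_{S^s(I)}\leq C_0\|u_0\|_{H^s}+C_0\|g(u)\|_{N^s(I)}\leq C_0 A+C_1 M\bigl((2\delta)^{p_0}+(2\delta)^{p_1}\bigr),$$
which is $\leq 2C_0A=M$ as soon as $2C_1\bigl((2\delta)^{p_0}+(2\delta)^{p_1}\bigr)\leq 1$, an absolute smallness threshold on $\delta$. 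For the $X$-bound, using \eqref{Ss_X}, the Strichartz estimate at level $s_0$, and $\|g(u)\|_{N^{s_0}(I)}\lesssim\|g(u)\|_{N^s(I)}$,
$$\|\Phi(u)\|_{X(I)}\leq\|e^{it\Delta}u_0\|_{X(I)}+C_0\Bigl\|\int_0^t e^{i(t-\tau)\Delta}g(u)(\tau)\,d\tau\Bigr\|_{S^{s_0}(I)}\leq\delta+C_2 M\bigl((2\delta)^{p_0}+(2\delta)^{p_1}\bigr),$$
and since $M=2C_0A$ and $(2\delta)^{p_0}+(2\delta)^{p_1}\lesssim\delta^{p_1}$ for $\delta\leq\tfrac12$, this is $\leq 2\delta$ provided $\delta\leq\delta_0(A):=(C_3A)^{-1/(p_1-1)}$. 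This is the step that forces $\delta_0$ to depend on $A$.

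\emph{$\Phi$ is a contraction.} For $u,v\in\mathcal B$, Strichartz and the difference estimate \eqref{diff_bound} give
$$d(\Phi(u),\Phi(v))\leq C_0\|g(u)-g(v)\|_{N^s(I)}\lesssim\bigl(\|(u,v)\|_{X}^{p_1-1}+\|(u,v)\|_{X}^{p_0-1}\bigr)\Bigl[\|u-v\|_{\dot W^s}\|(u,v)\|_{X}+\|u-v\|_{X}\|(u,v)\|_{W^s}\Bigr].$$
Bounding $\|(u,v)\|_{X}\leq 4\delta$, $\|u-v\|_{\dot W^s}\leq C_0\,d(u,v)$, $\|u-v\|_{X}\leq C_0\|u-v\|_{S^{s_0}}\leq C_0\,d(u,v)$ and $\|(u,v)\|_{W^s}\leq 2C_0 M$, the right-hand side is $\lesssim\bigl((4\delta)^{p_1-1}+(4\delta)^{p_0-1}\bigr)(4\delta+M)\,d(u,v)$, which is $\leq\tfrac12 d(u,v)$ for $\delta\leq\delta_0(A)$, after shrinking $\delta_0(A)$ if necessary (the dominant coefficient has size $\sim A\,\delta^{p_1-1}$). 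By the Banach fixed-point theorem $\Phi$ has a unique fixed point $u\in\mathcal B$; the Duhamel formula then forces $u\in C(I,H^s)$. Running the argument with $A=\|u_0\|_{H^s}$ (admissible since $\delta_0$ is non-increasing, so $\delta\leq\delta_0(A)$ still holds) yields $\|u\|_{S^s(I)}\leq 2C_0\|u_0\|_{H^s}$ and $\|u\|_{X(I)}\leq 2\delta$. Uniqueness among all solutions with $u(0)=u_0$, $\|u\|_{S^s}\lesssim\|u_0\|_{H^s}$ and $\|u\|_X\leq 2\delta$ follows by enlarging $M$ in the definition of $\mathcal B$ to absorb the implicit constant and repeating, or by a standard continuity-in-$t$ argument.

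\emph{Continuous dependence and the main difficulty.} If $u_{0,k}\to u_0$ in $H^s$, then $\|e^{it\Delta}(u_{0,k}-u_0)\|_{X(I)}\to 0$ by Strichartz, so for $k$ large $u_{0,k}$ satisfies the hypotheses and the corresponding $u_k$ lies in $\mathcal B$ (enlarging the constants in its definition by a bounded factor if needed); the contraction estimate applied to $u_k$ and $u$ gives $\|u_k-u\|_{S^s(I)}\leq C_0\|u_{0,k}-u_0\|_{H^s}+\tfrac12\|u_k-u\|_{S^s(I)}$, hence $\|u_k-u\|_{S^s(I)}\to 0$, and since $L^\infty(I,H^s)$ is a component of $S^s(I)$, $u_k\to u$ in $C(I,H^s)$. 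The argument is essentially routine once Proposition \ref{P:boundg} is available; the only points that require care are that the $X(I)$-constraint defining $\mathcal B$ is of a different nature from the $S^s(I)$-constraint — the latter being of size $\sim A$, not small — and so must be propagated and checked separately, which is precisely why $\delta_0$ must depend on $A$; and that the \emph{homogeneous} norm $\|u-v\|_{\dot W^s}$ appearing in \eqref{diff_bound} is harmless here because $\|u-v\|_{\dot W^s}\lesssim\|u-v\|_{S^s}$ (its homogeneous character becomes relevant only in the scaling-critical arguments of Section \ref{S:profile}).
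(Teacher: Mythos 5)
Your proof is correct and follows essentially the same route as the paper: a Banach fixed-point argument for the Duhamel map on the ball $\{u:\ \|u\|_{S^s(I)}\leq M,\ \|u\|_{X(I)}\leq 2\delta\}$ equipped with the $S^s(I)$ metric, powered by the nonlinear estimates of Proposition \ref{P:boundg}, Strichartz, and the embedding \eqref{Ss_X}. You spell out a few points the paper leaves implicit (completeness of the ball, the continuous-dependence step, and the reason $\delta_0$ must depend on $A$), but the decomposition of the argument and the key estimates coincide with the paper's proof.
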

\begin{remark}
By Strichartz estimates, \eqref{Ss_X} and Proposition \ref{P:local wellposed}, if $\norm{u_0}_{H^s}$ is small then $u$ is global and 
\[
\norm{u}_{S^s}\lesssim \norm{u_0}_{H^s}.
\]
\end{remark}
\begin{proof}
We use similar argument as in \cite[Lemma 2.5]{KeMe06}. We work on the interval $I$ for all norms.

Consider 
\[
B=\Big\{ u:\; \norm{u}_{X} \leq 2\delta,\; \norm{u}_{S^{s}} \leq MA \Big\},
\]
for some large universal contant $M$, with the topology induced by the norm in $S^s(I)$. 
%Rewrite \eqref{NLS} in Duhamel form:
% \[
% u(t)=e^{it\Delta}u_0-i\int_0^t e^{i(t-\tau)\Delta}g(u)(\tau), d\tau.
% \]
We denote by $\Psi(u)$ the right hand side of \eqref{Duhamel}. We show that if $\delta\leq \delta_0(A)$ small enough, and $M$ large enough (independently of $A$), $\Psi$ is a contraction map on $B$. 

For $u\in B$, we have, by Strichartz estimates and Proposition \ref{P:boundg}
\begin{multline*}
\norm{\Psi(u)}_{X(I)}\leq \norm{e^{it\Delta}u_0}_{X(I)}+\norm{\int_0^te^{i(t-\tau)\Delta}g(u)(\tau)\,d\tau}_{X(I)}\\
\leq \delta+C \norm{\int_0^te^{i(t-\tau)\Delta}g(u)(\tau)\,d\tau}_{S^s(I)}\leq \delta +C\|g(u)\|_{N^s(I)}\leq \delta+C \|u\|_{S^s}\left(\|u\|_{X}^{p_1}+\|u\|_{X}^{p_0}\right)
\end{multline*}
Thus,
\[
\norm{\Psi(u)}_{X(I)}\leq \delta+CM A\delta^{p_1}\leq 2\delta,
\]
if $\delta$ is small enough (so that $CM A\delta^{p_1-1}\leq 1$). 
Similarly,
\begin{equation*}
\norm{\Psi(u)}_{S^s}\leq\norm{e^{i\cdot\Delta}u_0}_{S^s}+C\norm{g(u)}_{N^s}\leq CA+ CM A\delta^{p_1}\leq MA,
\end{equation*}
choosing $M\geq 2C$, and $\delta\leq \delta_0(A)$ small enough. Thus $\Psi(u)\in B$.

Now, let $u,v\in B$. We have, by Strichartz estimates and Proposition \ref{P:boundg},
\begin{multline*}
\norm{\Psi(u)-\Psi(v)}_{S^s} \leq C\norm{g(u)-g(v)}_{N^s}\leq \|u-v\|_{S^s}\left(\|u\|_{S^s}+\|v\|_{S^s}\right)\left(\|u\|^{p_1-1}_X+\|v\|^{p_1-1}_X\right)\\
\leq C\delta^{p_1-1}MA\|u-v\|_{S^s}
\end{multline*}
Thus, taking $\delta$ small enough, we obtain that $\Psi$ is a contraction map on $B$. By the fixed point theorem, there exists a unique $u\in B$ such that $u=\Psi(u)$. Thus, $u$ is a solution to \eqref{NLS}. Also, since $u\in B$ (and since we can take $A=\|u_0\|_{H^s}$) we have as anounced $\|u\|_{S(I)}\lesssim \|u_0\|_{H^s}$, $\|u\|_{X(I)}\leq 2\delta$.
If $\norm{u_0}_{H^s}$ is small then $
\norm{e^{i\cdot\Delta}u_0}_{X(\R)} \leq C\norm{u_0}_{H^s}$ is small. Thus, $u$ is global and $
\norm{u}_{S^s(\R)} \leq M\norm{u_0}_{H^s}.$
\end{proof}

\begin{remark}
We have $\norm{e^{i\cdot\Delta}u_0}_{X(\R)} \leq C \norm{u_0}_{H^s}<\infty$. Thus, for each $\varepsilon>0$, there exists a small interval $I$ around $0$ such that $\norm{e^{i\cdot\Delta}u_0}_{X(I)}<\varepsilon$. This implies local existence of solution.
\end{remark}

\begin{remark}
\label{R:maximal}
If $u,v$ are two solutions to \eqref{NLS} on $0\in I$ such that $u(0)=v(0)$ then $u \equiv v$. We first show this assertion when $I'\subset I$ is a small interval around $0$. Shrinking $I'$ if necessary, we assume 
$$\|u\|_{X(I')}\leq\delta_0(A),\quad \|v\|_{X(I')}\leq \delta_0(A),$$
with $\delta_0(A)$ as in Proposition \ref{P:local wellposed}, and
$$A=\max_{t\in I'}\|u(t)\|_{H^s}+\|v(t)\|_{H^s}.$$
Thus, $u,v \in B$ (where $B$ is as in the preceding proof), which shows that $u=v$ on $I'$. Repeating this argument, we deduce that $u \equiv v$ on all $I$. This allows us to define a maximal interval $I_{\max}=(T_{-},T_{+})$ with $T_{-}<0$, $T_{+}>0$.
\end{remark}

\begin{lemma}
\label{L:scatt_criterion}
Let $u$ be a maximal solution on $I_{\max}$ as in Proposition \ref{P:local wellposed}, Remark \ref{R:maximal}. Assume that
\[
u\in X([0,T_{+})).
\] 
Then $T_{+}=\infty$, and $u$ scatters in the future in in $H^s$. A similar result holds in the past, if $u\in X((T_{-},0])$.
\end{lemma}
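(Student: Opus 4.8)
The statement is the standard "scattering criterion": finiteness of the $X$-norm on $[0,T_{+})$ implies $T_{+}=\infty$ and existence of a scattering state. First I would show $T_{+}=\infty$. Suppose for contradiction $T_{+}<\infty$. Since $u\in X([0,T_{+}))$ and $X([0,T_{+}))=X_{p_0}\cap X_{p_1}$ are Lebesgue spaces in $(t,x)$, for any $\eta>0$ there exists $t_0<T_{+}$ with $\|u\|_{X([t_0,T_{+}))}<\eta$. Now I want to upgrade this to a full $S^s([t_0,T_{+}))$ bound by a bootstrap. Using the Duhamel formula based at $t_0$, Strichartz estimates, and Proposition \ref{P:boundg} (estimate \eqref{bound_g}),
\[
\|u\|_{S^s([t_0,T))}\lesssim \|u(t_0)\|_{H^s}+\|g(u)\|_{N^s([t_0,T))}\lesssim \|u(t_0)\|_{H^s}+\|u\|_{S^s([t_0,T))}\left(\|u\|_{X([t_0,T))}^{p_0}+\|u\|_{X([t_0,T))}^{p_1}\right)
\]
for any $t_0\le T<T_{+}$. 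Since $\|u\|_{X([t_0,T))}\le\eta$ and $t\mapsto\|u\|_{S^s([t_0,t))}$ is continuous and vanishes at $t=t_0$, a continuity argument absorbs the nonlinear term and gives $\|u\|_{S^s([t_0,T_{+}))}\lesssim \|u(t_0)\|_{H^s}<\infty$, uniformly in $T$. In particular $\|u(t)\|_{H^s}$ stays bounded as $t\to T_{+}$, and $\|e^{i\cdot\Delta}u(t_0)\|_{X(I)}<\delta_0$ on a fixed-length interval; applying Proposition \ref{P:local wellposed} with data $u(t)$ for $t$ close to $T_{+}$ extends the solution beyond $T_{+}$, contradicting maximality. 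Hence $T_{+}=\infty$.

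**Scattering.** Once $T_{+}=\infty$, the same bootstrap applied on $[t_0,\infty)$ gives $u\in S^s([t_0,\infty))$, hence $g(u)\in N^s([t_0,\infty))$. Define
\[
v_0 = u(t_0) - i\int_{t_0}^{\infty} e^{-i\tau\Delta}g(u)(\tau)\,d\tau,
\]
which belongs to $H^s$ because the integral converges absolutely in $H^s$ (by the $N^s$ bound and the inhomogeneous Strichartz/energy estimate). Then from the Duhamel formula
\[
e^{-it\Delta}u(t) - v_0 = i\int_t^{\infty} e^{-i\tau\Delta}g(u)(\tau)\,d\tau,
\]
and the right-hand side tends to $0$ in $H^s$ as $t\to\infty$ since it is the tail of a convergent integral; equivalently $\|u(t)-e^{it\Delta}v_0\|_{H^s}\to 0$, applying Strichartz to the Duhamel remainder on $[t,\infty)$ and using $\|g(u)\|_{N^s([t,\infty))}\to 0$. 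This is exactly scattering in $H^s$. The past-directed statement is identical with intervals reversed.

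**Main obstacle.** The only genuinely delicate point is the bootstrap/continuity argument: I need to know a priori that $\|u\|_{S^s([t_0,t))}$ is finite and continuous in $t$ on $[t_0,T_{+})$ (so that the "absorb the small nonlinear term" trick is legitimate rather than circular). Finiteness on compact subintervals is part of the definition of a solution, and continuity in $t$ follows from dominated convergence for the Strichartz norms, so this is routine but must be stated carefully. A secondary subtlety in the supercritical case $s>1$ (or $s=s_0$ large) is that Proposition \ref{P:boundg} only controls $\|g(u)\|_{N^s}$ by $\|u\|_{W^s}$ times powers of $\|u\|_{X}$, not by $\|u\|_{S^s}$ directly — but since $S^s\subset W^s$ continuously, this causes no difficulty. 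No new ideas beyond the Cauchy theory already developed are needed; the proof is a packaging of Proposition \ref{P:boundg}, Strichartz estimates, and Proposition \ref{P:local wellposed}.
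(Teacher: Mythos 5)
Your proposal is correct and follows essentially the same route as the paper: both reduce to a small-$X$-norm interval, use Strichartz together with the nonlinear bound from Proposition~\ref{P:boundg} to absorb the nonlinear term and obtain a global $S^s$ bound, then extend via Proposition~\ref{P:local wellposed} to rule out $T_{+}<\infty$, and finally read off scattering from $g(u)\in N^s([0,\infty))$. The only cosmetic difference is that the paper partitions all of $[0,T_{+})$ into finitely many intervals with small $X$-norm and iterates across them, while you isolate only the last such interval and treat the initial compact piece by the definition of a solution; both bookkeepings are valid, and your explicit remark about the continuity/finiteness of $t\mapsto\|u\|_{S^s([t_0,t])}$ is the same subtlety the paper handles implicitly by working with compact subintervals and passing to the limit.
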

\begin{remark}
 Lemma \ref{L:scatt_criterion} implies the blow-up criterion $T_{+}<\infty \Longrightarrow \|u\|_{X(0,T_{+})}=\infty$ (and similarly for $T_{-}$). In the case $s>\frac{d}{2}-\frac{2}{p_0}$ we can also show (see e.g. \cite{CaWe90})  that if $T_{+}(u)$ is the maximum time of existence of $u$ and $T_{+}(u)<+\infty$ then
\[
\norm{u(t)}_{H^s}\rightarrow \infty, \text{ as } t\rightarrow T_{+}(u).
\]
\end{remark}
\begin{proof}
We only work on $[0,T_{+})$. In $(T_{-},0]$, we use similar argument. We only need to prove that
\begin{equation}
\label{finite_S}
\norm{u}_{S^s(0,T_{+})}<\infty.
\end{equation}
Indeed, if \eqref{finite_S} holds and $T_{+}$ is finite, then Proposition \ref{P:local wellposed} shows that one can extend the solution $u$ beyond $T_{+}$, a contradiction with the definition of $T_{+}$. Thus $T_{+}=\infty$, and Proposition \ref{P:boundg} implies $\|g(u)\|_{N([0,\infty))}<\infty$, which implies by standard arguments scattering in $H^s$.

% Let $\alpha$, $\beta$ be as in \eqref{def_alpha_beta}. Let
% $$ \varphi(t)=\left\|\langle \nabla\rangle^s u(t)\right\|_{L^{\frac{2d}{d-2}}},\quad \rho(t)=\|u(t)\|^{p_1}_{\frac{p_1(d+2)}{2}}+\|u(t)\|^{p_0}_{\frac{p_0(d-2)}{2}},$$
% and not that the assumption on $u$ implies $\rho \in L^{\frac{d-2}{2}}(0,T_{+})$. By Strichartz estimates, and Lemma \ref{L:product_rule}, for all $0\leq T<T_{+}$.
% \begin{equation*}
%  \|\varphi\|_{L^2(0,T)}\lesssim \|u_0\|_{H^s}+\left\| \langle \nabla\rangle^s (g_j(u)+g_0(u))\right\|_{L^{\alpha'}\left(0,T,L^{\beta'}\right)}\lesssim \|u_0\|_{H^s}+\left\| \rho \varphi\right\|_{L^{\alpha'}(0,T)}.
% \end{equation*}
% This is a Gronwall type inequality. By Lemma 8.1 of \cite{Ca..} we obtain $\varphi\in L^2(0,T_{+})$, i.e. $\langle u \rangle^s\in
% L^2(0,T_{+}L^{\frac{2d}{d-2}}$.

We divide $[0,T_{+})$ into finite intervals $I_k$ such that $\norm{u}_{X(I_k)}<\varepsilon$. On each $I_k=[t_k,t_{k+1}]$, we have
\begin{align*}
\norm{u}_{S^s(I_k)}&\leq C\norm{u(t_k)}_{H^s}+C\norm{u}_{S^s(I_k)}(\norm{u}_{X(I_k)}^{p_1}+\norm{u}_{X(I_k)}^{p_0})\\
&\leq C\norm{u(t_k)}_{H^s}+C\norm{u}_{S^s(I_k)}(\varepsilon^{p_1}+\varepsilon^{p_0}).
\end{align*}
Choosing $\varepsilon$ small and using that $\|u(t_{k+1})\|_{H^s}\lesssim \|u\|_{S^s(I_k)}$, we obtain by induction on $k$ that $\norm{u}_{L^{\infty}([0,T_{k\max}),H^s)}<\infty$ then \eqref{finite_S},
concluding the proof.
\end{proof}

One can also prove the existence of wave operators for equation \eqref{NLS}:
\begin{proposition}\label{P:wave operators}
Let $u_0\in H^s$. Let $g$, $p_0$, $p_1$, $s$ such that Assumption \ref{Assum:NL}  holds and $s\geq\frac{d}{2}-\frac{2}{p_0}$. Let $v_0\in H^s$ and $v_L(t)=e^{it\Delta}v_0$. Then there exist a unique solution $u\in C^0((T_{-}(u),\infty))$ of \eqref{NLS} such that
$$\lim_{t\to \infty}\|u(t)-v_L(t)\|_{H^s}=0.$$
\end{proposition}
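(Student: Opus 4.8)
The plan is to solve \eqref{NLS} backward from $t=+\infty$ by a fixed-point argument on the integral equation with data prescribed at infinity. Set $v_L(t)=e^{it\Delta}v_0$. Since $v_0\in H^s$ and $s\ge s_0$, the Strichartz bound together with \eqref{Ss_X} gives $\|v_L\|_{S^s(\R)}\lesssim\|v_0\|_{H^s}<\infty$ and $\|v_L\|_{X(\R)}<\infty$; hence there is $T$ large with $\|v_L\|_{X([T,\infty))}\le\delta$ for any prescribed small $\delta$. On $[T,\infty)$ I would look for a solution of the form
$$u(t)=v_L(t)-i\int_t^\infty e^{i(t-\tau)\Delta}g(u)(\tau)\,d\tau,$$
and run the contraction mapping argument exactly as in the proof of Proposition \ref{P:local wellposed}, on the ball $B=\{u:\|u\|_{X([T,\infty))}\le 2\delta,\ \|u\|_{S^s([T,\infty))}\le M\|v_0\|_{H^s}\}$ in the topology of $S^s([T,\infty))$. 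The map $\Psi(u)(t)=v_L(t)-i\int_t^\infty e^{i(t-\tau)\Delta}g(u)\,d\tau$ is estimated using the (time-reversed, but identical) Strichartz inequalities and Proposition \ref{P:boundg}: $\|\Psi(u)\|_{X}\le \delta+C\|u\|_{S^s}(\|u\|_X^{p_1}+\|u\|_X^{p_0})\le\delta+CM\|v_0\|_{H^s}\delta^{p_1}\le 2\delta$ for $\delta=\delta(\|v_0\|_{H^s})$ small, and similarly $\|\Psi(u)\|_{S^s}\le C\|v_0\|_{H^s}+CM\|v_0\|_{H^s}\delta^{p_1}\le M\|v_0\|_{H^s}$ for $M\ge 2C$; the difference estimate \eqref{diff_bound} of Proposition \ref{P:boundg} gives the contraction factor $C\delta^{p_1-1}M\|v_0\|_{H^s}<1$. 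This yields a unique $u\in B$ with $u=\Psi(u)$; it solves \eqref{NLS} on $[T,\infty)$, lies in $C([T,\infty),H^s)$, and since $u-v_L=-i\int_t^\infty e^{i(t-\tau)\Delta}g(u)\,d\tau$, Strichartz and Proposition \ref{P:boundg} bound $\|u-v_L\|_{S^s([t,\infty))}\lesssim \|g(u)\|_{N^s([t,\infty))}\to 0$ as $t\to\infty$ by dominated convergence, giving $\lim_{t\to\infty}\|u(t)-v_L(t)\|_{H^s}=0$.

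Next I would extend this solution to a maximal interval: by the local theory and uniqueness (Remark \ref{R:maximal}), $u$ extends to $(T_-(u),\infty)$, and the asymptotic property is unchanged since it is a statement about $t\to\infty$ only. For uniqueness of the wave operator in the full class, suppose $u$ and $\tilde u$ are two solutions of \eqref{NLS} on neighborhoods of $+\infty$ with $\|u(t)-v_L(t)\|_{H^s}\to 0$ and $\|\tilde u(t)-v_L(t)\|_{H^s}\to 0$; then $\|u(t)-\tilde u(t)\|_{H^s}\to0$. To upgrade this to $u\equiv\tilde u$ I would show both must coincide with the fixed point on some $[T',\infty)$: using $\|u(T')-v_L(T')\|_{H^s}$ small, continuity of $t\mapsto e^{it\Delta}u(T')$ in $X$, and shrinking the interval, one gets $\|u\|_{X([T',\infty))}$ and $\|\tilde u\|_{X([T',\infty))}$ small — here one also needs that $u-v_L$ and $\tilde u-v_L$ have small $X$ norm near infinity, which follows by writing the Duhamel formula between $T'$ and $t$, letting $t\to\infty$, using that the free evolution of $u(t)-v_L(t)\to0$, and Proposition \ref{P:boundg}. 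Then both $u,\tilde u$ lie in the ball $B$ on which $\Psi$ is a contraction, so $u=\tilde u$ on $[T',\infty)$, and uniqueness on all of the common interval of existence follows from Remark \ref{R:maximal}.

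The main obstacle is the uniqueness statement rather than existence: the contraction argument on $[T,\infty)$ is essentially a carbon copy of Proposition \ref{P:local wellposed} (with $\int_t^\infty$ in place of $\int_0^t$, which costs nothing in Strichartz estimates), but to conclude that \emph{any} solution asymptotic to $v_L$ equals the constructed one, I must argue that such a solution automatically has finite, indeed small, $X$-norm near $+\infty$. The delicate point is that the hypothesis gives control of $u(t)-v_L(t)$ only in $H^s$ at each fixed time, and one must convert this into a spacetime ($X$) smallness statement; the route is to use the Duhamel formula on $[t,\infty)$ together with the subcriticality exponents $p_1,p_0$ in Proposition \ref{P:boundg} to absorb the nonlinear term, after first restricting to an interval where $\|e^{i\cdot\Delta}u(T')\|_{X}$ is small (possible since $u(T')\in H^s$ and \eqref{Ss_X} holds). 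Once the $X$-smallness is in hand, the contraction/uniqueness of the fixed point closes the argument.
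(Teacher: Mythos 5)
The paper simply states that the proof is omitted as it is similar to that of Proposition \ref{P:local wellposed}, i.e.\ a contraction-mapping argument on the Duhamel formulation with data prescribed at infinity; your proposal carries out precisely this argument (with $\int_t^\infty$ replacing $\int_0^t$), so it matches the intended approach. Your additional discussion of uniqueness in the full class---reducing to the local theory on $[T',\infty)$ by first extracting $X$-smallness of $u$ near $+\infty$ from the $H^s$ asymptotic hypothesis and \eqref{Ss_X}---is sound and fills in a detail the paper leaves implicit.
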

We omit the proof which is similar to the proof of Proposition \ref{P:local wellposed}.

\begin{theorem}(Long time perturbation theory)\label{T:long time perturbation}
Let $A>0$, $s\geq s_0$. There exists constants $\eps(A,s)\in (0,1]$, $C(A,s)>0$ with the following properties.
Let $0\in I$ be an compact interval of $\R$ and $w$ be a solution of the following equation
\[
Lw=g(w)+e,
\] 
and $u_0\in H^s$ such that 
\begin{equation*}
\norm{w_0}_{H^s}+\norm{w}_{X(I)}\leq A,\quad
\norm{e}_{N^s(I)}+
\norm{u_0-w(0)}_{H^s)}=\varepsilon\leq \eps(A,s).
\end{equation*}
There the solution $u$ of \eqref{NLS} with initial data $u_0$ is defined on $I$ and satisfies
\begin{equation}
\label{bounduw}
\norm{u-w}_{S^s(I)}\leq C(A,s) \eps,\quad
\norm{u}_{S^s(I)}\leq C(A,s).
\end{equation}
\end{theorem}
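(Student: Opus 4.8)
The statement is the standard long-time perturbation (a.k.a. "long-time stability") result, and the plan is to follow the now-classical bootstrap/induction argument of Keraani and Kenig--Merle, using the nonlinear estimates of Proposition \ref{P:boundg} and Strichartz estimates in place of the usual single-power estimates. First I would reduce to a smallness situation: since $\|w\|_{X(I)}\le A$ and $X(I)=X_{p_0}(I)\cap X_{p_1}(I)$ with both exponents giving finite Lebesgue norms, one can partition $I$ into $N=N(A)$ consecutive subintervals $I_j=[t_j,t_{j+1}]$ on each of which $\|w\|_{X(I_j)}\le \eta$, where $\eta=\eta(A,s)$ is a small constant to be fixed. The number $N$ depends only on $A$ and $\eta$, hence only on $A$ and $s$. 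I will then prove the desired bounds on each $I_j$ by induction on $j$, with constants growing geometrically in $j$ (hence bounded by a constant $C(A,s)$ after $N$ steps).

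\textbf{The inductive step.} On a single interval $I_j$, set $r=u-w$, which solves $Lr=g(u)-g(w)-e$ with $r(t_j)=u(t_j)-w(t_j)$. Assume inductively that $\|u(t_j)-w(t_j)\|_{H^s}\le C_j\eps$ for a controlled constant $C_j$ (with $C_0=1$). By the Duhamel formula, Strichartz estimates, and Proposition \ref{P:boundg}, one gets, writing $\|\cdot\|$ for norms over $I_j$,
\begin{equation*}
\|r\|_{S^s}\lesssim \|r(t_j)\|_{H^s}+\|e\|_{N^s}+\big(\|(u,w)\|_X^{p_1-1}+\|(u,w)\|_X^{p_0-1}\big)\big(\|r\|_{\dot W^s}\|(u,w)\|_X+\|r\|_X\|(u,w)\|_{W^s}\big).
\end{equation*}
Using $\|w\|_{X(I_j)}\le\eta$, $\|w\|_{W^s(I_j)}\le\|w\|_{S^s(I_j)}\lesssim A$ (which also must be propagated, starting from $\|w_0\|_{H^s}\le A$ and Proposition \ref{P:boundg}, possibly shrinking $\eta$), and the embeddings $S^s\subset W^s$, $W^0\supset$ dual, one runs a continuity/bootstrap argument: as long as $\|r\|_{X(I_j)}\le\eta$ too, the nonlinear terms are absorbed into the left side provided $\eta$ is small depending on $A,s$, yielding $\|r\|_{S^s(I_j)}\le C(A,s)(C_j\eps+\eps)$. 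Taking $\eps(A,s)$ small enough that this is $\le\eta$ closes the bootstrap, and then $\|u(t_{j+1})-w(t_{j+1})\|_{H^s}\lesssim\|r\|_{S^s(I_j)}$ gives $C_{j+1}\le C(A,s)(C_j+1)$. After $N(A)$ steps this yields \eqref{bounduw}; that $u$ is defined on all of $I$ follows from the local theory (Proposition \ref{P:local wellposed}) together with the uniform $S^s$ bound just obtained, via the blow-up criterion of Lemma \ref{L:scatt_criterion}.

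\textbf{Main obstacle.} The delicate point is the appearance of the \emph{homogeneous} norm $\dot W^s$ in the first factor of the nonlinear estimate \eqref{diff_bound}, whereas the quantities one controls and propagates are the inhomogeneous ones ($H^s$, $S^s$). Since $\dot W^s\le W^s\lesssim S^s$ this is not an obstruction in itself, but one must be careful that the ``good'' small factor multiplying $\|r\|_{\dot W^s}$ is $\|(u,w)\|_{X(I_j)}$ (not $\|(u,w)\|_{W^s}$), so smallness genuinely comes only from the $X$-norm partition, and one needs a preliminary step establishing that $\|u\|_{X(I_j)}$ stays $\lesssim\eta$ on each subinterval — this is itself part of the bootstrap, coupling the control of $\|r\|_X$ and $\|r\|_{S^s}$. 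A second bookkeeping issue is that $X$ is an intersection of two Lebesgue spaces with exponents $p_1$ and $p_0$, so the partition into subintervals and the Hölder splittings must be done simultaneously for both exponents; the assumption $p_1\ge 4/d$ (via \eqref{relation} and \eqref{Ss_X}) is what makes all the relevant exponents admissible. None of this is conceptually hard, but the constants must be chosen in the right order: first $\eta(A,s)$ small (to absorb nonlinearities and to control the number of subintervals $N(A)$), then $\eps(A,s)$ small relative to $\eta$ and $N$.
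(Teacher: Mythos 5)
Your proposal is correct and follows essentially the same route as the paper: a preliminary step to propagate the $S^s(I)$ bound on $w$, a partition of $I$ into $N(A,s)$ subintervals with small $X$-norm, and a finite induction/bootstrap on each subinterval using Strichartz together with Proposition~\ref{P:boundg}, closing the loop by ordering the choices $\eta(A,s)$ then $\eps(A,s)$. The one cosmetic difference is that the paper re-partitions a second time so that $\|w\|_{W^s(I_j)}$ is also small on each piece (making the absorption constant $\delta$ independent of $A$), whereas you keep $\|(u,w)\|_{W^s(I_j)}\lesssim A$ and absorb via the prefactor $\|(u,w)\|_{X(I_j)}^{p_1-1}\lesssim\eta^{p_1-1}$, which works equally well since $p_1>1$ and $\eta$ is allowed to depend on $A$.
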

\begin{proof}

Without loss of generality, using the reversibility of the equation, we can assume $I=[0,T)$. We can also assume that $I\subset I_{\max}(u)$ (indeed, if $T_{+}(u)<T$, the proof below will show that \eqref{bounduw} holds with $I$ replaced by $[0,T_{+})$, contradicting the blow-up criterion).

Divide $I$ into $J=J(A,s)$ subintervals $I_j=[t_j,t_{j+1}]$, $j\in [0,J-1]$, with $t_0=0$, such that 
\begin{equation}
\label{cond_Ij}
\norm{w}_{X(I_j)}\leq \delta, 
\end{equation} 
where $\delta$ is a small constant (depending only on $s$), to be specified.
On each $I_j$, by Strichartz and Proposition \ref{P:boundg}, we have
\begin{align*}
\norm{w}_{S^s(I_j)}&\leq C\norm{w(t_j)}_{H^s}+\norm{g(w)}_{N^s(I_j)}+\norm{e}_{N^s(I_j)}\\
&\lesssim   \norm{w(t_j)}_{H^s} +\norm{w}_{S^s(I_j)}(\norm{w}_{X(I_j)}^{p_1}+\norm{w}_{X(I_j)}^{p_0})+\varepsilon\\
&\lesssim   \norm{w(t_j)}_{H^s} +\varepsilon+\norm{w}_{S^s(I_j)}(\delta^{p_1}+\delta^{p_0}).
\end{align*}
Thus, for $\delta$ small enough, $\norm{w}_{S^s(I_j)}\leq \norm{w(t_j)}_{H^s} +\eps$. Since $\norm{w(t_{j+1})}_{H^s}\leq \norm{w}_{S^{s}(I_j)}$, $\norm{w(0)}_{H^s}\leq A$, we obtain by a finite induction $\norm{w(t_j)}_{H^s}\leq C(A,s)$, and thus
\begin{equation}\label{eq2}
\|w\|_{S^s(I)}\leq C(A,s).
\end{equation}
Thus, we may divide $I$ into $J=J(A,s)$ subintervals, which we still denote by $I_j=[t_j,t_{j+1}]$, $j\in \llbracket 0,J-1\rrbracket$, such that for all $j$,
\begin{equation}
\label{cond_Ij_bis}
\norm{w}_{W^s(I_j)}+\norm{w}_{X(I_j)} \leq \delta.
\end{equation}
Taking $\delta$ small (independently of $A$ and $\eps$), and $\eps\leq \eps(A,s)$ small, we prove by induction 
\begin{equation}
\label{boundSsIj}
\forall j\in \llbracket 0,J-1\rrbracket,\quad \|u-w\|_{S^s(I_j)}\leq (Kj+K+1)\eps,
\end{equation} 
for some large constant $K$ independent of $A$ and $\eps$. This will imply the desired conclusion \eqref{bounduw}.
 More precisely, we will prove, for $j\in \llbracket 0,J-1\rrbracket$,
\begin{equation}
 \label{inductionk}
 \norm{u(t_j)-w(t_j)}_{H^s}\leq (Kj+1)\eps\Longrightarrow \norm{u-w}_{S^s(I_j)}<(Kj+K+1)\eps,
\end{equation} 
which will yield \eqref{boundSsIj}, since $\norm{u(t_0)-w(t_0)}_{H^s}\leq \eps$. 

Assuming $j\in \llbracket 0,J-1\rrbracket$, $\norm{u(t_j)-w(t_j)}_{H^s}\leq (Kj+1)\eps$, we argue by contradiction, assuming also that there exists $T\in [t_j,t_{j+1}]$ such that $\norm{u-w}_{S^s([t_j,T])}=(Kj+K+1)\eps$. Then by the equation satisfied by $u-w$ and Strichartz estimates, we obtain
\begin{equation}
\label{interm1}
 \|u-w\|_{S^s([t_j,T])}\leq \|u(t_j)-v(t_j)\|_{H^s}+C\|e\|_{N^s([t_j,T])}+ C\|g(u-w)\|_{N^s([t_j,T])}
 \end{equation}
%  \leq \Big(K(j+1/2)+1\Big)\eps+ \right).
We have $\|u\|_{W^s([t_j,T])}\leq \|u-w\|_{W^{s}([t_j,T])}+\|w\|_{W^s([t_j,T])}\leq 
(Kj+K+1)\eps+\delta$, and similarly $\|u\|_{X^s([t_j,T])}\leq 
C(Kj+K+1)\eps+\delta$. Thus by Proposition \ref{P:boundg}, taking $\eps$ small enough, so that $(Kj+K+1)\eps\leq 1$.
$$\|g(u-w)\|_{N^s([t_j,T])}\leq C(Kj+K+1)\Big( (Kj+K+1)^{p_1}\eps^{p_1}+\delta^{p_1}\Big).$$
Going back to \eqref{interm1}, we deduce
$$(Kj+K+1)\eps= \|u-w\|_{S^s([t_j,T])}\leq C\eps+(Kj+1)\eps+ C(Kj+K+1)\eps \big( (Kj+K+1)^{p_1}\eps^{p_1}+\delta^{p_1}\big).$$
Taking $K$ large enough, $\eps\leq \eps(s,A)$ small (so that $(KJ+K+1)\eps\leq \delta$), and $\delta$ small, we obtain.
$$ (Kj+K+1)\eps\leq (Kj+K/2+1)\eps,$$
a contradiction which concludes the proof.
% On $I_0$, apply Lemma \ref{short time}, there exists a solution $u$ of \eqref{NLS} such that
% \begin{align*}
% \norm{u-w}_{X(I_0)}&\lesssim \norm{u-w}_{S^s(I_0)}\leq C(A_1)\varepsilon,
% \end{align*}
% Thus, 
% \[
% \norm{u(t_1)-w(t_1)}_{H^S}\leq C(A_1)\varepsilon.
% \]
% Apply Lemma \ref{short time} on $I_1$ for $(C(A_1)+1)\varepsilon$ instead of $\varepsilon$, we have
% \[
% \norm{u-w}_{S^s(I_1)}\leq \varepsilon C_0(A_1),
% \]
% then
% \[
% \norm{u(t_0)-w(t_0)}_{H^s}\leq \varepsilon C_0(A_1).
% \]
% By similar argument, we have $u$ solves \eqref{NLS} on all $I$ such that on each $I_k$,
% \[
% \norm{u-w}_{S^s(I_k)}\leq \varepsilon C_k(A_1).
% \]
% Thus, summing on all $I_k$, we have
% \[
% \norm{u-w}_{S^s(I)}\leq \sum_{k} C_k(A_1) \varepsilon\leq C(A_1,A_0,\varepsilon)\varepsilon.
% \]
% Combining to \eqref{eq2}, we have
% \[
% \norm{u}_{S^s(I)}\leq C(A_1,A_0,\varepsilon).
% \]
% This completes the proof.
\end{proof}

\begin{remark}
 \label{R:homogeneous}
Assume $d\geq 1$, $g(u)=|u|^{p_0}u$, $s\geq s_0 =\frac{d}{2}-\frac{2}{p_0}$, $p_0\geq \frac{4}{d}$ and $g\in C^{\supent{s}+1}$ (i.e. $p_0$ is an even integer or $p_0>\supent{s}$). Then analogs of Proposition \ref{P:local wellposed} and Theorem \ref{T:long time perturbation} where all the spaces are replaced by homogeneous spaces hold. Precisely, in the statement of Proposition \ref{P:local wellposed}, one can replace in this case $X(I)$ by $X_{p_0}(I)=L^{\frac{p_0(d+2)}{2}}(I\times \R^d)$, $H^s$ by $\dot{H}^s$, $S^s(I)$ by $\dot{S}^s(I)$, and similarly for Proposition \ref{P:local wellposed}. The proof is the same, replacing Proposition \ref{P:boundg} by Lemma \ref{L:boundg_onepower}. See also e.g. \cite{KeMe06}.
\end{remark}
\begin{proposition}[Conservation laws]
 \label{P:conservation}
 Let $g$ such that Assumption \ref{Assum:NL} holds and $g(z)=G'(|z|^2)z$ for a $C^1$ function $G:[0,\infty)\to\R$ with $G(0)=0$. Let $u\in C^0(I,H^s(\R^d))$ be a solution of \eqref{NLS_g}, where $s\geq \frac{d}{2}-\frac{2}{p_0}$. Then the mass
 $$M(u):=\int |u(t,x)|^2dx$$
 is conserved on $I$. Furthermore, if $s\geq 1$, then the momentum:
 $$P(u):=\Im \int \nabla u(t,x) \overline{u}(t,x)dx$$
 and the energy
 $$ E(u):=\int|\nabla u(t,x)|^2dx+\int G(|u(t,x)|^2)dx.$$
 are well-defined and conserved on $I$.
 \end{proposition}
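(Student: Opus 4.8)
The conservation laws are formal identities for sufficiently regular solutions, and the plan is the classical one: establish them first in that setting, then pass to a general $H^s$ solution by approximation, using the Cauchy and stability theory of Subsection~\ref{sub:Cauchy}. Since $I$ is the increasing union of compact subintervals of $I$ containing $0$, it suffices to prove conservation on an arbitrary such $J$.

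For a solution $u\in C(J,H^\sigma)$ with $\sigma$ large — so that in particular $u\in C^1(J,L^2)$ with $\partial_tu=i\Delta u-ig(u)$, and all integrations by parts below are licit by density of Schwartz functions — the computations are short. For the mass, $\frac{d}{dt}M(u)=2\Re\int\overline u\,\partial_tu=2\Re\bigl(i\int\overline u\,\Delta u\bigr)-2\Re\bigl(i\int\overline u\,g(u)\bigr)=0$, since $\int\overline u\,\Delta u=-\int|\nabla u|^2$ and $\int\overline u\,g(u)=\int G'(|u|^2)|u|^2$ are real. For the $j$-th component of the momentum, $\frac{d}{dt}P_j(u)=2\Im\int\partial_ju\,\overline{\partial_tu}=-2\Re\int\partial_ju\,\Delta\overline u+2\Re\int\partial_ju\,\overline{g(u)}=0$, because $\Re\int\partial_ju\,\Delta\overline u=-\tfrac12\int\partial_j\bigl(\sum_k|\partial_ku|^2\bigr)=0$ and $\Re\int\partial_ju\,\overline{g(u)}=\tfrac12\int\partial_j\bigl(G(|u|^2)\bigr)=0$. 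For the energy, rewriting the equation as $-\Delta u+g(u)=i\partial_tu$, $\frac{d}{dt}E(u)=2\Re\int(-\Delta u+g(u))\,\overline{\partial_tu}=2\Re\bigl(i\int|\partial_tu|^2\bigr)=0$. One also needs $P$ and $E$ to be well defined on $H^s$ for $s\geq1$: $P(u)$ and $\int|\nabla u|^2$ are then finite, and from $G(0)=0$ and $|g(z)|\lesssim|z|^{p_0+1}+|z|^{p_1+1}$ one gets $|G(a)|\lesssim a^{1+p_0/2}+a^{1+p_1/2}$, so $\int G(|u|^2)$ is controlled by $\|u\|_{L^{p_0+2}}^{p_0+2}+\|u\|_{L^{p_1+2}}^{p_1+2}$, finite because $H^s\hookrightarrow L^{p_0+2}\cap L^{p_1+2}$ — in the energy-subcritical range since $s\geq1$, and when $s\geq s_0\geq1$ via $H^{s_0}\hookrightarrow L^{p_0 d/2}$ together with $p_1+2<p_0+2\leq p_0 d/2$.

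To treat a general solution $u\in C(J,H^s)$, I would choose $u_{0,k}\to u(0)$ in $H^s$ with $u_{0,k}$ in a space of high regularity, and apply Theorem~\ref{T:long time perturbation} with $w=u$ and $e=0$ (legitimate since $\|u\|_{X(J)}<\infty$ by \eqref{Ss_X} and the definition of a solution): for $k$ large the solution $u_k$ with data $u_{0,k}$ is defined on $J$ and $\|u_k-u\|_{S^s(J)}\lesssim\|u_{0,k}-u(0)\|_{H^s}\to0$, so that $u_k\to u$ in $C(J,L^2)$, and in $C(J,H^1)$ and $C(J,L^{p_0+2}\cap L^{p_1+2})$ when $s\geq1$. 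As each $u_k$ satisfies the conservation laws by the previous step, letting $k\to\infty$ gives them for $u$ on $J$, hence on $I$. The main obstacle is the first step — producing approximate solutions regular enough for the a priori computation. Persistence of regularity via Proposition~\ref{P:boundg} propagates $H^\sigma$-regularity only up to the order of differentiability of $g$, which under Assumption~\ref{Assum:NL} can be as low as $C^2$ (when $\supent{s}+1=2$ and $g$ is not polynomial in $u,\overline u$), so high-regularity solutions need not exist. I would resolve this by mollifying the nonlinearity as well: replace $g$ by a globally smooth, $L^2$-supercritical $g_n\to g$, prove the conservation laws for the (now genuinely smooth) solutions $u_{k,n}$ of the $g_n$-equation with data $u_{0,k}$, and then absorb $(g-g_n)(u_{k,n})$ into the error term of Theorem~\ref{T:long time perturbation} to pass to the limit in $n$, then in $k$. (Alternatively, in the energy-subcritical range one can run the a priori identity directly at the level of the duality pairing between $C(J,H^1)$ and $C^1(J,H^{-1})$, using that both $\Delta u$ and $g(u)$ lie in $C(J,H^{-1})$.) The remaining verifications are routine.
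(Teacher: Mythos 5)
The paper gives no proof of Proposition~\ref{P:conservation}; it simply cites \cite{Ca03}. Your proposal reconstructs the classical argument: carry out the formal identities for regular solutions (these computations are all correct as you wrote them), verify that the integrals defining $E$ and $P$ converge under the hypotheses (your use of \eqref{bound_G} and the Sobolev embeddings $H^1\hookrightarrow L^{p_j+2}$ in the subcritical range, resp.\ $H^{s_0}\hookrightarrow L^{p_0d/2}$ with $p_j+2\leq p_0d/2$ when $s_0\geq 1$, is the right bookkeeping), and then pass to general $H^s$ data by approximation via Theorem~\ref{T:long time perturbation}. You also correctly flag the genuine technical obstacle — when $g$ is only $C^{\supent{s}+1}$, one cannot propagate arbitrarily high Sobolev regularity, so one cannot simply approximate the data and apply an a priori identity for smooth solutions — and your proposed fix (mollify $g$ as well, absorb $(g-g_n)(u_{k,n})$ into the error term of the stability theorem, then pass to the limit in $n$ and $k$) is a legitimate and standard way through. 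The alternative you mention in parentheses, running the identities at the level of the $H^1$--$H^{-1}$ duality pairing, is in fact closer to the route taken in \cite{Ca03}. Altogether this is the "classical proof" the paper alludes to, executed correctly, with the only caveat being that the mollification step is sketched rather than carried out.
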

We omit the classical proof. See e.g. \cite{Ca03}.
Observe that the assumptions of Proposition \ref{P:conservation} imply
\begin{equation}
\label{bound_G}
 G(|u|^2)\lesssim |u|^{p_0+2}+|u|^{p_1+2},
\end{equation}
so that by Sobolev inequalities, $E(u)$ is well-defined if $s\geq 1$.

\section{Profile decomposition}
\label{S:profile}
\subsection{Profiles in homogeneous Sobolev spaces}
\label{sub:homog_profiles}
Let $0<s<\frac{d}{2}$. We denote by $u_L(t)=e^{it\Delta}u_0$ the solution to the linear Schr\"odinger equation on $\R\times \R^d$
\begin{equation}
 \label{LS}
 i\partial_tu_L+\Delta u_L=0,
\end{equation} 
with initial data 
\begin{equation}
 \label{IDHs}
 u_{L\restriction t=0}=u_0\in \dot{H}^{s}.
\end{equation} 
\begin{definition}
 \label{D:hom_profile}
 Let $p>\frac{4}{d}$ such that $s=\frac{d}{2}-\frac{2}{p}$.
 A linear $\dot{H}^{s}$-\emph{profile}, in short \emph{profile}, is a sequence $(\varphi_{Ln})_n$, of solutions of \eqref{LS}, of the form 
 \begin{equation}
  \label{linear_profile}
 \varphi_{Ln}(t,x)=\frac{1}{\lambda_n^{\frac{2}{p}}}\varphi_L\left( \frac{t-t_n}{\lambda_n^2},\frac{x-x_n}{\lambda_n} \right), 
\end{equation}
 where $\varphi_L$ is a fixed solution of \eqref{LS}, \eqref{IDHs} and $\Lambda_n=(\lambda_n,t_n,x_n)_n$ is a sequence in $(0,\infty)\times \R\times \R^d$ (called sequence of transformations) such that
 \begin{equation}
  \label{lim_time}
  \lim_{n\to\infty}\frac{-t_n}{\lambda_n^2}=\tau\in \R\cup \{\pm\infty\}.
 \end{equation} 
\end{definition}
\begin{definition}
\label{D:ortho_equiv}
We say that two sequence of transformations $\Lambda_n=(\lambda_n,t_n,x_n)$ and $M_n=(\mu_n,s_n,y_n)$ are \emph{orthogonal} when they satisfy
$$\lim_{n\to\infty} \frac{|t_n-s_n|}{\lambda_n^2}+\frac{|x_n-y_n|}{\lambda_n}+\left|\log\left( \frac{\lambda_n}{\mu_n} \right)\right|=\infty.$$ 
 We say that two $\dot{H}^{s}$-profiles $\varphi_{Ln}$ and $\psi_{Ln}$ are \emph{equivalent} (in $\dot{H}^{s}$) when
 $$\lim_{n\to\infty}\left\|\varphi_{Ln}(0)-\psi_{Ln}(0)\right\|_{\dot{H}^{s}}=0.$$
 We say that they are \emph{orthogonal} when one of the two profiles is identically $0$ or when the corresponding sequence of transformations are orthogonal.
\end{definition}
\begin{remark}
For a given $\dot{H}^{s}$-profile $\varphi_{Ln}$, the choice of the solution $\varphi_L$ and the sequence of transformations  $(\Lambda_n)_n$ are not unique. However the definitions of equivalent and orthogonal profiles do not depend on these choices. Also, two equivalent $\dot{H}^{s}$-profiles are orthogonal to the same $\dot{H}^{s}$-profiles. 
\end{remark}

\begin{definition}\label{D:decomposition1} Let $(u_{0,n})_n$ be a bounded sequence in $\dot{H}^{s}$, and $u_{Ln}=e^{it\Delta}u_{0,n}$. We say that the sequence $\left(\Big(\varphi_{Ln}^j\Big)_{n}\right)_{j\geq 1}$ of $\dot{H}^{s}$-profiles is a \emph{profile decomposition} of $(u_{0,n})_n$ if these profiles are pairwise orthogonal, and satisfy
\begin{equation}
 \label{weak_limit_profiles}
\lim_{J\to\infty}\limsup_{n\to\infty}\left\|w_{Ln}^J\right\|_{X_p(\R)}+\left\| |\nabla|^sw_{Ln}^J\right\|_{W^0(\R)}=0,
\end{equation} 
where 
\begin{equation}
\label{def_wLnJ} 
w_{Ln}^J=u_{Ln}-\sum_{j=1}^J \varphi_{Ln}^j.
\end{equation} 
 \end{definition}

\begin{proposition}
\label{P:decomposition1}
For any bounded sequence $(u_{0,n})_n$ in $\dot{H}^{s}$, there exists a subsequence (that we still denote by $(u_{0,n})_n$) that admits a profile decomposition $\left((\varphi_{Ln}^j)_n\right)_{j}$. Furthermore, we have the Pythagorean expansion:
\begin{equation}
 \label{Pythagorean1}
 \forall J\geq 1,\quad \|u_{0,n}\|^2_{\dot{H}^{s}}=\sum_{j=1}^J \left\|\varphi_{L}^j(0)\right\|^2_{\dot{H}^{s}}+\left\|w_{Ln}^J(0)\right\|^2_{\dot{H}^{s}}+o(1),\quad n\to\infty,
\end{equation} 
where $w_n^J$ is as in Definition \ref{D:decomposition1}.
\end{proposition}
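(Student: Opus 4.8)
The plan is to follow the now-standard Gallagher/Keraani-type construction of a linear profile decomposition in $\dot H^s$-critical Strichartz spaces, adapting the argument of Shao \cite{Shao09} (ultimately relying on Merle--Vega \cite{MeVe98}) to the pair of spaces $X_p(\R)$ and $\dot W^s(\R) = |\nabla|^{-s} W^0(\R)$ that appear in \eqref{weak_limit_profiles}. The key quantitative input is an \emph{inverse Strichartz inequality}: if $(v_n)_n$ is bounded in $\dot H^s$ with $\limsup_n\big(\|e^{it\Delta}v_n\|_{X_p(\R)}+\||\nabla|^s e^{it\Delta}v_n\|_{W^0(\R)}\big)\geq \varepsilon$, then, after passing to a subsequence, there is a sequence of transformations $\Lambda_n=(\lambda_n,t_n,x_n)$ satisfying \eqref{lim_time} and a nonzero $\varphi_L$ such that, writing $\varphi_{Ln}$ for the associated profile via \eqref{linear_profile}, one has $\lambda_n^{2/p}\varphi_{Ln}(t_n/\lambda_n^2,\lambda_n\cdot+x_n)\rightharpoonup \varphi_L(0)$ weakly in $\dot H^s$, together with a quantitative lower bound $\|\varphi_L(0)\|_{\dot H^s}\gtrsim \varepsilon\,(\varepsilon/A)^{\beta}$ for some $\beta>0$, where $A=\limsup_n\|v_n\|_{\dot H^s}$, and the Pythagorean splitting $\|v_n\|_{\dot H^s}^2 = \|\varphi_L(0)\|_{\dot H^s}^2 + \|v_n-\varphi_{Ln}(0)\|_{\dot H^s}^2+o(1)$ holds.

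Granting that inverse inequality, the proof is the classical induction/extraction scheme. First I would extract a subsequence so that all $\limsup$'s below are genuine limits. Then I would construct the profiles inductively: set $w_{Ln}^0 = u_{Ln}$; given $w_{Ln}^{J-1}$, let $\varepsilon_J = \lim_n\big(\|w_{Ln}^{J-1}\|_{X_p}+\||\nabla|^s w_{Ln}^{J-1}\|_{W^0}\big)$; if $\varepsilon_J=0$ the decomposition terminates, otherwise apply the inverse inequality to $(w_{0,n}^{J-1})_n := (w_{Ln}^{J-1}(0))_n$ to produce the $J$-th profile $\varphi_{Ln}^J$ with transformation $\Lambda_n^J$ and amplitude $\|\varphi_L^J(0)\|_{\dot H^s}\gtrsim \varepsilon_J(\varepsilon_J/A)^\beta$, and set $w_{Ln}^J = w_{Ln}^{J-1}-\varphi_{Ln}^J$. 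Pairwise orthogonality of the $\Lambda_n^j$ is proved exactly as in \cite{Shao09}: if two transformations were not orthogonal one could compose with them and contradict the fact that $w_{0,n}^{J-1}$ converges weakly to $0$ after every extracted transformation (this uses the "$\varphi^J$ is extracted from the remainder" property and a short computation on how the weak limits transform under $\Lambda$). The Pythagorean expansion \eqref{Pythagorean1} follows by iterating the one-step splitting from the inverse inequality and using that cross terms $\langle \varphi_{Ln}^i(0),\varphi_{Ln}^j(0)\rangle_{\dot H^s}\to 0$ for $i\neq j$ by orthogonality; a consequence of \eqref{Pythagorean1} is $\sum_j \|\varphi_L^j(0)\|_{\dot H^s}^2 \leq A^2$, hence $\varepsilon_J\to 0$ as $J\to\infty$, which is exactly \eqref{weak_limit_profiles} once one also controls the $X_p$ and $\dot W^s$ norms of the remainder — these are bounded uniformly in $J$ (iterate the Strichartz bound $\|w_{Ln}^J\|_{X_p}+\||\nabla|^s w_{Ln}^J\|_{W^0}\lesssim A$) so $\varepsilon_J\to0$ gives \eqref{weak_limit_profiles} directly.

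Finally I would verify the normalization \eqref{lim_time} for each profile: passing to a further subsequence one may assume $-t_n^j/(\lambda_n^j)^2$ converges in $\R\cup\{\pm\infty\}$, and if it converges to a finite $\tau^j$ one absorbs $e^{i\tau^j\Delta}$ into $\varphi_L^j$ so that, up to equivalence, one may take $t_n^j = 0$; this does not affect orthogonality by the remark following Definition \ref{D:ortho_equiv}. A diagonal argument over $J$ produces a single subsequence working for all $J$ simultaneously.

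The main obstacle is establishing the inverse Strichartz inequality in the present double-space setting: the weak-limit extraction must be run for the \emph{pair} of norms $\|\cdot\|_{X_p(\R)}$ and $\||\nabla|^s\cdot\|_{W^0(\R)}$ simultaneously, since the profile decomposition is required to make both remainder norms small. Concretely one needs a refined Strichartz/Sobolev estimate — of Bourgain--Moyua--Vega--Bégout--Vargas type — bounding these norms by a product of a fractional-Sobolev norm and a weaker (e.g. Besov or $X^{\theta}$-interpolated) norm, so that the lower bound $\varepsilon$ on the left forces a frequency-localized, space-time-localized bubble on the right, from which the transformation $\Lambda_n$ and the weak limit $\varphi_L(0)\neq 0$ are read off. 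Once this single concentration step is in hand (and here I would cite \cite{Shao09}, \cite{MeVe98} and the Strichartz estimates recalled in \S\ref{sub:notations}, checking that the exponents $p$, $s=\frac d2-\frac2p$ fit the hypotheses), the rest is the routine bookkeeping described above.
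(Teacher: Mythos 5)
The paper does not give a proof of Proposition~\ref{P:decomposition1}; it cites Shao \cite{Shao09} (building on Merle--Vega \cite{MeVe98}, B\'egout--Vargas \cite{BeVa07}, and Keraani \cite{Keraani01}) and moves on. Your proposal reconstructs precisely the argument in those references: an inverse Strichartz / concentration inequality, iterative extraction, orthogonality of the extracted transformations, one-step Pythagorean splitting iterated to give \eqref{Pythagorean1}, summability of the profile energies forcing $\varepsilon_J\to 0$, and a diagonal subsequence; this is the same route the paper implicitly relies on, so there is no divergence in approach.

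One remark on the point you flag as the main obstacle: the paper's Definition~\ref{D:decomposition1} demands vanishing of the remainder in the \emph{pair} of norms $X_p(\R)$ and $\dot W^{s}(\R)$, while the inverse inequality in \cite{Shao09} is phrased for a single scaling-critical norm (the diagonal space $L^{2(d+2)/(d-2s)}_{t,x}=X_p$). You correctly identify that the $\dot W^{s}$ vanishing must then be extracted from the $X_p$ vanishing together with the uniform $\dot S^{s}$ bound on $w^J_{Ln}$ (the latter coming from the Pythagorean control of the $\dot H^s$ initial data and Strichartz). This is done by interpolating between the $X_p$-associated admissible Strichartz pair and a bounded endpoint in $\dot S^{s}$, which places $W^0$ strictly inside the admissible range; your sketch gestures at this without writing the interpolation exponents, but that is consistent with the level of detail the paper itself adopts, since it simply defers to \cite{Shao09}. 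I see no gap beyond what the paper leaves to the cited literature.
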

Proposition \ref{P:decomposition1} is proved by Shao in \cite{Shao09}, using the $L^2$-critical profile decomposition of Merle and Vega \cite{MeVe98}, generalized to higher dimension by B\'egout and Vargas \cite{BeVa07}. See also \cite{Keraani01} for the energy-critical case $p=\frac{4}{d-2}$.

\begin{remark}
 \label{R:weaklimit}
  In the notations of Definition \ref{D:decomposition1}, if $(u_{0,n})$ admits a profile decomposition, then
  \begin{gather}
  \label{profile0}
  (\lambda_{n}^j)^{\frac{2}{p}}\varphi_{Ln}^k(t_{j,n},x_{j,n}+\lambda_{n}^j\cdot )\xrightharpoonup[]{n\to\infty}0 \text{ in }\dot{H}^{s},\quad j\neq k\\
  \label{profile0bis}
  (\lambda_{n}^j)^{\frac{2}{p}}w_{Ln}^J(t_{j,n},x_{j,n}+\lambda_{n}^j\cdot )\xrightharpoonup[]{n\to\infty}0\text{ in }\dot{H}^{s},\quad J\geq j\\
  \label{profileDef}
  (\lambda_{n}^j)^{\frac{2}{p}}u_{Ln}(t_{j,n},x_{j,n}+\lambda_{n}^j\cdot )\xrightharpoonup[]{n\to\infty}\varphi_L^j(0)\text{ in }\dot{H}^{s}.
  \end{gather}
  Indeed, \eqref{profile0} follows from the orthogonality of the profiles. The property \eqref{profile0bis} follows from \eqref{weak_limit_profiles} and the orthogonality of the profiles. Finally, \eqref{profileDef} follows easily from the two other properties. %The convergence \eqref{weak_limit_profiles} shows that all possible nonzero weak limits as in \eqref{profileDef} must correspond to a nonzero profile. 
  By \eqref{profile0bis},\eqref{profileDef}, one sees that the initial data of the nonzero profiles are exactly the nonzero weak limits of the form \eqref{profileDef}. This implies in particular that the profile decomposition is unique, up to reordering and equivalent profiles, if one ignore the null profiles.
 \end{remark}

Let $q=\frac{2d}{d-2s}=\frac{d}{2}p$ be the Lebesgue exponent such that the Sobolev embedding $\dot{H}^{s}\subset L^{q}$ holds. Let $(u_{0,n})_n$ be a sequence that has a profile decomposition as above. We next prove the analog of the property \eqref{weak_limit_profiles} of the remainder $w_n^J$ in the space $L^{\infty}(\R,L^q)$ and obtain a Pythagorean expansion of the $L^q$ norm of $u_{0,n}$.
Extracting subsequences, we can assume that for all $j$, the following limit exists in $\R\cup\{\pm\infty\}$:
 \begin{equation}
 \label{deftauj}
 \tau^j=\lim_{n\to\infty} \frac{-t_n^j}{(\lambda_n^j)^2}.
\end{equation}
We can also assume (translating the profiles in time if necessary), $\tau^j\in\{0,-\infty,+\infty\}$. If $\tau^j\in \{\pm\infty\}$, we have
\begin{equation}
\label{phinj0}
\lim_{n\to\infty} \|\varphi_{Ln}^j(0)\|_{L^q}=0.
\end{equation}
\begin{lemma}
\label{L:PythagoreLq}
Let $(u_{0,n})_n$ be as in Proposition \ref{P:decomposition1}. Then
\begin{gather}
\label{limLqwnJ}
\lim_{J\rightarrow +\infty}\limsup_{n\rightarrow +\infty} \norm{w^j_{Ln}}_{L^{\infty}(\R,L^q)} =0\\
 \label{expansionLq}
 \lim_{n\to\infty}\|u_{0,n}\|_{L^q}^{q}=\sum_{\substack{j\geq 1\\ \tau^j=0}}\|\varphi_{Ln}^j(0)\|^{q}_{L^q}.
\end{gather}
\end{lemma}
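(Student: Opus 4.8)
The plan is to prove \eqref{limLqwnJ} first --- this is the main point --- and then deduce \eqref{expansionLq} by the standard orthogonality (Br\'ezis--Lieb type) argument. Throughout I normalize the transformations so that $\tau^j\in\{0,\pm\infty\}$ as in \eqref{deftauj}, and I note that, by the Pythagorean expansion \eqref{Pythagorean1}, $\|w_{Ln}^J(0)\|_{\dot H^{s}}^2\leq \|u_{0,n}\|_{\dot H^{s}}^2+o(1)$, so that (since the free flow preserves the $\dot H^{s}$ norm) $\|w_{Ln}^J\|_{L^\infty(\R,\dot H^{s})}=\|w_{Ln}^J(0)\|_{\dot H^{s}}$ is bounded uniformly in $J$ and $n$. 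The difficulty in \eqref{limLqwnJ} is that an $L^\infty$-in-time norm cannot be interpolated against the finite-exponent space--time norms of \eqref{weak_limit_profiles}; the resolution is to use the dispersion of the free Schr\"odinger flow.

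For \eqref{limLqwnJ}: let $(P_N)_N$ be the Littlewood--Paley projections to dyadic spatial frequencies $|\xi|\approx N$, and recall that for $f\in\dot H^{s}$ the linear solution $e^{it\Delta}P_Nf$ is also localized at \emph{temporal} frequency $\approx N^{2}$, since its space--time Fourier transform is carried by $\{\tau=-|\xi|^2,\ |\xi|\approx N\}$. Fix a Strichartz-admissible pair $(a,b)$ with $2<b<\min\!\big(q,\tfrac{2(d+2)}{d}\big)$ and $a<\infty$; on the admissible line this pair lies strictly between $(\infty,2)$ and the diagonal pair $\big(\tfrac{2(d+2)}{d},\tfrac{2(d+2)}{d}\big)$ defining $W^0$. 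Using Bernstein in $x$ (legitimate since $b\leq q$) and then Bernstein in $t$ (using the temporal localization at scale $N^2$), and simplifying the power of $N$ with the admissibility relation $\tfrac2a=\tfrac d2-\tfrac db$ and with $\tfrac dq=\tfrac d2-s$, one gets
\[
\big\|e^{it\Delta}P_Nf\big\|_{L^\infty(\R,L^q)}\ \lesssim\ N^{\,d(\frac1b-\frac1q)+\frac2a}\,\big\|e^{it\Delta}P_Nf\big\|_{L^a_tL^b_x}\ =\ N^{s}\,\big\|e^{it\Delta}P_Nf\big\|_{L^a_tL^b_x}\ \approx\ \big\| |\nabla|^{s}P_Nf \big\|_{L^a_tL^b_x}.
\]
Squaring and summing over $N$ --- using Minkowski's inequality (since $q\geq 2$) for the left side and the Littlewood--Paley square function estimate in $L^a_tL^b_x$ (since $a,b\geq2$) for the right side --- yields $\|v\|_{L^\infty(\R,L^q)}\lesssim\| |\nabla|^{s}v\|_{L^a_tL^b_x}$ for every linear solution $v$. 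Applying this with $v=w_{Ln}^J$ and then interpolating $\| |\nabla|^{s}w_{Ln}^J\|_{L^a_tL^b_x}$ between the bound $\| |\nabla|^{s}w_{Ln}^J\|_{L^\infty_tL^2_x}=\|w_{Ln}^J(0)\|_{\dot H^{s}}$ and the quantity $\| |\nabla|^{s}w_{Ln}^J\|_{W^0(\R)}$ from \eqref{weak_limit_profiles}, one obtains $\|w_{Ln}^J\|_{L^\infty(\R,L^q)}\lesssim\|w_{Ln}^J(0)\|_{\dot H^{s}}^{1-\theta}\,\| |\nabla|^{s}w_{Ln}^J\|_{W^0(\R)}^{\theta}$ for some $\theta\in(0,1)$, and \eqref{limLqwnJ} follows from \eqref{weak_limit_profiles} together with the uniform $\dot H^{s}$ bound above.

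For \eqref{expansionLq}: write $u_{Ln}(0)=\sum_{j=1}^J\varphi_{Ln}^j(0)+w_{Ln}^J(0)$. Since $\dot H^{s}\hookrightarrow L^q$, the sequence $(u_{0,n})$ is bounded in $L^q$; using the elementary inequality $\big| |A+B|^{q}-|A|^{q} \big|\lesssim|A|^{q-1}|B|+|B|^{q}$ together with \eqref{limLqwnJ} one sees that $\limsup_n\big|\,\|u_{0,n}\|_{L^q}^{q}-\| \sum_{j=1}^J\varphi_{Ln}^j(0)\|_{L^q}^{q}\,\big|\to 0$ as $J\to\infty$. It then remains to expand $\| \sum_{j=1}^J\varphi_{Ln}^j(0)\|_{L^q}^{q}$: using $\big| |\sum_j a_j|^{q}-\sum_j|a_j|^{q} \big|\lesssim_J\sum_{j\neq k}|a_j|^{q-1}|a_k|$ and the orthogonality of the sequences of transformations --- which, after rescaling each profile to $\varphi_L^j$ and approximating it by a fixed compactly supported function, forces $\int|\varphi_{Ln}^j(0)|^{q-1}|\varphi_{Ln}^k(0)|\,dx\to 0$ for $j\neq k$ (the supports separate, or the factors disperse, or the scales become incompatible and the rescaled prefactor vanishes) --- we get $\lim_n\| \sum_{j=1}^J\varphi_{Ln}^j(0)\|_{L^q}^{q}=\sum_{j=1}^J\lim_n\|\varphi_{Ln}^j(0)\|_{L^q}^{q}$. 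By the scaling invariance of the $L^q$ norm at regularity $s$ and the continuity of $t\mapsto\varphi_L^j(t)$ into $\dot H^{s}\hookrightarrow L^q$, each of these limits equals $\|\varphi_L^j(0)\|_{L^q}^{q}$ when $\tau^j=0$ and vanishes when $\tau^j=\pm\infty$ by \eqref{phinj0}. Letting $J\to\infty$ (the partial sums of the resulting series being controlled by $\limsup_n\|u_{0,n}\|_{L^q}^{q}$ up to the error bounded above) gives both the existence of $\lim_n\|u_{0,n}\|_{L^q}^q$ and the identity \eqref{expansionLq}. The only genuinely delicate step is \eqref{limLqwnJ} --- extracting uniform-in-time smallness of the remainder from the space--time smallness in \eqref{weak_limit_profiles}, which, as noted, requires the temporal frequency localization of frequency-$N$ linear solutions rather than mere interpolation of norms; the rest is routine.
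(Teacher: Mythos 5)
Your proof is correct but takes a genuinely different route from the paper's, most visibly for \eqref{limLqwnJ}. The paper argues by contradiction via concentration--compactness: if the double limit of $\|w_{Ln}^J\|_{L^\infty L^q}$ were positive, one picks times $t_k$ at which the $L^q$ norm stays bounded below, invokes G\'erard's elliptic profile decomposition \cite{Gerard98} to extract a nonzero weak $\dot H^s$ limit $\varphi$ of $w_{Ln_k}^{J_k}(t_k)$, and then by Strichartz $w_{Ln_k}^{J_k}(\cdot+t_k)\rightharpoonup e^{i\cdot\Delta}\varphi\neq 0$ in $X_p(\R)$, contradicting \eqref{weak_limit_profiles}. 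You instead prove the direct quantitative refinement $\|e^{it\Delta}f\|_{L^\infty_t L^q_x}\lesssim\big\||\nabla|^s e^{it\Delta}f\big\|_{L^a_tL^b_x}$ for a suitable interior admissible pair $(a,b)$, via Littlewood--Paley and Bernstein in $x$ together with Bernstein in $t$ (exploiting the temporal frequency localization $|\tau|\approx N^2$ of $e^{it\Delta}P_N$), and then interpolate $L^a_tL^b_x$ between $L^\infty_tL^2_x$ and $W^0(\R)$. Your bookkeeping checks out: $(a,b)$ lies on the admissible line joining $(\infty,2)$ and the diagonal pair, $a>2$ and $2<b<\min(q,\tfrac{2(d+2)}{d})$ is achievable for any $q>2$ by taking $\theta$ small, the power of $N$ simplifies to $N^s$, and the square-function/Minkowski summation uses only $a,b,q\geq 2$. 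The upshot is that the paper's argument is shorter but relies on \cite{Gerard98} as a black box, while yours is self-contained and delivers an explicit estimate $\|w_{Ln}^J\|_{L^\infty L^q}\lesssim\|w_{Ln}^J(0)\|_{\dot H^s}^{1-\theta}\big\||\nabla|^s w_{Ln}^J\big\|_{W^0(\R)}^\theta$ from which \eqref{limLqwnJ} is immediate. For \eqref{expansionLq}, the two routes are essentially the same argument: the paper notes that \eqref{limLqwnJ} and \eqref{phinj0} yield an elliptic profile decomposition at $t=0$ and cites the pseudo-Pythagorean expansion (1.11) of \cite{Gerard98}, whereas you rederive that expansion from scratch via the elementary pointwise inequality and the vanishing of the cross terms by orthogonality of the transformations.
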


\begin{proof}
This follows from the elliptic profile decomposition of Patrick G\'erard \cite{Gerard98}. We first prove \eqref{limLqwnJ} by contradiction, assuming that there exists $\eta>0$ and a sequence $J_k\to\infty$ such that
 \begin{equation}
 \label{absurd_wnJ}
 \forall k,\quad
  \lim_{J\to\infty}\limsup_{n\to\infty}\|w_{Ln}^{J_k}\|_{L^{\infty}(\R,L^q)}\geq \eta.
 \end{equation}
Let
 $$ \eps_k=\limsup_{n\to\infty}\|w_{Ln}^{J_k}\|_{X_p(\R)}\xrightarrow[k\to\infty]{}0,$$
 where the convergence to $0$ follows from \eqref{weak_limit_profiles}. For all $k$, there exists $N_k$ such that
$$ n\geq N_k\Longrightarrow \|w_{Ln}^{J_k}\|_{X_p(\R)}\leq 2\eps_k. $$
By \eqref{absurd_wnJ}, we can find $n_k\geq N_k$, $t_k\in \R$ such that
$$\|w_{Ln_k}^{J_k}(t_k)\|_{L^q}\geq \frac{1}{2}\eta.$$
As a consequence, by the main result of \cite{Gerard98}, we can find $\varphi \in \dot{H}^s\setminus\{0\}$ such that
$$ w_{Ln_k}^{J_k}(t_k)\xrightharpoonup[k\to\infty]{}\varphi.$$
By Strichartz estimates, this implies that the sequence $\left(w_{Ln_k}^{J_k}(\cdot +t_k)\right)_k$ converges weakly to
$e^{i\cdot\Delta}\varphi$ in $X_p(\R)$, contradicting \eqref{weak_limit_profiles}. This concludes the proof of \eqref{limLqwnJ}. As a consequence (using also \eqref{phinj0}), we have the ``elliptic'' profile decomposition at fixed time:
$$ u_{0,n}=\sum_{\substack{1\leq j\leq J\\ \tau^j=0}} \varphi_{Ln}^j(0)+\tilde{w}_{Ln}^J(0),\quad \lim_{J\to\infty}\limsup_{n\to\infty}\|\tilde{w}_{Ln}^J(0)\|_{L^q}=0.$$
The pseudo-Pythagorean expansion \eqref{expansionLq} then follows from \cite{Gerard98}: see the expansion (1.11) there.
\end{proof}

%\thomas{I changed \eqref{limLqwnJ} to
%$$\lim_{J\rightarrow +\infty}\limsup_{n\rightarrow +\infty} \norm{w^j_{Ln}}_{L^{\infty}(\R,L^q)} =0.$$
%}
%About your proof:
%\stefan{
%I think \eqref{limLqwnJ} should be replaced by $$\lim_{J\rightarrow +\infty}\limsup_{n\rightarrow +\infty} \norm{w^j_{Ln}}_{L^{\infty}(\R,L^q)} =0.$$
%Now, we give a simple proof of  \eqref{limLqwnJ}. By \eqref{Pythagorean1}, we have $$ \lim_{J\rightarrow +\infty}\limsup_{n\rightarrow +\infty} \norm{w^j_{Ln}(0)}_{\dot{H}^s}=0.$$
%}
%\thomas{This is wrong! One can construct a sequence $(u_{0,n})$ such that
%$$\lim_{n\to\infty} \|u_{n}\|_{L^{\infty}L^q}=0$$
%But
%$$\|u_{0,n}\|_{\dot{H}^s}=1.$$
%Say $u_{0,n}=e^{i\xi_n x_1}\xi_n^{-s}\varphi$, where $\xi_n$ goes to $\infty$, and $\varphi$ has $C^{\infty}$, compactly supported Fourier transform.

%For this sequence, the profile decomposition is trivial (all the profiles are zero).
%}

\subsection{Profiles in non-homogeneous Sobolev spaces}
\label{sub:NLprofiles}
In all this subsection, we fix $s_0>0$ and $p_0>\frac{4}{d}$ with $s_0=\frac{d}{2}-\frac{2}{p_0}$. We consider a sequence $(u_{0,n})$, which is bounded in the \emph{inhomogeneous} Sobolev space $H^{s_0}(\R^d)$. We assume that $u_{0,n}$ admits a $\dot{H}^{s_0}$ profile decomposition $(\varphi_{Ln}^j)_{n,j}$, where
$$ \varphi_{Ln}^j(t,x)=\frac{1}{(\lambda_n^j)^{\frac{2}{p_0}}}\varphi^j_L\left( \frac{t-t_n^j}{\lambda_n^j},\frac{x-x_n^j}{\lambda_n^j} \right).$$

\begin{claim}
 \label{C:modulation}
 For all $j$, the sequence $(\lambda_{n}^j)_n$ is bounded. 
\end{claim}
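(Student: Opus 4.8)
The plan is to argue by contradiction: suppose that for some index $j$ the sequence $(\lambda_n^j)_n$ is unbounded. After extracting a subsequence we may assume $\lambda_n^j\to\infty$. We want to derive a contradiction with the boundedness of $(u_{0,n})_n$ in the \emph{inhomogeneous} space $H^{s_0}$, using the fact that a large scaling parameter makes the low-frequency part of a profile expensive in $L^2$. The key observation is the scaling computation: for a fixed function $\psi\in\dot H^{s_0}$ and $\lambda\to\infty$,
\[
\left\|\lambda^{-\frac{2}{p_0}}\psi\big((\cdot-x)/\lambda\big)\right\|_{L^2}^2=\lambda^{d-\frac{4}{p_0}}\|\psi\|_{L^2}^2=\lambda^{2s_0}\|\psi\|_{L^2}^2,
\]
so if $\varphi_L^j(0)\notin\{0\}$ and the $L^2$ norm of the relevant profile were bounded below, the $L^2$ norm of $u_{0,n}$ would blow up. The difficulty is that $\varphi_L^j(0)$ only lives in $\dot H^{s_0}$ a priori, not in $L^2$, so one cannot directly take $L^2$ norms of the profile; the argument has to be made at the level of the sequence $u_{0,n}$ and a weak limit in an appropriate space.

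The main step I would carry out is the following. Using \eqref{profileDef}, we have
\[
(\lambda_{n}^j)^{\frac{2}{p_0}}u_{Ln}(t_{n}^j,x_{n}^j+\lambda_{n}^j\cdot )\xrightharpoonup[]{n\to\infty}\varphi_L^j(0)\quad\text{in }\dot H^{s_0},
\]
and in particular this weak limit is nonzero if the profile is nonzero, so there is $\chi\in C_c^\infty(\R^d)$ with $\dual{\varphi_L^j(0)}{\chi}\neq 0$; without loss of generality $\chi$ is such that $\dual{\varphi_L^j(0)}{\chi}=1$ where the pairing is the $L^2$ pairing (legitimate since $\chi$ is smooth compactly supported, hence in every Sobolev space). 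Now I would estimate, for the rescaled function $v_n(x):=(\lambda_n^j)^{\frac{2}{p_0}}u_{Ln}(t_n^j,x_n^j+\lambda_n^j x)$, the pairing
\[
1+o(1)=\dual{v_n}{\chi}=\int v_n(x)\overline{\chi(x)}\,dx.
\]
Undoing the change of variables $y=x_n^j+\lambda_n^j x$ gives
\[
\dual{v_n}{\chi}=(\lambda_n^j)^{\frac{2}{p_0}-d}\int u_{Ln}(t_n^j,y)\,\overline{\chi\big((y-x_n^j)/\lambda_n^j\big)}\,dy,
\]
and by Cauchy--Schwarz this is bounded by
\[
(\lambda_n^j)^{\frac{2}{p_0}-d}\,\|u_{Ln}(t_n^j)\|_{L^2}\,\|\chi((\cdot-x_n^j)/\lambda_n^j)\|_{L^2}=(\lambda_n^j)^{\frac{2}{p_0}-d}\,\|u_{Ln}(t_n^j)\|_{L^2}\,(\lambda_n^j)^{d/2}\|\chi\|_{L^2}.
\]
Since the linear flow is unitary on $L^2$ and $(u_{0,n})_n$ is bounded in $H^{s_0}\subset L^2$, we have $\|u_{Ln}(t_n^j)\|_{L^2}=\|u_{0,n}\|_{L^2}\lesssim 1$, so the right-hand side is $\lesssim (\lambda_n^j)^{\frac{2}{p_0}-d/2}=(\lambda_n^j)^{-s_0}\to 0$ as $n\to\infty$ (using $s_0>0$). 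This contradicts $\dual{v_n}{\chi}\to 1$, and hence $\varphi_L^j\equiv 0$, contradicting the assumption that we chose a nonzero profile. If the profile is identically zero the claim is vacuous. Therefore $(\lambda_n^j)_n$ cannot tend to $\infty$; repeating the extraction argument rules out any unbounded subsequence, so $(\lambda_n^j)_n$ is bounded.

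The part that requires the most care is the justification that the weak limit $\varphi_L^j(0)$ can be tested against a compactly supported smooth function with nonzero result — this is automatic since $\varphi_L^j(0)\neq 0$ in $\dot H^{s_0}$ (for a nonzero profile) and $C_c^\infty$ is dense in $\dot H^{-s_0}$, the dual of $\dot H^{s_0}$ — together with the bookkeeping of the scaling exponents to see that the relevant power of $\lambda_n^j$ is exactly $(\lambda_n^j)^{-s_0}$, which is where the \emph{inhomogeneous} nature of the ambient space (the $L^2$ bound on $u_{0,n}$) is crucially used. I expect the exponent computation $\frac{2}{p_0}-d+\frac d2=\frac2{p_0}-\frac d2=-s_0$ to be the only genuinely quantitative point, and it is short; the conceptual content is simply that concentration at scale $\lambda\to\infty$ is incompatible with an $L^2$ bound.
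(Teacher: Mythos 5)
Your argument is correct and is essentially the paper's proof: both exploit the weak $\dot H^{s_0}$ convergence of the rescaled sequence together with the $L^2$ bound on $u_{0,n}$, the scaling computation $\frac{2}{p_0}-\frac d2=-s_0<0$, and the uniqueness of distributional limits to conclude the profile must vanish. The paper computes $\|(\lambda_n^j)^{2/p_0}u_{Ln}(t_n^j,\lambda_n^j\cdot+x_n^j)\|_{L^2}=(\lambda_n^j)^{-s_0}\|u_{0,n}\|_{L^2}\to 0$ and then invokes uniqueness of the distributional limit, which is exactly what your pairing against a fixed $\chi\in C_c^\infty$ and Cauchy--Schwarz make explicit.
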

\begin{proof}
 We have 
 \begin{equation}
  \label{weakCV1}
  (\lambda_n^j)^{\frac{2}{p_0}}u_{Ln}(t_n^j,\lambda_n^j\cdot+x_n^j)\xrightharpoonup{n\to\infty} \varphi_L^j(0) \text{ weakly in }\dot{H}^{s_0}.
 \end{equation} 
 Furthermore,
 $$\left\|(\lambda_n^j)^{\frac 2{p_0}} u_{Ln}\left( t_n^j,\lambda_n^j\cdot+x_n^j\right) \right\|_{L^2}=(\lambda_n^j)^{\frac{2}{p_0}-\frac{N}{2}}\|u_{0,n}\|_{L^2}.$$
 Assume that $\lambda_{n}^j\to\infty$ along a subsequence in $n$. As a consequence, since $\|u_{0,n}\|_{L^2}$ is bounded, we obtain, along the same subsequence, that
 $(\lambda_n^j)^{\frac 2p_0} u_{Ln}\left( t_n^j,\lambda_n^j\cdot+x_n^j\right)$ converges strongly to $0$ in $L^2$. By uniqueness of the distributional limit,
 $\varphi^j_L(0)=0$ as announced.
\end{proof}
Using the claim and extracting subsequences we obtain that for all $j$, one of the following holds:
\begin{itemize}
\item $\varphi^j_L\equiv 0$. In this case we say that $(\varphi_{Ln}^j)_n$ is a \emph{null profile}, and denote $j\in \JJJ_0$. 
\item $\varphi^j_L\not \equiv 0$ and $\lim_{n\to\infty}\lambda_{n}^j=\lambda^j\in (0,\infty)$. In this case we say that $(\varphi_{Ln}^j)_n$ is a \emph{non-concentrating profile}, and denote $j\in \JJJ_{NC}$. The weak limit \eqref{weakCV1} and the fact that the sequence $(u_{0,n})$ is bounded in $H^{s_0}$ proves that $\varphi_0^j=\varphi^j_L(0)\in H^{s_0}$. Replacing $\varphi_0^j$ by $\frac{1}{(\lambda^j)^{\frac{N}{2}}}\varphi_0^j\left( \frac{x}{\lambda^j} \right)$ and $\lambda_n^j$ by $\lambda_n^j/\lambda^j$, we see that we can assume $\lambda^j=1$. As a consequence, we can assume $\lambda_n^j=1$ for all $n$ (this will modify the profile $\varphi_{Ln}^j$ only by a term which goes to $0$ in $H^{s_0}$ as $n\to\infty$).
\item $\varphi^j_L \not\equiv 0$ and $\lim_{n\to\infty}\lambda_n^j=0$. In this case we say that $(\varphi_{Ln}^j)_n$ is a \emph{concentrating profile}, and denote $j\in\JJJ_C$.
\end{itemize}
\begin{remark}
\label{R:no_Jc}
 Assume that the sequence $(u_{0,n})$ is bounded in $H^{s_2}$ for some $s_2>s_0$. Then it is easy to see that $\JJJ_C$ is empty, i.e. that there is no concentrating profile.
\end{remark}

\begin{remark}
\label{R:Hs}
 Let $s$ such that $0<s<s_0$. Then $\left((\varphi_{Ln}^j)_n\right)_{j\in \JJJ_{NC}}$ is a $\dot{H}^s$ profile decomposition of $(u_{0,n})_n$. Indeed, by Remark \ref{R:no_Jc} the $\dot{H}^s$-profile decomposition of this sequence has no concentrating profile.
 By the preceding subsection, we have the Pythagorean expansion
 \begin{equation}
 \label{PythagoreanHs}
 \forall J\geq 1,\quad \|u_{0,n}\|^2_{\dot{H}^{s}}=\sum_{\substack{j\in \JJJ_{NC}\\1\leq j\leq J}} \left\|\varphi_{L}^j(0)\right\|^2_{\dot{H}^{s}}+\left\|\overline{w}_{Ln}^J(0)\right\|^2_{\dot{H}^{s}}+o(1),\quad n\to\infty,
\end{equation}
where $\overline{w}_{Ln}^J=u_{Ln}-\sum_{\substack{j\in \JJJ_{NC}\\1\leq j\leq J}}   \varphi_{Ln}^j$. One can also prove, as a consequence of the orthogonality of the profiles,
 \begin{equation}
 \label{PythagoreanL2}
 \forall J\geq 1,\quad \|u_{0,n}\|^2_{L^2}=\sum_{\substack{j\in \JJJ_{NC}\\1\leq j\leq J}} \left\|\varphi_{L}^j(0)\right\|^2_{L^2}+\left\|\overline{w}_{Ln}^J(0)\right\|^2_{L^2}+o(1),\quad n\to\infty.
\end{equation}

% \stefan{Moreover,we have the following Pythagorean expansion, for all $J\geq 1$, as $n\rightarrow\infty$:
% \begin{gather}
% \label{Eq:Energy expansion}
% E(u_{0,n})-\sum_{\substack{ j\in \JJJ_{NC} \\1\leq j\leq J}} E(\varphi^j_L(0))-E(\bar{w}^J_{Ln}(0))=o_n(1),\\
% \label{Eq:Virial function expansion}
% \Phi(u_{0,n})-\sum_{\substack{ j\in \JJJ_{NC} \\1\leq j\leq J}} \Phi(\varphi^j_L(0))-\Phi(\bar{w}^J_{Ln}(0))=o_n(1).
% \end{gather}
%}
\end{remark}

\begin{remark}
\label{R:Lq}
Let $2<q<q_0=\frac{2d}{d-2s_0}$, and $\tau^j$ be defined by \eqref{deftauj}. Assume as before $\tau^j\in\{0,\pm\infty\}$. Then if $\tau_j\in \{\pm\infty\}$, one has, by standard properties of the linear Schr\"odinger equation,
\begin{equation}
\label{0Lq}
\lim_{n\to\infty}\|\varphi_{Ln}^j(0)\|_{L^q}=0.
\end{equation} 
Moreover, if $j\in \JJJ_C$, then \eqref{0Lq} holds by a simple scaling argument. Finally, using Lemma \ref{L:PythagoreLq} and the same argument as in Remark \ref{R:Hs}, one obtain
$$\lim_{n\to\infty}\|u_{0,n}\|_{L^q}^{q}=\sum_{\substack{j\in \JJJ_{NC}\\ \tau^j=0}}\|\varphi_{L}^j(0)\|^{q}_{L^q}.$$ 
\end{remark}

\subsection{Nonlinear profile decomposition}
\label{sub:NL}
We now construct a nonlinear profile decomposition, based on the preceding linear profile decomposition, and adapted to the equation \eqref{NLS_g},
where $g$ satisfy the following assumptions.
\begin{assumption}
\label{Assum:profile}
\begin{equation}
 \label{model_g}
g(u)= g_0(u)+g_1(u), \quad g_0(u)=\iota_0|u|^{p_0}u, 
\quad \iota_0\in \{\pm 1\}
 \end{equation} 
and $g_1(u)$ is a nonlinearity of lower order. Precisely, we will assume
\begin{equation}
\label{Assum_Profile}
g_1\in \NNN(s_0,p_2,p_1),\quad \frac{4}{d}<p_1\leq p_2<p_0,\quad 1<p_1, 
\end{equation} 
where as usual $s_0=\frac{d}{2}-\frac{2}{p_0}$, and
\begin{equation}
 \label{Assum_Profile2}
\supent{s_0}\leq p_0, \text{ or }g \text{ is a polynomial in }u,\;\overline{u}.
 \end{equation} 
 Furthermore, the nonlinearity is of the form $g(z)=G'(|z|^2)z$, $G\in C^1([0,\infty),\R)$ with $G(0)=0$.
\end{assumption}
We note that the last condition on $g$ implies conservation of the mass, and, if $s_0\geq 1$, of the
 energy and the momentum. Note that Assumption \ref{Assum:profile} implies Assumption \ref{Assum:NL} with $s=s_0$.

We will also consider the equation \eqref{NLS_g} with $g=g_0$, that is the homogeneous nonlinear Schr\"odinger equation \eqref{NLSh}
with initial data in $\dot{H}^{s_0}$.

To each (linear) profile $\varphi_{Ln}^j$, we associate a \emph{nonlinear profile} $\varphi_{n}^j$ and a \emph{modified nonlinear profile} $\tilde{\varphi}_{n}^j$  in the following way:
\begin{itemize}
\item If $j\in \JJJ_0$, $\tilde{\varphi}^j_n$ and $\varphi_n^j$ are both equal to the constant null function.
 \item If $j\in \JJJ_{NC}$, the 
 modified  nonlinear profile and the nonlinear profile are equal, and defined by 
 $$\tvarphi_n^j(t,x)=\varphi_n^j(t,x)=\varphi^j\left(t-t^j_n,x-x_n^j\right),$$
 where  $\varphi^j$ is the unique solution of \eqref{NLS_g} such that 
 \begin{equation}
 \lim_{t\to\tau^j}\left\|\varphi^j(t)-\varphi^j_L(t)\right\|_{H^{s_0}}=0,\quad \tau^j=\lim_{n\to\infty}-t^j_n.
 \end{equation}
 This solution is given by the well-posedness theory, Proposition \ref{P:local wellposed} (if $\tau^j$ is finite) or by the existence of wave operators, Proposition \ref{P:wave operators} (if $\tau^j\in \{\pm\infty\}$).
 \item If $j\in \JJJ_{C}$, the nonlinear profile $\varphi_n^j$ is defined by 
 \begin{equation}
 \label{defphinj}
 \varphi_{n}^j(t,x)=\frac{1}{(\lambda_n^j)^{\frac{2}{p_0}}}\varphi^j\left( \frac{t-t_n^j}{(\lambda_n^j)^2},\frac{x-x_n^j}{\lambda_n^j} \right)
 \end{equation} 
 where $\varphi^j$ is the unique solution of the homogeneous equation \eqref{NLSh} such that
 $$\lim_{t\to\tau^j} \left\|\varphi^j(t)-\varphi_L^j(t)\right\|_{\dot{H}^{s_0}}=0,\quad \tau^j=\lim_{n\to\infty}\frac{-t_{j,n}}{(\lambda_n^j)^2}.$$
  By definition of $\varphi^j$, we see that $\varphi^j(\tau)$  is in $\dot{H}^{s_0}$ for all $\tau$ in the domain of existence of $\varphi^j$. However it is not necessarily in $H^{s_0}$. To tackle with this difficulty, we fix $\sigma^j$ in the maximal interval of existence of $\varphi^j$ (if $\tau^j$ is finite, we can take $\sigma^j=\tau^j$, if $\tau^j=\pm\infty$, $|\sigma^j|$ large and with the same sign than $\tau^j$). We let 
 $$ s^j_n=(\lambda_n^j)^2\sigma^j+t_n^j$$
 and denote by $\tilde{\varphi}_n^j$ the solution of \eqref{NLSh} such that 
 \begin{equation}
 \label{def_tildephij}
 \tilde{\varphi}_n^j(s^j_n)=\chi\left( x-x_n^j \right)\varphi_n^j(s_n^j)= \chi\left(x-x_n^j\right)\frac{1}{(\lambda_n^j)^{\frac{2}{p_0}}}\varphi^j\left( \sigma^j,\frac{x-x_{n}^j}{\lambda_{n}^j} \right),  
 \end{equation} 
where $\chi\in C_0^{\infty}(\R^N)$ is radially symmetric, $\chi(x)=1$ for $|x|<1$, $\chi(x)=0$ for $|x|>2$.
 \end{itemize}

\begin{lemma}
\label{L:modifprofile1}
 Let $j\in \JJJ_C$. Then
 \begin{equation}
  \label{equiv_prof1}
  \lim_{n\to\infty}\left\|\tilde{\varphi}^j_n(0)-\varphi_{n}^j(0)\right\|_{\dot{H}^{s_0}}=0.
 \end{equation}
 More precisely, let $0\in J_n$ be a sequence of interval. Let $I^j_n=\left\{\frac{t-t_n^j}{(\lambda_n^j)^2},t\in J_n\right\}.$ Assume that there
exists an interval $I$ in the domain of existence of $\varphi^j$ such that $\|\varphi^j\|_{X_{p_0}(I)}<\infty$ and for large $n$, $I^j_n\subset I$. Then for large $n$, $J_n$ is included in the domain of existence of $\tilde{\varphi}^j_n$ and
\begin{gather}
 \label{equiv_prof2}
 \sup_{t\in J_n}\left\|\varphi_n^j(t)-\tilde{\varphi}_n^j(t)\right\|_{\dot{H}^{s_0}}+\|\varphi_n^j-\tilde{\varphi}_n^j \|_{\dot{S}^{s_0}(J_n)} \underset{n \to \infty}{\longrightarrow} 0\\
 \label{equiv_prof3}
\forall s\in [0,s_0),\quad \sup_{t\in J_n}\left\|\tilde{\varphi}_n^j(t)\right\|_{H^{s}}+\|\tilde{\varphi}_n^j \|_{S^{s}(J_n)} \underset{n \to \infty}{\longrightarrow} 0
 \end{gather}
 \end{lemma}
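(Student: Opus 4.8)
The strategy is to reduce everything to scaled-invariant quantities and then exploit the fact that the cutoff $\chi$ becomes invisible at the scale $\lambda_n^j\to 0$. Write $\varphi_n^j(t,x)=(\lambda_n^j)^{-2/p_0}\varphi^j\big(\tfrac{t-t_n^j}{(\lambda_n^j)^2},\tfrac{x-x_n^j}{\lambda_n^j}\big)$ and introduce the rescaled profile $\psi_n^j(\tau,y)=(\lambda_n^j)^{2/p_0}\varphi_n^j\big((\lambda_n^j)^2\tau+t_n^j,\lambda_n^j y+x_n^j\big)=\varphi^j(\tau,y)$, which is independent of $n$. The defining condition \eqref{def_tildephij} says $\tilde\varphi_n^j(s_n^j,x)=\chi(x-x_n^j)\varphi_n^j(s_n^j,x)$; rescaling, $\tilde\varphi_n^j$ corresponds under the same change of variables to the solution $\tilde\psi_n^j$ of \eqref{NLSh} with data at time $\sigma^j$ equal to $\chi(\lambda_n^j y)\varphi^j(\sigma^j,y)$. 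Since $\lambda_n^j\to 0$ and $\chi\equiv 1$ near the origin, $\chi(\lambda_n^j\cdot)\varphi^j(\sigma^j,\cdot)\to\varphi^j(\sigma^j,\cdot)$ in $\dot H^{s_0}$ as $n\to\infty$ (one checks this by splitting into the region $|y|\le R$, where $\chi(\lambda_n^j y)=1$ for $n$ large, and $|y|\ge R$, which has small $\dot H^{s_0}$-mass, using that multiplication by the fixed Schwartz function $\chi$ and by $1-\chi$ is bounded on $\dot H^{s_0}$ together with the product rule Lemma \ref{L:product_rule}).

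With this convergence in hand, the homogeneous long-time perturbation theory (Remark \ref{R:homogeneous}, i.e. the homogeneous analogue of Theorem \ref{T:long time perturbation}) applied with $\tilde\psi_n^j$ as the perturbed solution and $\varphi^j$ as the reference solution, on the rescaled interval $I\supset I_n^j$ on which $\|\varphi^j\|_{X_{p_0}(I)}<\infty$, gives for $n$ large that $\tilde\psi_n^j$ exists on $I$ and
\[
\sup_{\tau\in I}\big\|\tilde\psi_n^j(\tau)-\varphi^j(\tau)\big\|_{\dot H^{s_0}}+\big\|\tilde\psi_n^j-\varphi^j\big\|_{\dot S^{s_0}(I)}\xrightarrow[n\to\infty]{}0.
\]
Undoing the scaling—which preserves $\dot H^{s_0}$ and $\dot S^{s_0}$ norms by the choice of the exponent $2/p_0$—converts this into \eqref{equiv_prof2} on $J_n$, and in particular into \eqref{equiv_prof1} by taking $t=0\in J_n$ (applied with, say, $J_n=\{0\}$ or any fixed small interval around $0$, noting $0\in I_n^j\subset I$ follows from $\tau^j\in I$ for $n$ large). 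That $\tilde\varphi_n^j$ is defined on $J_n$ is part of the output of the perturbation theorem.

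For \eqref{equiv_prof3}, fix $s\in[0,s_0)$. Under rescaling, the $H^s$ and $S^s$ norms of $\tilde\varphi_n^j$ on $J_n$ equal, up to the factor $(\lambda_n^j)^{s_0-s}$, the corresponding $\dot H^{s}$, resp. mixed, norms of $\tilde\psi_n^j$ on $I_n^j$; more precisely $\|\tilde\varphi_n^j(t)\|_{\dot H^{s}}=(\lambda_n^j)^{s_0-s}\|\tilde\psi_n^j(\tau)\|_{\dot H^{s}}$ and $\|\tilde\varphi_n^j(t)\|_{L^2}=(\lambda_n^j)^{s_0}\|\tilde\psi_n^j(\tau)\|_{L^2}$ (here $s_0-\tfrac d2+\tfrac{2}{p_0}=0$ is used for the $L^2$ computation). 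Now $\tilde\psi_n^j(\sigma^j)=\chi(\lambda_n^j\cdot)\varphi^j(\sigma^j,\cdot)$ has $L^2$ and $\dot H^{s}$ norms that are $O(1)$ uniformly in $n$ (by boundedness of multiplication by $\chi$), hence $H^s$ norm $O(1)$, so by the inhomogeneous well-posedness/stability theory on the \emph{fixed} interval $I$ (Proposition \ref{P:local wellposed} and Theorem \ref{T:long time perturbation}, noting $\varphi^j\in X(I)$ here because $s<s_0$ forces $p$ larger and the relevant $X_p$ norm is controlled), $\|\tilde\psi_n^j\|_{S^s(I_n^j)}$ is bounded uniformly in $n$. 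Multiplying by $(\lambda_n^j)^{s_0-s}\to 0$ (and by $(\lambda_n^j)^{s_0}\to 0$ for the $L^2$ part) yields \eqref{equiv_prof3}.

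The main obstacle I anticipate is the careful bookkeeping in the last paragraph: one must verify that the inhomogeneous Cauchy/stability theory genuinely applies to $\tilde\psi_n^j$ on the full interval $I$ with constants uniform in $n$. The point is that $\varphi^j$ solves the \emph{homogeneous} equation and lies only in $\dot H^{s_0}$, not in $H^{s_0}$, so one cannot directly invoke Theorem \ref{T:long time perturbation} with $\varphi^j$ as reference in an inhomogeneous space; instead one uses that $\tilde\psi_n^j-\varphi^j$ is small in $\dot S^{s_0}$ (from \eqref{equiv_prof2} already proven) and that $\chi(\lambda_n^j\cdot)\varphi^j(\sigma^j)$ has bounded $H^s$ norm, to bootstrap a uniform $S^s$ bound on $\tilde\psi_n^j$ via the subcritical Strichartz/nonlinear estimates of Proposition \ref{P:boundg} with $s$ in place of $s_0$, a standard but slightly delicate interpolation argument exploiting $\frac{d}{2}-\frac{2}{p_0}=s_0>s$.
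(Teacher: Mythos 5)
Your proof of \eqref{equiv_prof1} and \eqref{equiv_prof2} is correct and follows essentially the same route as the paper: rescale $\tilde\varphi_n^j$ to $\tilde\psi_n^j$ (denoted $\widetilde\Phi_n^j$ in the paper), observe that $\tilde\psi_n^j(\sigma^j)\to\varphi^j(\sigma^j)$ in $\dot H^{s_0}$ since $\lambda_n^j\to 0$, invoke the homogeneous long-time perturbation theory on $I$, and undo the scaling.

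However, your argument for \eqref{equiv_prof3} has a genuine gap. You claim that $\tilde\psi_n^j(\sigma^j)=\chi(\lambda_n^j\cdot)\,\varphi^j(\sigma^j,\cdot)$ has $L^2$ and $\dot H^s$ norms that are $O(1)$ uniformly in $n$, ``by boundedness of multiplication by $\chi$.'' This is false: $\varphi^j(\sigma^j)$ lies only in $\dot H^{s_0}\hookrightarrow L^{q_0}$ with $q_0=\frac{2d}{d-2s_0}$, not in $L^2$, and the support of $\chi(\lambda_n^j\cdot)$ has radius $\sim\lambda_n^{-j}$, which is growing. By H\"older one gets only
$\|\chi(\lambda_n^j\cdot)\varphi^j(\sigma^j)\|_{L^2}\lesssim(\lambda_n^j)^{-s_0}\|\varphi^j(\sigma^j)\|_{L^{q_0}}$,
and by interpolation $\|\chi(\lambda_n^j\cdot)\varphi^j(\sigma^j)\|_{\dot H^s}\lesssim(\lambda_n^j)^{-(s_0-s)}$. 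After you multiply by the scaling factor $(\lambda_n^j)^{s_0}$ (resp.\ $(\lambda_n^j)^{s_0-s}$), you obtain only $\|\tilde\varphi_n^j(s_n^j)\|_{H^s}=O(1)$, not the required $o(1)$, and the whole chain collapses. The caveat you flag at the end still leans on the same false uniform bound, so it does not repair the gap.

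The correct argument (the one the paper uses) requires showing genuine \emph{decay}, not mere boundedness: split $\varphi^j(\sigma^j)=a+b$ with $a$ compactly supported (hence in $L^2$ because $q_0>2$) and $\|b\|_{L^{q_0}}$ arbitrarily small; then
$\|\tilde\varphi_n^j(s_n^j)\|_{L^2}^2\lesssim(\lambda_n^j)^{2s_0}\|a\|_{L^2}^2+\|b\|_{L^{q_0}}^2\to 0$.
By conservation of the $L^2$ norm for the homogeneous flow, $\sup_{t\in J_n}\|\tilde\varphi_n^j(t)\|_{L^2}\to 0$; interpolating with the $\dot H^{s_0}$ bound supplied by \eqref{equiv_prof2} then yields $\sup_{t\in J_n}\|\tilde\varphi_n^j(t)\|_{H^s}\to 0$ for every $s<s_0$. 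The $S^s$ part of \eqref{equiv_prof3} follows by the nonlinear Strichartz bootstrap on finitely many subintervals, as you correctly sketched.
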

\begin{proof}
We first prove \eqref{equiv_prof2}. Without loss of generality, we can assume $\sigma_j\in I$. Let
$$\widetilde{\Phi}_n^j(\tau,y)=(\lambda_n^j)^{2/p} \tilde{\varphi}_n^j\left(t_n^j+(\lambda_n^j)^2\tau,x_n^j+\lambda_n^jy\right).$$
Then
$$\widetilde{\Phi}_n^j(\sigma_j,y)=(\lambda_n^j)^{\frac{2}{p_0}}\tilde{\varphi}_n^j\left(s_n^j,x_n^j+\lambda_n^j y\right)=\chi(\lambda_n^jy)\varphi^j(\sigma^j,y).$$
Thus, using that $\lambda_n^j\to 0$ as $n\to\infty$, we obtain that $I$ is included in the domain of existence of $\widetilde{\Phi}_n^j$ and
$$\lim_{n\to\infty} \left\|\widetilde{\Phi}_n^j\left(\sigma^j\right)-\varphi^j(\sigma^j)\right\|_{\dot{H}^{s_0}}=0.$$
Using the long time perturbation theory for equation \eqref{NLSh} (see Theorem \ref{T:long time perturbation} and Remark \ref{R:homogeneous}),
we obtain 
$$ \sup_{\tau \in I}\left\|\varphi^j(\tau)-\widetilde{\Phi}^j_n(\tau)\right\|_{\dot{H}^{s_0}}+\left\|\varphi^j-\widetilde{\Phi}^j_n\right\|_{\dot{S}^{s_0}(I)}\underset{n\to\infty}{\longrightarrow} 0.$$
By the change of variable $\tau=\frac{t-t_{j,n}}{(\lambda_{n}^j)^2}$, $y=\frac{x-x_{j,n}}{\lambda_{n}^j}$, we obtain \eqref{equiv_prof2}. Applying \eqref{equiv_prof2}, we obtain \eqref{equiv_prof1}.

%Taking $J_n=\left[-\lambda_{n}^j\eps,\lambda_{n}^j\eps\right]$, $\eps>0$ small, we obtain \eqref{equiv_prof1}.

To prove \eqref{equiv_prof3}, we first notice that using \eqref{def_tildephij} and $\lambda_n^j\to 0$, 
$$\lim_{n\to\infty} \|\tilde{\varphi}^j_n(s_n^j)\|_{L^2}=0,
$$
By conservation of the $L^2$ norm, and interpolation with the bound of the $\dot{H}^s$ norm which follows from \eqref{equiv_prof2}
\begin{equation}
\label{limHs}
\forall s\in [0,s_0),\quad
 \lim_{n\to \infty} \sup_{t\in J_n}\left\|\tilde{\varphi}^j_n\right\|_{H^{s}}=0.
\end{equation}
It remains to prove the second limit in \eqref{equiv_prof3}. We let $s\in [0,s_0)$. To any $J\subset J_n$ such that $\left\|\tilde{\varphi_n}^j\right\|_{X(J)}\leq \eps$ (where $\eps$ is a small constant), and $a_n\in J$, we have by Strichartz estimates and Proposition \ref{P:boundg},
$$ \norm{\tilde{\varphi}_n^j}_{S^s(J)}\lesssim \norm{\tilde{\varphi}_n^j(a_n)}_{H^s}+\left\|g_0(\tilde{\varphi}_n^j)\right\|_{N^s(J)}\lesssim \norm{\tilde{\varphi}_n^j(a_n)}_{H^s} +\norm{\tilde{\varphi}_n^j}_{S^{s}(J)}\norm{\tilde{\varphi}_n^j}_{X(J)}^{p_0},$$
and thus (if $\eps$ is small enough),
$$\norm{\tilde{\varphi}_n^j}_{S^s(J)}\lesssim \norm{\tilde{\varphi}_n^j(a_n)}_{H^s}.$$
Since 
$$\limsup_{n\to\infty} \left\|\tilde{\varphi}_n^j\right\|_{S^{s_0}(J_n)}<\infty,$$
and $S^{s_0}(J)$ is continuously embedded in $X(J)$, we can divide the interval $J_n$ in $N$ subintervals $J_n^k$, $k\in \llbracket 1,N\rrbracket$ ($N$ independent of $n$), such that
$\left\|\tilde{\varphi}_n^j\right\|_{X(J_n^k)}\leq \eps$. Arguing as in the proof of the long-time perturbation theory result (Theorem \ref{T:long time perturbation}), and using \eqref{limHs}, we obtain \eqref{equiv_prof3}.
\end{proof}
When $j\in \JJJ_C$, the modified profiles $\tilde{\varphi}_n^j$ are approximate solutions of \eqref{NLS_g}:
\begin{lemma}
\label{L:modifprofile2}
Let $j\in \JJJ_C$, and $J_n$ be as in Lemma \ref{L:modifprofile1}. Let 
\begin{equation}
 \label{deftildeenj}
 \tilde{e}_n^j=i\partial_t\tvp_n^j+\Delta \tvp_n^j-g(\tvp_n^j)=-g_1(\tilde{\varphi}_n^j).
\end{equation} 
Then
$$\lim_{n\to\infty}\left\|\tilde{e}_n^j\right\|_{N^{s_0}(J_n)}=0.$$
\end{lemma}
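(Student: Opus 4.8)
The plan is to exploit that $g_1$ is strictly lower order than the leading term $g_0$ and that, by Lemma~\ref{L:modifprofile1}, the modified concentrating profile $\tvp_n^j$ is small in every \emph{subcritical} norm. Since $\tvp_n^j$ solves the homogeneous equation \eqref{NLSh}, i.e.\ $i\partial_t\tvp_n^j+\Delta\tvp_n^j=g_0(\tvp_n^j)$, the identity $\tilde e_n^j=g_0(\tvp_n^j)-g(\tvp_n^j)=-g_1(\tvp_n^j)$ recorded in \eqref{deftildeenj} is immediate, so it suffices to prove $\|g_1(\tvp_n^j)\|_{N^{s_0}(J_n)}\to 0$.

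The first step is to apply Proposition~\ref{P:boundg} to the lower-order nonlinearity $g_1\in\NNN(s_0,p_2,p_1)$, which gives
\[
\|g_1(\tvp_n^j)\|_{N^{s_0}(J_n)}\lesssim \|\tvp_n^j\|_{W^{s_0}(J_n)}\Big(\|\tvp_n^j\|_{X_{p_1}(J_n)\cap X_{p_2}(J_n)}^{p_1}+\|\tvp_n^j\|_{X_{p_1}(J_n)\cap X_{p_2}(J_n)}^{p_2}\Big).
\]
The factor $\|\tvp_n^j\|_{W^{s_0}(J_n)}$ is bounded uniformly in $n$: one has $\|\tvp_n^j\|_{W^{s_0}(J_n)}\lesssim\|\tvp_n^j\|_{S^{s_0}(J_n)}$ (from $S^0\subset W^0$), and $\limsup_n\|\tvp_n^j\|_{S^{s_0}(J_n)}<\infty$ by Lemma~\ref{L:modifprofile1}, the $\dot S^{s_0}$-part being controlled through \eqref{equiv_prof2} by the scaling-invariant quantity $\|\varphi^j\|_{\dot S^{s_0}(I)}<\infty$ and the lower-order part by \eqref{equiv_prof3}. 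Hence the whole matter reduces to showing $\|\tvp_n^j\|_{X_{p_i}(J_n)}\to 0$ for $i\in\{1,2\}$.

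For this I would set $s_i:=\tfrac d2-\tfrac2{p_i}$; since $\tfrac4d<p_i<p_0$ one has $0<s_i<s_0$. The Sobolev embedding underlying \eqref{inclusion}--\eqref{Ss_X}, used there for $p_0$ but equally valid for $p_i$ (because $p_i>\tfrac4d$ makes $\tfrac{p_i(d+2)}2$ an admissible Strichartz endpoint exponent), gives $\|\tvp_n^j\|_{X_{p_i}(J_n)}=\|\tvp_n^j\|_{L^{p_i(d+2)/2}(J_n\times\R^d)}\lesssim\|\tvp_n^j\|_{S^{s_i}(J_n)}$, and since $s_i\in[0,s_0)$, \eqref{equiv_prof3} yields $\|\tvp_n^j\|_{S^{s_i}(J_n)}\to 0$. (Alternatively, one interpolates the space--time Lebesgue norm $X_{p_i}=L^{p_i(d+2)/2}_{t,x}$ between $W^0(J_n)=X_{4/d}(J_n)$, whose norm tends to $0$ by \eqref{equiv_prof3} with $s=0$, and $X_{p_0}(J_n)$, whose norm stays bounded by \eqref{Ss_X}; the strict inequalities $\tfrac4d<p_i<p_0$ are exactly what make the interpolation exponent lie in $(0,1)$.) Combining, both terms on the right-hand side of the displayed estimate tend to $0$, which is the claim. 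The only point carrying genuine content is the subcritical smallness $\|\tvp_n^j\|_{X_{p_i}(J_n)}\to 0$, i.e.\ that a concentrating profile is negligible against a strictly lower-order nonlinearity; the rest is a mechanical application of Proposition~\ref{P:boundg} and Lemma~\ref{L:modifprofile1}.
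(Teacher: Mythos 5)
Your proof is correct and follows essentially the same route as the paper: reduce to bounding $\|g_1(\tvp_n^j)\|_{N^{s_0}(J_n)}$ via Proposition~\ref{P:boundg}, control the $S^{s_0}$ (or $W^{s_0}$) factor by \eqref{equiv_prof2}, and send the subcritical $X_{p_i}$-norms to zero using the Sobolev embedding $X_{p_i}\hookleftarrow S^{s_i}$ with $s_i=\tfrac d2-\tfrac2{p_i}<s_0$ together with \eqref{equiv_prof3}. The alternative interpolation remark is a cosmetic variant; the core mechanism is identical to the paper's.
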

\begin{proof}
 Since $\tilde{\varphi}_n^j$ is a solution of \eqref{NLSh} with $p=p_0$, we indeed have $\tilde{e}_n^j=-g_1(\tilde{\varphi}_n^j)$. As a consequence, by Proposition \ref{P:boundg}, using that $g_1\in \NNN(s_0,p_2,p_1)$,
\begin{multline}
\norm{\tilde{e}_n^j}_{N^{s_0}(J_n)}\lesssim \norm{\tilde{\varphi}_n^j}_{S^{s_0}(J_n)}\left(\norm{\tilde{\varphi}_n^j}_{L^{p_1(d+2)/2}(J_n\times \R^d)}^{p_1}+\norm{\tilde{\varphi}_n^j}_{L^{p_2(d+2)/2}(J_n\times \R^d)}^{p_2}\right)\\
\lesssim \norm{\tilde{\varphi}_n^j}_{S^{s_0}(J_n)}\left(\norm{\tilde{\varphi}_n^j}_{S^{s_1}(J_n)}^{p_1}+\norm{\tilde{\varphi}_n^j}_{S^{s_2}(J_n)}^{p_2}\right),
 \end{multline}
where $s_k=\frac{d}{2}-\frac{2}{p_k}$, $k\in \{1,2\}$.
 Since $s_1<s_2<s_0$, the conclusion of the Lemma follows from \eqref{equiv_prof3}.
\end{proof}
We next give the announced approximation result. For this we must also modify the linear remainder $w_{L,n}^J$: we let $\tw_{L,n}^J$ be the solution of the linear wave equation with initial data
\begin{equation}
\label{def_tilde_w}
\tilde{w}_{L,n}^J(0)=u_{0,n}-\sum_{j=1}^J\tilde{\varphi}_n^j(0). 
\end{equation} 
\begin{claim}
 \label{C:modifw}
 For all $s$ with $0<s\leq s_0$,
 $$\lim_{J\to\infty}\limsup_{n\to\infty}\left\|\tilde{w}_{L,n}^J\right\|_{X(\R)}+\left\|\tilde{w}_{L,n}^J\right\|_{\dot{W}^s(\R)}=0.$$
\end{claim}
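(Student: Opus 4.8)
The plan is to deduce the claim from the analogous vanishing \eqref{weak_limit_profiles} of the \emph{unmodified} remainder $w_{Ln}^J$, by showing that replacing each $\varphi_{Ln}^j$ by $\tilde\varphi_n^j$ perturbs the initial data only by something that vanishes in $\dot{H}^{s_0}$. Since $\tilde{w}_{L,n}^J$ solves the free Schr\"odinger equation, writing $u_{0,n}=w_{Ln}^J(0)+\sum_{j=1}^J\varphi_{Ln}^j(0)$ gives, for all $t$,
\[
\tilde{w}_{L,n}^J(t)=w_{Ln}^J(t)+\sum_{j=1}^J e^{it\Delta}r_n^j,\qquad r_n^j:=\varphi_{Ln}^j(0)-\tilde\varphi_n^j(0).
\]
First I would check that $\|r_n^j\|_{\dot{H}^{s_0}}\to 0$ as $n\to\infty$ for each fixed $j$: this is trivial for $j\in\JJJ_0$; for $j\in\JJJ_{NC}$ it follows from the continuity of the flows of \eqref{NLS_g} and of the linear equation together with the defining asymptotics of $\tilde\varphi_n^j=\varphi_n^j$ coming from Proposition \ref{P:local wellposed} and Proposition \ref{P:wave operators}; and for $j\in\JJJ_C$ it is precisely \eqref{equiv_prof1} of Lemma \ref{L:modifprofile1} combined with the defining asymptotics of the concentrating nonlinear profile for \eqref{NLSh}.

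Granting this, the $X_{p_0}(\R)$ and $\dot{W}^{s_0}(\R)$ parts of the claim are immediate: by Strichartz estimates (via \eqref{Ss_X} and Remark \ref{R:homogeneous}) one has $\|e^{it\Delta}r_n^j\|_{X_{p_0}(\R)}+\|e^{it\Delta}r_n^j\|_{\dot{W}^{s_0}(\R)}\lesssim\|r_n^j\|_{\dot{H}^{s_0}}$, so the displayed identity, the finiteness of the sum over $j$, and \eqref{weak_limit_profiles} (which for the present exponents reads $\lim_{J}\limsup_{n}(\|w_{Ln}^J\|_{X_{p_0}(\R)}+\|w_{Ln}^J\|_{\dot{W}^{s_0}(\R)})=0$) give $\lim_{J}\limsup_{n}(\|\tilde{w}_{L,n}^J\|_{X_{p_0}(\R)}+\|\tilde{w}_{L,n}^J\|_{\dot{W}^{s_0}(\R)})=0$. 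To reach $X_{p_1}(\R)$ with $\tfrac4d<p_1<p_0$ and $\dot{W}^s(\R)$ with $0<s<s_0$, I would interpolate with $W^0(\R)=L^{2(d+2)/d}(\R\times\R^d)$: since $\tfrac4d<p_1<p_0$, the exponent $p_1(d+2)/2$ lies between those of $W^0$ and $X_{p_0}$, so $\|v\|_{X_{p_1}(\R)}\le\|v\|_{X_{p_0}(\R)}^{\theta}\|v\|_{W^0(\R)}^{1-\theta}$ for some $\theta\in(0,1)$, while Lemma \ref{lm homogeneous GN inequality} applied in $x$ followed by H\"older in $t$ gives $\|v\|_{\dot{W}^s(\R)}\lesssim\|v\|_{W^0(\R)}^{1-s/s_0}\|v\|_{\dot{W}^{s_0}(\R)}^{s/s_0}$. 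Thus it remains only to bound $\|\tilde{w}_{L,n}^J\|_{W^0(\R)}$, and hence by Strichartz $\|\tilde{w}_{L,n}^J(0)\|_{L^2}$, uniformly in $J$ (for $n$ large).

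This uniform $L^2$ bound is the step that needs real care, since the triangle inequality would lose a power of $J$; the main obstacle is therefore an almost-orthogonality estimate for the modified profiles in $L^2$. I would obtain it by expanding $\|u_{0,n}\|_{L^2}^2=\bigl\|\sum_{j=1}^J\tilde\varphi_n^j(0)+\tilde{w}_{L,n}^J(0)\bigr\|_{L^2}^2$ and showing that all cross terms are $o(1)$ as $n\to\infty$, using that $\|\tilde\varphi_n^j(0)\|_{L^2}\to 0$ for $j\in\JJJ_C$ (established inside the proof of Lemma \ref{L:modifprofile1}) and the standard $L^2$-orthogonality of the profiles $\varphi^j(-t_n^j,\cdot-x_n^j)$, $j\in\JJJ_{NC}$, inherited from the orthogonality of the sequences of transformations exactly as in \eqref{PythagoreanL2}; this yields $\limsup_n\|\tilde{w}_{L,n}^J(0)\|_{L^2}\le\limsup_n\|u_{0,n}\|_{L^2}$, a bound independent of $J$. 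Combined with the two interpolation inequalities and the case $s=s_0$ already handled, this completes the proof for all of $X_{p_1}(\R)$ and $\dot{W}^s(\R)$, $0<s\le s_0$. (A minor point to dispatch along the way is that each $\tilde\varphi_n^j$ is defined at $t=0$; for $j\in\JJJ_C$ this follows for $n$ large from Lemma \ref{L:modifprofile1} applied with an interval $J_n\ni 0$, in the regime in which the claim is used.)
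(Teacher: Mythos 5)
Your proposal is correct and follows essentially the same route as the paper: express $\tilde{w}_{L,n}^J$ as $w_{L,n}^J$ plus initial-data corrections $r_n^j$ that vanish in $\dot H^{s_0}$, obtain the $X_{p_0}$ and $\dot W^{s_0}$ vanishing from \eqref{weak_limit_profiles} and Strichartz, establish a $J$-uniform $L^2$ bound on $\tilde{w}_{L,n}^J(0)$ via the Pythagorean expansion \eqref{PythagoreanL2} (discarding the concentrating profiles, which are small in $L^2$), and interpolate to reach $X_{p_1}$ and $\dot W^s$, $0<s<s_0$. The paper's write-up is terser — it does not spell out the interpolation step or the technical remark about the well-definedness of $\tilde\varphi_n^j(0)$ — but the mathematical content is identical.
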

\begin{proof}
 We have
$$\tilde{w}_{L,n}^J(0)=w_{L,n}^J(0)+\sum_{1\leq j\leq J}(\varphi_{Ln}^j(0)-\tvarphi_n^j(0)).$$
 By Lemma \ref{L:modifprofile1} and linear profile decomposition, we have
 $$ \lim_{n\to \infty} \left\|\varphi_{Ln}^j(0)-\tilde{\varphi}_n^j(0)\right\|_{\dot{H}^{s_0}}\leq \lim_{n\to \infty} \left\|\varphi_{Ln}^j(0)-\varphi_n^j(0)\right\|_{\dot{H}^{s_0}}+\lim_{n\to \infty} \left\|\varphi_{n}^j(0)-\tilde{\varphi}_n^j(0)\right\|_{\dot{H}^{s_0}}\rightarrow 0.$$ Combining with \eqref{weak_limit_profiles}, we obtain
 \begin{equation}
  \label{bnd_s0level}
 \lim_{J\to\infty}\limsup_{n\to\infty}\left\|\tilde{w}_{L,n}^J\right\|_{X_{p_0}(\R)}+\norm{|\nabla|^{s_0}\tilde{w}_{L,n}^J}_{W^0(\R)}=0.
 \end{equation} 
 By Lemma \ref{L:modifprofile1}, we also have
 $$ \tilde{w}^J_{L,n}(0)=u_{0,n}-\sum_{\substack{j\in \JJJ_{NC}\\ 1\leq j\leq J}}\varphi_n^j(0)+o_n(1),\text{ in } H^s,\; 0\leq s<s_0,$$
 which shows by the Pythagorean expansion \eqref{PythagoreanL2} that 
 $$\sup_J\limsup_{n\to\infty} \|\tilde{w}_{L,n}^J(0)\|_{L^2}<\infty.$$
 Using Strichartz estimates, we obtain
 \begin{equation}
\label{estimate_tw}
 \lim_{J\to\infty}\limsup_{n\to\infty}\norm{\tilde{w}_{L,n}^J}_{W^0(\R)}<\infty,
 \end{equation} 
 which yields, combining with \eqref{bnd_s0level}, the conclusion of the claim.
 \end{proof}
 \begin{theorem}[Approximation by profiles]
\label{T:NLapprox}
Let $(u_{0,n})_n$ be a sequence bounded in $H^{s_0}$ that admits a profile decomposition $\left((\varphi^j_{Ln})_n\right)_{j}$. Define as above the nonlinear profiles $\varphi_n^j$, the modified nonlinear profiles $\tvarphi_n^j$ and the modified remainder $\tilde{w}_{L,n}^J$. Let $I_n$ be a sequence of intervals such that $0\in I_n$, and assume that for each $j\geq 1$, for large $n$,
$0\in I_n \subset I_{\max}(\varphi_n^j),$
\begin{gather}
\label{scatt_profile1}
j\in \JJJ_C\Longrightarrow \limsup_{n\to\infty} \left\||\nabla|^{s_0}\varphi_n^j\right\|_{S^{0}(I_n)}<\infty,\\
\label{scatt_profile2}
j\in \JJJ_{NC}\Longrightarrow \limsup_{n\to\infty} \left\|\varphi_n^j\right\|_{S^{s_0}(I_n)}<\infty.
\end{gather}
Let $u_n$ be the solution of \eqref{NLS_g} with initial data $u_{0,n}$. Then for large $n$, $I_n\subset I_{\max}(u_n)$, 
$$ u_n(t)=\sum_{1\leq j\leq J} \tvarphi_n^j(t,x)+\tilde{w}_{L,n}^J(t)+r_{n}^J(t),\quad t\in I_n,$$
with 
$$\limsup_{n\to \infty}\|u_n\|_{S^{s_0}(I_n)}<\infty$$
and
\begin{equation}
 \label{lim_rnJ}
\lim_{J\to\infty}\limsup_{n\to\infty}\left\|r_n^J\right\|_{S^{s_0}(I_n)}=0.
 \end{equation} 
\end{theorem}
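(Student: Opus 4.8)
The plan is to run the standard Bahouri–Gérard / Kenig–Merle argument: build an approximate solution out of finitely many nonlinear profiles plus the modified linear remainder, check that it is a genuine approximate solution of \eqref{NLS_g} with small error in $N^{s_0}(I_n)$, and then invoke the long-time perturbation theory (Theorem \ref{T:long time perturbation}) to conclude that the true solution $u_n$ stays close to it. Fix $J\geq 1$ and set
\[
v_n^J(t)=\sum_{1\leq j\leq J}\tvarphi_n^j(t)+\tw_{L,n}^J(t).
\]
\emph{Step 1: uniform bounds on the approximate solution.} First one shows that $\limsup_n\|v_n^J\|_{S^{s_0}(I_n)}<\infty$, with a bound that may depend on $J$. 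For $j\in\JJJ_{NC}$ this is hypothesis \eqref{scatt_profile2}; for $j\in\JJJ_C$ one combines \eqref{scatt_profile1} with Lemma \ref{L:modifprofile1}, in particular \eqref{equiv_prof2} (control of $\tvarphi_n^j-\varphi_n^j$ in $\dot S^{s_0}$) and \eqref{equiv_prof3} (control of the low-Sobolev and $L^2$-based norms of $\tvarphi_n^j$, which converge to $0$); together these give $\limsup_n\|\tvarphi_n^j\|_{S^{s_0}(I_n)}<\infty$. For $\tw_{L,n}^J$ one uses Strichartz and Claim \ref{C:modifw} together with \eqref{estimate_tw}. One also needs the key orthogonality fact that for $j\neq k$ the $X_p$ and $W^0$ norms of the "cross terms" $\tvarphi_n^j\,\tvarphi_n^k$ (and mixed products) tend to $0$ as $n\to\infty$; this is the usual consequence of the pairwise orthogonality of the transformations $\Lambda_n^j$, and it lets one pass from the bound on each individual profile to a bound on the sum, uniformly in $J$ in the relevant $X$ and $\dot W^{s_0}$ norms (though not necessarily in the full $S^{s_0}$ norm). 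A convenient technical device here is the standard one: bound $\sum_{j\leq J}\|\tvarphi_n^j\|_{X(I_n)}^2\lesssim_J 1$ first, then upgrade using almost-orthogonality to control $\|\sum_j \tvarphi_n^j\|_{X(I_n)}$ uniformly in $J$.

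\emph{Step 2: the error term.} Define $e_n^J=i\partial_t v_n^J+\Delta v_n^J-g(v_n^J)$. Since each $\tvarphi_n^j$ solves \eqref{NLS_g} up to the error $\tilde e_n^j$ (equal to $0$ when $j\in\JJJ_{NC}$, and equal to $-g_1(\tvarphi_n^j)$ when $j\in\JJJ_C$, controlled by Lemma \ref{L:modifprofile2}), and $\tw_{L,n}^J$ solves the linear equation,
\[
e_n^J=\sum_{1\leq j\leq J}\tilde e_n^j+\Big(g(v_n^J)-\sum_{1\leq j\leq J}g(\tvarphi_n^j)\Big).
\]
The first sum is $o_n(1)$ in $N^{s_0}(I_n)$ for each fixed $J$ by Lemma \ref{L:modifprofile2} (and is $0$ on the $\JJJ_{NC}$ part). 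For the second, "nonlinear interaction", term one writes $g=g_0+g_1$ and expands $g_0\big(\sum_j\tvarphi_n^j+\tw_{L,n}^J\big)-\sum_j g_0(\tvarphi_n^j)$ — and similarly for $g_1$ — into a sum of mixed monomials each of which contains either at least two distinct profiles, or at least one profile and one factor of $\tw_{L,n}^J$. Using Proposition \ref{P:boundg} (with $v=0$ for the $N^{s_0}$-estimate, using the homogeneous $\dot W^{s_0}$ norm as in the statement) together with: (i) the pairwise orthogonality of profiles, which sends the genuinely mixed profile–profile terms to $0$ in $X$ as $n\to\infty$ for each fixed $J$; and (ii) Claim \ref{C:modifw}, which sends $\|\tw_{L,n}^J\|_{X(\R)}$ and $\|\tw_{L,n}^J\|_{\dot W^{s_0}(\R)}$ to $0$ as first $n\to\infty$ then $J\to\infty$, one obtains
\[
\lim_{J\to\infty}\limsup_{n\to\infty}\|e_n^J\|_{N^{s_0}(I_n)}=0.
\]
Here one must be a little careful: the number of monomials in the $g_0$-expansion is $p_0+2$ only when $p_0$ is an integer, so in the non-polynomial case one instead uses the Lipschitz-type bound \eqref{diff_bound} of Proposition \ref{P:boundg} directly on $g\big(\sum_j\tvarphi_n^j+\tw_{L,n}^J\big)-g\big(\sum_j\tvarphi_n^j\big)$ to absorb the $\tw_{L,n}^J$ contribution, and a separate telescoping/almost-orthogonality argument on $g\big(\sum_j\tvarphi_n^j\big)-\sum_j g(\tvarphi_n^j)$.

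\emph{Step 3: perturbation and conclusion.} Now fix $A$ to be (a constant larger than) $\limsup_n\|v_n^J(0)\|_{H^{s_0}}+\limsup_n\|v_n^J\|_{X(I_n)}$, which by Steps 1 is finite and can be taken \emph{uniform in $J$} (this uniformity is exactly why one needs the almost-orthogonal $X$-bound rather than a $J$-dependent one). Note $u_{0,n}-v_n^J(0)=0$ by the definition \eqref{def_tilde_w} of $\tw_{L,n}^J$, so the initial-data mismatch is zero. Choose $J$ so large, and then $n$ so large, that $\|e_n^J\|_{N^{s_0}(I_n)}\leq\eps(A,s_0)$; then Theorem \ref{T:long time perturbation} applies with $w=v_n^J$, gives $I_n\subset I_{\max}(u_n)$, and yields $\|u_n-v_n^J\|_{S^{s_0}(I_n)}\leq C(A,s_0)\,\|e_n^J\|_{N^{s_0}(I_n)}=:\|r_n^J\|_{S^{s_0}(I_n)}$, which therefore satisfies \eqref{lim_rnJ}. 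Since $u_n=v_n^J+r_n^J=\sum_{j\leq J}\tvarphi_n^j+\tw_{L,n}^J+r_n^J$, and $\|u_n\|_{S^{s_0}(I_n)}\leq \|v_n^J\|_{S^{s_0}(I_n)}+\|r_n^J\|_{S^{s_0}(I_n)}$ is bounded (for one fixed large admissible $J$), the theorem follows.

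\emph{Main obstacle.} The delicate point is Step 2, and within it two things: making the bookkeeping of the nonlinear interaction terms work \emph{without} assuming $p_0$ integer (so that one cannot literally multinomially expand $g_0$), and verifying that the almost-orthogonality estimates for products of rescaled/translated profiles in the relevant space-time Lebesgue norms $X_{p_0}$, $X_{p_1}$, $X_{p_2}$ and the Strichartz norms genuinely go to zero — this requires a separate elementary lemma (standard, but it must be stated and used carefully here because the scaling exponents and the admissible pairs differ across $p_0,p_1,p_2$, and because the remainder $\tw_{L,n}^J$ is only controlled after the double limit $\lim_J\limsup_n$). The second subtlety, needed in Step 3, is ensuring the constant $A$ in the perturbation theory is uniform in $J$, which forces one to prove the $X$-norm bound on $\sum_{j\le J}\tvarphi_n^j$ with a constant independent of $J$ rather than the easy $J$-dependent one.
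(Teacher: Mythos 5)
Your proposal is correct and follows essentially the same route as the paper: build the approximate solution $v_n^J=\sum_{j\le J}\tvarphi_n^j+\tw_{L,n}^J$, establish $J$-uniform $X$ and $\dot W^{s_0}$ bounds via the Pythagorean expansion, small-data theory for the tail profiles, and almost-orthogonality (Lemma \ref{L:ortho}); show $\|e_n^J\|_{N^{s_0}(I_n)}$ is small after the $\lim_J\limsup_n$ by combining Lemma \ref{L:modifprofile2}, Proposition \ref{P:boundg}, Claim \ref{C:modifw} and Lemma \ref{L:ortho}; and close via Theorem \ref{T:long time perturbation}, using that $v_n^J(0)=u_{0,n}$ exactly. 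The only blemish is a harmless sign typo in your displayed formula for $e_n^J$ (the interaction term should enter with the opposite sign), which does not affect the estimates.
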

We will also need the fact that Pythagorean expansions of the Sobolev norms hold in the setting of the preceding Theorem:

\begin{lemma}
\label{L:Pythagorean}
With the same assumptions and notations as in Theorem \ref{T:NLapprox}, if $(t_n)_n$ is a sequence of time with $t_n\in I_n$ for all time, then for all $J\geq 1$,
$$ \|u_n(t_n)\|^2_{\dot{H}^{s_0}}=\sum_{j=1}^J\|\tilde{\varphi}^j_n(t_n)\|_{\dot{H}^{s_0}}^2+\|\tilde{w}_n^J(t_n)\|^2_{\dot{H}^{s_0}}+o_{J,n}(1),$$
where $\lim_{J\to\infty}\limsup_{n\to\infty} o_{J,n}(1)=0$. Furthermore, for all $s$ with $0\leq s<s_0$, 
$$ \|u_n(t_n)\|^2_{\dot{H}^{s}}= \sum_{\substack{1\leq j\leq J\\ j\in \JJJ_{NC}}}\|\tilde{\varphi}^j_n(t_n)\|_{\dot{H}^{s}}^2+\|\tilde{w}_n^J(t_n)\|^2_{\dot{H}^{s}}+o_{J,n}(1),$$
\end{lemma}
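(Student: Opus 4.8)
The plan is to deduce the two expansions from the decomposition
$u_n(t)=\sum_{j=1}^J\tilde\varphi_n^j(t)+\tilde w_{L,n}^J(t)+r_n^J(t)$ of Theorem~\ref{T:NLapprox} (so that $\tilde w_n^J=\tilde w_{L,n}^J+r_n^J$), reducing each to the vanishing of cross terms. All the pieces are bounded: $\limsup_n\|u_n\|_{S^{s_0}(I_n)}<\infty$, $\limsup_n\|\tilde\varphi_n^j\|_{S^{s_0}(I_n)}<\infty$ by \eqref{scatt_profile1}, \eqref{scatt_profile2} and Lemma~\ref{L:modifprofile1}, and $\sup_J\limsup_n\big(\|\tilde w_{L,n}^J(0)\|_{L^2}+\|\tilde w_{L,n}^J(0)\|_{\dot H^{s_0}}\big)<\infty$ by the proof of Claim~\ref{C:modifw}; moreover $\limsup_n\|r_n^J\|_{S^{s_0}(I_n)}\to 0$ as $J\to\infty$ by \eqref{lim_rnJ}. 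Hence, expanding $\|u_n(t_n)\|_{\dot H^{s_0}}^2$ and bounding by Cauchy--Schwarz the terms containing $r_n^J$, the first identity follows once we show, for fixed $J$ and as $n\to\infty$,
\[
\big(\tilde\varphi_n^j(t_n),\tilde\varphi_n^k(t_n)\big)_{\dot H^{s_0}}\to 0\quad(j\neq k),\qquad
\big(\tilde\varphi_n^j(t_n),\tilde w_{L,n}^J(t_n)\big)_{\dot H^{s_0}}\to 0\quad(1\leq j\leq J).
\]
Both are proved by renormalizing along the $j$-th frame $(\lambda_n^j,t_n^j,x_n^j)$, which leaves the $\dot H^{s_0}$ inner product invariant.

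For the second limit, fix $j$, set $\theta_n^j=(t_n-t_n^j)/(\lambda_n^j)^2$, and pass to a subsequence with $\theta_n^j\to\theta^j\in[-\infty,+\infty]$. Using that $e^{it\Delta}$ intertwines the scaling and translation of the $j$-th frame, the renormalization of the linear solution $\tilde w_{L,n}^J(t_n)$ along this frame equals $e^{i\theta_n^j\Delta}\Phi_n$ with $\Phi_n=(\lambda_n^j)^{2/p_0}\tilde w_{L,n}^J(t_n^j,x_n^j+\lambda_n^j\,\cdot)$; by the proof of Claim~\ref{C:modifw} one has $\tilde w_{L,n}^J(0)=w_{Ln}^J(0)+o_n(1)$ in $\dot H^{s_0}$, so \eqref{profile0bis} gives $\Phi_n\rightharpoonup 0$ in $\dot H^{s_0}$. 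On the other hand the renormalization of $\tilde\varphi_n^j(t_n)$ equals $\varphi^j(\theta_n^j)+o_n(1)$ in $\dot H^{s_0}$ (immediate for $j\in\JJJ_{NC}$, and a consequence of Lemma~\ref{L:modifprofile1} for $j\in\JJJ_C$), and by the Strichartz bound \eqref{scatt_profile1}--\eqref{scatt_profile2} the family $e^{-i\theta_n^j\Delta}\varphi^j(\theta_n^j)$ converges strongly in $\dot H^{s_0}$ (to $\varphi^j(\theta^j)$ by continuity when $\theta^j$ is finite, to a scattering state of $\varphi^j$ when $\theta^j=\pm\infty$). Since $e^{i\theta_n^j\Delta}$ is unitary, the cross term equals $\big(e^{-i\theta_n^j\Delta}\varphi^j(\theta_n^j),\Phi_n\big)_{\dot H^{s_0}}+o_n(1)\to 0$ by weak--strong convergence.

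For the first limit, renormalize again along the $j$-th frame: $\tilde\varphi_n^j(t_n)$ becomes $\varphi^j(\theta_n^j)+o_n(1)$ while $\tilde\varphi_n^k(t_n)$ becomes the renormalization of a profile carried by a frame which, relative to the $j$-th one, diverges in scale, position, or time. One then runs the standard orthogonality case analysis: if the relative scales or positions diverge, density of $C^\infty_c(\R^d)$ in $\dot H^{s_0}$ together with the scattering of $\varphi^k$ shows the renormalized $k$-profile tends to $0$ weakly in $\dot H^{s_0}$; if scales and positions stay comparable, both renormalized profiles are of the form $e^{i\sigma_n\Delta}\psi$ (with $\psi\in\dot H^{s_0}$ fixed modulo $o_n(1)$) with time shifts differing by a quantity going to $+\infty$, so the cross term is $\big(\psi^j,e^{i\tau_n\Delta}\psi^k\big)_{\dot H^{s_0}}+o_n(1)$, $|\tau_n|\to\infty$, which vanishes because $e^{i\tau\Delta}g\rightharpoonup 0$ in $\dot H^{s_0}$ as $|\tau|\to\infty$. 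This is the nonlinear analogue of the orthogonality computation behind Proposition~\ref{P:decomposition1} (see \cite{Shao09,Keraani01}); the only new ingredient is that the arbitrary time sequence $t_n$ is absorbed into the $\theta_n^j$. I expect this case analysis --- and especially the scale-separation subcase in which one profile simultaneously concentrates and disperses --- to be the main obstacle.

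For the second identity, with $0\leq s<s_0$, the concentrating profiles drop out since $\sup_{t\in I_n}\|\tilde\varphi_n^j(t)\|_{H^s}\to 0$ for $j\in\JJJ_C$ by \eqref{equiv_prof3}. The remainder is controlled by interpolation: $u_n$ is bounded in $L^2$ by conservation of mass, $\sum_{j\leq J}\tilde\varphi_n^j(t_n)$ is bounded in $L^2$ uniformly in $J$ by \eqref{PythagoreanL2}, hence so are $\tilde w_{L,n}^J(t_n)$ and $r_n^J(t_n)$, and $\|r_n^J(t_n)\|_{\dot H^s}\lesssim\|r_n^J(t_n)\|_{L^2}^{1-s/s_0}\|r_n^J(t_n)\|_{\dot H^{s_0}}^{s/s_0}=o_{J,n}(1)$. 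The surviving cross terms now only involve the frames with $\lambda_n^j\equiv 1$ ($j\in\JJJ_{NC}$) and vanish by the same renormalization argument --- which for these profiles reduces to the translation/time orthogonality already used for the linear expansion \eqref{PythagoreanHs}. Collecting the bounds yields both expansions.
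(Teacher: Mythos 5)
Your proposal is correct in outline but takes a genuinely different route from the paper's. The paper avoids the cross-term case analysis entirely: rather than expanding $\|u_n(t_n)\|^2_{\dot H^{s_0}}$ and killing the cross terms by hand, it recognises the pulled-back decomposition at time $t_n$ as a \emph{new} linear $\dot H^{s_0}$-profile decomposition of the sequence $\bigl(u_n(t_n)\bigr)_n$. Concretely, after extracting $\sigma^j=\lim_n\theta_n^j$ and replacing each nonlinear profile $\varphi^j$ by the linear solution $\psi_L^j$ with $\lim_{t\to\sigma^j}\|\psi_L^j(t)-\varphi^j(t)\|_{\dot H^{s_0}}=0$ (which exists by the local theory or scattering, exactly as in your strong-convergence step), the sequences $\psi_{Ln}^j$ with frames $(\lambda_n^j,\,t_n^j-t_n,\,x_n^j)$ are pairwise orthogonal, and the residual $R_n^J=u_n(t_n)-\sum_{j\le J}\varphi_n^j(t_n)$ is dispersively small because $\tilde w_{L,n}^J$, $r_n^J$ and the errors $\varphi_n^j-\tilde\varphi_n^j$ all are (Claim~\ref{C:modifw}, \eqref{lim_rnJ}, Lemma~\ref{L:modifprofile1}). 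One then invokes the already-established Pythagorean expansions \eqref{Pythagorean1}, \eqref{PythagoreanHs}, \eqref{PythagoreanL2} verbatim. Your approach instead re-derives the orthogonality of the cross terms from scratch via renormalisation and weak convergence; the scale/position/time trichotomy you flag as ``the main obstacle'' is indeed the content of the linear Pythagorean theorem, which the paper simply reuses. Both are valid; the paper's is shorter because it factors the work through Proposition~\ref{P:decomposition1}, while yours is more self-contained and makes visible exactly where the Strichartz bounds \eqref{scatt_profile1}--\eqref{scatt_profile2} (hence strong convergence of $e^{-i\theta_n^j\Delta}\varphi^j(\theta_n^j)$) and the weak limits \eqref{profile0bis} enter. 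Your treatment of the second identity via $\eqref{equiv_prof3}$ and interpolation against the $L^2$ bound matches the role of \eqref{PythagoreanHs} and \eqref{PythagoreanL2} in the paper's argument.
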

Before proving Theorem \ref{T:NLapprox} and Lemma \ref{L:Pythagorean}, we need two technical lemmas.
\begin{lemma}
\label{L:technical}
 Let $(q,r)$ be a Schr\"odinger admissible pair with $q,r$ finite, $\varphi$ such that $|D|^{s_0}\varphi\in L^qL^r(\R\times \R^d)$, $\psi\in C_0^{\infty}(\R\times \R^d))$. Let $\Lambda_n=(\lambda_n,t_n,x_n)$ and $M_n=(\mu_n,s_n,y_n)$ be two   sequences of transformations that are orthogonal in the sense of Definition \ref{D:ortho_equiv}. Let 
 $$ \varphi_n(t,x)=\frac{1}{\lambda_{n}^{\frac{2}{p_0}}}\varphi\left( \frac{t-t_n}{\lambda_n^2},\frac{x-x_n}{\lambda_n} \right),\quad \psi_n(t,x)=\psi\left( \frac{t-s_n}{\mu_n^2},\frac{x-y_n}{\mu_n} \right).$$
 Assume that $(\mu_n/\lambda_n)_n$ is bounded.
 Then, 
 $$ \lim_{n\to\infty} \Big\| |\nabla|^{s_0}(\varphi_n\psi_n)\Big\|_{L^qL^r}=0.$$
\end{lemma}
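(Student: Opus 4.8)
The plan is to reduce the claim to a statement about the interaction of two spatial profiles whose scaling parameters are comparable, after localizing in the rescaled time variable using the compact support of $\psi$. First I would pass to the rescaled variables adapted to $\Lambda_n$: setting $\tau=\frac{t-t_n}{\lambda_n^2}$, $y=\frac{x-x_n}{\lambda_n}$, the product $\varphi_n\psi_n$ becomes, up to the scaling factor $\lambda_n^{-2/p_0}$, the function $\varphi(\tau,y)\,\psi_n^{\sharp}(\tau,y)$ where $\psi_n^{\sharp}(\tau,y)=\psi\!\left(\frac{\lambda_n^2\tau+t_n-s_n}{\mu_n^2},\frac{\lambda_n y+x_n-y_n}{\mu_n}\right)$. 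Since $|\nabla|^{s_0}$ is homogeneous of degree $s_0$, the scaling factors combine so that $\||\nabla|^{s_0}(\varphi_n\psi_n)\|_{L^qL^r}=\||\nabla|^{s_0}(\varphi\,\psi_n^{\sharp})\|_{L^qL^r}$ in the rescaled variables (the exponents $q,r$ are Strichartz-admissible, so the Jacobian factors are exactly absorbed; this is the point where finiteness of $q,r$ is used, guaranteeing the space-time norm scales correctly). By the product rule (Lemma \ref{L:product_rule}, applied with the decomposition $s_0=\lfloor s_0\rfloor+(s_0-\lfloor s_0\rfloor)$ together with Lemma \ref{lm product rule 2} and the Leibniz rule, Lemma \ref{L:Leibniz}, for the case $s_0\geq 1$), one bounds $\||\nabla|^{s_0}(\varphi\,\psi_n^{\sharp})\|_{L^qL^r}$ by a sum of terms, each a product of an $L^{q_1}L^{r_1}$-type norm of (a derivative of) $\varphi$ and an $L^{q_2}L^{r_2}$-type norm of (a derivative of) $\psi_n^{\sharp}$, with admissible splittings of the exponents.

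The key step is then to show that each such mixed norm tends to $0$. There are two mechanisms, according to which of the orthogonality quantities blows up. Write $\nu_n=\mu_n/\lambda_n$, which by hypothesis is bounded; after extracting a subsequence we may assume $\nu_n\to\nu\in[0,\infty)$.

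\emph{Case 1: $\nu=0$.} Then $\psi_n^{\sharp}(\tau,y)=\psi\!\left(\cdots,\frac{y}{\nu_n}+\frac{x_n-y_n}{\mu_n}\right)$; since $\psi$ is compactly supported in its spatial variable and $\nu_n\to 0$, the support of $\psi_n^{\sharp}$ in $y$ shrinks to a ball of radius $O(\nu_n)$, so $\|\psi_n^{\sharp}\|_{L^{q_2}L^{r_2}}$ (and likewise each $\|D^{\gamma}\psi_n^{\sharp}\|_{L^{q_2}L^{r_2}}$, whose derivatives pick up extra factors $\nu_n^{-|\gamma|}$ but whose support volume loses $\nu_n^{d}$ and whose time-support has measure $O(\nu_n^2)$) is controlled by a positive power of $\nu_n$ times the corresponding norm of $\psi$, which goes to $0$. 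Here one uses $q_2,r_2<\infty$ so that shrinking support genuinely makes the norm small; the $\varphi$-factors are finite because $|\nabla|^{s_0}\varphi\in L^qL^r$ and, by Gagliardo--Nirenberg (Lemma \ref{lm homogeneous GN inequality}) together with the Strichartz-admissibility built into the exponent choices, the intermediate-derivative norms $\|D^{\beta}\varphi\|_{L^{q_1}L^{r_1}}$ are all finite.

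\emph{Case 2: $\nu>0$ (so $\lambda_n\approx\mu_n$).} Then the condition $|\log(\lambda_n/\mu_n)|\to\infty$ fails, so orthogonality forces $\frac{|t_n-s_n|}{\lambda_n^2}\to\infty$ or $\frac{|x_n-y_n|}{\lambda_n}\to\infty$. In the first situation, the time argument of $\psi_n^{\sharp}$, namely $\frac{\lambda_n^2\tau+t_n-s_n}{\mu_n^2}=\frac{\tau}{\nu_n^2}+\frac{t_n-s_n}{\mu_n^2}$, lies in the support of $\psi$ (a fixed compact set) only for $\tau$ in a set of bounded measure that \emph{escapes to infinity} (is disjoint from any fixed compact $\tau$-interval for $n$ large); hence by dominated convergence / absolute continuity of the $L^qL^r$ integral of the finite functions $D^{\beta}\varphi$, the contribution of $\varphi$ restricted to that $\tau$-set tends to $0$, while $\|D^{\gamma}\psi_n^{\sharp}\|$ stays bounded. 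In the second situation, the spatial argument $\frac{\lambda_n y+x_n-y_n}{\mu_n}=\frac{y}{\nu_n}+\frac{x_n-y_n}{\mu_n}$ lies in the (fixed compact) $y$-support of $\psi$ only for $y$ in a bounded set translating off to infinity (since $\frac{|x_n-y_n|}{\mu_n}=\frac1{\nu_n}\frac{|x_n-y_n|}{\lambda_n}\to\infty$), and the same absolute-continuity argument applied in the $y$-variable gives the vanishing. Combining the two cases and summing the finitely many terms produced by the product rule yields $\lim_n\||\nabla|^{s_0}(\varphi_n\psi_n)\|_{L^qL^r}=0$, as claimed.

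The main obstacle I anticipate is the bookkeeping in the product/Leibniz-rule step when $s_0>1$: one must choose, for each of the many terms of the form $(\partial_z^{h_1}\partial_{\bar z}^{h_2}|\nabla|^{\cdot})$-type factors, admissible Hölder exponent splittings $(q_i,r_i)$ so that the $\varphi$-factors land in spaces where $|\nabla|^{s_0}\varphi\in L^qL^r$ (via Gagliardo--Nirenberg) implies finiteness, \emph{and} simultaneously the $\psi_n^{\sharp}$-factors land in spaces $L^{q_2}L^{r_2}$ with $q_2,r_2<\infty$ so that the shrinking/escaping-support arguments apply; the exponent choices of \eqref{def_param1}--\eqref{def_param2} in the proof of Lemma \ref{L:NL} can be reused essentially verbatim. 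A minor technical point is that $\psi$ is smooth and compactly supported in \emph{space-time}, so all its derivatives are bounded with fixed compact support, which makes the support-volume estimates in Case 1 and the escape estimates in Case 2 uniform and clean.
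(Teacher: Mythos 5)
Your approach diverges fundamentally from the paper's, and there is a genuine gap in it. The paper's proof is much shorter: it first reduces by density (using that $q,r<\infty$) to the case $\varphi\in C_0^{\infty}(\R\times\R^d)$, then normalizes one of the two sequences of transformations and extracts a subsequence, landing in one of two cases: either both scales converge to $1$ (so orthogonality forces the time/space translations to escape, and compact support of both $\varphi$ and $\psi$ makes $\varphi_n\psi_n\equiv 0$ for large $n$), or $\lambda_n\to\infty$, in which case $|\varphi_n\psi_n|\leq\lambda_n^{-2/p_0}\|\varphi\|_\infty|\psi|$ and the same bound for all space-time derivatives kills every $L^qL^r$ norm; interpolation then gives the $|\nabla|^{s_0}$ result. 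You attempt instead to work directly with a general $\varphi$ having only $|\nabla|^{s_0}\varphi\in L^qL^r$, pass to the $\Lambda_n$-rescaled variables, and estimate $\||\nabla|^{s_0}(\varphi\,\psi_n^{\sharp})\|_{L^qL^r}$ via fractional product/Leibniz rules, then argue vanishing from shrinking or escaping supports of $\psi_n^{\sharp}$.

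The gap is in your Case 2, spatial-escape sub-case. The fractional product rule (Lemma \ref{L:product_rule}) is applied at each fixed time over all of $\R^d$, so the decomposition you invoke necessarily contains a term of the form $\||\nabla|^{s_0}\varphi(\tau,\cdot)\|_{L^{r}_x}\,\|\psi_n^{\sharp}(\tau,\cdot)\|_{L^{\infty}_x}$ (or an analogous term with other exponents and partial numbers of derivatives). In this sub-case $\nu_n\approx 1$ and only the spatial translation $\frac{x_n-y_n}{\lambda_n}$ diverges, so $\|\psi_n^{\sharp}\|_{L^{\infty}_{t,x}}=\|\psi\|_{L^{\infty}}$ is a fixed constant, and $\||\nabla|^{s_0}\varphi\|_{L^qL^r}$ is also a fixed constant: neither factor sees the escaping spatial support. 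Your absolute-continuity argument does make $\|\varphi\|_{L^{q_c}L^{r_c}(\{\psi_n^{\sharp}\neq 0\})}\to 0$ for the $\emph{undifferentiated}$ $\varphi$-factors (since $\psi_n^{\sharp}$ has compact support you may restrict $\varphi$ spatially at no cost by H\"older), but you cannot localize $|\nabla|^{s_0}\varphi$ to the escaping set: $|\nabla|^{s_0}$ is nonlocal, and $\||\nabla|^{s_0}(\chi_n\varphi)\|_{L^r}$ is not controlled by the restriction of $|\nabla|^{s_0}\varphi$ to the support of $\chi_n$. So the term where all $s_0$ derivatives fall on $\varphi$ simply does not vanish under the mechanism you propose. (Your Case 1 and the time-escape part of Case 2, by contrast, are fine, because there the vanishing comes from restricting the time integral of $\varphi$ to a set of shrinking or escaping measure, which is legitimate.) Fixing this requires approximating $\varphi$ by $C_0^{\infty}$ functions in the $|\nabla|^{s_0}\cdot\in L^qL^r$ topology, with a uniform (in $n$) product-rule bound for the remainder — which is exactly the density reduction the paper performs first, and once that is done the fractional product/Leibniz machinery becomes unnecessary.
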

\begin{proof}
By density (since $(q,r)$ are finite), we can assume $\varphi\in C_0^{\infty}(\R\times \R^d)$.
 Rescaling and translating $\varphi_n\psi_n$, we can assume (since $\mu_n/\lambda_n$ is bounded) that one of the following holds:
 \begin{itemize}
  \item $\lim_{n\to\infty}\lambda_n=\lim_{n\to\infty}\mu_n=1$. In this case, the fact that the two sequences are orthogonal implies that $\varphi_n\psi_n=0$ for large $n$.
  
  \item $\lim_{n}\lambda_n=\infty$ and $\forall n$, $\mu_n=1$, $s_n=0$, $y_n=0$. In this case, we have 
  $$|\varphi_n\psi_n(t,x)|\leq \frac{1}{\lambda_n^{2/p_0}}\|\varphi\|_{\infty}|\psi(t,x)|$$
  and thus $\varphi_n\psi_n$ goes to $0$ in $L^qL^r$ as $n\to\infty$. The same argument proves that for all $\alpha\in \N,\beta \in \N^d$, $\partial_t^{\alpha}\partial_x^\beta(\varphi_n\psi_n)$ goes to $0$ in $L^qL^r$ as $n\to\infty$. Interpolating we obtain the conclusion of the lemma.
  \end{itemize}
\end{proof}
\begin{lemma}
\label{L:ortho}
 Let $(\Lambda_n^j)_{n}=(\lambda_n^j,t_n^j,x_n^j)_n$, $1\leq j\leq J$ be a family of sequences of transformations that are pairwise orthogonal. For $j\in \llbracket 1,J\rrbracket$, we let $\varphi^j$ such that $|\nabla|^{s_0}\varphi^j\in S^{0}(\R)$, and let $\varphi_n^j$ be defined by \eqref{defphinj}.
Then 
\begin{gather}
\label{ortho_X0}
\lim_{n\to\infty} \Big\|\sum_{j=1}^J \varphi_{n}^j\Big\|_{X_{p_0}(\R)}^{\frac{p_0(d+2)}{2}}=\sum_{j=1}^J \norm{\varphi^j}_{X_{p_0}(\R)}^{\frac{p_0(d+2)}{2}}\\
\label{orthoWs}
\lim_{n\to\infty} \Big\|\sum_{j=1}^J |\nabla|^{s_0}\varphi_{n}^j\Big\|_{W^{0}(\R)}^{\frac{2(d+2)}{d}}=\sum_{j=1}^J \norm{|\nabla|^{s_0}\varphi^j}_{W^0(\R)}^{\frac{2(d+2)}{d}}\\
\label{ortho_g2}
\lim_{n\to\infty}\norm{g_0\left(\sum_{j=1}^J\varphi_n^j\right)-\sum_{j=1}^J g_0(\varphi_n^j)}_{\dot{N}^{s_0}(\R)}=0.
\end{gather} 
Furthermore, assuming that $\lambda_n^j=1$ and that $\varphi^j\in S^{s_0}(\R)$ for all $n$, one has
\begin{gather}
\label{ortho_X2}
\lim_{n\to\infty} \Big\|\sum_{j=1}^J \varphi_{n}^j\Big\|_{X_{p_1}(I)}^{\frac{p_1(d+2)}{2}}-\sum_{j=1}^J \norm{\varphi_{n}^j}_{X_{p_1}(I)}^{\frac{p_1(d+2)}{2}}=0\\
\label{orthoW}
\lim_{n\to\infty} \Big\|\sum_{j=1}^J\varphi_{n}^j\Big\|_{W^{0}(\R)}^{\frac{2(d+2)}{d}}-\sum_{j=1}^J \norm{\varphi_{n}^j}_{W^0(\R)}^{\frac{2(d+2)}{d}}=0\\
\label{orthoN}
\lim_{n\to\infty}\norm{g\left(\sum_{j=1}^J\varphi_n^j\right)-\sum_{j=1}^J g(\varphi_n^j)}_{N^{s_0}(\R)}=0.
\end{gather} 
\end{lemma}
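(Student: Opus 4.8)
The argument will rest on two facts. (a) \emph{Scaling invariance.} Since $\tfrac2{p_0}+s_0=\tfrac d2$, a change of variables shows that the transformation \eqref{defphinj} preserves the norms $\|\cdot\|_{X_{p_0}(\R)}$ and $\||\nabla|^{s_0}\cdot\|_{W^0(\R)}$; thus $\|\varphi_n^j\|_{X_{p_0}(\R)}=\|\varphi^j\|_{X_{p_0}(\R)}$ and $\||\nabla|^{s_0}\varphi_n^j\|_{W^0(\R)}=\||\nabla|^{s_0}\varphi^j\|_{W^0(\R)}$, and when $\lambda_n^j\equiv1$ the transformation is a pure space--time translation, so \emph{all} norms of $\varphi_n^j$ equal those of $\varphi^j$. (b) \emph{Vanishing of mixed products.} For $j\neq k$, $\sigma,\sigma'\in[0,s_0]$, and $(q,r)$ admissible with $q,r$ finite,
\[
\big\|\,(|\nabla|^{\sigma}\varphi_n^j)\,(|\nabla|^{\sigma'}\varphi_n^k)\,\big\|_{L^q_tL^r_x}\longrightarrow 0,\qquad n\to\infty .
\]
This is proved as in Lemma \ref{L:technical}: by Gagliardo--Nirenberg (Lemma \ref{lm homogeneous GN inequality}) each $|\nabla|^{\sigma}\varphi^j$ lies in a suitable finite space--time Lebesgue space, so $|\nabla|^{\sigma}\varphi_n^j$ is a rescaled--translated copy of a fixed $L^qL^r$ function; reducing to $C_0^\infty$ functions by density and extracting a subsequence along which $\lambda_n^k/\lambda_n^j$ converges in $[0,\infty]$, either one factor flattens out and decays in $L^\infty$ on the (rescaled) support of the other, or the scales stay comparable and orthogonality forces the space--time centres to diverge, so the two supports become disjoint.

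\emph{The $L^a$--orthogonality identities \eqref{ortho_X0}, \eqref{orthoWs}, \eqref{ortho_X2}, \eqref{orthoW}.} Let $a$ be the exponent in play (one of $\tfrac{p_0(d+2)}2,\tfrac{2(d+2)}d,\tfrac{p_1(d+2)}2$, all $>2$ since $p_0,p_1>\tfrac4d$). From the elementary inequality
\[
\Big|\,\Big|\sum_{j=1}^J z_j\Big|^{a}-\sum_{j=1}^J|z_j|^{a}\,\Big|\lesssim_{a,J}\sum_{j\neq k}|z_j|\,|z_k|^{a-1}\qquad (a\ge1),
\]
applied pointwise with $z_j=\varphi_n^j$, respectively $z_j=|\nabla|^{s_0}\varphi_n^j$, and integrated, the problem reduces to showing that each cross term $\iint|z_n^j|\,|z_n^k|^{a-1}$ ($j\neq k$) tends to $0$. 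Writing $|z_n^j|\,|z_n^k|^{a-1}=|z_n^jz_n^k|\,|z_n^k|^{a-2}$ and using H\"older with exponents $\tfrac 2a,\tfrac{a-2}a$, this cross term is $\le\|z_n^jz_n^k\|_{L^{a/2}_{t,x}}\,\|z_n^k\|_{L^{a}_{t,x}}^{a-2}$; the second factor is bounded uniformly in $n$ by (a), and the first tends to $0$ by (b). Together with the diagonal identities from (a), this yields \eqref{ortho_X0}, \eqref{orthoWs}; in the translation case ($\lambda_n^j\equiv1$) the diagonal terms are left as they are, giving \eqref{ortho_X2}, \eqref{orthoW}.

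\emph{The nonlinear identities \eqref{ortho_g2}, \eqref{orthoN}.} I would telescope
\[
g_0\Big(\sum_{j=1}^J\varphi_n^j\Big)-\sum_{j=1}^Jg_0(\varphi_n^j)=\sum_{j=2}^J\Big(g_0(a_n^j+\varphi_n^j)-g_0(a_n^j)-g_0(\varphi_n^j)\Big),\qquad a_n^j:=\sum_{k<j}\varphi_n^k,
\]
and bound each summand's $\dot{N}^{s_0}(\R)$ norm by its $\dot{Z}^{s_0}(\R)$ norm. Using the Taylor identity $g_0(a+b)-g_0(a)-g_0(b)=\iint_{[0,1]^2}(D^2g_0)(\theta a+sb)[a,b]\,ds\,d\theta$ (schematically, with $D^2g_0$ running over the second partials of $g_0$ in $z,\overline z$), one runs the fractional Leibniz/chain-rule expansion of $|\nabla|^{s_0}$ as in the proof of Lemma \ref{L:NL} (via Lemmas \ref{L:product_rule}, \ref{lm product rule 2}, \ref{lm fractional chain rule}). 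Every resulting term carries a distinguished bilinear factor $(|\nabla|^{\sigma}\varphi_n^k)(|\nabla|^{\sigma'}\varphi_n^j)$ with $k<j$ coming from the slots $[a,b]$ (with $\sigma,\sigma'\in[0,s_0]$), together with further factors that are powers of $|\nabla|^{\sigma''}\varphi_n^\ell$ ($\ell\le j$) and the bounded quantity $\|(D^2g_0)(\cdot)\|\lesssim\|a_n^j\|_{X_{p_0}(\R)}^{p_0-1}+\|\varphi_n^j\|_{X_{p_0}(\R)}^{p_0-1}$; after a H\"older splitting that keeps the bilinear factor together, that factor tends to $0$ by (b) while the rest stays bounded by (a), \eqref{ortho_X0}, \eqref{orthoWs} and the hypotheses. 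Summing over $j$ gives \eqref{ortho_g2}. For \eqref{orthoN}, $g=g_0+g_1$ satisfies the hypotheses of Proposition \ref{P:boundg}, and since $\lambda_n^j\equiv1$, after the density reduction the $\varphi_n^j$ have pairwise disjoint space--time supports for $n$ large, so $g(\sum_j\varphi_n^j)=\sum_jg(\varphi_n^j)$ pointwise (using $g(0)=0$) and the difference vanishes identically; the general $\varphi^j\in S^{s_0}(\R)$ case follows by approximation controlled through the Lipschitz estimate \eqref{diff_bound}.

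\textbf{Main obstacle.} The delicate point is \eqref{ortho_g2} when $p_0$ is not an even integer: then $g_0$ is only $C^{\supent{s_0}+1}$, there is no honest multilinear expansion, and one must repeat the whole Leibniz/chain-rule bookkeeping of Lemma \ref{L:NL} while dragging one fixed ``cross'' factor $\varphi_n^k\varphi_n^j$ ($k\neq j$) through each step, checking that the remaining factors always land on exponents for which the scaling invariance, hence the uniform bound, holds.
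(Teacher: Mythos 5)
Your treatment of the four $L^a$-type identities \eqref{ortho_X0}, \eqref{orthoWs}, \eqref{ortho_X2}, \eqref{orthoW} via the pointwise inequality $\big||\sum z_j|^{a}-\sum|z_j|^{a}\big|\lesssim\sum_{j\neq k}|z_j||z_k|^{a-1}$ plus H\"older and the vanishing of cross products is correct and works, though it is a slightly different route than the paper, which just reduces to $\varphi^j\in C_0^\infty$ and observes disjoint supports (resp.\ uses the orthogonality lemma when scales differ). Your treatment of \eqref{orthoN} by density and disjoint supports coincides with the paper's.

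The real gap is \eqref{ortho_g2}, and you flag it yourself under ``Main obstacle'' without closing it. Beyond being incomplete, the Taylor-plus-Leibniz route you sketch runs into a regularity obstruction: writing $g_0(a+b)-g_0(a)-g_0(b)=\int\!\!\int D^2g_0(\sigma a+tb)[a,b]$ and then distributing $|\nabla|^{s_0}$ across the three factors forces you, in the worst allocation, to apply the chain/Leibniz machinery of Lemma~\ref{L:NL} to $D^2g_0$, which would require $g_0\in C^{\supent{s_0}+2}$, one derivative more than Assumption~\ref{Assum:NL} provides. The paper avoids this entirely. Its proof of \eqref{ortho_g2} proceeds by (i) density reduction to $\varphi^j\in C_0^\infty$; (ii) induction on $J$, reordering so that $\lambda_n^J=\min_j\lambda_n^j$; (iii) introducing a cutoff $\chi_n$ equal to $1$ on the (rescaled) support of $\varphi_n^J$ and replacing $\sum_{j<J}\varphi_n^j$ by $(1-\chi_n)\sum_{j<J}\varphi_n^j$ at an $o_n(1)$ cost in $\dot N^{s_0}$, using Lemma~\ref{L:technical} together with the Lipschitz estimate \eqref{diff_boundj'}; (iv) observing that $(1-\chi_n)\sum_{j<J}\varphi_n^j$ and $\varphi_n^J$ now have disjoint supports, so $g_0$ splits exactly. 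No Taylor expansion of $g_0$ beyond what is already in Proposition~\ref{P:boundg} is ever needed, and the regularity assumed in the paper is exactly enough. This cutoff-plus-induction step is the key idea your proposal misses.
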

\begin{proof}
\emph{Proof of \eqref{ortho_X2}, \eqref{orthoW} and \eqref{orthoN}.}
By a density argument (and Proposition \ref{P:boundg} for the proof of \eqref{orthoN}), we can assume that $\varphi^j\in C_0^{\infty}(\R\times \R^d)$. As a consequence of the orthogonality of the sequences $\Lambda_n^j$, we deduce that for large $n$, the supports of the functions $\varphi_n^j$, $j\in \llbracket 1,J\rrbracket$ are two-by-two disjoint. The three estimates follow immediately.

\emph{Proof of \eqref{ortho_g2}.}
By a density argument and \eqref{diff_boundj'}, we can assume $\varphi^j\in C_0^{\infty}\left(\R\times \R^d\right)$. We argue by induction on $J$. We fix $J\geq 2$. Arguing by contradiction, reordering the profiles and extracting subsequences, we see that we can assume
$$\forall n,\quad \lambda_n^J=\min_{j\in \llbracket 1,J\rrbracket} \lambda_n^j.$$
It is sufficient to prove:
\begin{equation}
 \label{inductionJ}
\lim_{n\to\infty}\norm{g_0\Big(\sum_{j=1}^J\varphi_n^j\Big)-g_0\Big(\sum_{j=1}^{J-1} \varphi_n^j\Big)-g_0(\varphi_n^J)}_{\dot{N}^{s_0}(\R)}=0. 
\end{equation}
We let $\chi\in C_0^{\infty}\left(\R\times \R^d\right)$, with $\chi=1$ on the support of $\varphi^J$. We let $\chi_n(t,x)=\chi\left( \frac{t-t^J_n}{(\lambda_n^J)^2},\frac{x-x_n^J}{\lambda_n^J} \right)$. By Lemma \ref{L:technical}, and Lemma \ref{L:boundg_onepower} or Lemma \ref{L:NL} (see \eqref{diff_boundj'}),
\begin{gather*}
g_0\Big( \sum_{j=1}^J \varphi_n^j \Big)=g_0\bigg((1-\chi_n)\sum_{j=1}^{J-1} \varphi_n^j+\varphi_n^J\bigg)+o_n(1)\text{ in }\dot{N}^{s_0}\\
g_0\Big( \sum_{j=1}^{J-1} \varphi_n^j \Big)=g_0\bigg( (1-\chi_n)\sum_{j=1}^{J-1} \varphi_n^j \bigg)+o_n(1)\text{ in }\dot{N}^{s_0}.
\end{gather*}
Next, we note that, since $g_0(0)=0$ and the supports of $(1-\chi_n)\sum_{j=1}^{J-1}\varphi_n^j$ and $\varphi_n^J$ are disjoint, one has
$$ g_0\bigg((1-\chi_n)\sum_{j=1}^{J-1} \varphi_n^j+\varphi_n^J\bigg)=g_0\bigg((1-\chi_n)\sum_{j=1}^{J-1} \varphi_n^j\bigg)+g_0\big(\varphi_n^J\big).$$
Combining the preceding estimates, we obtain \eqref{inductionJ}, and hence \eqref{ortho_g2}. We omit the similar proofs of \eqref{ortho_X0} and \eqref{orthoWs}.
\end{proof}
\begin{proof}[Proof of Theorem \ref{T:NLapprox}]
 We let, for $t\in I_n$,
 \begin{equation}
 \label{def_vnJ_enJ}
 v_n^J(t)=\sum_{1\leq j\leq J} \tilde{\varphi}_n^J(t,x)+\tilde{w}_{L,n}^J(t,x),\quad e_n^J=Lv_n^J-g(v^J_n).
 \end{equation} 
We will prove that there exists a constant $C>0$ such that 
\begin{equation}
 \label{NL10} 
 \forall J\geq 1,\quad\limsup_{n\to\infty} \|v_n^J\|_{X(I_n)}+\|v_n^J\|_{W^{s_0}(I_n)}\leq C,
\end{equation} 
and that for all $\delta >0$, there exists $J_{\delta}$ such that 
\begin{equation}
 \label{NL11} 
 \forall J\geq J_{\delta},\quad\limsup_{n\to\infty} \|e_n^J\|_{N^{s_0}(I_n)}\leq \delta.
\end{equation} 
By the definitions \eqref{def_wLnJ} and \eqref{def_tilde_w} of $w_{L,n}^J$ and $\tilde{w}_{L,n}^J$, we have $v_n^J(0)=u_{0,n}$. This implies, together with \eqref{NL10} and the assumption that $(u_{0,n})_n$ is bounded in $H^{s_0}$ that there exists a constant $C>0$ independent of $J$ such that 
$$\forall J, \quad \limsup_{n\to\infty}\|v_n^J(0)\|_{H^{s_0}}\leq C.$$
Thus we see that for $J\geq J_{\delta}$, and $n$ large enough, the assumptions of Theorem \ref{T:long time perturbation} are satisfied. The conclusion of Theorem \ref{T:NLapprox} follows. We are left with proving \eqref{NL10} and \eqref{NL11}. In all the proof, we will denote by $C$ a large positive constant that may change from line to line, might depend on the sequence $(u_n)_n$, but is \emph{independent of $J$}.

\smallskip

\emph{Proof of \eqref{NL10}.} We note that it is sufficient to prove the bound in \eqref{NL10} for $J$ large.

By the Pythagorean expansion \eqref{Pythagorean1} we have, 
$$\forall J\geq 1,\quad \sum_{j=1}^J \|\varphi^j_L(0)\|_{\dot{H}^{s_0}}^2 \leq \limsup_{n\to\infty}\|u_n(0)\|^2_{\dot{H}^{s_0}}\leq C.$$
Hence $\sum_{j=1}^{\infty}\|\varphi^j_L(0)\|_{\dot{H}^{s_0}}^2 <\infty$. Letting $\eps_0$ be a small positive number, we see that there exists $J_0\geq 1$ such that 
\begin{equation}
 \label{NL21} \sum_{j=J_0}^{\infty}\norm{\varphi_L^j(0)}^2_{\dot{H}^{s_0}}\leq \eps_0.
\end{equation}
Using the small data theory for equation \eqref{NLSh} (see Proposition \ref{P:local wellposed} and Remark \ref{R:homogeneous}), we deduce that if $j\in \JJJ_C$ and $j\geq J_0$, $\varphi^j$ is global and 
\begin{equation}
 \label{NL22}
 \sum_{\substack{j\geq J_0\\ j\in \JJJ_C}}\norm{\varphi^j}^2_{\dot{S}^{s_0}(\R)}<\infty
\end{equation} 
Arguing similarly with the Pythagorean expansion of the $H^{s_0}$ norm given by \eqref{Pythagorean1}, \eqref{PythagoreanL2}, we obtain (taking a larger $J_0$ if necessary) that if $j\in \JJJ_{NC}$ and $j\geq J_0$, $\varphi^j$ is global and 
\begin{equation}
 \label{NL23}
 \sum_{\substack{j\geq J_0\\ j\in \JJJ_{NC}}}\norm{\varphi^j}^2_{S^{s_0}(\R)}<\infty.
\end{equation} 
Next, we see that the assumptions of the theorem implies that for all $1\leq j\leq J_0-1$, there exists an interval $I^j\subset I_{\max}(\varphi^j)$ such that for large $n$, $\left\{\frac{t-t_n^j}{(\lambda_n^j)^2},\quad t\in I_n\right\}\subset I^j$ and
\begin{equation*}
  \begin{cases}
   \|\varphi^j\|_{\dot{S}^{s_0}(I^j)}<\infty &\text{ if }j\in \JJJ_C\\
   \|\varphi^j\|_{S^{s_0}(I^j)}<\infty &\text{ if }j\in \JJJ_{NC}.
  \end{cases}
 \end{equation*} 
 We let $\overline{\varphi}^j(t,x)=\varphi^j(t,x)$, if $t\in I^j$, $\overline{\varphi}^j(t,x)=0$ if $t\in \R\setminus I^j$. We denote by $\overline{\varphi}^j_n$ the corresponding modulated profiles, defined similarly as in \eqref{defphinj}. By \eqref{NL22}, \eqref{NL23}, the definition of $\tilde{\varphi}^j_n$ and Lemma \ref{L:ortho}, we obtain, for $J\geq J_0$.
 \begin{multline}
  \label{NL30}
  \limsup_{n\to\infty} \left\|\sum_{j=1}^J \tvarphi_n^j\right\|^{\frac{2(2+d)}{d}}_{\dot{W}^{s_0}(I_n)}=\limsup_{n\to\infty} \left\|\sum_{j=1}^J \varphi_n^j\right\|^{\frac{2(2+d)}{d}}_{\dot{W}^{s_0}(I_n)}\leq \limsup_{n\to\infty}\left\|\sum_{j=1}^{J_0-1}\overline{\varphi}_n^j+\sum_{j=J_0}^J\varphi_n^j\right\|_{\dot{W}^{s_0}(\R)}^{\frac{2(2+d)}{d}}\\
  =\sum_{j=1}^{J_0-1} \|\overline{\varphi}^j\|_{\dot{W}^{s_0}(\R)}^{\frac{2(2+d)}{d}}+\sum_{j=J_0}^{J} \|\varphi^j\|_{\dot{W}^{s_0}(\R)}^{\frac{2(2+d)}{d}}\leq C.
 \end{multline}
A similar argument yields
$
\limsup_{n\to\infty} \left\|\sum_{j=1}^J \tvarphi_n^j\right\|_{X_{p_0}(I_n)}\leq C$. We also have, for $0\leq s<s_0$, by \eqref{equiv_prof3} in  Lemma \ref{L:modifprofile1},
\begin{equation}
 \label{NL31} 
 \lim_{n\to\infty} \Bigg\|\sum_{j=1}^J\tvarphi_n^j\Bigg\|_{W^s(I_n)}^{\frac{2(2+d)}{d}}=\lim_{n\to\infty} \Bigg\|\sum_{\substack{1\leq j\leq J\\ j\in \JJJ_{NC}}}\varphi_n^j\Bigg\|_{W^s(I_n)}^{\frac{2(2+d)}{d}},
\end{equation} 
and the same argument as above, using \eqref{NL23} and Lemma \ref{L:ortho}, yields  $\limsup_{n\to\infty} \left\|\sum_{j=1}^J \tvarphi_n^j\right\|_{W^{s}(I_n)}\leq C$. Combining the estimates above with Claim \ref{C:modifw} and \eqref{estimate_tw}, we obtain \eqref{NL10}.
 
 \smallskip
 
 \emph{Proof of \eqref{NL11}.}
 We have, by \eqref{def_vnJ_enJ},
 \begin{equation*}
  e_n^J=\overbrace{\sum_{\substack{1\leq j\leq J\\ j\in \JJJ_{NC}}} L\tvarphi_n^j-g(\tvarphi_n^j)}^0+\overbrace{\sum_{\substack{1\leq j\leq J\\ j\in \JJJ_{C}}} L\tvarphi_n^j-g_0(\tvarphi_n^j)}^0+\overbrace{L\tw_{L,n}^J}^0-g(v_n^J)+\sum_{\substack{1\leq j\leq J\\ j\in \JJJ_{C}}} g_0(\tvarphi_n^j)+\sum_{\substack{1\leq j\leq J\\ j\in \JJJ_{NC}}} g\Big(\tvarphi_n^J\Big)
 \end{equation*}
 By Proposition \ref{P:boundg}, \eqref{estimate_tw} and \eqref{NL10},
\begin{equation*}
\left\| g(v_n^J)-g\Big(\sum_{j=1}^J \tilde{\varphi}_n^j\Big)\right\|_{N^{s_0}(I_n)} \leq C \left(\|\tilde{w}_{L,n}^J\|_{X(I_n)}+\|\tilde{w}_{L,n}^J\|_{\dot{W}^{s_0}(I_n)}\right)
 \end{equation*}
By Claim \ref{C:modifw}, there exists $J_{\delta}$ such that, for all $J\geq J_{\delta}$,
 \begin{equation}
  \label{NL50}
  \forall J\geq J_{\delta},\quad \limsup_{n\to\infty} \left\| g(v_n^J)-g\Big(\sum_{j=1}^J \tilde{\varphi}_n^j\Big)\right\|_{N^{s_0}(I_n)} \leq \frac{\delta}{2}.
  \end{equation}
 Using Lemma \ref{L:boundg_onepower} or Lemma \ref{L:NL} together with Lemma \ref{L:ortho}:
 \begin{equation}
\label{NL51}
  g_0\Big(\sum_{j=1}^J \tilde{\varphi}_n^j\Big)=
  g_0\Big(\sum_{j=1}^J \varphi_n^j\Big)+ o_n(1)= \sum_{j=1}^J  g_0\Big(\varphi_n^j\Big)+o_n(1) \text{ in } \dot{N}^{s_0}(I_n),
  \end{equation}
  and similarly (using also Lemma \ref{L:modifprofile1}),
\begin{multline}
\label{NL52}
    g_0\Big(\sum_{j=1}^J \tilde{\varphi}_n^j\Big)=   g_0\bigg(\sum_{\substack{1\leq j\leq J\\ j\in \JJJ_{NC}}} \tilde{\varphi}_n^j\bigg)+o_n(1)= \sum_{\substack{1\leq j\leq J\\ j\in \JJJ_{NC}}} g_0\Big(\tilde{\varphi}_n^j\Big)+o_n(1)\\= \sum_{{1\leq j\leq J}} g_0\Big(\tilde{\varphi}_n^j\Big)+o_n(1)  \text{ in }N^{0}(I_n)
\end{multline}
By Proposition \ref{P:boundg} and Lemma \ref{L:modifprofile1}, using that $g_1\in \NNN(s_0,p_0,p_1)$, we obtain
\begin{multline*}
\quad\left\|g_1\Big(\sum_{j=1}^J \tilde{\varphi}_n^j\Big)-  g_1\bigg(\sum_{\substack{1\leq j\leq J\\ j\in \JJJ_{NC}}} \tilde{\varphi}_n^j\bigg)\right\|_{N^0(I_n)}\\
\lesssim \sum_{j\in\JJJ_C, 1\leq j\leq J}\norm{\tilde{\varphi}^j_n}_{X_{p_1}}(\sum_{1\leq j\leq J}\norm{\tilde{\varphi}^j_n}_{X_{p_1}})^{p_1-1}\sum_{1\leq j\leq J}\norm{\tilde{\varphi}^j_n}_{W^0} \underset{n\to\infty}{\longrightarrow}0.\quad
\end{multline*}
Combining with Lemma \ref{L:modifprofile2} and Lemma \ref{L:ortho}, we obtain
\begin{equation}
\label{NL53}
    g_1\Big(\sum_{j=1}^J \tilde{\varphi}_n^j\Big)=   g_1\bigg(\sum_{\substack{1\leq j\leq J\\ j\in \JJJ_{NC}}} \tilde{\varphi}_n^j\bigg)+o_n(1)= \sum_{\substack{1\leq j\leq J\\ j\in \JJJ_{NC}}} g_1\Big(\tilde{\varphi}_n^j\Big)+o_n(1) \text{ in }N^{s_0}(I_n).
\end{equation}
By \eqref{NL50}, \eqref{NL51}, \eqref{NL52} and \eqref{NL53}, we obtain \eqref{NL11}, which concludes the proof.
 \end{proof}

\begin{proof}[Sketch of proof of Lemma \ref{L:Pythagorean}]
 By Theorem \ref{T:NLapprox}, $(u_n(t_n))_n$ is bounded in $H^{s_0}(\R^d)$, and 
 $$u_n(t_n)=\sum_{j=1}^J \tilde{\varphi}_n^j(t_n)+\tilde{w}_{L,n}^J(t_n)+r_n^J(t_n).$$
 We will interpret this expansion as a profile decomposition of $u_n(t_n)$. 
 Using the property of $\tilde{w}_{L,n}^J$ given by Claim \ref{C:modifw}, the property \eqref{lim_rnJ} of $r_n^J$ and the bound of $\varphi_n^j-\tilde{\varphi}_n^j$, $j\in \JJJ_C$ given by Lemma \ref{L:modifprofile1}, we obtain
 $$u_n(t_n,x)=\sum_{j=1}^J \varphi_n^j(t_n,x)+R_{n}^J(x),$$
 where 
 $$\lim_{J\to\infty}\limsup_{n\to\infty} \norm{e^{i\cdot \Delta}R_{n}^J}_{\dot{W}^{s_0}(\R)\cap X_{p_0}(\R)}=0.$$
 We have 
 $$\varphi_{n}^j(t_n,x)=\frac{1}{(\lambda_n^j)^{\frac{2}{p_0}}}\varphi^j\left( \frac{t_n-t_n^j}{(\lambda_n^j)^2},\frac{x-x_n^j}{\lambda_n^j} \right). $$
 Extracting subsequences, we can assume that $\frac{t_n-t_n^j}{(\lambda_n^j)^2}$ has a limit $\sigma^j$ as $n\to\infty$. We define a new linear profile $\psi^j_{Ln}$ by 
 $$\psi_{Ln}^j(t,x)=\frac{1}{(\lambda_n^j)^{\frac{2}{p_0}}}\psi^j_L\left( \frac{t+t_n-t_n^j}{(\lambda_n^j)^2},\frac{x-x_n^j}{\lambda_n^j} \right), $$
 where $\psi_{L}^j$ is the solution of the linear Schr\"odinger equation such that 
 $$\lim_{t\to \sigma^j} \left\|\psi_L^j(t)-\varphi^j(t)\right\|_{\dot{H}^{s_0}}=0.$$
 With these choice of $\psi_L^j$, we see that $\left((\psi^j_{L,n})_n\right)_{j\geq 1}$ is a $\dot{H}^{s_0}$ profile decomposition for the sequence $(u_n(t_n))_n$. The conclusion of the lemma follows from the Pythagorean expansions \eqref{Pythagorean1}, \eqref{PythagoreanHs} and \eqref{PythagoreanL2}.
\end{proof}
\section{Global well-posedness}
\label{S:GWP}

In this section we prove our theorem on global well-posedness, Theorem \ref{T:GWP}.

% that if all solutions of the homogeneous equation \eqref{NLSh} that are bounded in the critical Sobolev space scatters, then all solutions of the equation \eqref{NLS_g} (with a nonlinearity $g$ satisfying Assumption \ref{Assum:profile}) that are bounded in the same space are global. 

We first observe that Property \ref{Proper:bnd} is equivalent to the existence of uniform space-time bound for solutions of equation \eqref{NLSh} that are bounded in critical norm.
\begin{proposition}
\label{P:boundh}
Let $d\geq 2$, $p_0>\frac{4}{d}$, $s_0=\frac{d}{2}-\frac{2}{p_0}$. Assume Property \ref{Proper:bnd}. Then for all $A\in (0,A_0)$, there exists $\FFF(A)>0$ such that for all interval $0\in I$, for all solution $u\in C^0(I,\dot{H}^{s_0})$ of \eqref{NLSh} such that
\begin{equation}
\label{Hs0_bound}
\sup_{t\in I}\|u(t)\|_{\dot{H}^{s_0}}\leq A
\end{equation} 
we have 
\begin{equation}
\label{s0_bound}
\|u\|_{\dot{S}^{s_0}(I)}\leq \FFF(A).
\end{equation} 
\end{proposition}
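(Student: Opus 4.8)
The plan is to argue by contradiction with the standard concentration--compactness/rigidity scheme, built on the profile decomposition of Section~\ref{S:profile} in its homogeneous version. That version is in fact simpler here: all the norms involved are invariant under the scaling of \eqref{NLSh}, so the concentrating-profile modifications of Subsection~\ref{sub:NL} are unnecessary, Lemma~\ref{L:ortho} applies directly, and the homogeneous analogue of the approximation theorem~\ref{T:NLapprox} follows from Lemma~\ref{L:ortho} together with the homogeneous long-time perturbation theory (Remark~\ref{R:homogeneous}). Set $A^{*}=\sup\{A\ge 0:\FFF(A)<\infty\}$, with $\FFF$ as in the statement; the small-data theory for \eqref{NLSh} (Proposition~\ref{P:local wellposed} and Remark~\ref{R:homogeneous}) gives $\FFF(A)<\infty$ for small $A$, so $A^{*}>0$, and the proposition amounts to $A^{*}\ge A_0$. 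Suppose $A^{*}<A_0$. Since $\FFF(A^{*}+\tfrac1n)=\infty$, there is for each $n$ a solution $u_n$ of \eqref{NLSh} with $\|u_n\|_{\dot{S}^{s_0}(I_n)}\ge n$, where we take $I_n$ to be the maximal subinterval of $I_{\max}(u_n)$ containing $0$ on which $\|u_n(t)\|_{\dot{H}^{s_0}}\le A^{*}+\tfrac1n$; a subsequence of the $u_n$ remaining below some fixed $A<A^{*}$ would be bounded in $\dot{S}^{s_0}$ by $\FFF(A)$, so necessarily $\sup_{t\in I_n}\|u_n(t)\|_{\dot{H}^{s_0}}\to A^{*}$.

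Passing to a subsequence, apply Proposition~\ref{P:decomposition1} to $u_{0,n}:=u_n(0)$, obtaining a $\dot{H}^{s_0}$-profile decomposition $(\varphi^{j}_{Ln})_j$ with parameters $(\lambda^{j}_n,t^{j}_n,x^{j}_n)$, time limits $\tau^{j}\in\{0,\pm\infty\}$ (after translating the profiles in time) and remainders $w^{J}_{Ln}$. To each $\varphi^{j}_{Ln}$ attach its nonlinear profile $\varphi^{j}_n$, a rescaled and translated solution of \eqref{NLSh} asymptotic in $\dot{H}^{s_0}$ to $\varphi^{j}_L$ as $t\to\tau^{j}$, provided by the well-posedness theory or by the existence of wave operators (homogeneous versions, cf.\ Proposition~\ref{P:local wellposed}, Proposition~\ref{P:wave operators}, Remark~\ref{R:homogeneous}). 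By the Pythagorean expansion \eqref{Pythagorean1}, $\sum_j\|\varphi^{j}_L(0)\|_{\dot{H}^{s_0}}^2\le (A^{*})^2$. Two cases occur. \emph{Case (b):} there is $\delta>0$ with $\limsup_{n\to\infty}\|\varphi^{j}_{Ln}(0)\|_{\dot{H}^{s_0}}^2\le (A^{*})^2-\delta$ for every $j$ (this holds in particular whenever there are at least two nonzero profiles, or one nonzero profile together with a non-vanishing remainder). \emph{Case (a):} there is exactly one nonzero profile $\varphi^{1}$, with $\limsup_{n\to\infty}\|\varphi^{1}_{Ln}(0)\|_{\dot{H}^{s_0}}=A^{*}$ and $\limsup_{J\to\infty}\limsup_{n\to\infty}\|w^{J}_{Ln}(0)\|_{\dot{H}^{s_0}}=0$.

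In Case (b), since $\FFF(A)<\infty$ for every $A<A^{*}$, any nonlinear profile that remains $\dot{H}^{s_0}$-subcritical is automatically global with finite $\dot{S}^{s_0}$ norm (using the homogeneous blow-up criterion, Lemma~\ref{L:scatt_criterion}). One must check that $\sup_t\|\varphi^{j}(t)\|_{\dot{H}^{s_0}}<A^{*}$ for \emph{all} times, not merely near $\tau^{j}$: this is done by a bootstrap in which, on any time interval where all the (rescaled) profiles and the remainder $w^{J}_{Ln}$ are already controlled, the homogeneous approximation theorem gives $u_n=\sum_{j\le J}\varphi^{j}_n+w^{J}_{Ln}+r^{J}_n$ with $\|r^{J}_n\|_{\dot{S}^{s_0}}\to 0$, and the propagated Pythagorean expansion (the homogeneous counterpart of Lemma~\ref{L:Pythagorean}) yields $\limsup_{n}\|\varphi^{j}_n(t)\|_{\dot{H}^{s_0}}^2\le\limsup_{n}\|u_n(t)\|_{\dot{H}^{s_0}}^2\le(A^{*})^2$, which the gap $\delta$ upgrades to $\le(A^{*})^2-\delta$ --- the strict inequality allowing the interval of control to be enlarged, so that a connectedness argument shows every $\varphi^{j}$ is global with $\sum_j\|\varphi^{j}\|_{\dot{S}^{s_0}(\R)}^2<\infty$ (the tail of profiles being handled by small-data theory, exactly as in the proof of Theorem~\ref{T:NLapprox}). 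Feeding this, and the Strichartz bound on $w^{J}_{Ln}$, into the approximation theorem produces $\|u_n\|_{\dot{S}^{s_0}(I_n)}\le C$ uniformly in $n$, contradicting $\|u_n\|_{\dot{S}^{s_0}(I_n)}\to\infty$. This bootstrap --- propagating the a priori $\dot{H}^{s_0}$ bound from $u_n$ to the individual profiles --- is the main obstacle; the remaining steps are a routine assembly of the material of Sections~\ref{S:preliminary} and~\ref{S:profile}.

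In Case (a), using that $\|u(t)\|_{\dot{H}^{s_0}}$ and $\|u\|_{\dot{S}^{s_0}}$ of a solution of \eqref{NLSh} are invariant under space/time translation and under scaling, normalise $\lambda^{1}_n=1$, $x^{1}_n=0$, $t^{1}_n=0$, $\tau^{1}=0$; then $u_{0,n}\to\varphi_0:=\varphi^{1}_L(0)$ strongly in $\dot{H}^{s_0}$, with $\|\varphi_0\|_{\dot{H}^{s_0}}=A^{*}<A_0$. Let $u_c$ solve \eqref{NLSh} with $u_c(0)=\varphi_0$. A short case analysis, comparing (the rescaled) $I_n$ with $I_{\max}(u_c)$ and using the long-time perturbation theory together with the blow-up criterion --- if $\|u_c\|_{\dot{S}^{s_0}}$ were finite in both time directions, perturbation theory would bound $\|u_n\|_{\dot{S}^{s_0}(I_n)}$; if $u_c$ attained a $\dot{H}^{s_0}$-value above $A^{*}$ at some time in $I_{\max}(u_c)$, that time would eventually lie in $I_n$ and violate the bound $\|u_n(t)\|_{\dot{H}^{s_0}}\le A^{*}+\tfrac1n$, while otherwise the divergence of $\|u_n\|_{\dot{S}^{s_0}}$ on one side forces $u_c$ to remain below $A^{*}$ up to its endpoint there --- shows $\sup_{t\in I_{\max}(u_c)}\|u_c(t)\|_{\dot{H}^{s_0}}\le A^{*}<A_0$. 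Then Property~\ref{Proper:bnd} (and its time-reversed version, valid since $\overline{u(-t,x)}$ solves \eqref{NLSh} when $u$ does) makes $u_c$ global and scattering in both time directions, so $\|u_c\|_{\dot{S}^{s_0}(\R)}<\infty$; a final application of perturbation theory gives $\|u_n\|_{\dot{S}^{s_0}(I_n)}\le\|u_n\|_{\dot{S}^{s_0}(\R)}\le C$ for large $n$, again contradicting $\|u_n\|_{\dot{S}^{s_0}(I_n)}\to\infty$. Having reached a contradiction in both cases, $A^{*}\ge A_0$, which is precisely \eqref{s0_bound}.
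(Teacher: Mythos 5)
Your proof follows the same concentration--compactness scheme that the paper sketches (and refers back to the proof of Theorem~\ref{T:scatt2} for): define a critical level $A^{*}$ (the paper's $A_c$), take a maximizing sequence of solutions, apply the homogeneous $\dot{H}^{s_0}$-profile decomposition, run a Pythagorean/bootstrap argument (as in Step~2 of Claim~\ref{Cl:compactness}) to show there is only one nonzero profile, and obtain a limiting solution $u_c$ with $\sup_t\|u_c(t)\|_{\dot{H}^{s_0}}\le A^{*}<A_0$. Only the last step differs: the paper goes on to construct a compact-orbit critical element (extracting at all times) and then contradicts the scattering given by Property~\ref{Proper:bnd} via $\liminf_{t\to\infty}\|\varphi(t)\|_{L^{dp_0/2}}>0$, whereas you apply Property~\ref{Proper:bnd} to get $u_c\in\dot S^{s_0}(\R)$ and then close by long-time perturbation, which directly bounds $\|u_n\|_{\dot S^{s_0}(I_n)}$. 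Your ending is a little more direct, avoiding the compact-orbit construction, and is perfectly legitimate.

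There is, however, a genuine gap in your Case~(a): the normalisation $\tau^{1}=0$ and the strong convergence $u_{0,n}\to\varphi_0:=\varphi^1_L(0)$ in $\dot{H}^{s_0}$ require $\tau^{1}$ to be \emph{finite}. If $\tau^{1}=\pm\infty$ (i.e.\ $-t^1_n\to\pm\infty$), then $\varphi^1_{Ln}(0)=\varphi^1_L(-t^1_n,\cdot-x^1_n)$ converges only weakly (to $0$) by dispersion, so $u_{0,n}$ has no strong limit modulo translation, and the perturbation argument comparing $u_n$ with a fixed $u_c$ breaks down. The paper eliminates this case at the outset by time-translating $u_n$ so that $\|u_n\|_{\dot S^{s_0}(a_n,0)}$ and $\|u_n\|_{\dot S^{s_0}(0,b_n)}$ \emph{both} tend to $+\infty$: with that arrangement, $\tau^1=+\infty$ would force the nonlinear profile to be already in the forward scattering regime on $[0,b_n)$ and the approximation theorem would then bound $\|u_n\|_{\dot S^{s_0}(0,b_n)}$, a contradiction (and symmetrically for $\tau^1=-\infty$). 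Your construction of $I_n$ (a maximal subinterval with $\|u_n\|_{\dot S^{s_0}(I_n)}\ge n$) only guarantees blow-up on at least one side, so you must insert a further time translation before applying the profile decomposition. Separately, one sentence in Case~(a) has the logic reversed: if $\|u_c(T)\|_{\dot{H}^{s_0}}>A^{*}$, then by perturbation $\|u_n(T)\|_{\dot{H}^{s_0}}>A^{*}+1/n$ for large $n$, so $T$ would eventually lie \emph{outside} $I_n$; the correct consequence is that $I_n$ is then trapped in a fixed compact subinterval of $I_{\max}(u_c)$ on which $u_c$ has finite $\dot S^{s_0}$-norm, contradicting $\|u_n\|_{\dot S^{s_0}(I_n)}\to\infty$. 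The rest of your argument is sound and matches the paper's intent.
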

\begin{proof}[Sketch of proof]
The proof is by contradiction. Let us denote by $\PPP(A)$ the property that there exists $\FFF(A)\in (0,\infty)$ such that \eqref{Hs0_bound} implies \eqref{s0_bound}.
By the small data theory for equation \eqref{NLSh}, $\PPP(A)$ holds for small $A>0$. Assuming that it does not hold for all $A\in (0,A_0)$, we obtain the existence of a critical $A_c\in (0,A_0)$ such hat $\PPP(A)$ holds for $A<A_c$, but $\PPP(A_c)$ does not hold. Thus there exists a sequence of  intervals $I_n=(a_n,b_n) \ni 0$, and of solutions $u_n\in C^0((a_n,b_n),\dot{H}^{s_0})$ such 
$$\sup_{a_n<t<b_n}\|u_n(t)\|_{\dot{H}^{s_0}}\underset{n\to\infty}{\longrightarrow} A_c,$$
and 
$$\lim_{n\to\infty}\|u_n\|_{\dot{S}^{s_0}(a_n,0)}=\lim_{n\to\infty}\|u_n\|_{\dot{S}^{s_0}(0,b_n)}=+\infty.$$
 By a standard compactness argument, using the homogeneous profile decomposition of Subsection \ref{sub:homog_profiles}, with the analog, for the homogeneous equation, of Theorem \ref{T:NLapprox}, we obtain, after extraction of subsequences, that there exists $x_n\in \R^N$, $\lambda_n>0$ and $\varphi_0\in \dot{H}^{s_0}\setminus \{0\}$ such that
 $$\lim_{n\to\infty}\left\| \frac{1}{\lambda_{n}^{\frac{2}{p_0}}}u_n\left(0,\frac{\cdot-x_n}{\lambda_n} \right)-\varphi_0\right\|_{\dot{H}^{s_0}}=0,$$
  and the solution $\varphi$ of \eqref{NLSh} with initial data $\varphi_0$ satisfies that for all $t\in \R$ there exist $x(t)\in \R^N$, $\lambda(t)>0$ such that
  $$ \left\{\frac{1}{\lambda(t)^{\frac{2}{p}}}\varphi\left(t,\frac{\cdot-x(t)}{\lambda(t)} \right),\;t\in I_{\max}(\varphi)\right\}$$
  has compact closure in $\dot{H}^{s_0}$
  and
  $$\sup_{t\in I_{\max}} \|\varphi(t)\|_{\dot{H}^{s_0}}\leq A_c$$
  (see the similar proof of Theorem \ref{T:scatt2} below). Then $\varphi$ is global by Property \ref{Proper:bnd}. Since $\varphi$ is not the zero solution, the preceding compactness property implies $\liminf_{t\to\infty}\|\varphi(t)\|_{L^{\frac{dp_0}{2}}}>0$, a contradiction with the fact that by Property \ref{Proper:bnd}, $\varphi$ must be scattering. The proof is complete.
  \end{proof}

\begin{proof}[Proof of Theorem \ref{T:GWP}]
We argue by contradiction, assuming that there exists a solution $u$ of \eqref{NLS_g} such that $T_{+}(u)<\infty$ and
$$\limsup_{n\to\infty}\|u(t)\|_{\dot{H}^{s_0}}<m \in (0,A_0).$$
Let $\eps_0$ be such that $m+3\eps_0<A_0$. By conservation of mass, we have indeed,
\begin{equation}
\label{boundHs2}
\limsup_{t\rightarrow T^+(u)}\|u(t)\|_{H^{s_0}}<\infty. 
\end{equation}
Let $t_n=T_{+}(u)-1/2^n$. Extracting subsequences, we can assume by Proposition \ref{P:decomposition1} that $(u(t_n))_n$ admits a profile decomposition $\left((\varphi_{L,n}^j)_n\right)_{j\geq 1}$. By the Pythagorean expansion \eqref{Pythagorean1} of the $\dot{H}^{s_0}$ norm, we have
\begin{equation}
\label{glbPytha}
\sum_{j\geq 1}\|\varphi_L^j(0)\|^2_{\dot{H}^{s_0}}\leq m.
\end{equation}
By the Pythagorean expansion of the $L^2$ norm,
\begin{equation}
\label{glbPytha2}
\sum_{j\geq 1}\|\varphi_L^j(0)\|^2_{L^2}<\infty.
\end{equation}

We denote by $\varphi_n^j$ the nonlinear profiles associated to the preceding profile decomposition. We will prove that for every $j\in \JJJ_c$, 
\begin{equation}
 \label{glb90}
 \sup_{0\leq \tau< T_{+}-t_n}\left\|\varphi_n^{j}(\tau)\right\|_{\dot{H}^{s_0}}\leq m+2\eps_0,\quad n\gg_j 1.
\end{equation}
Let $\eps>0$ be a small constant. By \eqref{glbPytha} , \eqref{glbPytha2} and the small data theory for equations \eqref{NLSh} and \eqref{NLS_g} there exists $J_0\geq 1$ such that for $j\geq J_0+1$, $\varphi^j$ is global and 
\begin{equation}
\label{bnd_above_J0}
\eps\geq
\begin{cases}
\|\varphi^j\|_{S^{s_0}(\R)} &\text{ if }j\in \JJJ_{NC}\\
\|\varphi^j\|_{\dot{S}^{s_0}(\R)} &\text{ if }j\in \JJJ_{C}.
\end{cases}
\end{equation} 
 In particular \eqref{glb90} is satisfied for $j\in \JJJ_C$, $j\geq J_0+1$. We next prove by contradiction that \eqref{glb90} holds for $j\in \JJJ_C\cap \llbracket 1,J_0\rrbracket$. If not, by \eqref{glbPytha}, there exists $\tau'_n\in [0,T_{+}-t_n)$ such that
\begin{equation}
  \label{glb91}
 \sup_{j\in\llbracket 1, J_0\rrbracket \cap \JJJ_C}\sup_{0\leq \tau\leq \tau'_n}\left\|\varphi_n^{j}(\tau)\right\|_{\dot{H}^{s_0}}=m+\eps_0 \in (0,A_0).
 \end{equation}
 By the local well-posedness theory for equation \eqref{NLS_g} and the fact that $\lambda_n^j=1$ for $j\in\JJJ_{NC}$, there exists $\tau_0>0$ such that
\begin{equation}
  \label{glb92}
  \limsup_{n\to\infty}\sup_{j\in \llbracket 1,J_0\rrbracket\cap \JJJ_{NC}}\|\varphi_n^j\|_{S^{s_0}(0,\tau_0)}<\infty.
 \end{equation}
Since $m+\eps_0<A_0$, by \eqref{glb91} and Proposition \ref{P:boundh}, we obtain
 $$\forall j\in \llbracket 1,J_0\rrbracket \cap \JJJ_c,\quad \limsup_{n\to\infty} \left\|\varphi_n^j\right\|_{\dot{S}^{s_0}(0,\tau'_n)}<\infty$$
 Note that since $\tau_n'\to 0$ as $n\to\infty$, we have $\tau_0>\tau_n'$ for large $n$. Combining with \eqref{bnd_above_J0} and \eqref{glb92}, we see that the assumptions of Theorem \ref{T:NLapprox} are satisfied on the interval $I_n=[0,\tau'_n)$. By Lemma \ref{L:Pythagorean}, for all sequence $(\sigma_n)_n$ with $0\leq \sigma_n\leq \tau'_n$, for all $J$,
 $$\limsup_n\sum_{j\in \llbracket 1,J\rrbracket \cap \JJJ_C}\left\|\varphi^j_n(\sigma_n)\right\|_{\dot{H}^{s_2}}\leq m.$$
 This clearly contradicts \eqref{glb91}, proving \eqref{glb90}. 
 
 Next, we observe that \eqref{glb90} implies by Proposition \ref{P:boundh}
 $$\forall j\in \llbracket 1,J_0\rrbracket \cap \JJJ_c,\quad \limsup_{n\to\infty} \left\|\varphi_n^j\right\|_{\dot{S}^{s_0}(0,T_{+}-t_n)}<\infty$$
 Combining this information with \eqref{bnd_above_J0} and \eqref{glb92}, we see
 that the Assumptions of Theorem \ref{T:NLapprox} are satisfied on the interval $I_n=[0,T_{+}-t_n)$. By the conclusion of the theorem, we obtain that for large $n$, $u(\cdot+t_n)\in S^{s_0}((0,T_{+}-t_n))$. This implies $u\in S^{s_0}((0,T_{+}))$, contradicting the blow-up criterion for equation \eqref{NLS_g}. The proof is complete.
\end{proof}

\section{General rigidity result}
\label{S:rigidity}
In this section, we consider equation \eqref{NLS_g}, where $g$ satisfies Assumption \ref{Assum:NL} p.~\pageref{Assum:NL}, and is of the form $g(u)=G'(|u|^2)u$ for some $C^1$ function $G$. We recall that with these assumptions, the mass $M(u)$, the energy $E(u)$ and the momentum $P(u)$ are conserved for $H^1\cap H^{s_0}$ solutions of \eqref{NLS_g}, where  as usual $s_0=\frac{d}{2}-\frac{2}{p_0}$. We will also consider the virial functional:
\begin{equation}
 \label{general_Phi}
\Phi(u)=\int |\nabla u|^2+\frac{d}{2}\int (G'(|u|^2)|u|^2-G(|u|^2).
 \end{equation} 
Since the assumptions on $g$ imply $G'(|u|^2)|u|^2+G(|u|^2)\lesssim |u|^{p_0+2}+|u|^{p_1+2}$, one easily checks, using Sobolev inequalities, that $\Phi$ is well-defined if $u\in H^{s_0}\cap H^1$.

We prove the following result:
\begin{proposition}\label{pro1}
With the assumptions above, let $u$ be a solution of \eqref{NLS_g} defined on $[0,\infty)$ such that there exists $x(t)$, $t\in [0,\infty)$ with
\begin{equation}
\label{defK}
K=\{u(t,x+x(t)); t\geq 0\} 
\end{equation} 
has compact closure in $H^{s_0}\cap H^1$. Then
\begin{equation}
\label{eqB12}
\min_{t\geq 0}\left|\Phi(u(t)) - \frac{|P(u)|^2}{M(u)}\right|=0.
\end{equation}
\end{proposition}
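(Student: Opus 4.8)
The plan is to run a localized virial identity centered along the linear trajectory of the center of mass, following the sketch in the introduction. Write $M=M(u)$, $P=P(u)$, $c=|P(u)|^2/M(u)$ and $X(t)=\tfrac{2P}{M}t$. Compactness of $\overline K$ in $H^{s_0}\cap H^1$ gives $B:=\sup_{t\ge0}\|u(t)\|_{H^{s_0}\cap H^1}<\infty$, and, recalling that $u\in C\big([0,\infty),H^{s_0}\cap H^1\big)$ (automatic when $s_0\ge1$, since then $H^{s_0}\subset H^1$, and otherwise via Proposition \ref{P:local wellposed} with $s=1$ and uniqueness), the map $t\mapsto\Phi(u(t))$ is continuous. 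Arguing by contradiction I would suppose $\inf_{t\ge0}\big|\Phi(u(t))-c\big|=\delta>0$; by continuity $\Phi(u(t))-c$ keeps a constant sign, and I may assume $\Phi(u(t))-c\ge\delta$ for all $t\ge0$ (the opposite sign is symmetric, reversing all signs below). I will also use, first, the a priori information $\tfrac1t\,|x(t)-X(t)|\to0$ as $t\to\infty$, obtained from the localized center-of-mass law $\tfrac{d}{dt}\int x|u|^2=2P$ exactly as in the derivation of \eqref{control_x(t)}; and second, the uniform spatial localization from compactness: for every $\eta>0$ there is $\rho(\eta)>0$ with
\[
\sup_{t\ge0}\int_{|x-x(t)|\ge\rho(\eta)}\Big(|\nabla u(t)|^2+|u(t)|^2+|u(t)|^{p_0+2}+|u(t)|^{p_1+2}\Big)\,dx\le\eta,
\]
which follows from precompactness of $\{u(t,\cdot+x(t))\}_t$ in $H^{s_0}\cap H^1\hookrightarrow L^{p_0+2}\cap L^{p_1+2}$ together with the uniform smallness of tails of a precompact family.

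Next I would introduce the truncated virial functional. Fix a radial $\theta\in C^\infty(\R^d)$ with $\theta(y)=\tfrac{|y|^2}{2}$ and $\nabla^2\theta=\mathrm{Id}$ on $\{|y|\le1\}$, and with $\nabla\theta,\nabla^2\theta,\nabla^4\theta$ bounded; for $R\ge1$ set $\theta_R=R^2\theta(\cdot/R)$ and
\[
\mathcal V_R(t)=\Im\int\nabla\theta_R\big(x-X(t)\big)\cdot\nabla u(t,x)\,\overline{u(t,x)}\,dx,
\]
so that $|\mathcal V_R(t)|\le\|\nabla\theta_R\|_{L^\infty}\|\nabla u(t)\|_{L^2}\|u(t)\|_{L^2}\le C_1R$ with $C_1=C_1(B,M)$. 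Differentiating in $t$, using the equation, $\dot X=\tfrac{2P}{M}$, conservation of $P$, and the virial computation behind \eqref{virial_ID}--\eqref{momentum} (the bulk term, where $\theta_R(y)=\tfrac{|y|^2}{2}$, producing exactly $2\Phi(u)$ with $\Phi$ as in \eqref{general_Phi}; cf.\ \cite{Ca03}), I expect
\[
\frac{d}{dt}\mathcal V_R(t)=2\big(\Phi(u(t))-c\big)+\mathcal E_R(t),
\]
where $\mathcal E_R(t)$ is supported on $\{|x-X(t)|\ge R\}$ and, using $G'(|u|^2)|u|^2+G(|u|^2)\lesssim|u|^{p_0+2}+|u|^{p_1+2}$, is bounded there by a constant times the $L^2$-, $L^{p_0+2}$- and $L^{p_1+2}$-tails of $u(t)$, plus $R^{-2}$ times its $L^2$-tail, plus $\tfrac{2|P|}{M}\big(\int_{|x-X(t)|\ge R}|\nabla u|^2\big)^{1/2}\big(\int_{|x-X(t)|\ge R}|u|^2\big)^{1/2}$.

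To close the argument I would balance the three scales $R$, the time horizon, and the drift $|x(t)-X(t)|$. Choose $\eta=\eta(\delta)>0$ so small that $|\mathcal E_R(t)|\le\delta$ whenever $\{|x-X(t)|\ge R\}\subset\{|x-x(t)|\ge\rho(\eta)\}$, i.e.\ whenever $R\ge\rho(\eta)+|x(t)-X(t)|$. Set $\varepsilon=\delta/(8C_1)$; by the control of the center there is $T_*$ with $|x(t)-X(t)|\le\varepsilon t$ for $t\ge T_*$, while $|x(t)-X(t)|\le C_*<\infty$ on $[0,T_*]$ (one may take $x$ locally bounded). Take $R$ so large that $R\ge\rho(\eta)+C_*$ and $R\ge\rho(\eta)+\varepsilon T_*$: then for every $t\in[0,T]$ with $T=(R-\rho(\eta))/\varepsilon$ one has $|x(t)-X(t)|\le R-\rho(\eta)$, hence $\tfrac{d}{dt}\mathcal V_R(t)\ge2\delta-\delta=\delta$, and integrating,
\[
C_1R\ \ge\ \mathcal V_R(T)\ \ge\ \mathcal V_R(0)+\delta T\ \ge\ -C_1R+\frac{\delta}{\varepsilon}\big(R-\rho(\eta)\big)=-C_1R+8C_1\big(R-\rho(\eta)\big),
\]
which forces $R\le\tfrac43\rho(\eta)$, contradicting the freedom to take $R$ arbitrarily large. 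This contradiction yields \eqref{eqB12}.

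The main obstacle is precisely this balancing step, and the reason it succeeds is the \emph{sublinearity} $|x(t)-X(t)|=o(t)$: a fixed truncation radius $R$ only controls the virial error over a time window of length $\sim R/\varepsilon$, where $\varepsilon$ is the rate of drift, whereas $\mathcal V_R$ itself is only $O(R)$, so one genuinely needs $\varepsilon$ to be taken arbitrarily small compared to $\delta$ — which is exactly what $o(t)$ provides, and what mere linear growth would not. Establishing this sublinearity (the analogue of \eqref{control_x(t)}) is the principal technical input; it rests in turn on the localized center-of-mass identity and the a priori bound $|x(t)|=O(t)$, both standard consequences of compactness. The localized virial identity with its error terms, and the uniform tail bound, are routine.
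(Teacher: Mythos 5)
Your proposal is correct and follows essentially the same route as the paper: first establish the sublinearity $|x(t)-X(t)|=o(t)$ via a localized center-of-mass functional (Lemma~\ref{lm2}), then run a localized virial functional centered along $X(t)$ and balance its $O(R)$ a priori bound against growth $\gtrsim\delta\,R/\tilde\varepsilon$ over the window where the localization error is controlled (Lemma~\ref{lm compute A1,A2} and the proof of Proposition~\ref{pro1}). The only differences are cosmetic — you use a radial weight $\theta_R$ where the paper takes a coordinate-wise truncation $\varphi=(\theta(x_1),\dots,\theta(x_d))$, you start the integration from $t=0$ rather than from $T_1=L(\tilde\varepsilon)$ by taking $R$ large relative to $\sup_{[0,T_*]}|x-X|$, and you explicitly invoke continuity of $t\mapsto\Phi(u(t))$ to reduce to a single sign of $\Phi-c$ (the paper treats $\Phi-c\ge\delta$ and leaves the symmetric case implicit).
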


%\stefan{In radial setting, we obtain the following result.
%\begin{proposition}
%\label{pro2}
%Let $u$ be a radial solution of \eqref{NLS_g} defined on $[0,\infty)$ such that $K=\{u(t,x); t\geq 0\}$ has compact closure in $H^{s_0}$ ($s_0=\frac{d}{2}-\frac{2}{p_0}$). Then  
%\begin{equation}
%\label{eqB12}
%\min_{t\geq 0}\left|\Phi(u(t))\right|=0.
%\end{equation}
%\end{proposition}}
Define 
\begin{equation}\label{eqB23}
X(t)=2\frac{P(u)}{M(u)}t,
\end{equation}
where $P(u)$ is momentum. The proof of Proposition \ref{pro1} relies on an asymptotic estimate of $x(t)$ and a localized virial argument. We start with two lemmas.
\begin{lemma}\label{lm2}
With the assumptions above, let $u$ be a solution \eqref{NLS_g} such that there exists $x(t)$, $t\geq 0$ such that $K=\{u(t,x+x(t)); t\geq 0\}$ has compact closure in $H^{1}$. Then
\begin{equation}
\label{eqB14}
\lim_{t\rightarrow +\infty}\frac{|x(t)-X(t)|}{t}=0.
\end{equation}
\end{lemma}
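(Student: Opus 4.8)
The plan is to prove \eqref{eqB14} by a localized center-of-mass computation performed \emph{around the explicit drift} $X(t)$ from \eqref{eqB23}, controlling the localization error by the orbital compactness of $K$. After a routine regularization I may assume that $t\mapsto x(t)$ is continuous (so that $\sup_{[0,T]}|x(t)|<\infty$ for every $T$); write $\ell(t)=|x(t)-X(t)|$. First I would extract from the compactness of $\overline{K}$ in $H^1$ the tightness bound: for every $\varepsilon>0$ there is $R_0=R_0(\varepsilon)\ge 1$ with
\begin{equation}
\label{eq:tight}
\sup_{t\ge 0}\int_{|x-x(t)|>R_0}\left(|u(t,x)|^2+|\nabla u(t,x)|^2\right)dx<\varepsilon^2 .
\end{equation}

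Next I would fix $\phi=(\phi_1,\dots,\phi_d)\in C_0^\infty(\R^d;\R^d)$ with $\phi(y)=y$ for $|y|\le1$ and $\|\nabla\phi\|_{L^\infty}\lesssim1$, put $\phi_R(y)=R\,\phi(y/R)$ (so $\phi_R(y)=y$ for $|y|\le R$, $|\phi_R|\lesssim R$, and $\|\nabla\phi_R\|_{L^\infty}\lesssim1$ uniformly in $R$), and introduce the $\R^d$-valued localized moment
\begin{equation}
\label{eq:gR}
g_R(t)=\int_{\R^d}\phi_R\big(x-X(t)\big)\,|u(t,x)|^2\,dx .
\end{equation}
This is a localized, $X(t)$-centered version of \eqref{momentum}. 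Using the equation, $\Im(\overline u\,g(u))=0$, $\dot X=2P(u)/M(u)$, and the cancellation $2P(u)-\frac{2P(u)}{M(u)}M(u)=0$ of the ``bulk'' term (coming from $\nabla\phi_R=I$ on $\{|x-X(t)|\le R\}$), a direct computation shows that $\frac{d}{dt}g_R(t)$ is an integral whose integrand vanishes on $\{|x-X(t)|\le R\}$ and is otherwise bounded by $C(|u||\nabla u|+|u|^2)$; hence, with $C_1=C_1(d,P(u),M(u))$,
\begin{equation}
\label{eq:dgRbd}
\left|\tfrac{d}{dt}g_R(t)\right|\le C_1\int_{|x-X(t)|>R}\big(|u(t)|^2+|\nabla u(t)|^2\big)\,dx .
\end{equation}

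Then I would compare $g_R$ with $\ell$. When $R\ge R_0+\ell(t)$ one has $\{|x-X(t)|>R\}\subset\{|x-x(t)|>R_0\}$, so the right-hand side of \eqref{eq:dgRbd} is $<C_1\varepsilon^2$; and splitting \eqref{eq:gR} at $|x-x(t)|=R_0$, using $\phi_R(x-X(t))=x-X(t)$ on $\{|x-x(t)|\le R_0\}$ together with \eqref{eq:tight}, gives
\begin{equation}
\label{eq:gRstar}
g_R(t)=M(u)\big(x(t)-X(t)\big)+\vec m(t)+O(R\varepsilon^2),\qquad |\vec m(t)|\le R_0\,M(u).
\end{equation}
Now fix $\varepsilon$ small, fix $T>0$, set $D:=\sup_{[0,T]}\ell(t)<\infty$ and choose $R:=D+R_0$, so that \eqref{eq:dgRbd}–\eqref{eq:gRstar} hold on $[0,T]$ and $|g_R(t)-g_R(0)|\le C_1\varepsilon^2 T$ there. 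Evaluating \eqref{eq:gRstar} at a point of $[0,T]$ where $\ell$ attains its maximum $D$, and at $t=0$, yields
\[
M(u)\,D\le M(u)\big(|x(0)|+R_0\big)+2C\varepsilon^2(D+R_0)+C_1\varepsilon^2 T ,
\]
so for $\varepsilon$ small enough $D\le C_\varepsilon+\tfrac{2C_1}{M(u)}\varepsilon^2 T$ with $C_\varepsilon$ independent of $T$. Hence $\ell(T)/T\le C_\varepsilon/T+\tfrac{2C_1}{M(u)}\varepsilon^2$ for all $T>0$, so $\limsup_{t\to\infty}\ell(t)/t\le \tfrac{2C_1}{M(u)}\varepsilon^2$, and letting $\varepsilon\to0$ gives \eqref{eqB14}.

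I expect the main obstacle to be the mismatch of localization centers: compactness of $\overline K$ provides tightness of $|u|^2$ and $|\nabla u|^2$ only around $x(t)$, whereas the center-of-mass identity is naturally written around the explicit line $X(t)$; the estimate on $D=\sup_{[0,T]}\ell(t)$ therefore has to be obtained in a self-improving way, choosing the truncation radius $R$ itself of order $D$. A secondary, purely technical point, handled by a standard mollification of the modulation parameter, is to arrange that $x(t)$ is continuous.
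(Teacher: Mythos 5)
Your proof is correct and takes essentially the same route as the paper: both use a localized center-of-mass functional centered at the drift line $X(t)$, choose the truncation radius comparable to $R_0(\varepsilon)+\sup\ell$, exploit the cancellation $\dot X=2P/M$ in the bulk, and control the tail by the compactness of $K$ around $x(t)$. The only structural difference is that the paper organizes the same self-improving estimate as a contradiction argument with an ``entrance time'' $\tilde t_n$, whereas you give a direct estimate of $\sup_{[0,T]}|x(t)-X(t)|$ and then let $T\to\infty$, $\varepsilon\to0$.
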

\begin{proof}
We can assume that $x$ is continuous (see e.g \cite[Proposition 3.2]{DuHoRo08}). \\
We argue by contradiction, assuming that there exists a sequence $t_n\rightarrow +\infty$, $\varepsilon_0>0$ such that 
\begin{equation}\label{eqB20}
\frac{|x(t_n)-X(t_n)|}{t_n} \geq \varepsilon_0.
\end{equation}
Without loss of generality we may assume $x(0)=0$.\\
For $R>0$, we let
\begin{equation}
\label{eqB21}
t_0(R)=\inf\{t\geq 0; |x(t)-X(t)|\geq R\}.
\end{equation}
Since $x(0)=0=X(0)$ and $x(t)-X(t)$ is continuous, we have $t_0(R)>0$.\\
We define $R_n=|x(t_n)-X(t_n)|$ and $\tilde{t}_n=t_0(R_n)$ so that $t_n\geq \tilde{t}_n$. Thus we have the following properties:
\begin{align}
\forall 0\leq t<\tilde{t}_n&\Rightarrow |x(t)-X(t)|<R_n,\label{eqB22}\\
|x(\tilde{t}_n)-X(\tilde{t}_n)|&=R_n,\label{eqB23 2}\\
\frac{R_n}{\tilde{t}_n}&\geq \varepsilon_0, \label{eqB24}  
\end{align}
where \eqref{eqB24} follows from \eqref{eqB20} and $t_n\geq\tilde{t}_n$. \\
By precompactness of $K$, for any $\varepsilon>0$, there exists $R_0(\varepsilon)>0$ such that for all $t\geq 0$:
\begin{equation}
\label{eqB30}
\int_{|x-x(t)|\geq R_0(\varepsilon)} (|u|^2+|\nabla u|^2)\,dx\leq \varepsilon.
\end{equation} 
Let $\tilde{R}_n=R_n+R_0(\varepsilon)$ (for $\varepsilon$ small enough).\\
Let $\theta \in \mathcal{D}(\R)$ be such that $\theta(x)=x$ for $|x|\leq 1$, $\theta(x)=0$ for $|x|\geq 2$ and $\norm{\theta}_{L^{\infty}}<2$. We write $\varphi(x)=(\theta(x_1),\theta(x_2),\cdot,\cdot,\cdot,\theta(x_d))$. Thus, $\varphi(x)=x$ for $|x|\leq 1$ and $\norm{\varphi}_{L^{\infty}}<2d$.\\
We define
\begin{equation}
\label{eqB31}
z_{\tilde{R_n}}(t)=\int_{\R^d}\tilde{R_n}\varphi\left(\frac{x-X(t)}{\tilde{R}_n}\right)|u(t,x)|^2\,dx.
\end{equation}  
Let $t\in [0,\tilde{t}_n]$. We have $z_{\tilde{R}_n}'(t)=([z_{\tilde{R}_n}'(t)]_1,[z_{\tilde{R}_n}'(t)]_2,\cdot,\cdot,\cdot,[z_{\tilde{R}_n}'(t)]_d)$, where
\begin{align*}
[z_{\tilde{R}_n}'(t)]_j&=-X_j'(t)\int_{\R^d}\theta'\left(\frac{x_j-X_j(t)}{\tilde{R}_n}\right)|u(t,x)|^2\,dx\\
&\quad +2\Im \int_{\R^d}\theta'\left(\frac{x_j-X_j(t)}{\tilde{R}_n}\right) \overline{u}u_j\,dx, \quad (\text{ where } u_j=\partial_j u=\partial_{x_j}u). 
\end{align*}
For $|x_j-X_j(t)|\geq \tilde{R}_n$, we have $|x(t)-X(t)|\geq \tilde{R}_n$ then $|x-x(t)|\geq \tilde{R}_n-R_n$ (by the definition of $R_n$ and triangle inequality). Thus, $|x-x(t)|\geq R_0(\varepsilon)$. For $|x_j-X_j(t)|\leq \tilde{R}_n$, $\theta'\left(\frac{x_j-X_j(t)}{\tilde{R_n}}\right)=1$. By \eqref{eqB30} we deduce, for $t\in [0,\tilde{t}_n]$,
\begin{align}
[z_{\tilde{R}_n}'(t)]_j&=-\frac{2P_j(u)}{M(u)}M(u)+2P_j(u)+ O(\varepsilon)=O(\varepsilon).\label{eqB40}
\end{align}  
Furthermore, we have
\begin{align*}
z_{\tilde{R}_n}(0)&=\int_{\R^d}\tilde{R}_n\varphi\left(\frac{x}{\tilde{R}_n}\right)|u_0(x)|^2\,dx\\
&=\int_{|x|<R_0(\varepsilon)}\tilde{R}_n\varphi\left(\frac{x}{\tilde{R}_n}\right)|u_0(x)|^2\,dx+\int_{|x|>R_0(\varepsilon)}\tilde{R}_n\varphi\left(\frac{x}{\tilde{R}_n}\right)|u_0(x)|^2\,dx.
\end{align*}
Thus, for some constant $C>0$,
\begin{equation}
\label{eqB41}
|z_{\tilde{R}_n}(0)|\leq R_0(\varepsilon)M(u)+C\tilde{R}_n\varepsilon\leq 2R_0(\varepsilon)M(u)+CR_n\varepsilon,
\end{equation}
where we have used for the first bound that $|\varphi(x)|\lesssim |x|$ and for the second bound that $\varphi\in L^{\infty}$.\\
Furthermore,
\begin{align}
z_{\tilde{R}_n}(\tilde{t}_n)&=\int_{|x-x(\tilde{t}_n)|\geq R_0(\varepsilon)}\tilde{R}_n\varphi\left(\frac{x-X(\tilde{t}_n)}{\tilde{R}_n}\right)|u(\tilde{t}_n,x)|^2\,dx\nonumber\\
&\quad +\int_{|x-x(\tilde{t}_n)|\leq R_0(\varepsilon)}\tilde{R}_n\varphi\left(\frac{x-X(\tilde{t}_n)}{\tilde{R}_n}\right)|u(\tilde{t}_n,x)|^2\,dx\nonumber\\
&=I+ II.\label{eqB42}
\end{align}
Using \eqref{eqB30}, we have 
\begin{equation}
\label{eqB50}
|I|\lesssim \tilde{R}_n\varepsilon.
\end{equation}
Furthermore, in the integral defining $II$, we have:
\begin{align*}
|x-X(\tilde{t}_n)|&\leq |x-x(\tilde{t}_n)|+|x(\tilde{t}_n)-X(\tilde{t}_n)|\\
&\leq R_0(\varepsilon)+R_n=\tilde{R}_n,
\end{align*}
where we have used the definition of $\tilde{R}_n$.

We next write
\begin{align*} 
II&= \int_{|x-x(\tilde{t}_n)|\leq R_0(\varepsilon)}(x-X(\tilde{t}_n))|u(\tilde{t}_n,x)|^2\,dx\\
&= (x(\tilde{t}_n)-X(\tilde{t}_n))\int_{\R^d}|u(\tilde{t}_n,x)|^2\,dx
\\&\qquad-(x(\tilde{t}_n)-X(\tilde{t}_n))\int_{|x-x(\tilde{t}_n)|\geq R_0(\varepsilon)}|u(\tilde{t}_n,x)|^2\,dx+\int_{|x-x(\tilde{t}_n)|\leq R_0(\varepsilon)} (x-x(\tilde{t}_n))|u(\tilde{t}_n,x)|^2\,dx.
\end{align*}
By \eqref{eqB23 2}, \eqref{eqB30}, we have
\begin{equation}
\label{eqB51}
|II|\geq R_n(M(u)-C\varepsilon)-R_0(\varepsilon)M(u).
\end{equation}
By \eqref{eqB42}, \eqref{eqB50} and \eqref{eqB51}, we obtain 
\begin{align*}
|z_{\tilde{R}_n}(\tilde{t}_n)|&\geq R_n(M(u)-C\varepsilon)-R_0(\varepsilon)M(u)-\tilde{R}_n\varepsilon.
\end{align*}
Thus, 
\begin{equation}
\label{eqB60}
|z_{\tilde{R}_n}(\tilde{t}_n)|\geq R_n(M(u)-C\varepsilon)-2R_0(\varepsilon)M(u),
\end{equation}
where we chose $\varepsilon \ll M(u)$. \\
By \eqref{eqB40}, \eqref{eqB41} and \eqref{eqB60}, we have
\[
R_n M(u)\lesssim R_0(\varepsilon)M(u)+\varepsilon \tilde{t}_n.
\]
By \eqref{eqB24}, we deduce:
\[
R_n M(u)\lesssim R_0(\varepsilon)M(u)+\varepsilon\frac{R_n}{\varepsilon_0}.
\]
Choosing $\varepsilon\ll \varepsilon_0 M(u)$, we obtain $R_n\lesssim R_0(\varepsilon)$. \\
Letting $n\rightarrow +\infty$, we obtain a contradiction. This completes the proof. 
\end{proof}

The second lemma concerns the derivative of the localized virial functional. We consider
\[
W_R(t)=R\Im\int_{\R^d}\varphi\left(\frac{x-X(t)}{R}\right)\nabla u\overline{u}\,dx,
\]
where $X(t)$ is defined by \eqref{eqB23} and $\varphi$ is as in the proof of Lemma \ref{lm2}. By the relative compactness of $K$ and the continuous embedding of $H^1\cap H^{s_0}$ into $L^{p_0+2}\cap L^{p_1+2}$, we have that for any $\varepsilon>0$, there exists $R_1(\varepsilon)$ such that
\begin{equation}
\label{eqB70}
\int_{|x-x(t)|\geq R_1(\varepsilon)}|\nabla u|^2+|u|^2+|G'(|u|^2)||u|^2+|G(|u|^2)|\,dx\leq \varepsilon.
\end{equation}
By Cauchy-Schwarz inequality, we have
\begin{equation}
\label{eqB71}
|W_R(t)|\leq CR\norm{\nabla u(t)}_2\norm{u(t)}_2\lesssim R.
\end{equation}
For convenience, we denote $\sum_{j=1}^d$ by $\sum_j$, $\partial_{x_j}u=\partial_ju$ by $u_j$, $u_t$ by $\partial_t u$ and $\int_{\R^d}$ by $\int$. We have
\begin{equation*}
W_R'(t)=\underbrace{\sum_j R\Im\int \theta'\left(\frac{x_j-X_j(t)}{R}\right)\frac{-X_j'(t)}{R}u_j\overline{u}\,dx}_{A_1}+\underbrace{\sum_j R\Im\int\theta\left(\frac{x_j-X_j(t)}{R}\right)\partial_t(u_j\overline{u})\,dx}_{A_2}.
\end{equation*}
We will prove the following result on the terms $A_1$ and $A_2$:
\begin{lemma}\label{lm compute A1,A2}
Let $\varepsilon,\tilde{\varepsilon}>0$ be small. There exist $L(\tilde{\varepsilon})$ depending on $\tilde{\varepsilon}$ such that for $R\geq 2R_1(\tilde{\eps})$, $L(\tilde{\varepsilon})<t<\frac{R}{2\tilde{\varepsilon}}$, 
\begin{equation*}
A_1=\frac{-2|P(u)|^2}{M(u)}+O(\varepsilon),
\quad A_2=2\Phi(u)+O(\varepsilon).
\end{equation*}
\end{lemma}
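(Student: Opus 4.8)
The plan is to compute $A_1$ and $A_2$ explicitly, using the precompactness of $K$ (via the estimate \eqref{eqB70}), the asymptotic control $|x(t)-X(t)|/t\to 0$ from Lemma \ref{lm2}, and the fact that $\theta'\equiv 1$ and $\varphi(y)=y$ on $\{|y|\le 1\}$. The common mechanism is that for $R\ge 2R_1(\tilde\eps)$ and $t<R/(2\tilde\eps)$ with $t$ large, on the bulk region $\{|x-x(t)|\le R_1(\tilde\eps)\}$ — which carries all but $O(\tilde\eps)$ of the mass, gradient, and potential energy — the cutoff argument of $\varphi$, namely $(x-X(t))/R$, has modulus $\le 1$: indeed $|x-X(t)|\le |x-x(t)|+|x(t)-X(t)|\le R_1(\tilde\eps)+o(t)\le R_1(\tilde\eps)+\tilde\eps\, t < R_1(\tilde\eps)+R/2\le R$ once $t\ge L(\tilde\eps)$, where $L(\tilde\eps)$ is chosen (using Lemma \ref{lm2}) so that $|x(t)-X(t)|\le \tilde\eps\, t$ for $t\ge L(\tilde\eps)$. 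Hence on the bulk we may replace $\theta'(\cdot)$ by $1$ and $R\,\varphi(\cdot)$ by $x-X(t)$, while the complementary region contributes $O(\tilde\eps)$ because of \eqref{eqB70} together with the uniform bounds $|\theta'|\lesssim 1$, $|\varphi|\lesssim 1$, $|R\,X'(t)/R|=|X'(t)|=2|P(u)|/M(u)\lesssim 1$, and $R^{-1}|x_j-X_j(t)|\lesssim 1$ on $\operatorname{supp}\varphi$. Shrinking $\tilde\eps$ relative to $\eps$ (i.e.\ choosing $\tilde\eps$ so that the accumulated error is $\le\eps$) converts these $O(\tilde\eps)$ into $O(\eps)$.

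\smallskip

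\emph{Term $A_1$.} Since $X_j'(t)=2P_j(u)/M(u)$ is constant, $A_1=-\sum_j \frac{2P_j(u)}{M(u)}\Im\int \theta'\!\left(\frac{x_j-X_j(t)}{R}\right)u_j\overline u\,dx$. On the bulk region $\theta'=1$, and $\Im\int u_j\overline u\,dx=P_j(u)$ up to a tail term controlled by \eqref{eqB70}; the region where $\theta'\ne 1$ is contained in $\{|x-x(t)|\ge R_1(\tilde\eps)\}$ by the displayed triangle inequality, so its contribution is $O(\tilde\eps)$. Therefore $A_1=-\sum_j \frac{2P_j(u)}{M(u)}P_j(u)+O(\tilde\eps)=-\frac{2|P(u)|^2}{M(u)}+O(\eps)$.

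\smallskip

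\emph{Term $A_2$.} Here one uses the equation $i\partial_t u+\Delta u=g(u)=G'(|u|^2)u$ to compute $\partial_t(u_j\overline u)$ in the standard way: $\partial_t(u_j\overline u)+\partial_t(\overline u_j u)$ and the pointwise identities lead, after integration by parts in $x_k$, to the usual virial integrand. Concretely, replacing $\theta\!\left(\frac{x_j-X_j(t)}{R}\right)$ by $\frac{x_j-X_j(t)}{R}$ on the bulk (legitimate by the same argument, with the error on the complement estimated by \eqref{eqB70} and $|\theta|\lesssim 1$), the term $A_2$ becomes, modulo $O(\tilde\eps)$, the standard (unlocalized, translated) Morawetz/virial quantity
\[
\sum_j \Im\int (x_j-X_j(t))\,\partial_t(u_j\overline u)\,dx,
\]
whose value, using the equation and integrating by parts exactly as in the derivation of \eqref{virial_ID}, equals $2\int|\nabla u|^2+d\int\big(G'(|u|^2)|u|^2-G(|u|^2)\big)=2\Phi(u)$ with $\Phi$ as in \eqref{general_Phi}; care must be taken that the $X(t)$-dependence produces no extra term, since $X(t)$ is a constant shift in $x$ at fixed $t$ and the integrand of the unlocalized virial depends only on $x-X(t)$ through a quantity that integrates to $2\Phi(u)$ regardless of the shift (equivalently, the explicit $\partial_t X(t)$-contribution is exactly the piece already absorbed into $A_1$; this is why the splitting $A_1+A_2$ was made). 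Thus $A_2=2\Phi(u)+O(\eps)$.

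\smallskip

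\emph{Main obstacle.} The bookkeeping of error terms generated by differentiating the $R$-localized, $X(t)$-shifted weight is the delicate point: one must verify that \emph{all} boundary/tail terms coming from the region $R_1(\tilde\eps)\le |x-x(t)|$ (and from $1\le |(x-X(t))/R|\le 2$, where $\varphi$ is neither the identity nor zero) are genuinely $O(\tilde\eps)$ uniformly in the allowed range $L(\tilde\eps)<t<R/(2\tilde\eps)$ — in particular the constraint $t<R/(2\tilde\eps)$ is exactly what guarantees $|x(t)-X(t)|\le\tilde\eps t<R/2$ so the bulk sits inside $\operatorname{supp}\varphi$ where $\varphi$ is linear. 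The second delicate point is the cross-term in $\partial_t(u_j\overline u)$ involving the nonlinearity, which must reproduce precisely the coefficient $d/2$ in $\Phi$; this is the same computation underlying the virial identity \eqref{virial_ID} and uses only $g(u)=G'(|u|^2)u$, so it goes through verbatim after the localization errors have been absorbed.
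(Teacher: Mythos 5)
Your overall plan coincides with the paper's: use the precompactness bound \eqref{eqB70}, use Lemma~\ref{lm2} (via $|x(t)-X(t)|\le\tilde\eps t$ for $t\ge L(\tilde\eps)$) together with the constraint $t<R/(2\tilde\eps)$ to guarantee that on the bulk $\{|x-x(t)|\le R_1(\tilde\eps)\}$ the weight is linear ($\theta'\equiv 1$, $\theta'''\equiv 0$), and then absorb the complementary region into $O(\eps)$. Your treatment of $A_1$ is correct and is what the paper does.

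For $A_2$, however, the order of operations in your argument creates a genuine gap. You propose to replace $R\,\theta\!\left(\tfrac{x_j-X_j}{R}\right)$ by $x_j-X_j$ modulo $O(\tilde\eps)$, thereby reducing to the ``unlocalized, translated'' virial quantity $\sum_j\Im\int(x_j-X_j(t))\,\partial_t(u_j\overline u)\,dx$, and only \emph{then} integrate by parts. Two problems. First, that unlocalized quantity need not be finite for $u\in H^1\cap H^{s_0}$: $\partial_t(u_j\overline u)$ contains $\Delta u$ (via $iu_t=-\Delta u + g(u)$), and multiplying by the unbounded weight $x_j-X_j(t)$ requires decay in $x$ that is not assumed (cf.\ the remark before \eqref{virial_ID} that the unlocalized identities need ``enough decay at infinity''). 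Second, even if you phrase it as an error on the complement, your stated bound uses \eqref{eqB70} and $|\theta|\lesssim 1$, but the integrand on the complement is $\bigl[R\,\theta(\cdot)-(x_j-X_j)\bigr]\partial_t(u_j\overline u)$, which contains $\Delta u$; \eqref{eqB70} controls $|\nabla u|^2$, $|u|^2$, $|G'||u|^2$, $|G|$, not $|\Delta u|$. To make the error $O(\tilde\eps)$, you must integrate by parts \emph{first} on the localized $A_2$, as the paper does: then all the weights appearing in the error terms are $\theta'-1$ and $R^{-2}\theta'''$, which are bounded, vanish on the bulk, and pair only with $|\nabla u|^2$, $|u|^2$, $G'(|u|^2)|u|^2$, $G(|u|^2)$ --- precisely what \eqref{eqB70} controls. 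With that correction (integrate by parts on the localized quantity, then estimate the tails), your argument reduces to the paper's proof.
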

\begin{proof}
Writing $\partial_t(u_j\overline{u})=(u_{tj}\overline{u}+u_j\overline{u}_t)$ and integrating by part, we obtain
\begin{align*}
% A_2&=\sum_j R\Im\int \theta\left(\frac{x_j-X_j(t)}{R}\right)(u_{tj}\overline{u}+u_j\overline{u}_t)\,dx\\
% &=\sum_j R\Im\int -\partial_j\left(\theta\left(\frac{x_j-X_j(t)}{R}\right)\overline{u}\right)u_t+\theta\left(\frac{x_j-X_j(t)}{R}\right)u_j\overline{u}_t\,dx\\
% &=\sum_j R\Im\int -\theta\left(\frac{x_j-X_j(t)}{R}\right)\overline{u}_ju_t-\frac{1}{R}\overline{u}u_t\theta'\left(\frac{x_j-X_j(t)}{R}\right)+\theta\left(\frac{x_j-X_j(t)}{R}\right)u_j\overline{u}_t\,dx\\
A_2&=\sum_j -2R\Im\int\theta\left(\frac{x_j-X_j(t)}{R}\right)\overline{u}_ju_t-\sum_j\Im\int\theta'\left(\frac{x_j-X_j(t)}{R}\right)\overline{u}u_t\\
&=\sum_j C_j+\sum_j D_j=I+II.
\end{align*}
Using the equation \eqref{NLS_g}, we obtain 
\begin{align*}
C_j%&= -2R\Im\int\theta\left(\frac{x_j-X_j(t)}{R}\right)\overline{u}_ju_t\\
%&=2R\Re\int\theta\left(\frac{x_j-X_j(t)}{R}\right)\overline{u}_j iu_t\,dx\\
&=2R\Re\int\theta\left(\frac{x_j-X_j(t)}{R}\right) \overline{u}_j(-\Delta u+G'(|u|^2)u)\,dx\\
&=2R\Re\int\nabla\left(\theta\left(\frac{x_j-X_j(t)}{R}\right)\overline{u}_j\right)\nabla u+\theta\left(\frac{x_j-X_j(t)}{R}\right)\frac{1}{2}\partial_jG(|u|^2)\\
&=2R\int\theta\left(\frac{x_j-X_j(t)}{R}\right)\partial_j\frac{|\nabla u|^2}{2}\,dx+2\int \theta'\left(\frac{x_j-X_j(t)}{R}\right)|u_j|^2\,dx\\
&\quad +2R\int\theta\left(\frac{x_j-X_j(t)}{R}\right)\frac{1}{2}\partial_jG(|u|^2)\,dx.
%&=2R\int\theta\left(\frac{x_j-X_j(t)}{R}\right)\partial_j\left(\frac{|\nabla u|^2}{2}+\frac{1}{2}G(|u|^2)\right)\,dx\\
%&\quad +2R\int\partial_j\theta\left(\frac{x_j-X_j(t)}{R}\right)|u_j|^2\,dx.
\end{align*}
Summing up, we obtain
\begin{align*}
%&=-2R\sum_j \int\partial_j \theta\left(\frac{x_j-X_j(t)}{R}\right)\left(\frac{|\nabla u|^2}{2}+\frac{1}{2}G(|u|^2)\right)\,dx\\
%&\quad +2R\sum_j\int\partial_j\theta\left(\frac{x_j-X_j(t)}{R}\right)|u_j|^2\,dx\\
I&=-2\int\sum_j\theta'\left(\frac{x_j-X_j(t)}{R}\right)\left(\frac{|\nabla u|^2}{2}+\frac{1}{2}G(|u|^2)\right)\,dx+2\sum_j\int\theta'\left(\frac{x_j-X_j(t)}{R}\right)|u_j|^2\,dx\\
&=-d\int|\nabla u|^2+G(|u|^2)\,dx -2\sum_j\int \left(\theta'\left(\frac{x_j-X_j(t)}{R}\right)-1\right)\left(\frac{|\nabla u|^2}{2}+\frac{1}{2}G(|u|^2)\right)\,dx\\
&\quad +2\int|\nabla u|^2\,dx+2\sum_j\int\left(\theta'\left(\frac{x_j-X_j(t)}{R}\right)-1\right)|u_j|^2\,dx.
\end{align*}
Moreover
\begin{align*}
D_j%&=-\Im\int\theta'\left(\frac{x_j-X_j(t)}{R}\right)\overline{u}u_t\\
&=\Re\int\theta'\left(\frac{x_j-X_j(t)}{R}\right)\overline{u}iu_t\,dx\\
&=\Re\int\theta'\left(\frac{x_j-X_j(t)}{R}\right)\overline{u}(-\Delta u+G'(|u|^2)u)\,dx\\
%&=\int \theta'\left(\frac{x_j-X_j(t)}{R}\right) G'(|u|^2)|u|^2\,dx-\int\theta'\left(\frac{x_j-X_j(t)}{R}\right)\Re(\overline{u}\Delta u)\,dx\\
&=\int \theta'\left(\frac{x_j-X_j(t)}{R}\right) G'(|u|^2)|u|^2\,dx-\int\theta'\left(\frac{x_j-X_j(t)}{R}\right)\left(\Delta\frac{|u|^2}{2}-|\nabla u|^2\right)\,dx\\
&=\int\theta'\left(\frac{x_j-X_j(t)}{R}\right) (G'(|u|^2)|u|^2+|\nabla u|^2)\,dx-\frac{1}{2R^2}\int\theta'''\left(\frac{x_j-X_j(t)}{R}\right)|u|^2\,dx.
\end{align*}
where we have used $ \int \theta'\left(\frac{x_j-X_j(t)}{R}\right)\partial_{kk}\frac{|u|^2}{2}=0 $ for all $k\neq j$.
Thus,
\begin{align*}
II
&=d\int|\nabla u|^2+G'(|u|^2)|u|^2\,dx\\
&\qquad +\sum_j \int \left(\theta'\left(\frac{x_j-X_j(t)}{R}\right) -1\right)(|\nabla u|^2+G'(|u|^2)|u|^2)\,dx-\frac{1}{2R^2}\int\theta'''\left(\frac{x_j-X_j(t)}{R}\right)|u|^2\,dx.
\end{align*}
Combining the above, we have
\begin{align}
A_2\notag
% I+II\nonumber\\
% &=-d\int |\nabla u|^2+G(|u|^2)\,dx -\sum_j\int \left(\theta'\left(\frac{x_j-X_j(t)}{R}\right)-1\right)\left(|\nabla u|^2+G(|u|^2)\right)\nonumber\\
% &\quad +2\int|\nabla u|^2\,dx+2\sum_j\int \left(\theta'\left(\frac{x_j-X_j(t)}{R}\right)-1\right)|u_j|^2\nonumber\\
% &\quad +d\int |\nabla u|^2+G'(|u|^2)|u|^2+\sum_j\int\left(\theta'\left(\frac{x_j-X_j(t)}{R}\right)-1\right)(|\nabla u|^2+G'(|u|^2)|u|^2) \,dx\nonumber\\
% &\quad -\sum_j\frac{1}{2R^2}\int\theta'''\left(\frac{x_j-X_j(t)}{R}\right)|u|^2\,dx\nonumber\\
% &=2\int |\nabla u|^2+d\int G'(|u|^2)|u|^2-G(|u|^2)\nonumber\\
% &\quad +\sum_j\int\left(\theta'\left(\frac{x_j-X_j(t)}{R}\right)-1\right)\left(G'(|u|^2)|u|^2-G(|u|^2)+2|u_j|^2\right)\nonumber\\
% &-\sum_j\frac{1}{2R^2}\int\theta'''\left(\frac{x_j-X_j(t)}{R}\right)|u|^2\,dx\nonumber\\
&=2\Phi(u)\\
&\qquad -\sum_j\frac{1}{2R^2}\int\theta'''\left(\frac{x_j-X_j(t)}{R}\right)|u|^2\,dx\label{eqsmall term 1}\\&\qquad +\sum_j\int \left(\theta'\left(\frac{x_j-X_j(t)}{R}\right)-1\right)\left(2|u_j|^2+G'(|u|^2)|u|^2-G(|u|^2)\right).\label{eqsmall term 2}
\end{align}
Applying Lemma \ref{lm2}, for each $\tilde{\varepsilon}$, there exists $L(\tilde{\varepsilon})$ such that
\begin{equation}
\label{eq 1000}
|x(t)-X(t)|\leq \tilde{\varepsilon} t,\quad \forall t \geq L(\tilde{\varepsilon}).
\end{equation} 
Assume $L(\tilde{\varepsilon})< t< \frac{R}{2\tilde{\varepsilon}}$ and $R\geq 2R_1(\varepsilon)$. Then
\begin{equation*}
|\text{\eqref{eqsmall term 1}}|+|\text{\eqref{eqsmall term 2}}|\lesssim \int_{|x-X(t)| \geq R} \frac{1}{2R^2}|u|^2+2|\nabla u|^2+|G'(|u|^2)||u|^2+|G(|u|^2)|\,dx,
\end{equation*}
where we have used the fact that for $|x-X(t)|\leq R$ then $|x_j-X_j(t)|\leq R$, for each $1\leq j\leq d$. Thus, $\theta'\left(\frac{x_j-X_j(t)}{R}\right)=1$ and $\theta'''\left(\frac{x_j-X_j(t)}{R}\right)=0$ for each $1\leq j\leq d$. \\
Moreover, for $|x-X(t)| \geq R$ and $L(\tilde{\varepsilon})< t< \frac{R}{2\tilde{\varepsilon}}$, we have 
$$|x-x(t)|\geq R-|x(t)-X(t)|\geq R-\tilde{\varepsilon} t> R/2\geq R_1(\varepsilon).$$
This implies that
\[
A_2=2\Phi(u)+O(\varepsilon).
\] 
Similarly, for $L(\tilde{\varepsilon})< t< \frac{R}{2\tilde{\varepsilon}}$, $R\geq 2R_1(\varepsilon)$, we have 
\begin{align*}
A_1&=\sum_j -X_j'(t)\Im \int \theta'\left(\frac{x_j-X_j(t)}{R}\right) u_j\overline{u}\,dx\\
&=\sum_j \frac{-2P_j(u)}{M(u)}\Im\int u_j\overline{u}\,dx -\frac{2P_j(u)}{M(u)}\Im\int \left(\theta'\left(\frac{x_j-X_j(t)}{R}\right)-1\right) u_j\overline{u}\,dx\\
&=\sum_j \frac{-2P_j(u)}{M(u)}P_j(u)+O(\varepsilon)=\frac{-2|P(u)|^2}{M(u)}+O(\varepsilon).
\end{align*}
This completes the proof of Lemma \ref{lm compute A1,A2}.
\end{proof}
We are now ready to prove Proposition \ref{pro1}. 
\label{proofprop}
\begin{proof}[Proof of Proposition \ref{pro1}]
Assume that $\inf_{t\geq 0}\left(\Phi(u)-\frac{|P(u)|^2}{M(u)}\right)=\delta>0$. We fix small parameters $\tilde{\eps}\ll \delta$ and $\eps\ll \delta$.

From Lemma \ref{lm compute A1,A2}, we have
\[
W'_R(t)= A_1+A_2=2\left(\Phi(u)-\frac{|P(u)|^2}{M(u)}\right)+O(\varepsilon),
\]
for $L(\tilde{\varepsilon})<t<\frac{R}{2\tilde{\varepsilon}}$, $R\geq 2R_1(\tilde{\eps})$. 

Let $T_1=L(\tilde{\varepsilon})$, $T_2=\frac{R}{2\tilde{\varepsilon}}$, where $R$ is large (and in particular $R>2\tilde{\eps}L(\tilde{\eps})$). We have
\begin{align*}
CR&\geq |W_R(T_2)-W_R(T_1)|=(T_2-T_1)|W_R'(t_0)|,\quad \text{for some } T_2>t_0>T_1\\
&\geq \delta\left(\frac{R}{\tilde{2\varepsilon}}-L(\tilde{\varepsilon})\right),
\end{align*} 
(where we have used $\eps\ll \delta$ for the last inequality).
This gives a contradiction letting $R\to\infty$, since $\tilde{\eps}\ll \delta$. 
\end{proof}

\section{Scattering}
 \label{S:scattering}

This section is dedicated to the proof of Theorems \ref{T:defoc_defoc}, \ref{T:defoc_defoc'}, \ref{T:scatt_intro2} and \ref{T:scatt_intro2'}.
Recall that $M(\varphi)=\int |\varphi|^2$. We consider the following property:
\begin{property}
\label{Assume_C}
There exists $m_c>0$ such that $\forall \varphi\in (H^{s_0}\cap H^1)\setminus\{0\}$, if $M(\varphi)<m_c$ then $\Phi(\varphi)\geq \frac 12\int |\nabla u|^2$ and $E(\varphi)\geq \frac 12\int |\nabla u|^2$.
\end{property}
Property \ref{Assume_C} holds for all $m_c$ when $g$ is defocusing. In the case where $\iota_0>0$, we have the following:
%\thomas{In the next lemma, the lower bound on the energy is necessary in the subcritical case $s_0\leq 1$, where it gives an a priori bound for the $H^1$ norm.}
\begin{lemma}
Let $g$ be a
nonlinearity that satisfies Assumption \ref{Assum:profile} with $\iota_0>0$. Then Property \ref{Assume_C} is true.
\end{lemma}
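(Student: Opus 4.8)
The plan is to exploit that $\iota_0>0$ forces $\iota_0=1$, so that the leading contributions to $\Phi$ and $E$ are defocusing and only the lower order term $g_1$ can be destabilising; it will be absorbed for small mass. Writing $g(z)=G'(|z|^2)z$ and matching with $g(z)=\iota_0|z|^{p_0}z+g_1(z)$ gives $G(a)=\frac{2}{p_0+2}a^{\frac{p_0+2}{2}}+G_1(a)$ with $g_1(z)=G_1'(|z|^2)z$, $G_1(0)=0$. Hence, with $s_0=\frac d2-\frac2{p_0}$,
\[
\Phi(\varphi)=\int|\nabla\varphi|^2+\frac{dp_0}{2(p_0+2)}\int|\varphi|^{p_0+2}+\frac d2\int\!\big(G_1'(|\varphi|^2)|\varphi|^2-G_1(|\varphi|^2)\big),
\]
and $E(\varphi)=\int|\nabla\varphi|^2+\frac{2}{p_0+2}\int|\varphi|^{p_0+2}+\int G_1(|\varphi|^2)$. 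In each, the first two terms are nonnegative. From $g_1\in\NNN(s_0,p_2,p_1)$ and $|g_1(z)|\lesssim|z|^{p_2+1}+|z|^{p_1+1}$ one gets $|G_1'(a)|\lesssim a^{p_2/2}+a^{p_1/2}$, hence $|G_1(a)|\lesssim a^{\frac{p_2+2}{2}}+a^{\frac{p_1+2}{2}}$ and $|aG_1'(a)-G_1(a)|\lesssim a^{\frac{p_2+2}{2}}+a^{\frac{p_1+2}{2}}$; thus the last term of $\Phi$, and the last term of $E$, are each bounded in absolute value by $C\big(\|\varphi\|_{L^{p_1+2}}^{p_1+2}+\|\varphi\|_{L^{p_2+2}}^{p_2+2}\big)$ with $C$ depending only on $g_1,d$.

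So it is enough to prove: for $j\in\{1,2\}$ and every $\eps>0$ there is $m_\eps>0$ with
\[
M(\varphi)<m_\eps\ \Longrightarrow\ \int|\varphi|^{p_j+2}\le\eps\Big(\int|\nabla\varphi|^2+\int|\varphi|^{p_0+2}\Big).
\]
Granting this and taking $\eps$ small (in terms of $d,p_0,C$) and $m_c=\min_j m_\eps$, the error terms above are $\le\frac12\int|\nabla\varphi|^2$ plus half of the available $\int|\varphi|^{p_0+2}$ term, giving $\Phi(\varphi),E(\varphi)\ge\frac12\int|\nabla\varphi|^2$.

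To prove the displayed estimate I would argue by a three-way dichotomy. Write $m=M(\varphi)$, $N=\int|\nabla\varphi|^2$, $B=\int|\varphi|^{p_0+2}$ and $\eta_j=1-\frac{p_j}{p_0}\in(0,1)$. Since $2<p_j+2<p_0+2$, Hölder interpolation between $L^2$ and $L^{p_0+2}$ gives $\int|\varphi|^{p_j+2}\le m^{\eta_j}B^{1-\eta_j}$. First, if $B\ge K(\eps)m$ with $K(\eps)$ large, this is $\le K(\eps)^{-\eta_j}B\le\eps B$. Otherwise $B<K(\eps)m$, so $\int|\varphi|^{p_j+2}\le C_{K(\eps)}m$, and then either $N\ge\eps^{-1}C_{K(\eps)}m$, whence $\int|\varphi|^{p_j+2}\le\eps N$, or $N<\eps^{-1}C_{K(\eps)}m$, so that both $N$ and $B$ are $\le Lm$ with $L=L(\eps)$. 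In this last regime I would invoke a second interpolation inequality $\int|\varphi|^{p_j+2}\le C\,m^{c_0}N^{c_1}B^{c_2}$ with $c_0,c_1,c_2\ge0$ and total homogeneity $c_0+c_1+c_2>1$: the homogeneous Gagliardo--Nirenberg inequality (Lemma \ref{lm homogeneous GN inequality}) with $\dot H^1$ when $p_j<\frac4{d-2}$ (then $c_2=0$, $c_0+c_1=\frac{p_j+2}2$); the Sobolev embedding $\dot H^1\hookrightarrow L^{2^\ast}$, $2^\ast=\frac{2d}{d-2}$, when $p_j=\frac4{d-2}$; and interpolation of $L^{p_j+2}$ between $L^{2^\ast}$ and $L^{p_0+2}$ (together with $\dot H^1\hookrightarrow L^{2^\ast}$) when $p_j>\frac4{d-2}$ — in that case a direct computation gives $c_0+c_1+c_2=1+(p_j+2)(1-\sigma)\frac{2^\ast-2}{2\cdot2^\ast}>1$, where $\sigma\in(0,1)$ is the interpolation parameter. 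Substituting $N,B\le Lm$ into the surplus power and using Young's inequality then yields $\int|\varphi|^{p_j+2}\le C_L\,m^{\,c_0+c_1+c_2-1}(N+B)\le\eps(N+B)$ for $m$ small.

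The one genuinely delicate point will be this last sub-case when $p_j$ is energy-critical or energy-supercritical, which can occur only in the setting $s_0>1$ (and then necessarily $d\ge3$): there $\dot H^1$ does not embed in $L^{p_j+2}$, so no bound of the type $\int|\varphi|^{p_j+2}\lesssim F(M(\varphi),\|\nabla\varphi\|_{L^2})$ is available, and one must retain the good higher-power term $\int|\varphi|^{p_0+2}$ and check that the interpolation homogeneities still sum to more than $1$. Everything else is routine bookkeeping; the energy-subcritical range $s_0\le1$ is handled entirely by Gagliardo--Nirenberg.
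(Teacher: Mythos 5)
Your proof is correct, and it gives the same reduction as the paper (decompose $\Phi$ and $E$, isolate the defocusing $|u|^{p_0+2}$ and $|\nabla u|^2$ contributions, bound the $G_1$-remainders by $\|\varphi\|_{L^{p_1+2}}^{p_1+2}+\|\varphi\|_{L^{p_2+2}}^{p_2+2}$), but differs in how you establish the smallness of the $L^{p_j+2}$ contributions. The paper cites a single generalized Gagliardo--Nirenberg inequality from \cite{LeNo20} (their (3.25)),
\[
\|u\|_{L^{p_j+2}}^{p_j+2}\lesssim \|u\|_{L^2}^{p_j-\theta p_0}\,\|\nabla u\|_{L^2}^{2(1-\theta)}\,\|u\|_{L^{p_0+2}}^{\theta(p_0+2)},\qquad \theta=\frac{dp_j-4}{dp_0-4}\in(0,1),
\]
whose crucial feature is that the exponents on $N=\|\nabla u\|_{L^2}^2$ and $B=\|u\|_{L^{p_0+2}}^{p_0+2}$ are $1-\theta$ and $\theta$, summing exactly to $1$: then weighted AM--GM gives $N^{1-\theta}B^{\theta}\le N+B$, and the leftover $M(u)^{(p_j-\theta p_0)/2}$ with $p_j-\theta p_0=\frac{4(p_0-p_j)}{dp_0-4}>0$ supplies the smallness in $m_c$ directly, with no case distinction and no dependence on whether $p_j$ is energy-subcritical, critical or supercritical. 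You instead reconstruct this control by hand via Hölder interpolation between $L^2$ and $L^{p_0+2}$, a three-way dichotomy on whether $B$ or $N$ dominates $m$, and then (in the remaining regime $N,B\lesssim m$) a case-split between the subcritical Gagliardo--Nirenberg, the critical Sobolev embedding, and (for supercritical $p_j$) a Sobolev-plus-Hölder interpolation through $L^{2^\ast}$. This works --- the total homogeneity $c_0+c_1+c_2>1$ you compute is exactly what the absorption step needs, and the Young-type rearrangement you sketch goes through since $c_1+c_2\geq 1$ in every branch --- but it trades a one-line citation for several pages of bookkeeping. The net gain of your route is that it is self-contained (no appeal to \cite{LeNo20}); the cost is length and the need to handle $p_j$ energy-critical/supercritical separately, which the cited inequality makes invisible.
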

\begin{proof}
Using that $\iota_0=1$, we have
\begin{multline*}
\Phi(u)=\int |\nabla u|^2+\frac{d}{2}\int (G'(|u|^2)|u|^2-G(|u|^2)\\=\int |\nabla u|^2+\frac{d p_0}{2(2+p_0)}\int |u|^{p_0+2}+\frac{d}{2}\int (G_1'(|u|^2)|u|^2-G_1(|u|^2),
 \end{multline*}
 where $G_1(s)=G(s)-s^{p_0/2}$ is such that $g_1(u)=G_1'(|u|^2)u$. By Assumption \eqref{Assum:profile}, we have $g_1\in \NNN(s_0,p_2,p_1)$, for some $\frac{4}{d}<p_1<p_2<p_0$. Using the definition of $\NNN$, this implies
 \begin{equation}
 \label{boundG1}
 |G_1(|u|^2)|+|G_1'(|u|^2)|u|^2)|\lesssim |u|^{p_2+2}+|u|^{p_1+2}.
 \end{equation}
 The claim that there exists $m_c$ such that $\Phi(u)$ is larger than $\frac{1}{2}\int |\nabla u|^2$ when $0<M(u)<m_c$ follows easily from \eqref{boundG1} and the generalized Gagliardo-Nirenberg inequalities (See (3.25) in \cite{LeNo20})
 $$   \|u\|_{L^{p_j+2}}^{p_j+2}\lesssim \|u\|_{L^2}^{p_j-\theta p_0}\|\nabla u\|^{2(1-\theta)}_{L^2}\|u\|^{\theta(p_0+2)}_{L^{p_0+2)}}.$$
 The lower bound for the energy is obtained by the same proof.
 \end{proof}

We have the following result.
\begin{lemma}
\label{Lm:positive of Phi}
If Property \ref{Assume_C} holds, for each $\varphi\in H^{s_0}\cap H^1$ such that $M(\varphi)<m_c$, we have
$$\Phi(\varphi)-\frac{|P(\varphi)|^2}{M(\varphi)}>0.$$
\end{lemma}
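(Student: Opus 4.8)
The plan is to combine an improved Cauchy--Schwarz inequality, in the spirit of \cite{Banica04}, with the lower bound for $\Phi$ furnished by Property \ref{Assume_C}, the latter applied not to $\varphi$ itself but to $|\varphi|$. Since $\frac{|P(\varphi)|^2}{M(\varphi)}$ must make sense we may assume $\varphi\neq 0$, so $0<M(\varphi)<m_c$. Set $J=\Im(\overline{\varphi}\,\nabla\varphi)\in L^1(\R^d;\R^d)$, so that $P(\varphi)=\int J$. The elementary fact to start from is the pointwise (Madelung/diamagnetic) identity, valid a.e.\ for $\varphi\in H^1$,
\begin{equation*}
|\nabla\varphi|^2=\bigl|\nabla|\varphi|\bigr|^2+\frac{|J|^2}{|\varphi|^2}\text{ on }\{\varphi\neq0\},\qquad \nabla\varphi=\nabla|\varphi|=0\text{ a.e.\ on }\{\varphi=0\},
\end{equation*}
with the convention that $|J|^2/|\varphi|^2=0$ where $\varphi=0$; this function is integrable, its integral being $\int|\nabla\varphi|^2-\int\bigl|\nabla|\varphi|\bigr|^2$.

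First I would prove the improved Cauchy--Schwarz inequality $\frac{|P(\varphi)|^2}{M(\varphi)}\leq\int|\nabla\varphi|^2-\int\bigl|\nabla|\varphi|\bigr|^2$: for each coordinate $k$ one has $J_k=\frac{J_k}{|\varphi|}\,|\varphi|$ a.e.\ (also on $\{\varphi=0\}$ by the convention), so Cauchy--Schwarz gives $\bigl(\int J_k\bigr)^2\leq\left(\int\frac{J_k^2}{|\varphi|^2}\right)\left(\int|\varphi|^2\right)$; summing in $k$ and using the identity above yields $|P(\varphi)|^2\leq M(\varphi)\left(\int|\nabla\varphi|^2-\int\bigl|\nabla|\varphi|\bigr|^2\right)$. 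Since, by the definition \eqref{general_Phi} of $\Phi$, the quantity $\Phi(\varphi)-\int|\nabla\varphi|^2=\frac d2\int\bigl(G'(|\varphi|^2)|\varphi|^2-G(|\varphi|^2)\bigr)$ depends on $\varphi$ only through $|\varphi|^2$, this gives
\begin{equation*}
\Phi(\varphi)-\frac{|P(\varphi)|^2}{M(\varphi)}\ \geq\ \int\bigl|\nabla|\varphi|\bigr|^2+\frac d2\int\bigl(G'(|\varphi|^2)|\varphi|^2-G(|\varphi|^2)\bigr)=\Phi\bigl(|\varphi|\bigr).
\end{equation*}
It then remains to conclude that $\Phi(|\varphi|)>0$: indeed $|\varphi|\in H^1$ with $M(|\varphi|)=M(\varphi)\in(0,m_c)$, and $\int\bigl|\nabla|\varphi|\bigr|^2>0$ (otherwise $|\varphi|$ is a.e.\ constant, hence $0$ by square-integrability, contradicting $\varphi\neq0$); moreover the proof of Property \ref{Assume_C} — nonnegativity of the potential term when $g$ is defocusing, the generalized Gagliardo--Nirenberg inequalities when $\iota_0>0$ — uses only $H^1$-type quantities and the smallness of the mass, all of which are unchanged, or only improved (the Dirichlet term decreases), under $\varphi\mapsto|\varphi|$; hence it applies to $|\varphi|$ and yields $\Phi(|\varphi|)\geq\tfrac12\int\bigl|\nabla|\varphi|\bigr|^2>0$. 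The two displays together give the claim.

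I expect the main obstacle to be the improved Cauchy--Schwarz step, and in particular a clean treatment of the zero set of $\varphi$ so that the Madelung identity and the division by $|\varphi|$ are rigorously justified; a secondary but necessary point is to invoke the \emph{proof} of Property \ref{Assume_C} rather than merely its statement when $s_0>1$, since then $|\varphi|$ need not belong to $H^{s_0}$.
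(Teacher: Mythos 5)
Your proof is correct and follows the approach the paper signals in its introduction (an improved Cauchy--Schwarz inequality in the spirit of Banica, via the Madelung identity, combined with the observation that $\Phi(\varphi)-\int|\nabla\varphi|^2$ depends only on $|\varphi|$). The paper does not spell out this lemma's proof, but your argument is a faithful and rigorous implementation of the intended strategy, including the correct handling of the zero set and the observation that for $s_0>1$ one must invoke the mechanism behind Property \ref{Assume_C} (which needs only $H^1$ and $L^{p_0+2}$ control) rather than its literal $H^{s_0}$ statement, since $|\varphi|$ may fail to lie in $H^{s_0}$; the remark that ``the Dirichlet term decreases'' is a harmless red herring, as the proof of Property \ref{Assume_C} applies directly to $|\varphi|$ in its own right.
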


We next prove Theorems \ref{T:defoc_defoc} and \ref{T:scatt_intro2}. The proofs of Theorems  \ref{T:defoc_defoc'} and \ref{T:scatt_intro2'} are similar, but simpler and we omit them. Theorems \ref{T:defoc_defoc} and \ref{T:scatt_intro2} are an immediate consequence of the following result:
\begin{theorem}
\label{thm:main_result_1}
Let $g$ satisfy Assumption \ref{Assum:profile}. Assume that $s_0\geq 1$, Properties \ref{Proper:bnd} and \ref{Assume_C} hold. Let $u$ be a solution of \eqref{NLS_g} with $M(u)<m_c$ and $u$ satisfies \eqref{bound_Hs0}. Then $u$ scatters forward in time.
\end{theorem}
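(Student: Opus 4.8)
The plan is to run the concentration--compactness/rigidity scheme of Kenig--Merle, feeding the profile decomposition of Section~\ref{S:profile} into the rigidity result of Section~\ref{S:rigidity}, with the induction carried on the critical norm $\sup_t\|\cdot\|_{\dot{H}^{s_0}}$ (as in the works quoted for Property~\ref{Proper:bnd}). Since $g$ satisfies Assumption~\ref{Assum:profile}, Property~\ref{Proper:bnd} holds and $u$ satisfies \eqref{bound_Hs0}, Theorem~\ref{T:GWP} gives $T_+(u)=+\infty$, so by Lemma~\ref{L:scatt_criterion} it suffices to prove $u\in X([0,\infty))$. Replacing $u$ by $u(\cdot+T)$ with $T$ large, \eqref{bound_Hs0} gives $\sup_{t\ge0}\|u(t)\|_{\dot{H}^{s_0}}<A_0$; moreover $M(u)$ is conserved, and since Property~\ref{Assume_C} gives $E(u)\ge\frac12\|\nabla u\|_{L^2}^2$ with $E$ conserved, $u$ is bounded in $H^1$. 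As $s_0\ge1$, interpolation ($\|f\|_{\dot{H}^1}\lesssim\|f\|_{L^2}^{1-1/s_0}\|f\|_{\dot{H}^{s_0}}^{1/s_0}$) shows $\sup_{t\ge0}\|u(t)\|_{H^{s_0}\cap H^1}<\infty$.

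Assume for contradiction $u\notin X([0,\infty))$. Let $A_*$ be the supremum of the $A\in(0,A_0)$ such that every solution $v$ of \eqref{NLS_g} with $M(v)\le M(u)$ and $\sup_{t\in[0,T_+(v))}\|v(t)\|_{\dot{H}^{s_0}}<A$ scatters forward. By Proposition~\ref{P:local wellposed} (small data) $A_*>0$, and since $u$ has $M(u)\le M(u)$, our hypothesis forces $A_*\le\sup_{t\ge0}\|u(t)\|_{\dot{H}^{s_0}}<A_0$. Choose solutions $v_n$ with $M(v_n)\le M(u)$ and $\sup_t\|v_n(t)\|_{\dot{H}^{s_0}}\to A_*$ that do not scatter forward; by Lemma~\ref{L:scatt_criterion} and a time translation we may assume the $X$-norm of $v_n$ blows up on both sides of $0$. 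Pass to a profile decomposition $((\varphi_{Ln}^j)_n)_j$ of the bounded (in $H^{s_0}\cap H^1$) sequence $v_n(0)$, with nonlinear profiles $\varphi_n^j$ and modified profiles $\tilde\varphi_n^j$. By the Pythagorean expansions \eqref{Pythagorean1} and \eqref{PythagoreanL2}, every nonlinear profile has mass $\le M(u)<m_c$ and $\dot{H}^{s_0}$-size $\le A_*$, with strict inequality in the latter unless there is exactly one profile and no remainder. In that nontrivial case the non-concentrating profiles scatter forward by definition of $A_*$ (and are defined and bounded on the relevant intervals thanks to the $H^{s_0}\cap H^1$ bound), while the concentrating profiles solve \eqref{NLSh} with $\dot{H}^{s_0}$-norm $<A_0$ and so have finite $\dot{S}^{s_0}$-norm by Property~\ref{Proper:bnd} through Proposition~\ref{P:boundh}; hence \eqref{scatt_profile1}--\eqref{scatt_profile2} hold on $[0,\infty)$ and Theorem~\ref{T:NLapprox} forces $v_n\in X([0,\infty))$ for $n$ large, a contradiction. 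So the decomposition consists of a single profile with no remainder; the cases $\tau^1\in\{\pm\infty\}$ and $1\in\JJJ_C$ are excluded by the same small-data/Proposition~\ref{P:boundh} reasoning, so the profile is non-concentrating with $\tau^1=0$ and, up to translation and a subsequence, $v_n(0)\to u_{c,0}$ in $H^{s_0}\cap H^1$. Let $u_c$ be the solution with data $u_{c,0}$: it is global forward (Theorem~\ref{T:GWP}), does not scatter forward, and satisfies $M(u_c)\le M(u)<m_c$, $\sup_t\|u_c(t)\|_{\dot{H}^{s_0}}\le A_*<A_0$. Repeating the analysis at arbitrary times $t_n$ (using Lemma~\ref{L:Pythagorean}) shows $K^+=\{u_c(t,x+x(t));\ t\ge0\}$ is precompact in $\dot{H}^{s_0}$ for a suitable $x(t)$, and boundedness of $u_c$ in $L^2\cap H^1$ upgrades this to precompactness in $H^{s_0}\cap H^1$.

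Thus $u_c$ satisfies the hypotheses of Proposition~\ref{pro1}, giving $\min_{t\ge0}\left|\Phi(u_c(t))-|P(u_c)|^2/M(u_c)\right|=0$, with $\Phi$ as in \eqref{general_Phi}. On the other hand $M(u_c(t))=M(u_c)<m_c$ and $u_c(t)\ne0$ for all $t$ (else $M(u_c)=0$ and $u_c\equiv0$ would scatter), so Lemma~\ref{Lm:positive of Phi} gives $\Phi(u_c(t))-|P(u_c)|^2/M(u_c)>0$ for every $t\ge0$. Since $\Phi$ is continuous on $H^{s_0}\cap H^1$ and translation invariant, $M$ and $P$ are conserved and translation invariant, and $K^+$ is precompact, the infimum over $t\ge0$ is attained at a nonzero limit point $\psi$ and equals $\Phi(\psi)-|P(\psi)|^2/M(\psi)>0$, contradicting the vanishing of the minimum. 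Hence $u\in X([0,\infty))$ and $u$ scatters forward in time.

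The main obstacle is the critical-element construction in the second step: one must check that the nonlinear profiles simultaneously inherit a subcritical $\dot{H}^{s_0}$-size (in the supremum-in-time sense, the delicate point when $s_0>1$, and strict in the non-trivial case) and a mass below $m_c$, so that the non-concentrating ones fall under the inductive hypothesis while the concentrating ones are controlled by Property~\ref{Proper:bnd} via Proposition~\ref{P:boundh}; one must treat carefully the profiles with $\tau^j=\pm\infty$ so that the space-time conditions \eqref{scatt_profile1}--\eqref{scatt_profile2} of Theorem~\ref{T:NLapprox} are satisfied on all of $[0,\infty)$; and one must upgrade the precompactness of $K^+$ from $\dot{H}^{s_0}$ to $H^{s_0}\cap H^1$ before invoking the rigidity Proposition~\ref{pro1}. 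The rest is the now-standard rigidity bookkeeping.
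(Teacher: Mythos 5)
Your proposal follows the same Kenig--Merle compactness/rigidity roadmap as the paper's proof: an induction (in the form of a critical level $A_*$ of the $\dot H^{s_0}$ norm), a profile decomposition of a sequence of almost-critical solutions, reduction to a single non-concentrating profile, passage to a critical element with precompact trajectory, and a contradiction with Proposition~\ref{pro1} and Lemma~\ref{Lm:positive of Phi}. The one cosmetic departure from the paper is that you run the induction directly on $\dot H^{s_0}$ under the fixed constraint $M(v)\le M(u)$, whereas the paper (Theorem~\ref{T:scatt2}) parametrizes by a small $\eta$, uses the modified norm $\|u\|^2_{\dot H^{s_0}}+\eta\|u\|^2_2$ together with $M\le m_c-\eta$, proves a uniform space-time bound $\FFF(A,\eta)$, and then lets $\eta\to 0$. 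Both variants are workable; the paper's $\eta$-trick packages the uniform-over-intervals version of the inductive hypothesis ($\PPP(A)$), which is what is actually needed to feed Theorem~\ref{T:NLapprox}, while your ``scatters forward'' formulation should be upgraded to such a uniform $S^{s_0}$-bound (exactly as Proposition~\ref{P:boundh} upgrades Property~\ref{Proper:bnd}).

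There is, however, a genuine gap in the way the argument is written. You deduce from the Pythagorean expansions \eqref{Pythagorean1} and \eqref{PythagoreanL2} that ``every nonlinear profile has mass $\le M(u)<m_c$ and $\dot H^{s_0}$-size $\le A_*$''. The Pythagorean expansions give this only at $t=0$: they control $\|\varphi^j_L(0)\|_{\dot H^{s_0}}$, not $\sup_t\|\tilde\varphi^j_n(t)\|_{\dot H^{s_0}}$. Since $s_0>1$ (energy-supercritical), there is no conservation law controlling $\dot H^{s_0}$ forward in time, so nothing a priori prevents a profile from leaving the ball of radius $A_*$ before the inductive hypothesis (for non-concentrating profiles) or Property~\ref{Proper:bnd}/Proposition~\ref{P:boundh} (for concentrating profiles) can be invoked. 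This is precisely the bootstrap carried out in Step~2 of the proof of Claim~\ref{Cl:compactness} (and analogously in Theorem~\ref{T:GWP}): one argues by contradiction that the profiles reach size $\sqrt{A_c^2-\tfrac12\eps_0^2}$ at some time $b_n'$, on $[0,b_n']$ one has enough $S^{s_0}$-control to apply Theorem~\ref{T:NLapprox}, and then the time-dependent Pythagorean expansion of Lemma~\ref{L:Pythagorean} at $t=b_n'$ contradicts the assumed size. You correctly flag this as ``the main obstacle,'' but the argument as written assumes the time-uniform bound rather than proving it; without that bootstrap the hypotheses \eqref{scatt_profile1}--\eqref{scatt_profile2} of Theorem~\ref{T:NLapprox} are not justified and the contradiction does not close.
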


We will prove Theorem \ref{thm:main_result_1} as a consequence of
\begin{theorem}
 \label{T:scatt2}
 Let $g$ satisfy Assumption \ref{Assum:profile}. Assume that $s_0\geq 1$, Properties \ref{Proper:bnd} and \ref{Assume_C} hold. Then for all $A\in (0,A_0)$ there exists $\FFF(A,\eta)>0$ such that for any interval $I$, for any solution $u\in C^0(I,H^{s_0})$ of \eqref{NLS_g} such that
 \begin{equation}
  \label{sct20}
 M(u)\leq m_c-\eta \quad
 \text{and}\quad\sup_{t\in I} \|u(t)\|_{\dot{H}^{s_0}}^2+\eta\|u(t)\|^2_2\leq A^2,
 \end{equation} 
 one has $u\in S^{s_0}(I)$ and $\|u\|_{S^{s_0}(I)}\leq \FFF(A,\eta)$.
\end{theorem}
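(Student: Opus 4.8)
The plan is to prove Theorem~\ref{T:scatt2} by the Kenig--Merle concentration-compactness/rigidity scheme, using the nonlinear profile decomposition of Section~\ref{S:profile} together with the rigidity result Proposition~\ref{pro1}. First I would argue by contradiction: suppose the statement fails for some $A\in(0,A_0)$ and some $\eta>0$. By the small-data theory (Proposition~\ref{P:local wellposed}) the property holds for small $A$, so there is a critical value $A_c\in(0,A_0]$ such that $\FFF(A,\eta)<\infty$ for $A<A_c$ but not for $A=A_c$. Hence there is a sequence of intervals $I_n\ni 0$ and solutions $u_n$ of \eqref{NLS_g} with $M(u_n)\le m_c-\eta$, $\sup_{t\in I_n}\|u_n(t)\|_{\dot H^{s_0}}^2+\eta\|u_n(t)\|_2^2\le A_c^2$, and $\|u_n\|_{S^{s_0}(I_n\cap(-\infty,0])}\to\infty$, $\|u_n\|_{S^{s_0}(I_n\cap[0,\infty))}\to\infty$.

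Next I would extract a critical element. Apply Proposition~\ref{P:decomposition1} to $(u_n(0))_n$ (bounded in $H^{s_0}$ by the hypothesis and conservation of mass) to get a profile decomposition, then feed it into Theorem~\ref{T:NLapprox}. The divergence of the $S^{s_0}$ norms forces, by the usual argument, that exactly one nonlinear profile carries all the mass/critical norm and the remainder is negligible — i.e.\ there is a profile $\varphi^{1}$, with an associated solution $u_c$ of \eqref{NLS_g}, enjoying: $M(u_c)\le m_c-\eta$, $\sup_t\|u_c(t)\|_{\dot H^{s_0}}^2+\eta\|u_c(t)\|_2^2\le A_c^2$, and $u_c\notin S^{s_0}$ forward or backward. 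Here I would need to rule out concentrating profiles $j\in\JJJ_C$: if such a profile were present it would, via Property~\ref{Proper:bnd} (in the form of Proposition~\ref{P:boundh}) applied to the rescaled homogeneous limit, have finite $\dot S^{s_0}$ norm, so the only profile that can fail to have finite Strichartz norm is a non-concentrating one with $\lambda_n^j\equiv 1$; moreover a standard minimality/orthogonality argument shows that $u_c(0)$ together with a sequence $x(t)$ produces a relatively compact trajectory $K=\{u_c(t,\cdot+x(t)):t\in\R\}$ in $H^{s_0}$, and since $s_0\ge 1$ and $M,E$ are conserved with $E(u_c)<\infty$ (by Property~\ref{Assume_C} the energy controls $\|\nabla u_c\|_{L^2}^2$, so $u_c\in L^\infty_tH^1$), $K$ is in fact relatively compact in $H^{s_0}\cap H^1$.

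The final step is the rigidity argument. With $K$ relatively compact in $H^{s_0}\cap H^1$ and the trajectory $x(t)$, Proposition~\ref{pro1} gives $\min_{t\ge 0}\bigl|\Phi(u_c(t))-|P(u_c)|^2/M(u_c)\bigr|=0$. But $M(u_c)\le m_c-\eta<m_c$, so Lemma~\ref{Lm:positive of Phi} yields $\Phi(u_c(t))-|P(u_c)|^2/M(u_c)>0$ for every $t$; since $K$ is compact and the map $\varphi\mapsto\Phi(\varphi)-|P(\varphi)|^2/M(\varphi)$ is continuous on $H^{s_0}\cap H^1$ and strictly positive on $K$, it is bounded below by a positive constant along the trajectory, contradicting the conclusion of Proposition~\ref{pro1} (one needs $u_c\not\equiv 0$, which holds because $u_c\notin S^{s_0}$). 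This contradiction shows $A_c=A_0$, i.e.\ the bound $\FFF(A,\eta)$ exists for all $A\in(0,A_0)$, proving Theorem~\ref{T:scatt2}. The main obstacle I anticipate is the extraction of the critical element: one must carefully track both the $\dot H^{s_0}$ and the weighted $L^2$ constraint \eqref{sct20} through the profile decomposition and Theorem~\ref{T:NLapprox}, use the Pythagorean expansions of Lemma~\ref{L:Pythagorean} to propagate \eqref{sct20} to each profile, invoke Property~\ref{Proper:bnd}/Proposition~\ref{P:boundh} to dispatch all profiles except a single critical one, and handle the minimality so that the surviving profile's trajectory is precompact — exactly the place where the supercritical induction on the norm is delicate and where the nonzero-momentum case (requiring the $X(t)$-recentred virial identity) must be accommodated.
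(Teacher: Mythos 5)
Your proposal follows essentially the same compactness/rigidity roadmap as the paper's proof: induction on the weighted $H^{s_0}$ norm, extraction of a critical element via the nonlinear profile decomposition of Section~\ref{S:profile} together with Theorem~\ref{T:NLapprox} and Lemma~\ref{L:Pythagorean}, exclusion of concentrating profiles by Proposition~\ref{P:boundh}, and then the contradiction from Proposition~\ref{pro1} combined with Lemma~\ref{Lm:positive of Phi}. The points you flag as delicate (propagating both constraints of \eqref{sct20} through the Pythagorean expansion, showing the surviving profile is non-concentrating, and passing to the limit along a sequence of times to contradict the rigidity) are precisely the ones the paper handles in Steps~2--4 of Claim~\ref{Cl:compactness}.
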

Theorem \ref{T:scatt2} implies Theorem \ref{thm:main_result_1} by the scattering criterion (Lemma \ref{L:scatt_criterion}) and letting $\eta$ go to zero.
\begin{proof}[Proof of Theorem \ref{T:scatt2}]
 We argue by contradiction, following the compactness/rigidity scheme as in \cite{KeMe06}. We fix $\eta>0$ throughout the argument. In all the proof, we will endow $H^{s_0}$ with the norm defined by
 \begin{equation}
  \label{Hs0eta}
  \|u\|^2_{H^{s_0}}=\|u\|^2_{\dot{H}^{s_0}}+\eta\|u\|^2_2.
 \end{equation}
 
 We will denote by $\PPP(A)$ the property that there exists $\FFF(A)$ such that for any interval $I$, for any solution $u\in C^0(I,H^{s_0})$ such that \eqref{sct20} holds, one has $u\in S^{s_0}$ and $\|u\|_{S^{s_0}(I)}\leq \FFF(A)$.
 
 By the small data theory for \eqref{NLS_g} (see Proposition \ref{P:local wellposed}), if $A>0$ is small and $\|u(t)\|_{H^{s_0}}\leq A$ for some $t\in I_{\max}(u)$, then $u$ is globally defined, scatters and $\|u\|_{S^{s_0}(\R)}\lesssim A$. This implies that $\PPP(A)$ holds for small $A>0$.

 Thus if the conclusion of Theorem \ref{T:scatt2} does not hold, there exists $A_c\in (0,A_0)$ such that for all $A<A_c$, $\PPP(A)$ holds, and $\PPP(A_c)$ does not hold, i.e. there exists a sequence of intervals $((a_n,b_n))_n$, a sequence $(u_n)_n$ of solutions of \eqref{NLS_g} on $(a_n,b_n)$,
 \begin{equation}
  \label{sct32}
  u_n\in C^0((a_n,b_n),H^{s_0}),\quad M(u_n)\leq m_c-\eta,\quad 
  \lim_{n\to\infty}\sup_{a_n<t<b_n} \|u_n(t)\|_{H^{s_0}} =A_c.
 \end{equation} 
 and $\lim_{n\to\infty} \|u_n\|_{S^{s_0}((a_n,b_n))}=\infty$. Time translating $u_n$, we can assume
 \begin{equation}
  \label{sct33}
a_n<0<b_n,\quad 
  \lim_{n\to\infty}\|u_n\|_{S^{s_0}((a_n,0))}=\lim_{n\to\infty}\|u_n\|_{S^{s_0}((0,b_n))}=+\infty.
 \end{equation} 
 We will prove
 \begin{claim}
 \label{Cl:compactness}
  For any sequences $(a_n)_n$, $(b_n)_n$ with $a_n<0<b_n$, for any sequence $(u_n)_n$ of solutions of \eqref{NLS_g} satisfying \eqref{sct32}, \eqref{sct33}, there exist, after extraction of subsequences, a sequence $(x_n)_n\in (\R^d)^{\mathbb{N}}$ and $\varphi\in H^{s_0}$ such that
  \begin{equation}
  \label{convergence}
  \lim_{n\to\infty}\|u_n(\cdot-x_n)-\varphi\|_{H^{s_0}}=0.
  \end{equation} 
 \end{claim}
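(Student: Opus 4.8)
The plan is to extract a critical element by applying the linear profile decomposition to the sequence $(u_n(0))_n$, which is bounded in $H^{s_0}$ by \eqref{sct32}, and then use the nonlinear approximation result Theorem \ref{T:NLapprox} together with the minimality of $A_c$ to show that exactly one profile survives and that this profile is in fact a non-concentrating profile with a trajectory that is precompact in $H^{s_0}$. First I would pass to a subsequence so that $(u_n(0))_n$ admits a profile decomposition $\left((\varphi_{Ln}^j)_n\right)_j$, with associated nonlinear profiles $\varphi_n^j$, modified nonlinear profiles $\tilde\varphi_n^j$, and modified remainder $\tilde w_{L,n}^J$; I may also pass to a subsequence so that the times $-t_n^j/(\lambda_n^j)^2$ converge and so that each $\tau^j\in\{0,\pm\infty\}$. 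By the Pythagorean expansions \eqref{Pythagorean1}, \eqref{PythagoreanL2} (and Lemma \ref{L:Pythagorean}) applied with the $H^{s_0}$ norm \eqref{Hs0eta}, the quantity $\sum_j \|\varphi_L^j(0)\|_{H^{s_0}}^2$ is at most $A_c^2$, and similarly the masses satisfy $\sum_j M(\varphi^j)\le m_c-\eta$ (using that the $L^2$ norm is split orthogonally and that $M(\tilde w^J_{L,n}(0))\ge 0$). In particular, for every $j$ with $\varphi_L^j\not\equiv 0$, the nonlinear profile $\varphi^j$ satisfies $M(\varphi^j)\le m_c-\eta$ and $\sup\|\varphi^j\|^2_{H^{s_0}}\le A_c^2$ on its maximal interval — with the subtlety that in the concentrating case $\varphi^j$ lives only in $\dot H^{s_0}$, so one argues with the homogeneous version of Property \ref{Proper:bnd} via Proposition \ref{P:boundh}.

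The core dichotomy is then: either there is more than one nonzero profile (or one nonzero profile together with a nontrivial remainder, or a single profile whose $S^{s_0}$ norm is finite), in which case each nonlinear profile has $H^{s_0}$-size (or $\dot H^{s_0}$-size, for concentrating profiles) strictly below $A_c$ — by the strict Pythagorean inequality — hence by the inductive hypothesis $\PPP(A)$ for $A<A_c$ (respectively Proposition \ref{P:boundh}) each $\varphi_n^j$ has a uniform $S^{s_0}$ bound, the hypotheses \eqref{scatt_profile1}, \eqref{scatt_profile2} of Theorem \ref{T:NLapprox} are satisfied on $I_n=(a_n,b_n)$, and that theorem yields $\limsup_n\|u_n\|_{S^{s_0}((a_n,b_n))}<\infty$, contradicting \eqref{sct33}. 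Therefore there is exactly one nonzero profile, say $\varphi^1$, the remainder $\tilde w_{L,n}^J(0)\to 0$ appropriately (Claim \ref{C:modifw} plus \eqref{lim_rnJ}), and $\|\varphi^1\|_{S^{s_0}}=\infty$ on every neighborhood of its initial time in both directions; moreover the profile must be non-concentrating, because a concentrating profile would, again by Proposition \ref{P:boundh} and Property \ref{Proper:bnd}, be global with finite $\dot S^{s_0}$ norm, contradicting the blow-up of $\|u_n\|_{S^{s_0}}$. After translating out $x_n^1$ and $t_n^1$ (which does not affect the $H^{s_0}$ norm since $\lambda_n^1=1$ for a non-concentrating profile), we get, using Theorem \ref{T:NLapprox} with $J=1$ and the fact that the remainder and $r_n^J$ tend to $0$, that $u_n(0,\cdot-x_n^1)\to \varphi^1(0)$ in $H^{s_0}$; set $\varphi=\varphi^1(0)$ and $x_n=x_n^1$. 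This establishes \eqref{convergence}.

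The main obstacle I anticipate is the bookkeeping around concentrating profiles: the nonlinear profile $\varphi^j$ for $j\in\JJJ_C$ is only controlled in $\dot H^{s_0}$, not $H^{s_0}$, so one cannot directly invoke $\PPP(A)$ (which is an $H^{s_0}$ statement) for these; instead one must use the homogeneous scattering/space-time bound Proposition \ref{P:boundh}, check that $\sup\|\varphi^j(t)\|_{\dot H^{s_0}}$ stays below $A_0$ (here one needs $A_c\le A_0$ and the Pythagorean expansion \eqref{Pythagorean1}), and propagate this bound over the rescaled time interval $I_n^j=\{(t-t_n^j)/(\lambda_n^j)^2: t\in(a_n,b_n)\}$ via the modified profile $\tilde\varphi_n^j$ and Lemma \ref{L:modifprofile1}. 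A second, more technical point is that a priori the bound $\sup_{a_n<t<b_n}\|u_n(t)\|_{H^{s_0}}\to A_c$ is only at the level of the sequence, so to run the profile argument on all of $(a_n,b_n)$ one must argue — as in the proof of Theorem \ref{T:GWP} — that the $\dot H^{s_0}$-norm of the nonlinear profiles cannot exceed $A_c$ at any intermediate time, using Lemma \ref{L:Pythagorean} applied at a worst-case time $\sigma_n$ together with a continuity/bootstrap argument; this requires some care but is routine given the machinery already in place.
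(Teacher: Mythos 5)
Your proposal is correct and follows essentially the same compactness--rigidity scheme as the paper: profile decomposition of $(u_n(0))_n$, Pythagorean expansions to bound profile norms, minimality of $A_c$ together with Theorem~\ref{T:NLapprox} to rule out multiple nonzero profiles (and a nontrivial remainder), Proposition~\ref{P:boundh} to rule out a concentrating profile, and finally extraction of the single non-concentrating profile giving \eqref{convergence}. The one point to be careful about — which you do flag at the end — is that the bound $\sup_{t\in I_n}\|\tilde\varphi^j_n(t)\|_{H^{s_0}}\leq A_c$ does not follow directly from the Pythagorean expansion at $t=0$ but requires the bootstrap/contradiction argument (applying Lemma~\ref{L:Pythagorean} at an intermediate time $b_n'$ chosen by continuity), exactly as the paper does in its Step~2; this step is essential to legitimately invoke $\PPP(A)$ for $A<A_c$ and Proposition~\ref{P:boundh} on the whole interval $I_n$.
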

We first assume the claim and conclude the proof of Theorem \ref{T:scatt2}. By the claim, there exist (after extraction of subsequences) $\varphi\in H^{s_0}$ and $(x_n)_n$ such that \eqref{convergence} holds. Let $u$ be the solution of \eqref{NLS_g} such that $u(0)=\varphi$. Since $\RRR_{\eta}$ is closed in $H^{s_0}$, we have $\varphi\in \RRR_{\eta}$.

We next prove by contradiction
\begin{equation}
\label{lim_an_bn}
\lim_{n\to\infty}a_n=-\infty,\quad \lim_{n\to\infty} b_n=+\infty. 
\end{equation} 
Assume to fix ideas, and after extraction of subsequences $\lim_{n\to\infty} b_n =b\in [0,\infty)$. Using that $\lim_n\|u_n\|_{S^{s_0}([0,b_n))}=\infty$, we must have $T_{+}(u)<\infty$ and $b\geq T_{+}(u)$. By the last assertion of \eqref{sct32}, we obtain
$$\sup_{0\leq t<T_{+}(u)} \|u(t)\|_{H^{s_0}}<\infty.$$
This implies by Theorem \ref{T:GWP} that $T_{+}(u)=+\infty$, a contradiction. Hence \eqref{lim_an_bn}. Next, we see that \eqref{convergence}, perturbation theory for equation \eqref{NLS_g} and the last assertion in \eqref{sct32} implies that for any compact interval $I\subset I_{\max}(u)$,
$$
\sup_{t\in I}\|u(t)\|_{H^{s_0}}\leq A_c.$$
This implies by Theorem \ref{T:GWP} that $u$ is global and 
$$\sup_{t\in \R}\|u(t)\|_{H^{s_0}}\leq A_c.$$
By \eqref{sct33} and stability theory for equation \eqref{NLS_g}, one has
$$ \|u\|_{S^{s_0}((-\infty,0))}=\|u\|_{S^{s_0}((0,+\infty))}=+\infty.$$

If $(t_n)_n$ is any sequence of times, Claim \ref{Cl:compactness} and the preceding properties imply that one can extract subsequence such that $u(t_n,\cdot-x_n)$ converges in $H^{s_0}$ for some sequence $(x_n)_n\in (\R^d)^{\mathbb{N}}$. This is classical that it implies that one can find a function $x(t)$, $t\in \R$ such that $K$ defined by \eqref{defK} has compact closure in $H^{s_0}$. We give a sketch of proof of this fact. Using the compactness and the fact that the solution $u$ is not identically $0$, we first notice that there exists $R_0>0$, $\eta>0$ such that
$$\inf_{t\in \R}\left(\sup_{X\in \R^d}\int_{|x|<R_0}|u(t,x-X)|^2dx\right)\geq \eta.$$
Thus for all $t$, there exists $x(t)\in \R^d$ such that
$$ \int_{|x|<R_0} |u(t,x-x(t))|^2dx \geq \eta/2.$$
For this choice of $x(t)$, one can check that $K$ defined by \eqref{defK} is compact.

By Proposition \ref{pro1}, we have 
\begin{equation}
\label{Eq:bounded of Phi}
\min_{t\geq 0}\left|\Phi(u(t))-\frac{|P(u)|^2}{M(u)}\right|=0.
\end{equation}
%Since $u(t) \in \RRR_{\eta}$, by Theorem \ref{thm3}, $\Phi(u)-\frac{|P(u)|^2}{M(u)}>0$ for all $t\in \R$.
By \eqref{Eq:bounded of Phi}, there exists a sequence of times $(t_n)_n$ such that
\begin{equation} 
\label{Eq:convergences}
\lim_{t_n\rightarrow\infty}\Phi(u(t_n))-\frac{|P(u(t_n))|^2}{M(u(t_n))}=0.
\end{equation}
By the claim, extracting subsequences, there exists $(x_n)_n$ such that $u(t_n,\cdot-x_n)$ convergences to $\varphi_0$ in $H^{s_0}$ (up to extract subsequence). By \eqref{Eq:convergences},
\begin{equation}
\label{Eq:varphi0}
\Phi(\varphi_0)-\frac{|P(\varphi_0)|^2}{M(\varphi_0)}=0.
\end{equation}
Since $\RRR_{\eta}$ is closed in $H^{s_0}$, $\varphi_0\in \RRR_{\eta}$. This implies, by Lemma \ref{Lm:positive of Phi}, that
$$ \Phi(\varphi_0)-\frac{|P(\varphi_0)|^2}{M(\varphi_0)}>0.$$
This contradicts to \eqref{Eq:varphi0}. This completes the proof.

\end{proof}

We are left with proving Claim \ref{Cl:compactness}.
\begin{proof}[Proof of Claim \ref{Cl:compactness}]
 
 \emph{Step 1. Profile decomposition.}
 Extracting subsequences, we can assume that $u_{0,n}=u_n(0)$ has a profile decomposition as in Section \ref{S:profile}:
 $$u_{0,n}=\sum_{j=1}^J \varphi^j_{Ln}(0)+w_n^J,$$
 where $\varphi^j_{Ln}=\frac{1}{\lambda^j_n}\varphi^j_L\left( \frac{t-t_{n}^j}{\lambda_{n}^j},\frac{x-x_{n}^j}{\lambda^j_n} \right)$.
 We denote by $\varphi^j_{n}$ the corresponding nonlinear profiles, and $\tilde{\varphi}^j_n$ the modified nonlinear profiles. 
 
 Our goal is to prove that there is a unique $j_0\geq 1$ such that $\varphi^{j_0}$ is not identically zero, and that $j_0\in \JJJ_c$. We first note that there is at least one $j$ such that $\varphi^j$ is not identically zero. If not, by the small data local well-posedness, we would have 
 $$\lim_{n\to\infty}\|u_n\|_{S^{s_0}(I_n)}=0,$$
 a contradiction with our assumptions.
 
 In the remaining step, we will prove that there is at most one nonzero profile. Arguing by contradiction, we assume that there is at least two nonzero profiles, say $\varphi^1$ and $\varphi^2$. By the small data theory, there exists $\eps_0>0$ such that 
 \begin{equation}
  \label{sct40}
\inf_{t\in I_n}\left\|\tilde{\varphi}^j_n\right\|_{H^{s_0}}\geq \eps_0,\quad j\in \{1,2\}. 
  \end{equation} 
  
\emph{Step 2. Bound of the $H^{s_0}$ norm.}
  We prove that for all $j\geq 1$ we have $I_n\subset I_{\max}(\tilde{\varphi}^j_n)$ and, for large $n$, 
  \begin{equation}
   \label{sct41}
   \sup_{t\in I_n} \|\tilde{\varphi}_n^j(t)\|_{H^{s_0}} \leq \sqrt{A_c^2-\frac 14\eps_0^2}.
  \end{equation} 
  By \eqref{sct32}, and the Pythagorean expansions \eqref{Pythagorean1}, \eqref{PythagoreanL2}, we obtain the bounds, for $J\geq 1$
   \begin{equation*}
    \sum_{j\geq 1} \|\varphi^j_L(0)\|^2_{\dot{H}^{s_0}}\leq A_c^2,\quad \sum_{j\in \JJJ_{NC}} \|\varphi^j_L(0)\|^2_{H^{s_0}}\leq A_c^2
   \end{equation*}
   Fixing a small $\eps>0$, we obtain, by the small data theory for equations \eqref{NLS_g} and \eqref{NLSh} and Lemma \ref{L:modifprofile1}, that there exists $J_0\geq 1$ such that, for $j\geq J_0+1$, $I_{\max}(\varphi^j)=\R$,
\begin{equation}
\label{boundSlargej}
\forall j\geq J_0+1,\quad
\begin{cases}
   \|\varphi^j\|_{\dot{S}^{s_0}(\R)}<\infty &\text{ if }j\in \JJJ_C\\
   \|\varphi^j\|_{S^{s_0}(\R)}<\infty &\text{ if }j\in \JJJ_{NC}.
  \end{cases}
 \end{equation}    
and
\begin{equation}
\label{boundHs2_1}
\forall j\geq J_0+1,\quad
\limsup_{n\to\infty}\sup_{t\in \R}\|\tilde{\varphi}_n^j(t)\|_{H^{s_0}}\leq \eps.
\end{equation} 
We next prove by contradiction that for $j\in \llbracket 1, J_0\rrbracket$, $I_n\subset I_{\max}(\tilde{\varphi}_n^j)$ and  \eqref{sct41} holds. If not, we can assume (inverting time if necessary, and using Theorem \ref{T:GWP}) that for large $n$, there exists $b_n'\in (0,b_n]$ such that $[0,b_n']\subset \bigcap_{1\leq j\leq J_0} I_{\max}(\tilde{\varphi}_n^j)$ and
\begin{equation}
\label{boundHs2_absurd}
\sup_{1\leq j\leq J_0}\sup_{0\leq t\leq b_n'}\left\|\tilde{\varphi}_n^j(t)\right\|_{H^{s_0}}^2=A_c^2-\frac{1}{2}\eps_0^2. 
\end{equation} 
This implies  that for large $n$
$$ \forall j\in \JJJ_{C}\cap \llbracket 1,J_0\rrbracket,\quad  \sup_{0\leq t\in b_n'}\|\varphi_n^j(t)\|_{\dot{H}^{s_0}}< A_c<A_0.$$
Thus by Proposition \ref{P:boundh}, we obtain a constant $C>0$ such that for large $n$,
\begin{equation}
\label{boundJC}
\sup_{j\in \JJJ_C\cap \llbracket 1,J_0\rrbracket}\|\varphi^j_n\|_{\dot{S}^{s_0}([0,b_n'])}\leq C. 
\end{equation} 
Going back to \eqref{boundHs2_absurd}, we see also that for large $n$
$$\forall j\in \llbracket 1,J_0\rrbracket \cap \JJJ_{NC},\quad \sup_{0\leq t\leq b_n'} \|\varphi_n^j(t)\|_{H^{s_0}}\leq \sqrt{A_c^2-\frac{1}{4}\eps_0^2}.$$
Also, using the Pythagorean expansion of the mass we see that 
$$\forall j\in \JJJ_{NC}, \; M(\varphi^j)\leq m_c-\eta.$$
Using that $\PPP(A)$ holds for $A=\sqrt{A_c^2-\frac{1}{4}\eps_0^2}$ we obtain that there exists a constant $C>0$ such that for large $n$
\begin{equation}
\label{boundJNC}
\sup_{j\in \JJJ_{NC}\cap \llbracket 1,J_0\rrbracket}\|\varphi^j_n\|_{S^{s_0}([0,b_n'])}\leq C. 
\end{equation} 
Combining \eqref{boundSlargej}, \eqref{boundJC} and \eqref{boundJNC}, we obtain that the assumptions of Theorem \ref{T:NLapprox} are satisfied on $[0,b_n']$. Using the Pythagorean expansion of Lemma \ref{L:Pythagorean} together with the limit in \eqref{sct32}, we obtain
$$\limsup_{n\to\infty} \sup_{0\leq t\leq b_n'} \sum_{j=1}^{J_0}\left\|\tilde{\varphi}_n^j(t)\right\|_{H^{s_0}}^2\leq A_c^2.$$
By \eqref{sct40}, we deduce
$$ \forall j\in \llbracket 1,J_0\rrbracket, \quad \limsup_{n\to\infty}\sup_{0\leq t\leq b_n'} \left\|\tilde{\varphi}_n^j(t)\right\|_{H^{s_0}}^2\leq A_c^2-\eps_0^2,$$
contradicting \eqref{boundHs2_absurd}. This proves that \eqref{sct41} holds for all $j\geq 1$, for large $n$.

\medskip

\emph{Step 3. Uniqueness of the nonzero profile.}

In this step we still assume that $\varphi^1$ and $\varphi^2$ are nonzero profiles.
Using \eqref{sct32} and \eqref{sct41}, and arguing as in Step 2, we see that the assumptions of Theorem \ref{T:NLapprox} are satisfied on $[a_n,b_n]$. This proves that $u_n$ scatters for large $n$, contradicting \eqref{sct33}. This concludes the proof that there is only one nonzero profile.

\medskip

\emph{Step 4. End of the proof.}

We assume that $\varphi^1$ is the only nonzero profile. By the same argument as before, we obtain that for large $n$, $I_n\subset I_{\max}(\tilde{\varphi}^1_n)$ and
$$\lim_{n\to\infty}\sup_{t\in I_n} \|\tilde{\varphi}^1_n\|_{H^{s_0}}\leq A_c.$$
If $1\in \JJJ_{C}$, we obtain by Proposition \ref{P:boundh} that $\limsup_{n\to\infty}\|\varphi^1_n\|_{\dot{S}^{s_0}(I_n)}<\infty$. Thus the assumptions of Theorem \ref{T:NLapprox} are satisfied on $I_n$, a contradiction with \eqref{sct33}. Thus $1\in \JJJ_{NC}$. By the same argument, we obtain $\limsup_{n\to\infty} \sup_{t\in I_n} \|\tilde{w}_{Ln}^1(t)\|_{H^{s_0}}=0$. Indeed, if not, we would have by the conservation of the $H^{s_0}$ norm for the linear Schr\"odinger equation (and after extraction of a subsequence) $\lim_{n\to\infty} \sup_{t\in I_n} \|\tilde{w}_{Ln}^1(t)\|_{H^{s_0}}=\eps_0>0$, and the same strategy as in Steps 2,3 would yield that $u_n$ scatters for large $n$, a contradiction. We have proved
$$u_n(0,x)=\varphi^1_L(-t_{1,n},x-x_{1,n})+o(1)\text{ in }H^{s_0}.$$
By \eqref{sct33}, $t_{1,n}$ must be bounded, and we can assume $t_{1,n}=0$ for all $n$, i.e.
\begin{equation}
\label{simple_expansion}
u_n(0,x)=\varphi^1(0,x-x_{1,n})+o(1)\text{ in }H^{s_0},
\end{equation} 
which concludes the proof of the claim.
\end{proof}

\appendix

\section{Equivalence of Sobolev norms}
\label{A:equivalence}
In this appendix we prove \eqref{equivalence}. We recall Mikhlin multiplier theorem \cite{Mi56}, \cite[Theorem 2.5]{Ho60}: if $m\in C^{\infty}\left( \R^d\setminus \{0\} \right)$ is such that $|\xi|^{\alpha}\partial_{\xi}^{\alpha}m(\xi)$ is bounded for all multi-indices $\alpha$ such that $|\alpha|\leq 1+\frac{n}{2}$, then $m(D)$ is a bounded operator from $L^p$ to $L^p$ for $1<p<\infty$. 

Using Mikhlin multiplier theorem with $m(\xi)=|\xi|^s(1+|\xi|^2)^{-s/2}$ then with $m(\xi)=(1+|\xi|^2)^{-s/2}$, we see that 
$$ \|u\|_{L^p}+\left\| |\nabla|^s u\right\|_{L^p}\lesssim \left\| \langle \nabla\rangle^su\right\|_{L^p}.$$
By Mikhlin theorem with $m(\xi)=\frac{(1+|\xi|^2)^{s/2}}{1+|\xi|^s}$, we obtain 
$$ \left\| \langle \nabla\rangle^su\right\|_{L^p}\lesssim \left\| (1+|\nabla|^s)u\right\|_{L^p}\lesssim \|u\|_{L^p}+\left\| |\nabla|^s u\right\|_{L^p},$$
which concludes the proof of \eqref{equivalence}.

\bibliographystyle{abbrv}
\bibliography{paper}
  
\end{document}